\newtheorem{thm}{Theorem}[subsection]
\newtheorem{lemma}[thm]{Lemma}
\newtheorem{prop}[thm]{Proposition}
\newtheorem{cor}[thm]{Corollary}
\newtheorem{fact}[thm]{Fact}
\newtheorem{rmk}[thm]{Remark}
\newtheorem{exercise}[thm]{Exercise}
\newtheorem{question}[thm]{Question}
\theoremstyle{definition}
\newtheorem{df}[thm]{Definition}
\newtheorem{ex}[thm]{Example}
\newcommand{\BM}{\mathbb{M}}  
\newcommand{\BN}{\mathbb{N}}
\newcommand{\BD}{\mathbb{D}}
\newcommand{\BJ}{\mathbb{J}}
\newcommand{\BK}{\mathbb{K}}
\newcommand{\BS}{\mathbb{S}}
\newcommand{\BR}{\mathbb{R}}
\newcommand{\cu}[1]{\mathcal{#1}}
\newcommand{\sa}[1]{\mathsf{#1}}
\newcommand{\tp}{\operatorname{tp}}
\newcommand{\dcl}{\operatorname{dcl}}
\newcommand{\acl}{\operatorname{acl}}
\newcommand{\cl}{\operatorname{cl}}
\newcommand{\met}{\operatorname{met}}
\newcommand{\Th}{\operatorname{Th}}
\newcommand{\dist}{\operatorname{dist}}
\newcommand{\rs}{\operatorname{rs}}
\newcommand{\range}{\operatorname{range}}
\renewcommand{\hat}{\widehat}
\def\indsym#1#2{%
  \setbox0=\hbox{$\m@th#1x$}%
  \kern\wd0%
  \hbox to 0pt{\hss$\m@th#1\mid$\hbox to 0pt{$\m@th#1^{#2}$}\hss}%
  \lower.9\ht0\hbox to 0pt{\hss$\m@th#1\smile$\hss}%
  \kern\wd0}
\newcommand{\ind}[1][]{\mathop{\mathpalette\indsym{#1}}}
\def\nindsym#1#2{%
  \setbox0=\hbox{$\m@th#1x$}%
  \kern\wd0%
  \hbox to 0pt{\hss$\m@th#1\not$\kern1.4\wd0\hss}
  \hbox to 0pt{\hss$\m@th#1\mid$\hbox to 0pt{$\m@th#1^{\,#2}$}\hss}%
  \lower.9\ht0\hbox to 0pt{\hss$\m@th#1\smile$\hss}%
  \kern\wd0}
\def\dotminussym#1#2{%
  \setbox0=\hbox{$\m@th#1-$}%
  \kern.5\wd0%
  \hbox to 0pt{\hss\hbox{$\m@th#1-$}\hss}%
  \raise.6\ht0\hbox to 0pt{\hss$\m@th#1.$\hss}%
  \kern.5\wd0}
\newcommand{\dotminus}{\mathbin{\mathpalette\dotminussym{}}}
\def\dotlesym#1#2{%
  \setbox0=\hbox{$\m@th#1-$}%
  \kern.5\wd0%
  \hbox to 0pt{\hss\hbox{$\m@th#1\le$}\hss}%
  \raise 1\ht0\hbox to 0pt{\hss$\m@th#1.$\hss}%
  \kern.5\wd0}
\newcommand{\dotle}{\mathbin{\mathpalette\dotlesym{}}}
\begin{document}

\title{Model Theory for Real-valued Structures}

\author{H. Jerome Keisler}

\address{University of Wisconsin-Madison, Department of Mathematics, Madison,  WI 53706-1388}
\email{keisler@math.wisc.edu}

\date{\today}

\begin{abstract}
We consider general structures where formulas have truth values in the real unit interval as in continuous model theory, but whose predicates
and functions need not be uniformly continuous with respect to a distance predicate.
Every general structure can be expanded to a
pre-metric structure by adding a distance predicate that is a uniform limit of formulas.  Moreover, that distance predicate is unique up to uniform equivalence.
We use this to extend the central notions in the model theory of metric structures to general structures, and show that many model-theoretic results from
the literature about metric structures have natural analogues for general structures.
 \end{abstract}

\maketitle

\setcounter{tocdepth}{2}

\tableofcontents

\section{Introduction}

We show that much of the model theory of metric structures carries over to general structures that
still have truth values in $[0,1]$.
A general ($[0,1]$-valued) structure is like a classical first-order structure, except that the predicate symbols have truth values in $[0,1]$.
 It has a vocabulary  consisting of predicate, function, and constant symbols.  General structures are called models of $[0,1]$-valued logic
 in [CK66], and are called $[0,1]$-valued structures in [AH] and [Ca].

Continuous model theory, as developed in [BBHU], [BU], and [Fa], has been
highly successful in the study  of metric structures, which are more elaborate than general $[0,1]$-valued structures.
A metric (or pre-metric) structure has, in addition to a vocabulary, a metric signature that specifies a
distance predicate symbol that is a metric (or pseudo-metric), and a modulus of uniform continuity for each predicate and function symbol.
The notions of formula, truth value of a formula, and sentence, are the same for general structures as for pre-metric structures.

We show that almost all of the model-theoretic properties of metric structures in [BBHU] extend in a unique way (called the absolute version)
to general structures, and have natural characterizations in terms of the general structure itself.  We also show that almost all of the
 results about metric structures in [BBHU] imply analogous results about general structures.  We get similar results for the infinitary
 continuous model theory in [Ea15], and the model theory for unbounded metric structures in [BY08].
We show, for instance, that the property of stability has an absolute version, and a general structure is stable if and only if it has a stable independence relation.
One can readily find (e.g. using Theorem \ref{t-interpret-upgrade} below) examples of stable general structures that
are quite different from the familiar examples of stable metric structures in [BBHU].

The formalism for metric structures in [BU] and [BBHU] is, to quote from [BU]. ``an immediate generalization of
classical first order logic, more natural and less technically involved than previous formalisms.''\footnote{Previous formalisms included metric open Hausdorff cats in [BY03a]
and Henson's Banach space logic in [He], [HI].}
The main advantage of their
formalism is that it is easy to describe a metric structure by specifying the universe, functions, constants, relations,
and moduli of uniform continuity.
Here we go a step further---the notion of a general structure is even less technically involved than the notion of a metric structure,
since it does not require one to specify a  metric signature giving a distance predicate and moduli of uniform continuity.

Formally, the key concepts that make things work in this paper are the notion of a pre-metric expansion of a theory (Definition \ref{d-metric-expansion}),
and the notion of an absolute version of a property of pre-metric structures (Definition \ref{d-absolute-version}).

Let $T$ be a  theory (or set of sentences) with vocabulary $V$.  A pre-metric expansion of $T$ is a metric theory $T_e$ that makes every general model $\cu M$ of $T$
into a pre-metric model $\cu M_e$ of $T_e$ in the following way.  $T_e$ has a metric signature over $V\cup\{D\}$ with distance $D$.
Moreover, there is a sequence $\langle d_m\rangle_{m\in\BN}$ of formulas of $V$, called an approximate distance for $T_e$, such that
for each general model $\cu M$ of $T$, $\cu M_e=(\cu M,D)$ with $D=\lim_{m\to\infty} d_m$.

By adapting arguments of Iovino [I94] and Ben Yaacov [BY05] to our setting, and using Theorem 4.23 of [BU], one can prove that every complete theory of
general structures with a countable vocabulary has a pre-metric
expansion.\footnote{In [BU], Theorem 4.23 was used to show that their formalism of continuous model theory is equivalent to the formalism of open Hausdorff cats in [BY03a].}
The Expansion Theorem \ref{t-metric-expansion-exists}  improves that by showing that every (not necessarily complete) theory $T$ with a countable vocabulary has a pre-metric
expansion in which each of the formulas $d_m(x,y)$ defines a pseudo-metric in $T$.  We will see in Section 4 that the Expansion Theorem has far-reaching consequences.
Proposition \ref{p-homeomorphic} shows that the pre-metric expansion of a theory is unique up to uniform equivalence (but far from unique).

We say that a property $\cu P$ of general structures with parameters is an absolute version of a property $\cu Q$
of pre-metric structures if whenever $\cu M$ is a general model of a theory $T$
and $T_e$ is a pre-metric expansion of $T$, $\cu M$ has property $\cu P$ if and only if  $\cu{M}_e$ has property $\cu{Q}$.
If  $\cu{Q}$ has an absolute version, its absolute version is unique.
 We regard the absolute version of a property of pre-metric structures as the ``right''
extension of that property to general structures.

General structures correspond to first order structures without equality in the same way that metric structures with a distance predicate
correspond to first order structures with equality.  In  first order model theory, each structure without equality can be expanded to a pre-structure with equality
in a unique way, so there is very little difference between structures without equality and structures with equality.

In $[0,1]$-valued model theory, every general structure can be expanded to a pre-metric structure by taking a pre-metric expansion.
In some cases, such as Banach spaces, there is a natural  distance predicate that can be regarded as the analogue of equality.
But in other cases, a general structure is easily described, but every pre-metric expansion is complicated.
In general, there may not be a natural distance, but since the pre-metric expansion is unique up to uniform equivalence, there will always
be a natural notion of uniform convergence.

In continuous model theory, one can either focus on general structures with a vocabulary,
or focus on metric structure with a metric signature.
When the goal is to apply the methods of continuous model theory to a family of structures with a natural metric, it is better to
focus on metric structures as in [BBHU].  But when the goal is to discover potential new applications of the results in
continuous model theory, it may be better to focus on general structures.

In Section 2 we lay the groundwork by developing some basic model-theory for general structures, and also give a brief review of pre-metric structures.
In Section 3 we introduce pre-metric expansions and prove their existence, and then introduce the notion of an absolute property.
In Section 4 we extend a variety of deeper properties and results from metric structures to general structures.

I thank Isaac Goldbring and Ward Henson, and Jos\'{e} Iovino for helpful comments related to this paper.

\section{Basic Model Theory for General Structures}

We assume that the reader is familiar with the model theory of metric structures as developed in the paper [BBHU], and we will freely use notation from that paper.
Our focus in this section will be on the definitions.
Only brief comments will be given on the proofs, which will be similar to the proofs of the corresponding
classical results that can be found, for example, in [CK12], as well as to proofs in [BBHU], and in the much earlier monograph [CK66] that
treated metric structures with the discrete metric.

\subsection{General Structures}

The space of truth values will be $[0,1]$, with $0$ representing truth.  A \emph{vocabulary} $V$ consists of a
 set of predicate symbols $P$ of finite arity, a set of function symbols $F$ of finite arity, and a set of constant symbols $c$.
A \emph{general} ($[0,1]$-valued) structure $\cu{M}$ consists of a vocabulary $V$, a non-empty universe set $M$, an element $c^{\cu{M}}\in M$ for each constant symbol $c$, a
mapping $P^\cu{M}\colon M^n\to[0,1]$ for each predicate symbol $P$ of arity $n$, and a mapping $F^\cu{M}\colon M^n\to M$ for each function symbol $F$ of arity $n$.
 A general structure determines a vocabulary, but does not determine a metric signature.

The \emph{formulas} are as in [BBHU], with the connectives\footnote{we consider the elements of $[0,1]$ to be $0$-ary connectives, so each $r\in[0,1]$ is a formula.}
 being all continuous functions from finite powers of $[0,1]$ into $[0,1]$, and the quantifiers $\sup_x,\inf_x$.
The  \emph{truth value} of a formula $\varphi(\vec{x})$ at a tuple $\vec{a}\in M^{|\vec x|}$ in a general structure $\cu{M}$ is an element of $[0,1]$ denoted by $\varphi^\cu{M}(\vec{a})$.  It is defined in the usual way by induction on the complexity of formulas.
The syntax and semantics of general structures described here are the same as in [AH], and are the same as the \emph{restricted continuous logic} in [Ca],
except that there the value $1$ denotes truth.

We deviate slightly from [BBHU] by defining a \emph{theory} to be a set of sentences (rather than a set of statements of the form $\varphi=0$).
Similarly, we define an \emph{$n$-type} over a set of parameters $A$ to be a set of formulas with free variables in $\vec x=\langle x_1,\ldots,x_n\rangle$ and
parameters from $A$.
A \emph{general model}\footnote{To avoid confusion, we will always write ``general model'' instead of just ``model'', because in the literature on continuous model theory,
``model'' is used to  mean what we call ``metric model''.} of a theory $T$ is a general structure $\cu M$ such that
$\varphi^{\cu M}=0$ for each $\varphi\in T$.
 An $n$-tuple $\vec b$ in a general structure $\cu{M}$ \emph{satisfies} a formula $\varphi(\vec x)$ in $\cu{M}$ if $\varphi^{\cu{M}}(\vec b)=0$,
and \emph{satisfies}, or \emph{realizes}, an $n$-type $p$  if $\vec b$ satisfies every formula $\varphi(\vec x)\in p$.

Hereafter, $\cu{M}, \cu{N}$ will denote general structures with universe sets $M,N$ and vocabulary $V$, and $S, T, U$ will denote
sets of sentences (that is, theories) with the vocabulary $V$.

The notions of substructure (denoted by $\subseteq$),  elementary equivalence (denoted by $\equiv$), elementary substructure and
extension (denoted by $\prec$ and $\succ$), and elementary chain
are as defined as in [BBHU], but applied to general structures
as well as metric structures.  $\cu M\models T$ means that $\cu M$ is a general model of $T$, $\cu M\models \varphi(\vec b)$
means that $\varphi^{\cu M}(\vec b)=0$, and $T\models U$ means that every general model of
$T$ is a general model of $U$.  $T$ and $U$ are \emph{equivalent} if they have the same general models.
The complete theory $\Th(\cu{M})$ of $\cu M$ is the set of all sentences true in $\cu{M}$.
Thus $\Th(\cu{M})=\Th(\cu{M}')$ if and only if $\cu{M}\equiv\cu{M}'$.
We state two elementary results.

\begin{fact}  \label{f-Skolem}  (Downward L\"{o}wenheim-Skolem)  For every general structure $\cu{M}$, there is a general structure $\cu{M}'\prec\cu{M}$ such that $|M'|\le\aleph_0+|V|$.
\end{fact}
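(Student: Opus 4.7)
The plan is to adapt the classical Skolem-hull construction to the $[0,1]$-valued setting, relying on a Tarski-Vaught style criterion for general structures. First I would record that criterion: a substructure $\cu{M}'\subseteq\cu{M}$ satisfies $\cu{M}'\prec\cu{M}$ iff for every formula $\varphi(x,\vec y)$ and every tuple $\vec b$ from $M'$,
\[
\inf_{a\in M'}\varphi^{\cu{M}}(a,\vec b)=\inf_{a\in M}\varphi^{\cu{M}}(a,\vec b),
\]
and symmetrically for $\sup$. The proof of this criterion is a routine induction on formula complexity, exactly as in [BBHU], and requires no uniform continuity because the truth values depend only on the parameters actually substituted.

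The second preliminary point concerns the cardinality of the formula set. Although the paper allows arbitrary continuous connectives (so the literal set of formulas is of size $2^{\aleph_0}$), a countable uniformly dense set of connectives (as in the restricted language of [BBHU, \S6]) suffices to approximate every formula uniformly on $[0,1]$. Thus, up to uniform approximation, there are at most $\aleph_0+|V|$ formulas over $V$, and it is enough to satisfy the Tarski-Vaught condition for this countable restricted family.

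Now I carry out the Skolem hull. Pick any $a_0\in M$ and let $M'_0$ be the closure of $\{a_0\}\cup\{c^{\cu{M}}:c\in V\}$ under all $F^{\cu{M}}$; this has cardinality $\le\aleph_0+|V|$. Given $M'_n$ of cardinality $\le\aleph_0+|V|$, for each restricted formula $\varphi(x,\vec y)$, each tuple $\vec b$ from $M'_n$, and each rational $\varepsilon>0$, choose witnesses $a^{\varphi,\vec b,\varepsilon}_{\inf},a^{\varphi,\vec b,\varepsilon}_{\sup}\in M$ with
\[
\varphi^{\cu{M}}(a^{\varphi,\vec b,\varepsilon}_{\inf},\vec b)<\inf_{a\in M}\varphi^{\cu{M}}(a,\vec b)+\varepsilon,\qquad \varphi^{\cu{M}}(a^{\varphi,\vec b,\varepsilon}_{\sup},\vec b)>\sup_{a\in M}\varphi^{\cu{M}}(a,\vec b)-\varepsilon.
\]
Take $M'_{n+1}$ to be the closure under all $F^{\cu{M}}$ of $M'_n$ together with all these witnesses. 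A counting argument keeps $|M'_{n+1}|\le\aleph_0+|V|$. Set $M'=\bigcup_n M'_n$; it is closed under functions and contains all constants, so it is the universe of a substructure $\cu{M}'\subseteq\cu{M}$ of cardinality $\le\aleph_0+|V|$.

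Finally I verify the Tarski-Vaught condition. Given a restricted $\varphi(x,\vec y)$ and $\vec b$ in some $M'_n$, the witnesses placed into $M'_{n+1}$ give, for each rational $\varepsilon>0$, an element of $M'$ witnessing $\inf_{a\in M'}\varphi^{\cu{M}}(a,\vec b)\le \inf_{a\in M}\varphi^{\cu{M}}(a,\vec b)+\varepsilon$; letting $\varepsilon\downarrow 0$ gives equality, and likewise for $\sup$. Uniform density of the restricted connectives then transfers this equality to arbitrary formulas. By the Tarski-Vaught criterion, $\cu{M}'\prec\cu{M}$. The main obstacle, if any, is the bookkeeping around the uncountable connective set; this is entirely standard once one replaces arbitrary formulas by a countable uniformly dense family, and is the only place where the proof differs nontrivially from the classical one.
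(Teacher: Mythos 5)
Your proof is correct and follows exactly the route the paper intends: the paper states this as a Fact without proof, deferring to the classical Tarski--Vaught/Skolem-hull argument in [CK12], [CK66], and [BBHU], which is what you carry out. The one point genuinely specific to this setting --- that the full formula class has size $2^{\aleph_0}$ and must be replaced by the countable uniformly dense family of restricted formulas before counting witnesses --- is the right thing to worry about, and you handle it correctly.
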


\begin{fact}  \label{f-elementary-chain} (Elementary Chain Theorem) The union of an elementary chain of general structures
$\langle \cu{M}_\alpha\colon\alpha<\beta\rangle$ is an elementary extension of each $\cu{M}_\alpha$.
\end{fact}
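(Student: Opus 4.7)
The plan is to adapt the classical elementary chain argument to the continuous setting by induction on formula complexity. First I would define the union $\cu M := \bigcup_{\alpha<\beta}\cu M_\alpha$ as a general structure: the underlying set is the union of the $M_\alpha$; each constant $c$ is interpreted by the common value $c^{\cu M_\alpha}$, which is well-defined because a chain is in particular a chain of substructures; and for each predicate $P$ or function $F$ of arity $n$ and tuple $\vec a$ in $M^n$, set $P^{\cu M}(\vec a):=P^{\cu M_\alpha}(\vec a)$ and $F^{\cu M}(\vec a):=F^{\cu M_\alpha}(\vec a)$ for any $\alpha$ with $\vec a\in M_\alpha^n$, independence of the choice of $\alpha$ again following from the substructure clauses.

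The heart of the proof is to show, by induction on the complexity of a formula $\varphi(\vec x)$, that for every $\alpha<\beta$ and every tuple $\vec a\in M_\alpha^{|\vec x|}$,
$$\varphi^{\cu M_\alpha}(\vec a)\;=\;\varphi^{\cu M}(\vec a).$$
The atomic case is immediate from the definition of $\cu M$. The connective step is formal: if $\varphi=u(\varphi_1,\ldots,\varphi_k)$, then $\varphi^{\cu M}(\vec a)=u(\varphi_1^{\cu M}(\vec a),\ldots,\varphi_k^{\cu M}(\vec a))$ equals $u(\varphi_1^{\cu M_\alpha}(\vec a),\ldots,\varphi_k^{\cu M_\alpha}(\vec a))=\varphi^{\cu M_\alpha}(\vec a)$ by the inductive hypothesis applied to each $\varphi_i$; continuity of $u$ plays no role here.

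The quantifier step is the one place where the chain must be elementary rather than merely a chain of substructures. Consider $\psi(\vec x)=\sup_y\varphi(y,\vec x)$ with $\vec a\in M_\alpha^{|\vec x|}$. For the inequality $\psi^{\cu M_\alpha}(\vec a)\le\psi^{\cu M}(\vec a)$, apply the inductive hypothesis to $\varphi(b,\vec a)$ for each $b\in M_\alpha$ and take sup. For the reverse direction, fix $b\in M$ and choose $\gamma\ge\alpha$ with $b\in M_\gamma$; then by induction $\varphi^{\cu M}(b,\vec a)=\varphi^{\cu M_\gamma}(b,\vec a)\le\psi^{\cu M_\gamma}(\vec a)=\psi^{\cu M_\alpha}(\vec a)$, where the last equality uses the hypothesis $\cu M_\alpha\prec\cu M_\gamma$. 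Taking sup over $b\in M$ yields $\psi^{\cu M}(\vec a)\le\psi^{\cu M_\alpha}(\vec a)$; the $\inf_y$ case is symmetric.

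The main---and quite mild---obstacle is this quantifier step: one juggles two different hypotheses, the pointwise agreement of $\varphi$-values coming from the induction on formulas, and the agreement of the quantified $\psi$-value across the chain coming from the elementary chain hypothesis. Everything else is formal, and no continuity of the connectives is needed; the argument uses only the order structure of $[0,1]$ and the fact that every finite tuple from $M$ already lies in some $M_\gamma$.
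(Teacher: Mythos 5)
Your proof is correct and is exactly the standard Tarski--Vaught elementary chain argument that the paper has in mind: the paper states this as a Fact without proof, remarking only that the proofs of these elementary results are "similar to the proofs of the corresponding classical results" in [CK12] and [CK66]. Your induction on formula complexity, with the quantifier step split into the easy inequality (sup over the smaller universe) and the reverse inequality obtained by pushing a witness $b\in M$ into some $\cu M_\gamma$ and invoking $\cu M_\alpha\prec\cu M_\gamma$, is precisely that classical argument transposed to $[0,1]$-valued truth, and all the details check out.
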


By an \emph{embedding} $h\colon\cu{M}\to\cu{N}$ we mean a function $h\colon M\to N$  such that $h(c^\cu{M})=c^\cu{N}$ for each constant symbol $c\in V$,
and for every $n$ and $\vec{a}\in M^n$, $h(F^\cu{M}(\vec{a}))=F^\cu{N}(h(\vec{a}))$ for every function symbol $F\in V$ of arity $n$, and
$P^\cu{M}(\vec{a})=P^\cu{N}(h(\vec{a}))$ for every predicate symbol $P\in V$ of arity $n$.  We say that $\cu{M}$ is \emph{embeddable in}
$\cu{N}$ if there is an embedding $h\colon\cu{M}\to\cu{N}$.  Note that the image of an embedding $h\colon\cu{M}\to\cu{N}$ is a substructure of $\cu{N}$.
An \emph{elementary embedding} $h\colon\cu{M}\prec\cu{N}$ is an embedding that preserves the truth value of every formula.
We say that $\cu{M}$ is \emph{elementarily embeddable} in $\cu{N}$ if there exists an $h\colon\cu{M}\prec\cu{N}$.

In [BBHU], the reduction of a pre-metric structure was defined by identifying elements that are at distance zero from each other.
Some care is needed to choose the right notion of reduction for general structures.
In first order logic without equality, the reduction of a structure is formed by identifying two elements that
cannot be distinguished by atomic formulas.  We do the analogous thing here for general structures.

\begin{df}  \label{d-reduced}
For $a,b\in M$, we write $a\doteq^{\cu{M}} b$ if for every atomic formula $\varphi(x,\vec{z})$ and tuple $\vec{c}\in M^{|\vec z|}$, $\varphi^{\cu{M}}(a,\vec{c})=\varphi^{\cu{M}}(b,\vec{c}).$
$\cu{M}$ is \emph{reduced} if whenever $a\doteq^\cu{M} b$ we have $a=b$.
\end{df}

The relation $\doteq^{\cu{M}}$ is a very old idea that goes back to Leibniz around 1840, and is called \emph{Leibniz equality}.

\begin{rmk}  \label{r-doteq}
For any general structure $\cu M$ and $a,b\in M$, $a\doteq^{\cu M} b$ if and only if in $\cu M$, $(a,b)$ satisfies the set of formulas
$$\{\sup_{\vec z}|\varphi(a,\vec z)-\varphi(b,\vec z)|\colon\varphi \mbox{ is atomic}\}.$$
\end{rmk}

The \emph{reduction map} for $\cu M$ is the mapping that sends each element of $M$ to its equivalence class under $\doteq^{\cu M}$.
The \emph{reduction} of the general structure $\cu{M}$ is the reduced structure $\cu{N}$ such that $N$ is the set of equivalence classes of elements of
$M$ under $\doteq^{\cu{M}}$, and the reduction map for $\cu{M}$ is an embedding of $\cu M$ onto $\cu N$.
We say that $\cu{M},\cu{M}'$ are \emph{isomorphic}, in symbols $\cu {M}\cong\cu{M}'$, if there is an embedding from the reduction of $\cu{M}$ onto the reduction of $\cu{M}'$.
We write $h\colon \cu M\cong\cu N$ if $h\colon \cu M\to\cu N$, and for each $b\in N$ there exists $a\in M$ such that $h(a)\doteq^{\cu N} b$.

\begin{rmk}  \label{r-reduction}
\noindent\begin{itemize}
\item $\cong$ is an equivalence relation on general structures.
\item Every general structure is isomorphic to its reduction.
\item $\cu M,\cu N$ are isomorphic if and only their reductions are isomorphic.
\item  If there is an embedding of $\cu{M}$ \emph{onto} $\cu{N}$, then $\cu{M}\cong\cu{N}$.
\item $\cu{M}\cong\cu{N}$ implies $\cu{M}\equiv\cu{N}$.
\item $\cu M\cong\cu N$ if and only there exists $h$ such that $h\colon \cu M\cong\cu N$.
\end{itemize}
\end{rmk}

If $V^0\subseteq V$, and $\cu{M}^0$ is obtained from $\cu{M}$ by forgetting every symbol of $V\setminus V^0$, we call $\cu{M}$ an
\emph{expansion} of $\cu{M}^0$ to $V$, and call $\cu{M}^0$  \emph{the $V^0$-part} of $\cu{M}$.

\begin{rmk}  \label{r-formula-expansion}
Suppose $V^0\subseteq V$, and $\cu{M}^0$ is the $V^0$-part of $\cu M$.  Then for every formula $\varphi(\vec x)$ in the vocabulary $V^0$,
and every tuple $\vec b\in M^{|\vec x|}$, we have $\varphi^{\cu M}(\vec b)=\varphi^{\cu{M}^0}(\vec b)$.
\end{rmk}

\begin{rmk}  \label{r-reduced-part}  If $\cu{M}^0$ is reduced and $\cu M$ is an expansion of $\cu M^0$, then $\cu M$ is reduced.
\end{rmk}

\begin{proof} For all $a,b\in M$, $a\doteq^{\cu{M}} b$ implies $a\doteq^{\cu{M}^0} b$.
\end{proof}

\subsection{Ultraproducts}

The ultraproduct of an indexed family of general structures will be defined below as the reduction of the pre-ultraproduct, which is a general structure whose universe is the cartesian product.
As we will see later, this will be a direct generalization of the ultraproduct of metric structures as defined in [BBHU].
Recall that for any ultrafilter $\cu{D}$ over a set $I$ and function $g\colon I\to[0,1]$, there is a unique value $r=\lim_{\cu{D}} g$ in $[0,1]$
such that for each neighborhood $Y$ of $r$, the set of $i\in I$ such that $g(i)\in Y$ belongs to $\cu{D}$.

\begin{df} \label{d-ultraproduct} Let $\cu{D}$ be an ultrafilter  over a set $I$ and $\cu{M}_i$ be a general structure for each $i\in I$.
The \emph{pre-ultraproduct} $\prod^{\cu{D}}\cu{M}_i$ is the  general structure $\cu{M}'=\prod^{\cu{D}}\cu{M}_i$  such that:
\begin{itemize}
\item $M'=\prod_{i\in I} M_i$, the cartesian product.
\item For each constant symbol $c\in V$, $c^{\cu{M}'}=\langle c^{\cu{M}_i}\rangle_{i\in I}$.
\item For each $n$-ary function symbol $G\in V$ and $n$-tuple $\vec{a}$ in $M'$,
$$G^{\cu{M}'}(\vec{a})=\langle G^{\cu{M}_i}(\vec{a}(i))\rangle_{i\in I}.$$
\item For each $n$-ary predicate symbol $P\in V$ and $n$-tuple $\vec{a}$ in $M'$,
$$P^{\cu{M}'}(\vec{a})=\lim_{\cu{D}}\langle P^{\cu{M}_i}(\vec{a}(i))\rangle_{i\in I}.$$
\end{itemize}

The \emph{ultraproduct} $\prod_{\cu{D}}\cu{M}_i$ is the reduction of the pre-ultraproduct $\prod^{\cu{D}}\cu{M}_i$.
For each $a\in M'$ we also let $a_{\cu{D}}$ denote the equivalence class of $a$ under $\doteq^{\cu{M}'}$.
\end{df}

The following fact is the analogue for general structures of the fundamental theorem of {\L}o{\'s}.

\begin{fact} \label{f-Los}  Let $\cu{M}_i$ be a general structure for each $i\in I$, let $\cu{D}$ be an ultrafilter over $I$, and let
$\cu{M}=\prod_{\cu{D}}\cu{M}_i$ be the ultraproduct.  Then for each formula $\varphi$ and tuple $\vec{b}$ in the cartesian product $\prod_{i\in I} M_i$,
$$\varphi^{\cu{M}}(\vec{b}_{\cu{D}})=\lim_{\cu{D}}\langle\varphi^{\cu{M}_i}(\vec{b}_i)\rangle_{i\in I}.$$
\end{fact}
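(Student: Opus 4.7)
My plan is to split the statement into two parts: first establish the Łoś-style equality for the pre-ultraproduct $\cu{M}'=\prod^{\cu{D}}\cu{M}_i$ (where the universe is the literal cartesian product, with no identifications), and then pass to the reduction $\cu{M}=\prod_{\cu{D}}\cu{M}_i$ by showing that the reduction map preserves the truth value of every formula.

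For the first part I would argue by induction on the complexity of formulas, proving that for every tuple $\vec{b}\in\prod_{i\in I} M_i$,
\[
\varphi^{\cu{M}'}(\vec{b})=\lim_{\cu{D}}\langle\varphi^{\cu{M}_i}(\vec{b}(i))\rangle_{i\in I}.
\]
A preliminary sub-induction on term complexity, using the definitions of constant and function symbols in $\cu{M}'$, gives $t^{\cu{M}'}(\vec{b})=\langle t^{\cu{M}_i}(\vec{b}(i))\rangle_{i\in I}$ for every term $t$. Composing with the definition of $P^{\cu{M}'}$ then handles atomic formulas directly. For the connective step, if $\varphi=u(\varphi_1,\ldots,\varphi_k)$ with $u$ a continuous function on $[0,1]^k$, the inductive hypothesis plus continuity of $u$ yields $\varphi^{\cu{M}'}(\vec{b})=u(\lim_{\cu{D}}\varphi_j^{\cu{M}_i}(\vec{b}(i)))_{j\le k}=\lim_{\cu{D}}\varphi^{\cu{M}_i}(\vec{b}(i))$, since ultralimits commute with continuous functions.

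The main obstacle is the quantifier step, say $\varphi=\sup_x\psi(x,\vec{y})$. The inequality $\varphi^{\cu{M}'}(\vec{b})\le\lim_{\cu{D}}\varphi^{\cu{M}_i}(\vec{b}(i))$ follows by applying the inductive hypothesis to each fixed $a\in M'$ and then taking the sup over $a$. For the reverse inequality, I would use the axiom of choice: given $n\in\BN$, for each $i\in I$ pick $a_{n,i}\in M_i$ with $\psi^{\cu{M}_i}(a_{n,i},\vec{b}(i))\ge\varphi^{\cu{M}_i}(\vec{b}(i))-1/n$, and set $a_n=\langle a_{n,i}\rangle_{i\in I}\in M'$. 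By the inductive hypothesis $\psi^{\cu{M}'}(a_n,\vec{b})=\lim_{\cu{D}}\psi^{\cu{M}_i}(a_{n,i},\vec{b}(i))\ge\lim_{\cu{D}}\varphi^{\cu{M}_i}(\vec{b}(i))-1/n$, so taking the sup over $a\in M'$ and letting $n\to\infty$ gives the desired bound. The case $\inf_x$ is symmetric.

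For the second part, let $\pi\colon\cu{M}'\to\cu{M}$ be the reduction map, so $\pi(\vec{b})=\vec{b}_{\cu{D}}$. By definition of the reduction, $\pi$ is an embedding, so it preserves atomic formulas and terms; connectives are handled trivially. For quantifiers, surjectivity of $\pi$ onto $M$ together with the inductive hypothesis gives
\[
(\sup_x\psi)^{\cu{M}}(\pi(\vec{b}))=\sup_{a'\in M}\psi^{\cu{M}}(a',\pi(\vec{b}))=\sup_{a\in M'}\psi^{\cu{M}'}(a,\vec{b})=(\sup_x\psi)^{\cu{M}'}(\vec{b}),
\]
and similarly for $\inf_x$. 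Combining the two parts yields $\varphi^{\cu{M}}(\vec{b}_{\cu{D}})=\varphi^{\cu{M}'}(\vec{b})=\lim_{\cu{D}}\varphi^{\cu{M}_i}(\vec{b}(i))$, as required.
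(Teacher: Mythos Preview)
Your argument is correct and is precisely the ``from scratch by induction on the complexity of formulas'' route that the paper explicitly names as one of two ways to prove the fact; your two-step decomposition (first the pre-ultraproduct, then the reduction via the surjective embedding) is the natural way to organize that induction. The paper also mentions an alternative shortcut---adjoin a discrete-metric symbol $=_i$ to each $\cu M_i$, invoke the already-proved [CK66] version of \L o\'s for those expanded structures, and then observe that $\prod_{\cu D}\cu M_i$ is the reduction of the $V$-part of $\prod_{\cu D}(\cu M_i,=_i)$---but it does not develop either route in detail, and yours is a faithful execution of the second one.
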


The special case of Fact \ref{f-Los} where each $\cu M_i$ has a symbol $=_i$ for the discrete metric was already stated and proved in [CK66].
The general case can be obtained from that special case by observing that if each $\cu M_i$ has vocabulary $V$ and $\cu N_i=(\cu M_i,=_i)$,
then $\prod_{\cu D}\cu M_i$ is the reduction of the $V$-part of $\prod_{\cu D}\cu N_i$.
Alternatively, Fact \ref{f-Los} can be proved from scratch by induction on the complexity of formulas, as is done in first order logic.

If $\cu{M}_i=\cu{M}$ for all $i\in I$, the ultraproduct $\prod_{\cu{D}}\cu{M}_i$ is called the \emph{ultrapower} of $\cu{M}$ modulo $\cu{D}$,
and is denoted by $\cu{M}^I/{\cu{D}}$.

\begin{cor}  \label{c-ultrapower}  For each $\cu{M}$ and ultrafilter $\cu{D}$, $\cu{M}$ is elementarily embeddable into $\cu{M}^I/{\cu{D}}$.
\end{cor}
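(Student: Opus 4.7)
The plan is to use the \emph{diagonal embedding} and derive everything from {\L}o{\'s}'s theorem (Fact \ref{f-Los}). Let $\cu M' = \prod^{\cu D}\cu M$ denote the pre-ultrapower, so $\cu M^I/\cu D$ is its reduction, and for $a \in M$ write $a^*$ for the constant sequence $\langle a\rangle_{i\in I} \in M'$. Define $h\colon M \to M^I/\cu D$ by $h(a) = (a^*)_{\cu D}$, i.e.\ $h$ sends $a$ to the $\doteq^{\cu M'}$-equivalence class of the constant sequence.

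First I would verify that $h$ is an embedding in the sense of the definition following Fact \ref{f-elementary-chain}. For each constant symbol $c$, the interpretation $c^{\cu M'}$ is by definition the sequence $\langle c^{\cu M}\rangle_i = (c^{\cu M})^*$, whose class is $h(c^{\cu M})$. For each $n$-ary function symbol $F$ and tuple $\vec a \in M^n$, the pre-ultrapower interprets $F$ coordinatewise, so $F^{\cu M'}(\vec a^{\,*}) = \langle F^{\cu M}(\vec a)\rangle_i = (F^{\cu M}(\vec a))^*$, and passing to $\doteq$-classes yields $F^{\cu M^I/\cu D}(h(\vec a)) = h(F^{\cu M}(\vec a))$. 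For each $n$-ary predicate symbol $P$, Definition \ref{d-ultraproduct} gives
\[
P^{\cu M^I/\cu D}(h(\vec a)) = P^{\cu M'}(\vec a^{\,*}) = \lim_{\cu D}\langle P^{\cu M}(\vec a)\rangle_i = P^{\cu M}(\vec a),
\]
so atomic predicate values are preserved. (The same argument, applied to atomic formulas, shows that $a\doteq^{\cu M} b$ implies $a^* \doteq^{\cu M'} b^*$, which is what makes $h$ well-defined even before one worries about whether $\cu M$ itself is reduced.)

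Next I would upgrade embedding to elementary embedding by invoking Fact \ref{f-Los} in full generality. For any formula $\varphi(\vec x)$ and tuple $\vec a \in M^{|\vec x|}$, the tuple $\vec a^{\,*}$ in $\prod_{i\in I} M_i$ represents $h(\vec a)$ in the ultrapower, so
\[
\varphi^{\cu M^I/\cu D}(h(\vec a)) = \lim_{\cu D}\langle \varphi^{\cu M}(\vec a)\rangle_{i\in I} = \varphi^{\cu M}(\vec a),
\]
because the sequence inside the limit is constant. Thus $h$ preserves the truth value of every formula, which is exactly the definition of an elementary embedding $h\colon \cu M \prec \cu M^I/\cu D$.

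There is no real obstacle here; the only subtlety worth flagging is that the ultrapower is defined as the \emph{reduction} of the pre-ultrapower, so $h$ must be described as the composition of the diagonal map $a \mapsto a^*$ with the reduction map, and the well-definedness of $h$ uses Remark \ref{r-doteq} to see that constant sequences representing $\doteq^{\cu M}$-related elements are themselves $\doteq^{\cu M'}$-related. Once that is noted, the corollary is an immediate specialization of {\L}o{\'s}'s theorem to constant sequences.
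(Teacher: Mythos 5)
Your proof is correct and is exactly the argument the paper intends: the corollary is stated without proof as an immediate consequence of Fact \ref{f-Los}, applied to the diagonal map composed with the reduction map, just as you describe. (One tiny quibble: since $h$ is defined on elements of $M$ rather than on $\doteq^{\cu M}$-classes, there is no well-definedness issue to check — though your observation that $\doteq^{\cu M}$-related elements get identified in the ultrapower is harmless and consistent with the paper's non-injective notion of embedding.)
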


\begin{cor}  \label{c-ultraproduct-isomorphism}  Suppose  $\cu M_i\cong\cu N_i$ for each $i\in I$, and $\cu D$ is an ultrafilter over $I$.
Then $\prod_{\cu D} \cu M_i\cong\prod_{\cu D} \cu N_i$.

\end{cor}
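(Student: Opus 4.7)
The plan is to build the desired isomorphism coordinatewise on the pre-ultraproducts, and then pass to the reductions. By the last bullet of Remark \ref{r-reduction}, the hypothesis $\cu M_i\cong\cu N_i$ gives, for each $i\in I$, an embedding $h_i\colon\cu M_i\to\cu N_i$ such that for every $b\in N_i$ there exists $a\in M_i$ with $h_i(a)\doteq^{\cu N_i} b$. Let $\cu M'=\prod^{\cu D}\cu M_i$ and $\cu N'=\prod^{\cu D}\cu N_i$ denote the pre-ultraproducts, and define $h\colon M'\to N'$ by $h(\langle a_i\rangle_{i\in I})=\langle h_i(a_i)\rangle_{i\in I}$.

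First I would verify that $h$ is an embedding of $\cu M'$ into $\cu N'$. Preservation of constants is immediate from $h_i(c^{\cu M_i})=c^{\cu N_i}$. Preservation of function symbols is coordinatewise and uses only the fact that each $h_i$ preserves functions. Preservation of predicate symbols is where Fact \ref{f-Los} enters: for an $n$-ary $P$ and $\vec a\in (M')^n$, one has $P^{\cu N'}(h(\vec a))=\lim_{\cu D}P^{\cu N_i}(h_i(\vec a(i)))=\lim_{\cu D}P^{\cu M_i}(\vec a(i))=P^{\cu M'}(\vec a)$, using that each $h_i$ preserves predicate values.

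Next I would check the surjectivity-up-to-$\doteq$ condition in Remark \ref{r-reduction}. Given $b=\langle b_i\rangle\in N'$, pick for each $i$ some $a_i\in M_i$ with $h_i(a_i)\doteq^{\cu N_i} b_i$, and set $a=\langle a_i\rangle$. To show $h(a)\doteq^{\cu N'} b$, let $\varphi(x,\vec z)$ be atomic and $\vec c\in (N')^{|\vec z|}$. Applying Fact \ref{f-Los} to $\varphi$ on both sides,
\[
\varphi^{\cu N'}(h(a),\vec c)=\lim_{\cu D}\varphi^{\cu N_i}(h_i(a_i),\vec c(i))=\lim_{\cu D}\varphi^{\cu N_i}(b_i,\vec c(i))=\varphi^{\cu N'}(b,\vec c),
\]
where the middle equality holds coordinatewise since $h_i(a_i)\doteq^{\cu N_i} b_i$. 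Thus $h\colon\cu M'\cong\cu N'$ in the precise sense of Remark \ref{r-reduction}, and so $\cu M'\cong\cu N'$. Finally, since the ultraproduct $\prod_{\cu D}\cu M_i$ is the reduction of $\cu M'$ (and similarly on the $\cu N$ side) and isomorphism is preserved under taking reductions by Remark \ref{r-reduction}, we conclude $\prod_{\cu D}\cu M_i\cong\prod_{\cu D}\cu N_i$.

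I do not expect any real obstacle here: the only subtle point is recognizing that $\doteq$ on a pre-ultraproduct can be checked via limits along $\cu D$ of atomic formulas, which is exactly what Fact \ref{f-Los} provides. Everything else is bookkeeping with the definitions of pre-ultraproduct and isomorphism.
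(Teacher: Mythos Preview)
Your proof is correct and follows essentially the same route as the paper: build $h$ coordinatewise on the pre-ultraproducts, verify via {\L}o{\'s} that $h$ is an embedding and is surjective up to $\doteq$, and then pass to reductions using Remark~\ref{r-reduction}. The only cosmetic difference is that the paper checks preservation of all formulas at once via Fact~\ref{f-Los}, whereas you verify the embedding conditions (constants, functions, predicates) directly; both are fine.
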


\begin{proof}  By Remark \ref{r-reduction}, for each $i\in I$ there is a map $h_i\colon \cu M_i\cong\cu N_i$.  Let $h\colon \prod_{i\in I}M_i\to\prod_{i\in I} N_i$
be the mapping such that for $a\in\prod_{i\in I} M_i$,
$h(a)=\langle h_i(a_i)\rangle_{i\in I}$.  For each continuous formula $\varphi(\vec v)$ and tuple $\vec a\in (\prod_{i\in I}M_i)^{|\vec v|}$, it follows from Fact
 \ref{f-Los} that
$$\varphi^{\prod^{\cu D}\cu M_i} (\vec a) = \lim_{\cu D}(\varphi^{\cu M_i}(\vec a_i))=\lim_{\cu D}(\varphi^{\cu N_i}(h_i(\vec a_i)))=
\varphi^{\prod^{\cu D}\cu N_i} (h(\vec a)).$$
Therefore $h\colon \prod^{\cu D}\cu M_i\to \prod^{\cu D}\cu N_i$.

By Fact \ref{f-Los}, if $b, c\in\prod_{i\in I} N_i$ and $b_i\doteq^{\cu N_i} c_i$
for each $i\in I$, then $b\doteq^{\prod^{\cu D}\cu N_i}c$.  It follows that $h\colon \prod^{\cu D}\cu M_i\cong \prod^{\cu D}\cu N_i$,
so by Remark \ref{r-reduction}, $\prod_{\cu D} \cu M_i\cong\prod_{\cu D} \cu N_i$.
\end{proof}

The following is proved in the usual way, using Fact \ref{f-Los}.

\begin{fact}  \label{f-compactness}  (Compactness)  If every finite subset of  $T$ has a general model, then $T$ has a general model.
\end{fact}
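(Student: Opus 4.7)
The plan is to mimic the classical ultraproduct proof of compactness, using Fact \ref{f-Los} in place of the usual {\L}o\'{s} theorem. The only thing that needs checking is that the notion of a general model behaves well enough under this construction.

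First I would set $I$ equal to the collection of all finite subsets of $T$, and for each $s \in I$ use the hypothesis to pick a general structure $\cu{M}_s$ that is a general model of $s$ (so $\varphi^{\cu{M}_s}=0$ for every $\varphi\in s$). Next, for each sentence $\varphi\in T$, define $\hat{\varphi}=\{s\in I \colon \varphi\in s\}$. Observe that for $\varphi_1,\ldots,\varphi_n\in T$ the set $\{\varphi_1,\ldots,\varphi_n\}\in I$ lies in $\hat{\varphi_1}\cap\cdots\cap\hat{\varphi_n}$, so the family $\{\hat{\varphi}\colon\varphi\in T\}$ has the finite intersection property. Extend it to an ultrafilter $\cu{D}$ on $I$.

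Now form the ultraproduct $\cu{M}=\prod_{\cu{D}}\cu{M}_s$ as in Definition \ref{d-ultraproduct}. Given any $\varphi\in T$, since $\varphi$ is a sentence Fact \ref{f-Los} reduces to $\varphi^{\cu{M}}=\lim_{\cu{D}}\varphi^{\cu{M}_s}$. For every $s\in\hat{\varphi}$ we have $\varphi\in s$ and hence $\varphi^{\cu{M}_s}=0$; since $\hat{\varphi}\in\cu{D}$, the limit along $\cu{D}$ must be $0$. Therefore $\varphi^{\cu{M}}=0$ for every $\varphi\in T$, so $\cu{M}$ is a general model of $T$.

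There is no real obstacle here; the only subtle point is verifying that Fact \ref{f-Los} applies to sentences of a general structure just as it does to first-order sentences, which is immediate since sentences have no free variables and $\cu{M}$ is the reduction of the pre-ultraproduct (the value of a sentence is well defined on either). If one prefers, the same argument can be run with the pre-ultraproduct $\prod^{\cu{D}}\cu{M}_s$ and then passed to the reduction using Remark \ref{r-formula-expansion}-style invariance of truth values under the reduction map.
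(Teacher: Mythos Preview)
Your proof is correct and is exactly the ``usual way'' the paper alludes to: the paper does not spell out a proof but simply remarks that compactness follows from Fact~\ref{f-Los} in the standard manner, which is precisely the ultraproduct argument you have written out.
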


\subsection{Definability and Types}

We  write
\begin{itemize}
\item $r\dotplus s$ for $\min(r+s,1)$,
\item $r\dotle s$ for  $\max(r-s,0)$,
\item $r\dotle s\dotle t$ for $\max(r\dotle s,s\dotle t).$
\end{itemize}
Thus $r\le s$ if and only if $r\dotle s = 0$.
In the literature, $r\dotle s$ is sometimes written $r\dotminus s$.
Note that for any general structure $\cu M$, the following are equivalent:
\begin{itemize}
\item $\cu M\models \sup_{\vec x} [\varphi(\vec x)\dotle\psi(\vec x)\dotle\theta(\vec x)]$.
\item $(\forall \vec a\in M^{|\vec x|})[\varphi^{\cu M}(\vec a)\le\psi^{\cu M}(\vec a) \mbox{ and } \psi^{\cu M}(\vec a)\le\theta^{\cu M}(\vec a)]$.
\end{itemize}

In what follows, all formulas mentioned are understood to be in the vocabulary of a theory $T$.
Let $\vec x$ be a tuple of variables, and $\vec y$ be a finite or infinite sequence of variables,
where all the symbols $x_i$ and $y_j$ are distinct.
Given a formula $\theta(\vec x,\vec y)$, we let $\sup_{\vec y}\theta(\vec x,\vec y)$ denote the formula
$\sup_{\vec u}\theta(\vec x,\vec y)$ where $\vec u$ is the (necessarily finite) tuple of variables from $\vec y$ that occur freely in $\theta$.

We say that a sequence $\langle\varphi_m(\vec x,\vec y)\rangle_{m\in\BN}$ of formulas is \emph{Cauchy} in $T$
if for each $\varepsilon >0$ there exists $m$ such that for all $k\ge m$,
$$ T\models \sup_{\vec x}\sup_{\vec y}|\varphi_m(\vec x,\vec y)-\varphi_k(\vec x,\vec y)|\dotle\varepsilon.$$
\emph{Cauchy in} $\cu M$ means Cauchy in the complete theory $\Th(\cu M)$.

If $\langle\varphi_m(\vec x,\vec y)\rangle_{m\in\BN}$ is Cauchy in $T$, then for each general model $\cu{M}$ of $T$ there is a
unique mapping from $M^{|\vec x|}\times M^{|\vec y|}$ into $[0,1]$, denoted by $[\lim \varphi_m]^{\cu{M}}$, such that
$$(\forall \vec b\in M^{|\vec x|}) (\forall \vec c\in M^{|\vec y|})[\lim \varphi_m]^\cu{M}(\vec b,\vec c)=\lim_{m\to\infty}\varphi_m^{\cu{M}}(\vec b,\vec c).$$

We say that $\langle\varphi_m(\vec x,\vec y)\rangle_{m\in\BN}$ is \emph{exponentially Cauchy} in $T$ if whenever $m\le k$ we have
$$ T\models \sup_{\vec x}\sup_{\vec y}|\varphi_m(\vec x,\vec y)-\varphi_k(\vec x,\vec y)|\dotle 2^{-m}.$$
Note that every exponentially Cauchy sequence of formulas in $T$ is  Cauchy, and every Cauchy sequence in $T$ has an exponentially Cauchy subsequence.

\begin{df}   \label{d-definable}
We say that a mapping $\sa P\colon M^{|\vec x|}\times M^{|\vec y|}\to[0,1]$ is \emph{defined} by $\langle\varphi_m(\vec x,\vec y)\rangle_{m\in\BN}$
in a general structure $\cu M$,
and is \emph{definable in} $\cu M$, if $\langle\varphi_m(\vec x,\vec y)\rangle_{m\in\BN}$ is Cauchy in $\cu M$
and $\sa P=[\lim \varphi_m]^{\cu M}$.
\end{df}

Note that  for each general structure $\cu M$,
$$\delta_{\cu M}(\varphi,\psi):=\sup_{\vec x}\sup_{\vec y}|\varphi(\vec x,\vec y)-\psi(\vec x,\vec y)|^{\cu M}$$
is a pseudo-metric on the set of all formulas with free variables from $(\vec x,\vec y)$, and the above definition says that
$\langle\varphi_m(\vec x,\vec y)\rangle_{m\in\BN}$ is Cauchy in $T$
if and only if it is Cauchy with respect to $\delta_{\cu M}$ uniformly for all $\cu M\models T$.

We often consider the case where $\vec y$ is empty, or equivalently, where only finitely many of the variables in $\vec y$
actually occur in some $\varphi_m(\vec x,\vec y)$.  In that case, we have the notion of a Cauchy sequence of formulas
$\langle\varphi_m(\vec x)\rangle_{m\in \BN}$ and a
definable mapping $\sa P\colon M^{|\vec x|}\to[0,1]$
in $\cu M$.

We now introduce complete $n$-types.
For each theory $T$ and $n\in\BN$, a \emph{complete $n$-type over} $T$ is an $n$-type $p(\vec x)$  that is maximal with respect to being satisfiable in a general model of $T$.
$S_n(T)$ denotes the set of all complete $n$-types over $T$.  In particular, $S_0(T)$ is the set of all complete extensions of $T$.  For each
$p\in S_n(T)$ and formula $\varphi(\vec x)$ with $n$ free variables, we let $\varphi(\vec x)^p$ be the unique $r$ such that $|\varphi(\vec x)-r|\in p$.
The \emph{logic topology} on $S_n(T)$ is the topology whose closed sets are the sets of the form $\{p\in S_n(T)\colon \Gamma(\vec x)\subseteq p\}$
for some $n$-type $\Gamma(\vec x)$.  It follows from the Compactness Theorem that:

\begin{fact} \label{f-logic-topology-compact}
For each theory $T$ and $n\in\BN$,  the logic topology on $S_n(T)$ is compact.
\end{fact}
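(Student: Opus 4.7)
The plan is to prove compactness by showing that every family of closed sets in $S_n(T)$ with the finite intersection property has nonempty intersection. So fix such a family $\{C_i : i \in I\}$; by the definition of the logic topology, each $C_i$ has the form $C_i = \{p \in S_n(T) : \Gamma_i(\vec x) \subseteq p\}$ for some $n$-type $\Gamma_i(\vec x)$. Setting $\Gamma(\vec x) = \bigcup_{i \in I} \Gamma_i(\vec x)$, it suffices to produce a single $p \in S_n(T)$ with $\Gamma \subseteq p$, for then $p$ lies in every $C_i$.

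The first step is to observe that every finite subset of $\Gamma$ is satisfiable in some general model of $T$. Given any finite $J \subseteq I$, the finite intersection property supplies some $q \in \bigcap_{i \in J} C_i$; since $q$ is, by definition of $S_n(T)$, a complete $n$-type over $T$, it is realized by some tuple in some general model of $T$, and that tuple automatically realizes $\bigcup_{i \in J}\Gamma_i \subseteq q$.

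The second step is to invoke the Compactness Theorem. Introduce a tuple of fresh constant symbols $\vec c = (c_1, \dots, c_n)$ and consider the theory
\[
T^* \;=\; T \cup \{\varphi(\vec c) : \varphi(\vec x) \in \Gamma\}
\]
in the expanded vocabulary $V \cup \{c_1,\ldots,c_n\}$. By the first step every finite subset of $T^*$ has a general model, so Fact \ref{f-compactness} gives a general model $\cu N$ of $T^*$ in which $\vec c^{\cu N}$ realizes $\Gamma$. Taking $p$ to be the set of all $V$-formulas $\psi(\vec x)$ such that $\psi^{\cu N}(\vec c^{\cu N}) = 0$ yields a complete $n$-type over $T$ containing $\Gamma$, hence an element of $\bigcap_{i \in I} C_i$.

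The proof amounts to little more than a translation between the syntactic notion of closed set in $S_n(T)$ and the semantic notion of satisfiability in a general model, so no step presents a genuine obstacle. The only point that deserves care is verifying that the $p$ just constructed is maximal with respect to being satisfiable in a general model of $T$; this is immediate because for any $V$-formula $\psi(\vec x)$ with $\psi^{\cu N}(\vec c^{\cu N}) = r$, the formula $|\psi(\vec x) - r|$ already lies in $p$, so any proper extension of $p$ would force $\psi$ to take two distinct truth values on a single realizing tuple and hence be unsatisfiable.
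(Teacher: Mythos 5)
Your argument is correct and is exactly the derivation the paper has in mind: the paper gives no written proof, merely noting that the fact "follows from the Compactness Theorem" (Fact \ref{f-compactness}), and your reduction of the finite intersection property to satisfiability of $T\cup\{\varphi(\vec c):\varphi\in\Gamma\}$ via fresh constants, followed by taking the complete type of the realizing tuple, is the standard way to carry that out. The maximality check at the end, using that $|\psi(\vec x)-r|$ is itself a formula (a continuous connective applied to $\psi$), is the right point to verify and you handle it correctly.
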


Given a general structure $\cu M$ and a set $A\subseteq M$, let $\cu M_A=(\cu M,a)_{a\in A}$.  The  \emph{complete type} of an
$n$-tuple $\vec{b}$ over $A$ in $\cu{M}$
is the set $\tp_{\cu M}(\vec{b}/A)$ of all formulas satisfied by $\vec{b}$ in $\cu M_A$.
The set $S_n(\Th(\cu M_A))$ of all complete $n$-types over $A$ realized in models of $\Th(\cu M_A)$
is denoted by $S_n^{\cu M}(A)$.
Thus the logic topology on $S_n^{\cu M}(A)$ is compact.

We say that a mapping $\sa P(\vec x,\vec y)$ is \emph{definable over} $A$ in $\cu M$ if $\sa P$ is definable in $\cu M_A$.

The following lemma gives a relationship between $\sa P(\vec x)$ being definable over a countable sequence of parameters,
and $\sa P(\vec x,\vec y)$ being definable without parameters.

\begin{lemma}  \label{l-definable-iff}
Let $B\subseteq M$.  A mapping $\sa P(\vec x)$ is definable over $B$ in $\cu M$ if and only if there is a  countable sequence $\vec b$ of elements of $B$
and an exponentially Cauchy sequence  $\langle\varphi(\vec x,\vec y)\rangle$ of formulas in $\cu M$ such that
 for all $\vec a\in M^{|\vec x|}$ we have $\sa P(\vec a)=\sa Q(\vec a,\vec b)$ where $\sa Q$ is the mapping defined by $\langle\varphi(\vec x,\vec y)\rangle$ in $\cu M$.
\end{lemma}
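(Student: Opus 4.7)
The plan is to prove the two directions separately, with the backward direction being routine and the forward direction requiring a detour through the type space.

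The backward direction is straightforward: given $\vec b$ and $\langle\varphi_m(\vec x,\vec y)\rangle$ as described, the exponentially-Cauchy bound $\sup_{\vec x}\sup_{\vec y}|\varphi_m-\varphi_k|^{\cu M}\dotle 2^{-m}$ specializes, on plugging $\vec b$ in for $\vec y$, to $\sup_{\vec x}|\varphi_m(\vec x,\vec b)-\varphi_k(\vec x,\vec b)|^{\cu M_B}\dotle 2^{-m}$; viewing each $\varphi_m(\vec x,\vec b)$ as a formula of $\cu M_B$ in $\vec x$, this sequence is exponentially Cauchy in $\cu M_B$ with limit $\sa Q(\vec x,\vec b)=\sa P(\vec x)$, so $\sa P$ is definable in $\cu M_B$, i.e.\ definable over $B$ in $\cu M$.

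For the forward direction the plan is to pass to the type space and apply the Tietze extension theorem. Starting from an exponentially-Cauchy sequence $\chi_m(\vec x,\vec b_m)$ of $\cu M_B$-formulas defining $\sa P$, gather the parameters into one countable sequence $\vec b=(b_0,b_1,\ldots)$ from $B$ with matching variables $\vec y=(y_0,y_1,\ldots)$. Write $T=\Th(\cu M)$, and let $S_{n+\omega}(T)$ denote the space of complete types in the variable-tuple $(\vec x,\vec y)$; this is compact Hausdorff in the logic topology by the same argument as Fact \ref{f-logic-topology-compact}, using Fact \ref{f-compactness}. The subset $K=\{p\in S_{n+\omega}(T)\colon p\restriction\vec y=\tp_{\cu M}(\vec b)\}$ is closed, and is naturally homeomorphic to $S_n^{\cu M}(\{b_i\}_{i<\omega})$ via the translation $\chi(\vec x,\vec b_m)\leftrightarrow\chi(\vec x,\vec y_m)$. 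Under this identification, $\sa P$ corresponds to a continuous function $\hat{\sa P}\colon K\to[0,1]$.

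Invoke the duality between definable predicates in $T$ and continuous $[0,1]$-valued functions on the type space---proved for general structures by a Stone--Weierstrass argument exactly as in [BBHU], since the type-functions of $V$-formulas separate the points of $S_{n+\omega}(T)$, contain the constants, and are closed under all continuous connectives, hence are uniformly dense in $C(S_{n+\omega}(T),[0,1])$. Apply the Tietze extension theorem to extend $\hat{\sa P}$ to a continuous $\hat{\sa Q}\colon S_{n+\omega}(T)\to[0,1]$; the reverse direction of the duality then delivers a Cauchy sequence of $V$-formulas $\langle\varphi_m(\vec x,\vec y)\rangle$ defining a predicate $\sa Q$ in $\cu M$ with type-function $\hat{\sa Q}$, and passing to an exponentially Cauchy subsequence yields the required sequence. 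By construction $\sa Q(\vec a,\vec b)=\hat{\sa Q}(\tp_{\cu M}(\vec a,\vec b))=\hat{\sa P}(\tp_{\cu M_{\vec b}}(\vec a))=\sa P(\vec a)$, as desired. The main obstacle is the duality statement itself, which is not yet in place at this point in the paper; its proof, however, is unchanged from the metric setting and uses only that the logic topology is compact Hausdorff and that the connectives include every continuous $[0,1]^k\to[0,1]$ map.
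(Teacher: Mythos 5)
Your proof is correct, but it takes a genuinely different route from the paper's. The paper's forward direction is a short syntactic argument: after collecting the parameters into one countable tuple $\vec b$ and thinning the defining sequence $\langle\theta_m(\vec x,\vec b)\rangle$ so that successive terms differ by at most $2^{-(m+1)}$ \emph{on the parameters} $\vec b$, it applies the ``forced convergence'' trick of [BU], setting $\varphi_0=\theta_0$ and $\varphi_{m+1}=\max(\varphi_m-2^{-(m+1)},\min(\varphi_m+2^{-(m+1)},\theta_{m+1}))$. The clamping makes the new sequence exponentially Cauchy in \emph{all} the variables $(\vec x,\vec y)$ uniformly over all models, while the values at $\vec y=\vec b$ are untouched because there $\theta_{m+1}$ already lies within the clamping window. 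This is entirely elementary and uses nothing beyond the definitions. Your argument instead goes through the compact Hausdorff type space $S_{n+\omega}(T)$, identifies the fiber $K$ over $\tp_{\cu M}(\vec b)$ with $S_n^{\cu M}(\{b_i\})$, extends the continuous type-function of $\sa P$ by Tietze, and recovers formulas by the Stone--Weierstrass duality between definable predicates and continuous functions on the type space. As you note, that duality (the content of the (ii)$\Rightarrow$(i) direction of Proposition \ref{p-definable-continuous}, here needed for a type space in infinitely many variables) is not yet available at this point in the paper, so your route requires front-loading that machinery; once it is in place, though, your argument is sound --- the fiber $K$ is closed, the identification with $S_n^{\cu M}(\{b_i\})$ is a homeomorphism, and uniform convergence over the type space coincides with the paper's notion of Cauchy in $T$ since every complete type is realized in some model. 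The trade-off is conceptual transparency and reusability (your extension argument shows more generally that any predicate definable over parameters extends to a parameter-free definable predicate on the ambient type space) versus the paper's brevity and self-containment; the forced-convergence trick also hands you the exponential rate directly rather than via a final passage to a subsequence.
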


\begin{proof}
 It is clear that if there is such a sequence of formulas, then $\sa P(\vec x)$ is definable over  $B$ in $\cu M$.
Suppose $\sa P(\vec x)$ is definable over  $B$ in $\cu M$, by a sequence $\langle\theta_m(\vec x)\rangle_{m\in\BN}$
with parameters in $B$.  By taking a subsequence if necessary, we may insure that for some sequence $\vec b$
of elements of $B$, each $\theta_m$ has parameters from $\vec b$, and for each $m$ we have
\begin{equation}  \label{e-theta}
\cu M\models\sup_{\vec x}|\theta_m(\vec x,\vec b)-\theta_{m+1}(\vec x,\vec b)|\le 2^{-(m+1)}.
\end{equation}
We now use the forced convergence trick in [BU] to find a new sequence of formulas
$\varphi_m(\vec x,\vec y)$ in such a way that
\begin{equation} \label{e-varphi}
\cu M\models\sup_{\vec x}\sup_{\vec y}|\varphi_m(\vec x,\vec y)-\varphi_{m+1}(\vec x,\vec y)|\le 2^{-(m+1)},
\end{equation}
and that $\varphi_m^{\cu M}(\vec a,\vec b)=\theta_m^{\cu M}(\vec a,\vec b) $ whenever $\vec a\in M^{|\vec x|}$ and $m\in\BN$.
To do that we inductively define $\varphi_0=\theta_0$, and
$$\varphi_{m+1}=\max(\varphi_m-2^{-(m+1)},\min(\varphi_m+2^{-(m+1)},\theta_{m+1})).$$
Condition (\ref{e-varphi}) implies that $\varphi_m(\vec x,\vec y)$ is exponentially Cauchy in $\cu M$,
and  defines a mapping $\sa Q$ in $\cu M$ such that $\sa P(\vec a)=\sa Q(\vec a,\vec b)$  for all $\vec a\in M^{|\vec x|}$.
\end{proof}

The following result is the analogue for general structures of Theorem 9.9 of [BBHU].

\begin{prop}  \label{p-definable-continuous}
Let $\cu M$ be a general structure, $A\subseteq M$, and  $\sa P\colon M^n\to[0,1]$.  The following are equivalent:
\begin{itemize}
\item[(i)] $\sa P$ is a definable predicate over $A$ in $\cu M$.
\item[(ii)]
There is a unique function $\Psi\colon S_n^{\cu M}(A)\to[0,1]$ such that
$\sa P(\vec c)=\Psi(\tp_{\cu M}(\vec c/A))$ for all $\vec c\in M^n$.  Furthermore, $\Psi$ is continuous in the logic topology on $S_n^{\cu M}(A)$
\end{itemize}
\end{prop}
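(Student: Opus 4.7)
The plan is to prove each direction separately, with (i) $\Rightarrow$ (ii) obtained directly from the definitions and a density argument, and (ii) $\Rightarrow$ (i) obtained from a Stone--Weierstrass argument on $S_n^{\cu M}(A)$.

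For (i) $\Rightarrow$ (ii), I would start from a Cauchy sequence $\langle\varphi_m(\vec x,\vec b)\rangle_{m\in\BN}$ in $\Th(\cu M_A)$ with $\vec b$ from $A$ that defines $\sa P$ in $\cu M$ (Lemma \ref{l-definable-iff}). The Cauchy condition in $\Th(\cu M_A)$ means that for each $p\in S_n^{\cu M}(A)$, the scalars $\varphi_m(\vec x,\vec b)^p$ form a Cauchy sequence in $[0,1]$, so one can set $\Psi(p):=\lim_m \varphi_m(\vec x,\vec b)^p$. Each $p\mapsto \varphi_m(\vec x,\vec b)^p$ is continuous on $S_n^{\cu M}(A)$ by the very definition of the logic topology, and the convergence is uniform in $p$, so $\Psi$ is continuous. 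Evaluating at $p=\tp_{\cu M}(\vec c/A)$ gives $\Psi(\tp_{\cu M}(\vec c/A))=\lim_m \varphi_m^{\cu M}(\vec c,\vec b)=\sa P(\vec c)$.

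Uniqueness needs that the set $\{\tp_{\cu M}(\vec c/A):\vec c\in M^n\}$ is dense in $S_n^{\cu M}(A)$: given $p$ and a basic open neighborhood $\{q:\varphi_i^q<r_i,\,i\le k\}$, fix $r_i'<r_i$ with $\varphi_i^p<r_i'$. A realization of $p$ in some model $\cu N\succ\cu M_A$ shows $\inf_{\vec x}\max_i(\varphi_i\dotle r_i')$ has value $0$ in $\cu N$, hence in $\cu M$, so there exist $\vec c\in M^n$ with $\varphi_i^{\cu M}(\vec c)<r_i$ for all $i$, and $\tp_{\cu M}(\vec c/A)$ lies in the given neighborhood. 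Two continuous functions on the compact space $S_n^{\cu M}(A)$ that agree on this dense set must coincide.

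For (ii) $\Rightarrow$ (i), I would apply the Stone--Weierstrass theorem (the lattice version) to the family
$$\cu A=\{\hat\varphi\colon p\mapsto\varphi^p\mid \varphi(\vec x)\text{ is a formula with parameters from }A\}\subseteq C(S_n^{\cu M}(A),[0,1]).$$
This family contains all constant functions (since each $r\in[0,1]$ is a $0$-ary connective), separates points of $S_n^{\cu M}(A)$ (two distinct types are separated by some formula in the vocabulary of $\cu M_A$), and is closed under $\max$, $\min$, and indeed arbitrary continuous functions $[0,1]^k\to[0,1]$, since such maps are among our connectives. Hence $\cu A$ is uniformly dense in $C(S_n^{\cu M}(A),[0,1])$. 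Pick formulas $\varphi_m(\vec x)$ with parameters in $A$ such that $\sup_{p}|\varphi_m^p-\Psi(p)|\le 2^{-(m+2)}$; then $\langle\varphi_m\rangle$ is exponentially Cauchy in $\Th(\cu M_A)$, and for every $\vec c\in M^n$,
$$\lim_{m\to\infty}\varphi_m^{\cu M}(\vec c)=\Psi(\tp_{\cu M}(\vec c/A))=\sa P(\vec c),$$
so $\sa P$ is definable over $A$ in $\cu M$.

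The only non-routine point is the density of realized types, handled by the compactness/infimum argument above; verifying the hypotheses of Stone--Weierstrass is immediate from the richness of our connective set, and the continuity of $p\mapsto\varphi^p$ is built into the definition of the logic topology.
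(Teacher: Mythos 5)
Your proof is correct and follows essentially the same route as the paper: for (i) $\Rightarrow$ (ii) you construct $\Psi$ from the Cauchy sequence exactly as the paper does (your continuity argument via a uniform limit of the continuous maps $p\mapsto\varphi_m^p$ is interchangeable with the paper's argument that $\Psi^{-1}(I)$ is closed for closed intervals $I$), and your Stone--Weierstrass argument for (ii) $\Rightarrow$ (i) is precisely the proof of Theorem 9.9 of [BBHU] that the paper cites rather than reproduces. The one point where you go beyond the paper is the density-of-realized-types argument establishing that $\Psi$ is unique (among continuous functions on $S_n^{\cu M}(A)$), a point the paper's proof leaves implicit.
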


\begin{proof}
The proof that (ii) $\Rightarrow$ (i) is the same as the proof of Theorem 9.9 in [BBHU], which did not use a metric on $\cu M$.

(i) $\Rightarrow$ (ii):  Suppose that $\sa P$ is defined by $\langle\varphi_m\rangle_{m\in\BN}$ over $A$ in $\cu M$.
Then $\langle\varphi_m\rangle_{m\in\BN}$ is Cauchy in $\Th(\cu M_A)$.  There is a unique $\Psi\colon S_n^{\cu M}(A)\to[0,1]$
such that $\Psi(\tp_{\cu M}(\vec c/A))=[\lim \varphi_m]^{\cu M_A}(\vec c)=\sa P(\vec c)$ for each $\vec c\in M^n$.
For each closed interval  $I\subseteq[0,1]$, there is
a $n$-type $\Gamma_I(\vec x)$ with parameters in $A$ such that for all $\vec c\in M^n$, $\Psi(\tp_{\cu M}(\vec c/A))\in I$ if and only if
$\cu M_A\models \Gamma_I(\vec c)$.
Therefore $\Psi^{-1}(I)$ is closed in $S_n^{\cu M}(A)$, so $\Psi$ is continuous.
\end{proof}

\begin{rmk}  Let $A\subseteq M$ and $\vec b$ in $M^n$.
\begin{itemize}
\item If $\cu N\succ\cu M$ then $\tp_{\cu M}(\vec b/A)=\tp_{\cu N}(\vec b/A)$.
\item  $\varphi^{\cu{M}}(\vec b,A)=r$ if and only if
$|\varphi(\vec x,A)-r|$ belongs to $\tp_{\cu M}(\vec b/A)$.
\end{itemize}
Using the compactness theorem, we have
\begin{itemize}
\item If $\cu N\succ\cu M$ then $S_{n}^{\cu M}(A)=S_{n}^{\cu N}(A)$.
\item There exists $\cu N\succ\cu M$ in which  every type in $\bigcup_n S_n^{\cu M}(A)$ is realized.
\end{itemize}
\end{rmk}

\subsection{Saturated and Special Structures}

For an infinite cardinal $\kappa$, we say that a general structure $\cu{M}$ is \emph{$\kappa$-saturated} if for every set $A\subseteq M$ of
cardinality $|A|<\kappa$, every set of formulas in the vocabulary of $\cu M$ with one free variable and parameters from $A$
that is finitely satisfiable in $\cu{M}_A$ is satisfiable in $\cu{M}_A$.

\begin{rmk}  \label{r-saturated-types}
If $\cu M$ is $\kappa$-saturated, $A\subseteq M$, and $|A|<\kappa$, then every complete type $p\in S_{n}(A)$ is realized in $\cu M$.
\end{rmk}

\begin{rmk}  \label{r-reduction-sat}
$\cu M$ is $\kappa$-saturated if and only if the reduction of $\cu M$ is $\kappa$-saturated.
\end{rmk}

\begin{df}
By a \emph{special cardinal} we mean a cardinal $\kappa$ such that $2^\lambda\le\kappa$ for all $\lambda<\kappa$.
We say that $\cu{M}$ is \emph{special} if $|M|$ is an uncountable special cardinal and  $\cu{M}$ is the union of an elementary chain of structures
$\langle \cu{M}_\lambda\colon \lambda<|M|\rangle$ such
that each $\cu{M}_\lambda$ is $\lambda^+$-saturated.  $\cu{M}$ is \emph{$\kappa$-special} if $\kappa$ is special and $\cu{M}$ is the reduction of a special
structure of cardinality $\kappa$.
\end{df}

Note that every strong limit cardinal is special, and if $2^\lambda=\lambda^+$ then $2^\lambda$ is special.
Thus every $\kappa$-special structure is reduced and has cardinality $\le\kappa$.
 A $\kappa$-special structure may have cardinality less than $\kappa$.  For example, every metric structure with a compact metric is $\kappa$-special for
 every special cardinal $\kappa$ but has cardinality at most $2^{\aleph_0}$.

\begin{rmk}  \label{r-special-expansion}  If $\cu{M}$ is $\kappa$-special and  $V^0\subseteq V$, then the reduction of the $V^0$-part of $\cu{M}$ is $\kappa$-special.
\end{rmk}

\begin{rmk}  \label{r-special-inaccessible}  If $\kappa$ is an uncountable inaccessible cardinal, then $\cu M$ is $\kappa$-special if and only if
$\cu M$ is a reduced $\kappa$-saturated structure of cardinality $\kappa$.
\end{rmk}

\begin{fact}  \label{f-special-unique}  (Uniqueness Theorem for Special Models) If $T$ is complete and $\cu{M}, \cu{N}$ are $\kappa$-special models of $T$,
then $\cu{M}$ and $\cu{N}$ are isomorphic.
\end{fact}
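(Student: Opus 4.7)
The plan is a classical back-and-forth along the special chains. By the definition of $\kappa$-special, there are special structures $\cu M^\sharp,\cu N^\sharp$ of cardinality $\kappa$ whose reductions are $\cu M,\cu N$. The reduction map preserves the truth value of every formula (by induction on complexity: $\doteq^{\cu M^\sharp}$ respects atomic formulas, and quantifiers in $\cu M^\sharp$ and $\cu M$ range over universes connected by a surjection), so $\cu M^\sharp,\cu N^\sharp$ are general models of $T$ and $\cu M^\sharp\equiv\cu N^\sharp$. Producing an embedding of $\cu M^\sharp$ onto $\cu N^\sharp$ suffices: the fourth bullet of Remark \ref{r-reduction} then gives $\cu M^\sharp\cong\cu N^\sharp$, and the third bullet gives $\cu M\cong\cu N$. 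Fix the chains $\cu M^\sharp=\bigcup_{\lambda<\kappa}\cu M^\sharp_\lambda$ and $\cu N^\sharp=\bigcup_{\lambda<\kappa}\cu N^\sharp_\lambda$ with each piece $\lambda^+$-saturated.

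Enumerate $M^\sharp=\{a_\alpha\colon\alpha<\kappa\}$ and $N^\sharp=\{b_\alpha\colon\alpha<\kappa\}$, and build by transfinite recursion a $\subseteq$-increasing chain of partial elementary maps $f_\alpha\colon A_\alpha\to B_\alpha$ with $|A_\alpha|\le|\alpha|+\aleph_0$. At odd successor stages (forth), pick the least unenumerated $a_\beta$, transport $\tp_{\cu M^\sharp}(a_\beta/A_\alpha)$ through $f_\alpha$ to a complete type $q(x)$ over $B_\alpha$, which is finitely satisfiable in $\cu N^\sharp$ by elementarity of $f_\alpha$. Choose $\mu<\kappa$ with $B_\alpha\subseteq N^\sharp_\mu$ and $\mu\ge|\alpha|+\aleph_0$; since $\cu N^\sharp_\mu\prec\cu N^\sharp$ is $\mu^+$-saturated, Remark \ref{r-saturated-types} yields a realization of $q$ in $\cu N^\sharp_\mu$, and we extend $f_\alpha$ by sending $a_\beta$ there. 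At even successor stages (back) do the symmetric step using the $\mu^+$-saturation of $\cu M^\sharp_\mu$. Limit stages are handled by taking unions and invoking Fact \ref{f-elementary-chain}. The final $f=\bigcup_\alpha f_\alpha$ is then an elementary bijection $\cu M^\sharp\to\cu N^\sharp$.

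The main obstacle is justifying the choice of the bounding ordinal $\mu<\kappa$. When $\kappa$ is regular, the special-cardinal inequality $2^\lambda\le\kappa$ for $\lambda<\kappa$ forces $\kappa$ to be (strongly) inaccessible, and by Remark \ref{r-special-inaccessible} one may simply replace $\cu N^\sharp_\mu$ by $\cu N^\sharp$ itself throughout, eliminating the issue. When $\kappa$ is singular with $\operatorname{cf}(\kappa)=\nu<\kappa$, the parameter sets can have cardinality $\ge\nu$ and so need not lie in any single chain piece. The remedy is to stratify the recursion into $\nu$ blocks indexed by a fixed cofinal sequence $\langle\lambda_i\colon i<\nu\rangle$ of cardinals below $\kappa$, maintaining the invariant that at the end of block $i$ the partial map has domain containing $M^\sharp_{\lambda_i}$, range containing $N^\sharp_{\lambda_i}$, and parameter set of cardinality $\le\lambda_i$; the $\lambda_{i+1}^+$-saturation of $\cu M^\sharp_{\lambda_{i+1}}$ and $\cu N^\sharp_{\lambda_{i+1}}$ then drives the back-and-forth throughout block $i+1$. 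The cardinal arithmetic needed is licensed by $2^\lambda\le\kappa$.
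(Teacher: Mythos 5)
The paper offers no proof of this Fact---it is cited as the classical Morley--Vaught uniqueness theorem for special models, carried over from [CK66]/[CK12] to general structures---so your argument stands on its own. Your framing is right: reduce to producing an elementary bijection between the special structures $\cu M^\sharp$ and $\cu N^\sharp$, then pass to the reductions via Remark \ref{r-reduction}. The difficulty is entirely in the back-and-forth, and that is where two genuine problems appear.

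First, the claim that a regular special cardinal must be inaccessible is false: if $2^\lambda=\lambda^+$ then $\kappa=\lambda^+$ is special and regular but is a successor cardinal (the paper notes this right after the definition). The conclusion you want in the regular case is nevertheless true---for regular $\kappa$, any subset of $N^\sharp$ of cardinality $<\kappa$ meets only boundedly many chain indices and hence lies in some $\cu N^\sharp_\lambda$ with $\lambda$ at least its cardinality, so $\cu N^\sharp$ is $\kappa$-saturated and the usual saturated back-and-forth applies---but this needs the regularity argument, not inaccessibility. Second, and more seriously, your invariant for singular $\kappa$ is unachievable: you require that at the end of block $i$ the partial map have domain containing $M^\sharp_{\lambda_i}$ while its parameter set has cardinality $\le\lambda_i$, but nothing in the definition of a specializing chain bounds $|M^\sharp_{\lambda_i}|$ by $\lambda_i$; a $\lambda_i^+$-saturated elementary substructure generally has cardinality at least $2^{\lambda_i}$ and may have cardinality $\kappa$. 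The same difficulty already defeats the unstratified step ``choose $\mu<\kappa$ with $B_\alpha\subseteq N^\sharp_\mu$'': once $|B_\alpha|\ge\operatorname{cf}(\kappa)$, the forced elements $b_\eta$ can be spread cofinally through the chain and no such $\mu$ exists. The missing device is to choose the enumerations of $M^\sharp$ and $N^\sharp$ block-by-block along a cofinal subsequence of the chains (list $N^\sharp_{\lambda_0}$ first, then $N^\sharp_{\lambda_1}\setminus N^\sharp_{\lambda_0}$, and so on), so that every initial segment of each enumeration lies inside a single chain member, and to place each realization you select inside a chain member of controlled index; the invariant to carry is then that $\operatorname{dom}f_\xi$ and $\operatorname{ran}f_\xi$ are contained in chain members whose index is at least $|\xi|+\aleph_0$, not that they contain whole chain members. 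With that bookkeeping (using, if you wish to keep the stratified version, the fact that a singular special cardinal is a strong limit to re-space the $\lambda_i$), the argument closes.
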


An easy consequence is:

\begin{fact}  \label{f-special-isomorphism}  Suppose $\cu M$ is $\kappa$-special, $A\subseteq M$, and $|A|<$ cofinality of $\kappa$.  Then
for all tuples $\vec b, \vec c$ in $M$, $\tp_{\cu M}(\vec b/A)=\tp_{\cu M}(\vec c/A)$ if and only if $\cu M$ has an automorphism that
sends $\vec b$ to $\vec c$ and is the identity on $A$.
\end{fact}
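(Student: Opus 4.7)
\medskip

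The plan is to use the Uniqueness Theorem for Special Models (Fact \ref{f-special-unique}) applied to an appropriate expansion of $\cu M$. The easy direction is immediate: if $\sigma$ is an automorphism of $\cu M$ with $\sigma(a)=a$ for $a\in A$ and $\sigma(\vec b)=\vec c$, then for every formula $\varphi(\vec x,\vec y)$ and tuple $\vec a$ from $A$, $\varphi^{\cu M}(\vec b,\vec a)=\varphi^{\cu M}(\sigma(\vec b),\sigma(\vec a))=\varphi^{\cu M}(\vec c,\vec a)$, so $\tp_{\cu M}(\vec b/A)=\tp_{\cu M}(\vec c/A)$.

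For the converse, I would first establish the following auxiliary claim: if $\cu M$ is $\kappa$-special and $B\subseteq M$ with $|B|<\operatorname{cof}(\kappa)$, then $\cu M_B$ is $\kappa$-special. Write $\cu M$ as the reduction of a special $\cu M^*=\bigcup_{\lambda<\kappa}\cu M^*_\lambda$ of cardinality $\kappa$, with each $\cu M^*_\lambda$ being $\lambda^+$-saturated. Choose preimages $\hat b\in M^*$ for each $b\in B$ under the reduction map and let $\hat B$ be their collection. Since $|\hat B|=|B|<\operatorname{cof}(\kappa)$, there is $\lambda_0<\kappa$ with $\hat B\subseteq M^*_{\lambda_0}$. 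For $\lambda\ge\max(\lambda_0,|B|)$, the expansion $(\cu M^*_\lambda,\hat b)_{b\in B}$ remains $\lambda^+$-saturated, because adding fewer than $\lambda^+$ constants preserves $\lambda^+$-saturation. After re-indexing, $(\cu M^*,\hat b)_{b\in B}$ is special of cardinality $\kappa$. Since Leibniz equality is unchanged by adding constants (an atomic formula with constants can be viewed as an atomic formula whose extra arguments happen to be fixed), the reduction of this expansion is exactly $\cu M_B$, so $\cu M_B$ is $\kappa$-special.

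Now take $B=A\cup\{b_1,\ldots,b_n\}$ and $B'=A\cup\{c_1,\ldots,c_n\}$; both still have cardinality less than $\operatorname{cof}(\kappa)$. The auxiliary claim, applied to the expansions by constants naming the elements of $\vec b$ (resp.\ $\vec c$) together with all of $A$, shows that $(\cu M,\vec b,a)_{a\in A}$ and $(\cu M,\vec c,a)_{a\in A}$ are both $\kappa$-special. The hypothesis $\tp_{\cu M}(\vec b/A)=\tp_{\cu M}(\vec c/A)$ says precisely that these two expansions satisfy the same sentences in the enlarged vocabulary, i.e.\ have the same complete theory. By Fact \ref{f-special-unique} they are isomorphic; an isomorphism between these two expansions is, by construction, an automorphism of $\cu M$ that fixes each $a\in A$ (since $a$ interprets the same constant on both sides) and sends each $b_i$ to $c_i$.

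The main obstacle is the auxiliary claim that $\kappa$-specialness is preserved under expansion by fewer than $\operatorname{cof}(\kappa)$ constants; the hypothesis on $|A|$ is used essentially there, both to absorb $\hat A$ into some initial $\cu M^*_{\lambda_0}$ and to guarantee that cofinally many levels of the special chain are still saturated enough after the expansion. The rest of the argument is a standard reduction to the uniqueness theorem.
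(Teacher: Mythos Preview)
Your argument is correct and is exactly the route the paper has in mind: the text introduces Fact~\ref{f-special-isomorphism} only with the phrase ``An easy consequence is'' immediately after the Uniqueness Theorem, and you have supplied precisely those details---expanding by constants for $A\cup\vec b$ and $A\cup\vec c$, checking that $\kappa$-specialness survives (this is where $|A|<\operatorname{cof}(\kappa)$ enters), and invoking Fact~\ref{f-special-unique}. Your handling of the reduction step (Leibniz equality is unchanged by constants, so the reduction of $(\cu M^*,\hat b)_{b\in B}$ is $\cu M_B$) is also correct and worth stating, since the paper's notion of isomorphism goes through reductions.
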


 The following result is proved using the Compactness, Downward L\"{o}wenheim-Skolem, and
Elementary Chain Theorems.

\begin{fact} \label{f-special-exist}  (Existence Theorem for Special Models)
If $\kappa$ is a special cardinal and $\aleph_0+|V|<\kappa$, then every reduced structure $\cu{M}$ such that
$|M|\le\kappa$ has a $\kappa$-special elementary extension.
\end{fact}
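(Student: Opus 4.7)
The plan is to follow the classical construction of special models by an elementary chain of saturated extensions, adapted to the general-structure setting. Let $\lambda_0 = \max(|M|, |V|, 2^{\aleph_0}) + \aleph_0$, which is less than $\kappa$ by hypothesis and by specialness of $\kappa$. I construct by transfinite recursion on infinite cardinals $\lambda$ with $\lambda_0 \le \lambda < \kappa$ an elementary chain $\langle \cu N_\lambda \rangle$ such that $\cu M \prec \cu N_{\lambda_0}$, $\cu N_\lambda \prec \cu N_{\lambda'}$ for $\lambda < \lambda'$, each $\cu N_\lambda$ is $\lambda^+$-saturated, and $|N_\lambda| \le 2^\lambda$.

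For the successor step $\lambda = \mu^+$ (and for the initial step from $\cu M$ to $\cu N_{\lambda_0}$), I iterate $\lambda^+$ many times the following procedure. Given a current structure $\cu P$, Fact \ref{f-compactness} produces an elementary extension realizing every finitely satisfiable set of formulas in one free variable with parameters from some subset $A \subseteq P$ with $|A| \le \lambda$; Fact \ref{f-Skolem} then shrinks the result back to bounded size. The crucial estimate is $(2^\lambda)^\lambda = 2^\lambda$: if $|P| \le 2^\lambda$, then the number of parameter subsets $A$ of size $\le \lambda$ is at most $2^\lambda$, and each yields at most $2^\lambda$ consistent sets of formulas (since the vocabulary together with all connectives contributes at most $2^{\aleph_0} \le \lambda$ symbols), so each step keeps the cardinality bounded by $2^\lambda$. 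Taking unions at limit ordinals via Fact \ref{f-elementary-chain} and iterating $\lambda^+$ times yields $\cu N_\lambda$ which is $\lambda^+$-saturated of size $\le 2^\lambda$. At a limit cardinal $\lambda$, first take $\bigcup_{\mu<\lambda}\cu N_\mu$, of cardinality $\le 2^\lambda$, then promote to $\lambda^+$-saturation by the successor construction.

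Let $\cu N^* = \bigcup_{\lambda_0 \le \lambda < \kappa}\cu N_\lambda$, an elementary extension of $\cu M$ by Fact \ref{f-elementary-chain}. Specialness of $\kappa$ gives $|N^*| \le \sup_{\lambda<\kappa} 2^\lambda \le \kappa$; since $2^\lambda \ge \lambda$, we also have $|N^*| \ge \sup_{\lambda<\kappa}\lambda = \kappa$, so $|N^*| = \kappa$. By construction, $\cu N^*$ is special. Let $\cu N$ be its reduction. Since $\cu M$ is reduced and $\cu M \prec \cu N^*$, the atomic formulas with parameters from $M$ take the same values in $\cu M$ and $\cu N^*$, so by Remark \ref{r-doteq}, $a \doteq^{\cu N^*} b$ for $a,b \in M$ entails $a \doteq^{\cu M} b$, hence $a=b$. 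Thus the reduction map is injective on $M$, yielding an elementary embedding $\cu M \prec \cu N$. By definition, $\cu N$ is the desired $\kappa$-special elementary extension.

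The main obstacle is the cardinal arithmetic that keeps $|N_\lambda| \le 2^\lambda \le \kappa$ throughout the $\lambda^+$-iteration at each stage. This hinges on the identity $(2^\lambda)^\lambda = 2^\lambda$ combined with the specialness of $\kappa$; without the bound $2^\lambda \le \kappa$ for all $\lambda < \kappa$ the sizes would blow up and the construction would fail at the successor step.
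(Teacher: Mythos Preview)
Your construction follows the classical pattern the paper alludes to, but the opening step contains a genuine gap: the claim that $\lambda_0=\max(|M|,|V|,2^{\aleph_0})+\aleph_0<\kappa$ is not justified. The hypothesis gives only $|M|\le\kappa$, and specialness gives only $2^{\aleph_0}\le\kappa$, not strict inequality. When $|M|=\kappa$ (which the statement allows) your $\lambda_0$ equals $\kappa$ and the recursion over $\lambda_0\le\lambda<\kappa$ is vacuous. More to the point, if $\kappa$ is a strong limit and $|M|=\kappa$ there is no $\lambda<\kappa$ with $|M|\le 2^\lambda$, so you cannot launch the $\lambda^+$-saturation construction while keeping $|N_\lambda|\le 2^\lambda$. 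The standard repair is to first build \emph{some} special $\cu N^*\models\Th(\cu M)$ of cardinality $\kappa$ starting from a small elementary substructure of $\cu M$ (where your cardinal bounds do hold), and then invoke the $\kappa^+$-universality of special models to embed $\cu M$ elementarily into $\cu N^*$; alternatively, interleave the chain with an absorption of $M=\bigcup_{\lambda<\kappa}A_\lambda$, $|A_\lambda|\le\lambda$.

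There is also a smaller slip in the lower bound for $|N^*|$: you only established $|N_\lambda|\le 2^\lambda$, so $|N^*|\ge\sup_{\lambda<\kappa}\lambda$ does not follow. In the general-structure setting a $\lambda^+$-saturated model can be arbitrarily small (any structure with a single $\doteq$-class is $\mu$-saturated for every $\mu$). To force $|N^*|=\kappa$, pad each $\cu N_\lambda$ with $2^\lambda$ Leibniz-equivalent copies of some element; this does not disturb saturation and guarantees that the pre-reduction union has cardinality exactly $\kappa$, as the definition of ``special'' requires.
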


 The statement of Fact \ref{f-special-exist} above differs slightly from the
 corresponding result in [CK66], because we do not have a symbol for the equality relation here.

\subsection{Pre-metric Structures}

 A \emph{metric signature} $L$ over $V$ specifies a distinguished binary predicate symbol $d\in V$ for distance, and equips each predicate
 or function symbol $S\in V$ with a modulus of uniform continuity   $\triangle_S\colon(0,1]\to(0,1]$ with respect to $d$.

In the literature on metric structures, one usually fixes a metric signature $L$ once and for all, but here we focus on general structures that are not equipped with metric signatures, and often consider many metric signatures at the same time.  For that reason, we will officially define a pre-metric structure to be pair consisting of a general structure and a metric signature.

A \emph{pre-metric structure} $\cu M_+=(\cu M,L)$  consists of a general structure $\cu{M}$ with  vocabulary $V$,
and a metric signature $L$ over $V$, such that $d^{\cu{M}}$ is a pseudo-metric
on $M$, and for each predicate symbol $P$ and function symbol $F$ of arity $n$, and for all $\vec{a},\vec{b}\in M^n$ and  $\varepsilon\in(0,1]$,
$$  \max_{k\le n}d^\cu{M}(a_k,b_k)<\triangle_P(\varepsilon)\Rightarrow|P^\cu{M}(\vec{a})-P^\cu{M}(\vec{b})|\le \varepsilon.$$ and
$$  \max_{k\le n}d^\cu{M}(a_k,b_k)<\triangle_F(\varepsilon)\Rightarrow d^\cu{M}(F^\cu{M}(\vec{a}),F^\cu{M}(\vec{b}))\le \varepsilon.$$
A \emph{metric structure}  is a pre-metric structure $(\cu M,L)$ such that $(M,d^\cu{M})$ is a complete metric space.
The paper [BBHU] emphasized metric structures, but in this paper we will focus more on pre-metric than metric structures.

Given a pre-metric structure $\cu M_+=(\cu M,L)$, we will call $\cu M$  the \emph{downgrade of} $\cu M_+$,
and call $\cu M_+$  the \emph{upgrade of} $\cu M$ to $L$.
Note that two different pre-metric structures can have the same downgrade,
because they may have different metric signatures.

We will slightly abuse notation and say that $\cu M$ \emph{is a pre-metric (or metric) structure for} $L$ when $(\cu M,L)$ is a pre-metric (or metric) structure.
We say that a pre-metric structure $\cu M_+$ for $L$ is $\kappa$-saturated if its downgrade is $\kappa$-saturated.  Similarly for other properties
of general structures, such as being reduced, or being an ultraproduct of a family of structures.

 We remind the reader that every pre-metric structure for $L$ has a unique completion up to isomorphism,
that this completion is a metric structure for $L$, and that every pre-metric structure is elementarily embeddable in its completion.

\begin{rmk} \label{r-saturated-metric}  Every pre-metric structure for $L$ that is reduced and $\aleph_1$-saturated
 is a metric structure for $L$.
\end{rmk}

In [BBHU], when $\cu{M}_+$ is a metric structure, a mapping from $M^n$ into $[0,1]$ that is uniformly continuous with respect to $d^{\cu M}$
is called a \emph{predicate on $\cu{M}_+$}.

We will make frequent use of the following result from [BBHU].

\begin{fact} \label{f-t3.5} (Theorem 3.5 in [BBHU])  For every metric signature $L$  and formula $\varphi(\vec x)$ in the vocabulary of $L$,
there is a function $\triangle_\varphi\colon(0,1]\to(0,1]$ that is a modulus of uniform continuity for the mapping $\varphi^{\cu M}\colon M^{|\vec x|}\to[0,1]$
in every pre-metric structure $\cu M_+$ for $L$.
\end{fact}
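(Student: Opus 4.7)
The plan is to prove the result by a simultaneous induction on the complexity of terms and formulas of $L$, constructing for each term $t(\vec x)$ a modulus of uniform continuity $\triangle_t\colon(0,1]\to(0,1]$ such that $\max_k d^{\cu M}(a_k,b_k)<\triangle_t(\varepsilon)$ implies $d^{\cu M}(t^{\cu M}(\vec a),t^{\cu M}(\vec b))\le\varepsilon$ in every pre-metric structure $\cu M_+$ for $L$, and for each formula $\varphi(\vec x)$ an analogous $\triangle_\varphi$ with $|\varphi^{\cu M}(\vec a)-\varphi^{\cu M}(\vec b)|\le\varepsilon$ as the conclusion. Crucially, the moduli produced depend only on the signature $L$, not on the particular structure, so the same $\triangle_\varphi$ works uniformly across all pre-metric structures for $L$.

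For terms, the base cases (variables, constants) take the identity and the trivial modulus respectively. For a compound term $F(t_1,\dots,t_n)$, I would combine the inductively given $\triangle_{t_i}$ with the modulus $\triangle_F$ furnished by $L$: if $\max_k d^{\cu M}(a_k,b_k)<\min_i\triangle_{t_i}(\triangle_F(\varepsilon))$, then each $d^{\cu M}(t_i^{\cu M}(\vec a),t_i^{\cu M}(\vec b))\le\triangle_F(\varepsilon)$, and strictness can be recovered by shrinking the input slightly (e.g.\ composing with $\varepsilon/2$) so that the defining strict inequality for $\triangle_F$ is met. The atomic formula case $P(t_1,\dots,t_n)$ is identical, using $\triangle_P$ instead of $\triangle_F$.

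For the inductive step on formulas, a connective $u(\varphi_1,\dots,\varphi_k)$ with $u\colon[0,1]^k\to[0,1]$ continuous is handled by noting that $u$ is automatically uniformly continuous on the compact cube $[0,1]^k$, hence carries a modulus $\triangle_u$; one then sets $\triangle_\varphi(\varepsilon)=\min_i\triangle_{\varphi_i}(\triangle_u(\varepsilon))$ (again with the minor strict-inequality adjustment). For the quantifier case $\varphi(\vec x)=\sup_y\psi(\vec x,y)$, I would simply take $\triangle_\varphi=\triangle_\psi$, using the elementary inequality
\[
\bigl|\sup_y\psi^{\cu M}(\vec a,y)-\sup_y\psi^{\cu M}(\vec b,y)\bigr|\le\sup_y\bigl|\psi^{\cu M}(\vec a,y)-\psi^{\cu M}(\vec b,y)\bigr|,
\]
valid pointwise because applying $\triangle_\psi$ with the pair $(\vec a,y),(\vec b,y)$ and $d^{\cu M}(y,y)=0$ bounds each summand on the right by $\varepsilon$. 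The $\inf_y$ case is symmetric.

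The main obstacle, such as it is, is purely bookkeeping: one must be careful that the moduli returned satisfy the strict-inequality form of the uniform continuity condition used in the definition of pre-metric structure, which is easily arranged by replacing $\triangle$ with $\varepsilon\mapsto\triangle(\varepsilon/2)$ whenever necessary. No appeal to completeness of the metric is made at any stage, so the result holds for every pre-metric structure for $L$, not only metric ones.
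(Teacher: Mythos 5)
Your proof is correct and is essentially the standard argument for Theorem 3.5 of [BBHU], which the paper simply cites without reproving: a simultaneous induction on terms and formulas, composing the signature's moduli for function and predicate symbols, using uniform continuity of connectives on the compact cube $[0,1]^k$, and the inequality $|\sup_y\psi(\vec a,y)-\sup_y\psi(\vec b,y)|\le\sup_y|\psi(\vec a,y)-\psi(\vec b,y)|$ for quantifiers. The strict-versus-nonstrict bookkeeping you flag is handled exactly as you describe, by shrinking the intermediate tolerance.
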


\begin{fact}  \label{f-metric-theory}  (See [Ca], page 112, and Definition 2.4 in [AH].)
For each metric signature $L$ over $V$, there is a theory $\met(L)$ whose general models are exactly
the downgrades of pre-metric structures for $L$.  Each sentence in $\met(L)$ consists of finitely many $\sup$ quantifiers followed by a quantifier-free formula.
\end{fact}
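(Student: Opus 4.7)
The plan is to exhibit $\met(L)$ explicitly as the union of three families of sentences, each of the form $\sup_{\vec x}\theta(\vec x)$ with $\theta$ quantifier-free. The easy part is the pseudo-metric axioms for $d$: the three sentences
\[
\sup_x d(x,x),\quad \sup_{x,y}(d(x,y)\dotle d(y,x)),\quad \sup_{x,y,z}\bigl(d(x,z)\dotle (d(x,y)\dotplus d(y,z))\bigr)
\]
force $d^{\cu M}(x,x)=0$, $d^{\cu M}(x,y)\le d^{\cu M}(y,x)$ (which with renaming of variables gives symmetry), and the triangle inequality; the cap inside $\dotplus$ is harmless because $d^{\cu M}$ takes values in $[0,1]$.

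The main obstacle is converting the uniform continuity condition ``$\max_{k\le n} d(a_k,b_k)<\triangle_P(\varepsilon)\Rightarrow|P(\vec a)-P(\vec b)|\le\varepsilon$'' into a single $\sup$-prefixed quantifier-free sentence, since the condition involves a strict inequality inside an implication. The trick is that the product of two $[0,1]$-valued quantities vanishes iff at least one of them does, so for each predicate symbol $P$ of arity $n$, each $\varepsilon\in(0,1]$, and each $t\in(0,\triangle_P(\varepsilon))$, I would put into $\met(L)$ the sentence
\[
\sigma^P_{\varepsilon,t}:=\sup_{\vec x,\vec y}\bigl[(|P(\vec x)-P(\vec y)|\dotle\varepsilon)\cdot(t\dotle\max_{k\le n}d(x_k,y_k))\bigr],
\]
and analogously $\sigma^F_{\varepsilon,t}$ for each function symbol $F$, with $|P(\vec x)-P(\vec y)|$ replaced by $d(F(\vec x),F(\vec y))$. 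Multiplication, $\dotle$, $\dotplus$, and nested $\max$ are all continuous connectives, so each of these is a finite block of $\sup$ quantifiers prefixing a quantifier-free formula. The sentence $\sigma^P_{\varepsilon,t}$ evaluates to $0$ in $\cu M$ iff for every $(\vec x,\vec y)$, either $|P^{\cu M}(\vec x)-P^{\cu M}(\vec y)|\le\varepsilon$ or $\max_{k\le n} d^{\cu M}(x_k,y_k)\ge t$; letting $t$ range over $(0,\triangle_P(\varepsilon))$ then recovers the strict inequality by density.

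It remains to check both directions of the equivalence. If $(\cu M,L)$ is a pre-metric structure, the pseudo-metric sentences hold by definition; for $\sigma^P_{\varepsilon,t}$, when $\max_{k\le n} d^{\cu M}(x_k,y_k)<t<\triangle_P(\varepsilon)$ the uniform continuity of $P$ forces $|P^{\cu M}(\vec x)-P^{\cu M}(\vec y)|\le\varepsilon$, while if $\max_{k\le n} d^{\cu M}(x_k,y_k)\ge t$ the second factor vanishes, so in either case the product is zero; $\sigma^F_{\varepsilon,t}$ is identical. Conversely, if $\cu M\models\met(L)$, the pseudo-metric axioms make $d^{\cu M}$ a pseudo-metric, and to verify the modulus condition for $P$, take $\varepsilon\in(0,1]$ and $\vec a,\vec b\in M^n$ with $r:=\max_{k\le n} d^{\cu M}(a_k,b_k)<\triangle_P(\varepsilon)$, choose any $t\in(r,\triangle_P(\varepsilon))$, and observe that the second factor of $\sigma^P_{\varepsilon,t}$ at $(\vec a,\vec b)$ equals $t-r>0$, so the first factor must vanish, yielding $|P^{\cu M}(\vec a)-P^{\cu M}(\vec b)|\le\varepsilon$; the function case is analogous.
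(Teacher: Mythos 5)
Your construction is correct, and it is exactly what the paper intends: the paper offers no proof of this Fact beyond citing [Ca] and [AH] and remarking that $\met(L)$ ``formally express[es] that $d$ is a pseudo-metric, and that the functions and predicates in $V$ respect the moduli of uniform continuity,'' which is precisely what your pseudo-metric axioms and the family $\sigma^P_{\varepsilon,t}$, $\sigma^F_{\varepsilon,t}$ accomplish (the product trick together with the dense family of thresholds $t\in(0,\triangle_P(\varepsilon))$ is the standard way to capture the strict-inequality hypothesis with $\sup$-prefixed quantifier-free sentences). Both directions of your verification are sound, so nothing further is needed.
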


The sentences in $\met(L)$ formally express that $d$ is a pseudo-metric, and that the functions and predicates in $V$ respect the moduli of uniform continuity for $L$.

\begin{cor}  Every ultraproduct of pre-metric structures for $L$ is a pre-metric structure for $L$.
\end{cor}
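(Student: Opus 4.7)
The plan is to deduce this immediately from Fact \ref{f-metric-theory} together with {\L}o{\'s}'s theorem (Fact \ref{f-Los}). Given an indexed family of pre-metric structures $(\cu M_i,L)$ for $i\in I$ and an ultrafilter $\cu D$ on $I$, I want to show that the ultraproduct $\cu M=\prod_{\cu D}\cu M_i$ (which is defined as a general structure via Definition \ref{d-ultraproduct}) is the downgrade of a pre-metric structure for $L$. By Fact \ref{f-metric-theory}, it suffices to verify that $\cu M$ is a general model of the theory $\met(L)$.

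For each sentence $\varphi\in\met(L)$ and each $i\in I$, the downgrade $\cu M_i$ of the pre-metric structure $(\cu M_i,L)$ satisfies $\varphi^{\cu M_i}=0$. Applying Fact \ref{f-Los} to the sentence $\varphi$ (with no free variables), I get
\[
\varphi^{\cu M}=\lim_{\cu D}\langle \varphi^{\cu M_i}\rangle_{i\in I}=\lim_{\cu D} 0 = 0.
\]
Hence $\cu M\models \met(L)$, and by Fact \ref{f-metric-theory} this means $\cu M$ is the downgrade of a pre-metric structure for $L$, i.e., $(\cu M,L)$ is a pre-metric structure for $L$.

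There is essentially no obstacle here once Facts \ref{f-Los} and \ref{f-metric-theory} are in hand; the only point deserving a moment's care is that {\L}o{\'s}'s theorem as stated applies to arbitrary formulas, and in particular to the sentences of $\met(L)$, which per Fact \ref{f-metric-theory} have the simple form of $\sup$-quantifiers applied to a quantifier-free formula. No separate verification of the pseudo-metric axioms on $d^{\cu M}$ or of the moduli of uniform continuity $\triangle_S$ for symbols $S\in V$ is needed, since all of that information is encoded in the sentences of $\met(L)$ and is therefore transferred by the ultraproduct automatically.
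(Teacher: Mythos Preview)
Your proof is correct and follows exactly the same approach as the paper, which simply cites Facts \ref{f-Los} and \ref{f-metric-theory}. You have unpacked the argument in full detail, but the core idea---each $\cu M_i$ models $\met(L)$, so by {\L}o{\'s} the ultraproduct does too, hence is a pre-metric structure for $L$---is identical.
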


\begin{proof}  By Facts \ref{f-Los} and  \ref{f-metric-theory}.
\end{proof}

A \emph{metric theory} $(T,L)$ consists of a metric signature $L$ and a set $T$ of sentences in the vocabulary of $L$ such that $T\models\met(L)$.
A pre-metric (or metric) \emph{model} of a metric theory $(T,L)$ is a pre-metric (or metric) structure $\cu M_+=(\cu M,L)$ for $L$ such that $\varphi^{\cu M}=0$
for each sentence $\varphi\in T$.

When $(T,L)$ is a metric theory, we also say that \emph{$T$ is a metric theory with signature $L$}.
Note that for every metric theory $(T,L)$, pre-metric structure $(\cu M,L)$, and continuous sentence $\varphi$, we have
$T\models \cu M$ iff $(T,L)\models (\cu M,L)$, and $T\models\varphi$ iff $(T,L)\models\varphi$. Also, if $(\cu M,L)$ is a pre-metric model of a metric theory $(T,L)$,
then $\cu M$ is a general model of $T$.

Part (i) of the next lemma shows that when $\cu M_+$ is a metric structure, the definition of a definable predicate here agrees with the notion of a definable predicate in [BBHU].
\begin{lemma}  \label{l-def-predicate-uniform}
\noindent\begin{itemize}
\item[(i)] In every pre-metric structure $\cu M_+$ for $L$, every mapping that is definable in the sense of Definition \ref{d-definable} is uniformly continuous.
\item[(ii)] For every metric theory $T$ with signature $L$,  and sequence of formulas
$\langle \varphi_m(\vec x)\rangle_{m\in\BN}$ that is Cauchy in $T$, there is a function $\triangle_{\langle\varphi\rangle}$
that is a modulus of uniform continuity for the definable predicate $[\lim\varphi_m]^{\cu M}$ in every pre-metric model $\cu M_+$ of $T$.
\end{itemize}
\end{lemma}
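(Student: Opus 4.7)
The plan is to prove both parts by the same standard ``uniform limit of uniformly continuous functions is uniformly continuous'' argument, with Fact \ref{f-t3.5} supplying moduli of uniform continuity for the individual approximating formulas, and with the Cauchyness of $\langle \varphi_m\rangle$ in a theory $T$ supplying the uniformity of the convergence across all pre-metric models of $T$ for part (ii).

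For part (i), suppose $\sa P$ is defined in $\cu M$ by a Cauchy sequence $\langle \varphi_m(\vec x)\rangle_{m\in\BN}$. Given $\varepsilon>0$, I would first use the Cauchy condition in $\cu M$ to pick $m$ such that for every $\vec a\in M^{|\vec x|}$,
\[
|\varphi_m^{\cu M}(\vec a)-\sa P(\vec a)|\le \varepsilon/3.
\]
By Fact \ref{f-t3.5}, the formula $\varphi_m$ has a modulus of uniform continuity $\triangle_{\varphi_m}$ that works in every pre-metric structure for $L$, hence in $\cu M_+$. If $\max_{k\le|\vec x|} d^{\cu M}(a_k,b_k)<\triangle_{\varphi_m}(\varepsilon/3)$, then $|\varphi_m^{\cu M}(\vec a)-\varphi_m^{\cu M}(\vec b)|\le \varepsilon/3$, and the triangle inequality yields $|\sa P(\vec a)-\sa P(\vec b)|\le \varepsilon$. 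Defining $\triangle_{\sa P}(\varepsilon):=\triangle_{\varphi_m}(\varepsilon/3)$ gives a modulus of uniform continuity for $\sa P$.

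For part (ii), the only additional observation needed is that the choice of $m=m(\varepsilon)$ and of the modulus depend only on $T$ and $\langle \varphi_m\rangle$, not on the particular pre-metric model of $T$. Since $\langle \varphi_m\rangle$ is Cauchy in $T$, for each $\varepsilon>0$ there is an $m(\varepsilon)$ such that
\[
T\models \sup_{\vec x}|\varphi_{m(\varepsilon)}(\vec x)-\varphi_k(\vec x)|\dotle \varepsilon/3 \qquad \text{for all }k\ge m(\varepsilon),
\]
and taking the limit in $k$ gives $|\varphi_{m(\varepsilon)}^{\cu M}(\vec a)-[\lim\varphi_m]^{\cu M}(\vec a)|\le \varepsilon/3$ in every pre-metric model $\cu M_+$ of $T$. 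Combining this with the modulus $\triangle_{\varphi_{m(\varepsilon)}}$ from Fact \ref{f-t3.5} as in part (i) and setting
\[
\triangle_{\langle\varphi\rangle}(\varepsilon):=\triangle_{\varphi_{m(\varepsilon)}}(\varepsilon/3)
\]
produces the desired uniform modulus working in every pre-metric model of $T$ simultaneously.

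There is no real obstacle here; the argument is the standard three-$\varepsilon$-splitting of a uniform limit, and both Fact \ref{f-t3.5} and the defining Cauchy condition are already doing the heavy lifting. The only point to be careful about is to extract $m(\varepsilon)$ from the semantic statement ``$\langle\varphi_m\rangle$ is Cauchy in $T$'' so that it is model-independent, which is immediate from the definition since the Cauchy condition is given as a $T$-consequence rather than as a statement about a single structure.
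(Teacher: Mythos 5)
Your proof is correct and is essentially the same as the paper's: the standard three-$\varepsilon$ splitting using Fact \ref{f-t3.5} for the modulus of $\varphi_{m(\varepsilon)}$ and the Cauchy condition in $T$ for model-independence. The only cosmetic difference is that the paper proves (ii) first and notes that (i) is the special case $T=\Th(\cu M)$, whereas you argue in the opposite order.
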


\begin{proof}   (ii) trivially implies (i).  We prove (ii).  Given $\varepsilon >0$, take the least $m$ so that  for all $n\ge m$,
$$T\models\sup_{\vec x}|\varphi_m(\vec x)-\varphi_n(\vec x)|\dotle\varepsilon/3.$$
Let $\triangle_{\langle\varphi\rangle}(\varepsilon)=\triangle_{\varphi_m}(\varepsilon/3)$ .  Suppose $\cu M_+$ is a pre-metric model of $ T$,
and $\max_{k\le n}d^\cu{M}(a_k,b_k)<\triangle_{\langle\varphi\rangle}(\varepsilon)$.  Then
$$ |[\lim\varphi_m]^{\cu M}(\vec a)-[\lim\varphi_m]^{\cu M}(\vec b)|\le$$
$$|[\lim\varphi_m]^{\cu M}(\vec a)-\varphi_m^{\cu M}(\vec a)| + |\varphi_m^{\cu M}(\vec a)-\varphi_m^{\cu M}(\vec b)| +|\varphi_m^{\cu M}(\vec b)-[\lim\varphi_m]^{\cu M}(\vec b)|\le$$
$$\varepsilon/3 + \varepsilon/3 +\varepsilon/3 =\varepsilon.$$
\end{proof}

The next fact follows at once from Theorem 3.7 in [BBHU].

\begin{fact}  \label{f-reduction-distance}
If $\cu M_+$ is a pre-metric structure, $a\doteq^{\cu{M}}b$ if and only if $d^{\cu{M}}(a,b)=0$.
\end{fact}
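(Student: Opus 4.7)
The plan is to prove the two directions separately, using only the defining clauses of a pre-metric structure (the fact that $d^{\cu M}$ is a pseudo-metric, and the moduli-of-uniform-continuity conditions for each predicate and function symbol with respect to $d$).

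For the forward direction, I would observe that $d$ itself is a binary predicate symbol of $V$, so $d(x,y)$ is an atomic formula. Applying Definition \ref{d-reduced} with $\varphi(x,z):=d(x,z)$ and parameter $\vec{c}:=b$, the hypothesis $a\doteq^{\cu M}b$ yields $d^{\cu M}(a,b)=d^{\cu M}(b,b)=0$, where the last equality uses that $d^{\cu M}$ is a pseudo-metric. This step is essentially a one-liner.

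For the reverse direction, suppose $d^{\cu M}(a,b)=0$. I need to show $\varphi^{\cu M}(a,\vec c)=\varphi^{\cu M}(b,\vec c)$ for every atomic $\varphi(x,\vec z)$ and every $\vec c\in M^{|\vec z|}$. I would first prove by induction on the complexity of terms $t(x,\vec z)$ that $d^{\cu M}(t^{\cu M}(a,\vec c),t^{\cu M}(b,\vec c))=0$. The cases where $t$ is a constant or a variable distinct from $x$ are immediate (distance zero from an element to itself), and the case $t=x$ is exactly the hypothesis. For the inductive step $t=F(t_1,\ldots,t_n)$ with $F\in V$, the inductive hypothesis gives $d^{\cu M}(t_i^{\cu M}(a,\vec c),t_i^{\cu M}(b,\vec c))=0<\triangle_F(\varepsilon)$ for every $\varepsilon>0$, and the modulus-of-uniform-continuity clause for $F$ gives $d^{\cu M}(t^{\cu M}(a,\vec c),t^{\cu M}(b,\vec c))\le\varepsilon$ for every $\varepsilon>0$, hence $=0$.

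Finally, an atomic formula has the form $P(t_1,\ldots,t_n)$ for some predicate symbol $P\in V$ (including $P=d$). The previous step gives $d^{\cu M}(t_i^{\cu M}(a,\vec c),t_i^{\cu M}(b,\vec c))=0<\triangle_P(\varepsilon)$ for every $\varepsilon>0$, so the modulus condition for $P$ yields $|P^{\cu M}(\ldots)-P^{\cu M}(\ldots)|\le\varepsilon$ for every $\varepsilon>0$, and hence $=0$. Taking $P=d$ recovers $d^{\cu M}(a,b)=0\Rightarrow a\doteq^{\cu M} b$ even in the presence of parameters. There is no real obstacle here; the whole argument is a straightforward induction driven by the moduli $\triangle_F,\triangle_P$, which is why the statement is flagged as following at once from Theorem 3.7 of [BBHU].
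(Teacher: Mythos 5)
Your proof is correct. The paper itself gives no argument for this Fact, simply citing Theorem 3.7 of [BBHU], and what you have written is exactly the elementary content of that citation: the forward direction by instantiating the atomic formula $d(x,z)$ at $\vec c=b$ and using $d^{\cu M}(b,b)=0$, and the reverse direction by a term induction driven by the moduli $\triangle_F$ followed by one application of the modulus $\triangle_P$. A small remark: the reverse direction can also be obtained without the term induction by invoking Fact \ref{f-t3.5} (Theorem 3.5 of [BBHU]) directly for the atomic formula $\varphi(x,\vec z)$, since $d^{\cu M}(a,b)=0<\triangle_\varphi(\varepsilon)$ forces $|\varphi^{\cu M}(a,\vec c)-\varphi^{\cu M}(b,\vec c)|\le\varepsilon$ for every $\varepsilon$; your induction is essentially the proof of that fact restricted to the atomic case, so nothing is gained or lost either way.
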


As mentioned above, in [BBHU], the notion of a reduction of a pre-metric structure $\cu M_+$ was defined as the structure obtained by identifying elements $x,y$
when $d(x,y)=0$, rather than when $x\doteq y$.  So the reduction of a pre-metric structure
as defined here is the same as the reduction of a pre-metric structure as defined in [BBHU].  It  follows that
the ultraproduct of a family of metric structures with signature $L$ as defined here is the same as
the ultraproduct of a family of metric structures with signature $L$ as defined in [BBHU].

\begin{fact}  \label{f-metric-ultraproduct}
(By Proposition 5.3 in [BBHU].)  Every ultraproduct of metric structures for $L$ is a metric structure for $L$.
\end{fact}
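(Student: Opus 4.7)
The plan is to reduce the statement to a completeness check and then dispatch that via saturation of ultraproducts. Let $\cu N = \prod_{\cu D} \cu M_i$. By the corollary immediately following Fact \ref{f-metric-theory}, any ultraproduct of pre-metric structures for $L$ is a pre-metric structure for $L$, so $\cu N$ is at least a pre-metric structure for $L$. By construction (Definition \ref{d-ultraproduct}) $\cu N$ is reduced, and by Fact \ref{f-reduction-distance} the pseudo-metric $d^{\cu N}$ is in fact a genuine metric on $N$. So the entire content of the fact to be proved is that $(N,d^{\cu N})$ is a \emph{complete} metric space.

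For completeness I would split into the usual two cases for ultrafilters. If $\cu D$ is principal, concentrated on some $i_0 \in I$, then $\cu N$ is isomorphic to the reduction of $\cu M_{i_0}$, which is already a metric structure for $L$ by hypothesis; the metric on $\cu N$ is then trivially complete. If $\cu D$ is countably incomplete, I would invoke the classical ultraproduct argument (the same one used in the proof of Proposition 5.3 of [BBHU], and in essence the same argument as in [CK66] for the discrete case) showing that $\cu N$ is $\aleph_1$-saturated in the sense of our Section 2.4. Once that is known, Remark \ref{r-saturated-metric} applies: a reduced, $\aleph_1$-saturated pre-metric structure for $L$ is automatically a metric structure for $L$, so in particular $(N,d^{\cu N})$ is complete.

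If one prefers an argument that avoids invoking saturation as a black box, the same idea can be rendered directly as a diagonal construction. Given a $d^{\cu N}$-Cauchy sequence of classes $([a^k]_{\cu D})_{k\in\BN}$, pass to a subsequence with $d^{\cu N}(a^k,a^{k+1}) < 2^{-k}$, use countable incompleteness to choose a decreasing chain $I \supseteq X_1 \supseteq X_2 \supseteq \cdots$ in $\cu D$ with $\bigcap_k X_k = \emptyset$ and $X_k \subseteq \{i \in I : d^{\cu M_i}(a^k_i,a^{k+1}_i) < 2^{-k+1}\}$, and for each $i\in I$ let $k(i)$ be the largest $k$ with $i\in X_k$ (or $\infty$ if $i$ lies in every $X_k$). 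Define $b_i$ to be any point of $M_i$ if $k(i)$ is finite, and the limit in the complete metric space $(M_i,d^{\cu M_i})$ of the then-Cauchy tail $(a^k_i)_{k\in\BN}$ if $k(i)=\infty$. A routine verification using Fact \ref{f-Los} shows $[b]_{\cu D}$ is the required limit in $\cu N$.

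The only genuine obstacle is the $\aleph_1$-saturation step (equivalently, the diagonal construction in the alternative approach): this is the one place the argument uses more than the already-established fact that ultraproducts of pre-metric structures for $L$ are pre-metric structures for $L$. Everything else is bookkeeping with the definitions of ultraproduct, reduction, and pre-metric structure.
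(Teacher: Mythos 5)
Your overall strategy is sound, and it is in substance the argument behind Proposition 5.3 of [BBHU]; the paper itself gives no internal proof of this Fact, only the citation (having checked in the preceding paragraphs that its notion of ultraproduct agrees with that of [BBHU] for metric structures). Your reduction is correctly assembled from results the paper does establish: the ultraproduct is a pre-metric structure for $L$ by the corollary to Fact \ref{f-metric-theory}, it is reduced by Definition \ref{d-ultraproduct}, hence $d^{\cu N}$ is a genuine metric by Fact \ref{f-reduction-distance}, so only completeness remains; and Remark \ref{r-saturated-metric} does finish the job once $\aleph_1$-saturation is known.

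There are, however, two concrete defects. First, the case split ``principal or countably incomplete'' is not exhaustive: a non-principal countably complete ultrafilter (which exists if there is a measurable cardinal) falls into neither case, and its ultraproducts need not be $\aleph_1$-saturated, so the saturation route does not cover the full statement. Second, and more seriously, the explicit diagonal construction as written does not produce a limit: having arranged $\bigcap_k X_k=\emptyset$, every $k(i)$ is finite, so the recipe ``let $b_i$ be any point of $M_i$ when $k(i)$ is finite'' makes $b$ a completely arbitrary element of the product, and no verification via Fact \ref{f-Los} can then show $[b]_{\cu D}$ is the limit. The repair is to set $b_i=a^{k(i)}_i$; since the $X_k$ are decreasing, $i\in X_n$ gives $k(i)\ge n$ and $d^{\cu M_i}(a^n_i,b_i)\le\sum_{m=n}^{k(i)-1}2^{-m+1}\le 2^{-n+2}$, whence $d^{\cu N}([a^n]_{\cu D},[b]_{\cu D})\le 2^{-n+2}\to 0$ by Fact \ref{f-Los}. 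Better still, drop the requirement $\bigcap_k X_k=\emptyset$ entirely: with $X_k=\{i\colon(\forall m\le k)\,d^{\cu M_i}(a^m_i,a^{m+1}_i)<2^{-m+1}\}$, set $b_i=a^{k(i)}_i$ when $k(i)$ is finite and $b_i=\lim_m a^m_i$ (using completeness of each $\cu M_i$) when $k(i)=\infty$. That single construction works for an arbitrary ultrafilter and makes the principal/countably incomplete/countably complete trichotomy unnecessary.
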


\begin{cor}  \label{c-metric-compactness}  (Metric Compactness Theorem)  If $T$ is a metric theory with signature $L$, and every finite subset of $T$ has a metric model,
then $T$ has a metric model.
\end{cor}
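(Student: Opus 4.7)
The plan is to mimic the classical ultraproduct proof of compactness, using Fact \ref{f-metric-ultraproduct} to guarantee that the resulting ultraproduct is itself a metric (not merely pre-metric) structure for $L$.

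First I would let $I$ be the family of all finite subsets of $T$ and, using the hypothesis, pick for each $i\in I$ a metric model $\cu M_i$ of $i$. For each $\varphi\in T$ set $J_\varphi=\{i\in I : \varphi\in i\}$. The collection $\{J_\varphi : \varphi\in T\}$ has the finite intersection property, since $\{\varphi_1,\ldots,\varphi_n\}\in I$ lies in $J_{\varphi_1}\cap\cdots\cap J_{\varphi_n}$. Extend this collection to an ultrafilter $\cu D$ on $I$ and form the ultraproduct $\cu N=\prod_{\cu D}\cu M_i$.

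Next, Fact \ref{f-metric-ultraproduct} gives that $\cu N$ is a metric structure for $L$. For the satisfaction clause, fix $\varphi\in T$; for every $i\in J_\varphi$ we have $\varphi\in i$, hence $\varphi^{\cu M_i}=0$. Since $J_\varphi\in\cu D$, the {\L}o\'s theorem (Fact \ref{f-Los}) yields $\varphi^{\cu N}=\lim_{\cu D}\varphi^{\cu M_i}=0$. Thus $\cu N$ is a metric model of $T$.

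There is really no obstacle here: the two nontrivial ingredients --- preservation of the metric-structure property under ultraproducts, and the fundamental theorem for ultraproducts --- have both been established, so the standard compactness argument goes through verbatim. The only subtlety worth flagging is that an ultraproduct of pre-metric structures need not be complete as a metric space, which is exactly the phenomenon that Fact \ref{f-metric-ultraproduct} rules out in the metric case.
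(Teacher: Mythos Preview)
Your proof is correct and is exactly the argument the paper intends: the corollary is stated immediately after Fact~\ref{f-metric-ultraproduct} with no explicit proof, so the implied argument is the standard ultraproduct proof of compactness (as in Fact~\ref{f-compactness}) together with Fact~\ref{f-metric-ultraproduct} to guarantee the ultraproduct is a metric structure for $L$. Your write-up carries this out accurately.
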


In [BBHU], the definitions of a pre-metric structure and of an ultraproduct
 were  designed to insure that Fact \ref{c-metric-compactness} and the Metric Compactness Theorem hold.
There were pitfalls to avoid in extending the notion of an ultraproduct to general structures.  For example, if one adds
a discontinuous predicate or function to a pre-metric structure and tries to define an ultraproduct by identifying elements at distance $0$
from each other, the added predicate or function symbol would be undefined in the ultraproduct.

Such pitfalls were avoided here by defining the reduction of a general structure to be the general structure formed by identifying $x$ with $y$
when $x\doteq^{\cu M} y$.  Using that notion of reduction, we obtained a well-defined notion of ultraproduct that coincides with the
notion in [BBHU] for  metric structures, satisfies the theorem of {\L}o{\'s} (Fact \ref{f-Los}) in all cases, and leads to the
Compactness Theorem for general structures.

\subsection{Some Variants of Continuous Model Theory}

In this subsection we discuss some cases from the literature where variants of continuous model theory have been developed
in order to study special classes of general structures that share some features with pre-metric structures.  In each of these cases,
one can instead work with the model theory of general structures as developed here.

\begin{ex} \label{e-Eagle} (Infinitary Continuous Logic) Christopher Eagle, in [Ea14] and [Ea15], developed an infinitary logic $\cu{L}_{\omega_1\omega}$
that is analogous to continuous logic, but has much greater power of expression and fails to satisfy the compactness theorem.
The  $\cu{L}_{\omega_1\omega}$-formulas in a vocabulary $V$ are built from the atomic formulas using the connectives and quantifiers of continuous logic
and, in addition, the operations $\sup_m\varphi_m$ and $\inf_m\varphi_m$ whenever $\langle\varphi_m\rangle_{m\in\BM}$ is a sequence of formulas
in which only finitely many variables occur freely.  Given an $\cu L_{\omega_1\omega}$-formula $\varphi(\vec x)$ and a pre-metric structure $\cu M$,
the truth value function $\varphi^{\cu M}\colon M^{|\vec x|}\to[0,1]$ is defined in [Ea15] as one would expect by induction on complexity.

We adopt here the same definition for the truth value $\varphi^{\cu M}$ of an $\cu{L}_{\omega_1\omega}$-formula in a general structure $\cu M$.  Then
$(\cu{M},\varphi^{\cu{M}})$ is a general structure whose vocabulary has an extra $|\vec x|$-ary predicate symbol. However, even if $\cu M$ is a metric structure,
$(\cu{M},\varphi^{\cu{M}})$ is not necessarily a metric structure, and $\varphi^{\cu{M}}$  may even be discontinuous.  We will revisit this concept in Subsection \ref{s-infinitary}.
\end{ex}

\begin{ex} (Geodesic Logic) The paper [Cho17] introduced a variant of continuous model theory, called \emph{geodesic logic}, whose structures
are pre-metric structures with extra functions that are possibly discontinuous.  He used this to obtain approximate fixed point results
and metastability results for certain discontinuous functions.

A \emph{geodesic signature} $G$ consists of a metric signature $H$ over a vocabulary $V$, and a set $Y$ of extra symbols for
possibly discontinuous functions, including a set $\{L_t\colon t\in[0,1]\}$ of  binary function symbols (the linear structure).
 A \emph{geodesic structure} with  signature $G$ is a general structure in our present sense,
that has a vocabulary $V\cup Y$ whose $V$-part is
a pre-metric structure with signature $H$, that  satisfies the following sentences for each $s,t\in[0,1]$:
\begin{eqnarray}
 \sup_x\sup_y[|d(L_s(x,y),L_t(x,y))- |s-t|d(x,y)|\dotle 0], \label{geo1}\\
 \sup_x\sup_y[d(L_t(x,y),L_{1-t}(y,c))\dotle 0]. \label{geo2}
 \end{eqnarray}
Thus a geodesic structure is just a general model of the theory
$$T_G=\met{H}\cup\{(\ref{geo1}),(\ref{geo2})\}.$$
For example, in our setting, Theorem 6.6 (a) of [Cho17] can be stated as follows:

\emph{Let $\lambda\in[0,1]$ and let $G$ be a geodesic signature with distance predicate $d$, and an extra unary function symbol $F$.
Let $\cu M$ be a general model of the theory
\begin{equation} \label{geo}
T_G\cup\{\sup_x[d(Fx,FL_\lambda(x,Fx))\dotle d(x,Fx)]\}.
\end{equation}
In $\cu M$,  when $x_{n+1}=L_\lambda(x_n,Fx_n)$ for all $n$, we have $\lim_{n\to\infty}d(x_n,x_{n+1})=0$.}

The compactness theorem can then be used to get a uniform metastability bound  across all general models of (\ref{geo}) \ (Theorem 6.6 (b) of [Cho17]).
 \end{ex}

\begin{ex}  \label{e-manysorts}  (Sorted Vocabularies)
Many-sorted metric structures are prominent in the literature.  Here we will work exclusively with general structures,
as defined at the beginning of this section, but introduce the notions of a sorted vocabulary,
and of a general structure that respects the sorts in that vocabulary.  A general structure that respects
sorts cannot be a pre-metric structure.
We have the flexibility of starting with a general structure that respects sorts, and adding a new unsorted distance predicate to
form a pre-metric structure that does not respect sorts.
As we will see later, that flexibility will be useful when we consider such topics as unbounded metrics and imaginary elements.
\end{ex}

A \emph{sorted vocabulary} $W$ consists of a set of sorts, sets of finitary predicate  and function symbols,
and a set of constant symbols.  Each argument of a predicate or function symbol is equipped with a sort, and each function and
constant symbol is equipped with a sort for its value. Moreover, $W$  contains a unary predicate symbol $U_\BS$ associated with each sort $\BS$,
and a constant symbol $u$ (for ``unsorted'') that has no sort.

  We say that a general structure $\cu M$ with vocabulary $W$ \emph{respects sorts} if:
\begin{itemize}
\item  The universe $M$ of $\cu M$ contains a family of pairwise disjoint non-empty universe sets $\BS^{\cu M}$ corresponding to the sorts $\BS$ of $W$.
\item For each sort $\BS$ of $W$ and $a\in M$,  $U_\BS^{\cu M}(a)=0$ when $a\in \BS^{\cu M}$, and $U_\BS^{\cu M}(a)=1$ when $a\notin \BS^{\cu M}$.
\item For each $k$-ary predicate symbol $P$ of $W$ and $\vec a\in M^k$,  $P^{\cu M}(\vec a)=1$ when at least one argument is of the wrong sort.
\item For each $k$-ary function symbol $F$ of $W$ with value sort $\BS$ and $\vec a\in M^k$, $F^{\cu M}(\vec a)$  belongs to $\BS^{\cu M}$ when each argument is of the
correct sort, and is $u^{\cu M}$ when at least one argument is of the wrong sort.
\item The value $c^{\cu M}$ of each constant symbol $c$ of sort $\BS$ is an element of $\BS^{\cu M}$.
\item  The constant $u^{\cu M}$ does not belong to any of the sets $\BS^{\cu M}$.
\end{itemize}

The unsorted constant symbol $u$ serves two purposes: It allows one to interpret
each function symbol as a total function rather than as a partial function from a product of sorts to a sort, and it insures that each structure that
respects sorts has an unsorted element.

\begin{lemma}  \label{l-sortless}
Let $W$ be a sorted vocabulary and let $\cu M$ be a general structure with vocabulary $W$.
\begin{itemize}
\item[(i)] If $\cu M$  respects sorts, then the reduction of $\cu M$ also respects sorts.
\item[(ii)]  If $\cu M$ is reduced and respects sorts, then $u^{\cu M}$ is the unique element of $M$ that does not belong to $\BS^{\cu M}$ for any sort $\BS$ of $W$.
\item[(iii)]  There is a set $\rs(W)$ of sentences such that for every reduced structure $\cu M$ with vocabulary $W$, $\cu M$ respects sorts
if and only if $\cu M\models\rs(W)$.
\end{itemize}
\end{lemma}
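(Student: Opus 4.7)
The plan is to prove all three parts by leveraging the atomic-formula characterization of $\doteq^{\cu M}$ from Remark \ref{r-doteq}. For (i), the decisive observation is that $a \doteq^{\cu M} b$ forces $U_{\BS}^{\cu M}(a) = U_{\BS}^{\cu M}(b)$ for every sort $\BS$, since $U_{\BS}(x)$ is atomic. Hence sort membership descends to the $\doteq^{\cu M}$-equivalence classes, and I would set $\BS^{\cu N} := \{[a] : a \in \BS^{\cu M}\}$ in the reduction $\cu N$ and transfer each bullet in the definition of ``respects sorts'' through the reduction map. The only subtle point is that $[u^{\cu M}]$ should lie in no $\BS^{\cu N}$; this follows from $U_{\BS}^{\cu M}(u^{\cu M}) = 1$ together with the just-noted invariance.

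For (ii), my plan is to show that any $a$ in no $\BS^{\cu M}$ satisfies $a \doteq^{\cu M} u^{\cu M}$, from which reducedness yields $a = u^{\cu M}$. The heart of the argument is a short induction on term structure: for every term $t(x, \vec z)$, either $t$ is literally the variable $x$, or $t^{\cu M}(a, \vec c) = t^{\cu M}(u^{\cu M}, \vec c)$ for all tuples $\vec c$. The inductive step uses that both $a$ and $u^{\cu M}$ lie in no sort, so any function symbol that receives one of them at an argument position produces $u^{\cu M}$ under both substitutions. Applied to atomic formulas $P(t_1, \ldots, t_k)$, either no $t_i$ is the variable $x$ (so both sides agree termwise), or some $t_i = x$ (so $P$ receives a wrong-sort argument on both sides and outputs $1$); the case $P = U_{\BS}$ is handled identically using $U_{\BS}^{\cu M}(a) = U_{\BS}^{\cu M}(u^{\cu M}) = 1$.

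For (iii), I would define $\rs(W)$ by transcribing each clause of ``respects sorts'' into continuous-logic sentences. For the easier clauses, include, for each sort $\BS$: sentences forcing $U_{\BS}$ to be $\{0,1\}$-valued (via $\sup_x \min(U_{\BS}(x), 1 \dotle U_{\BS}(x))$), forcing some element to satisfy $U_{\BS}(x) = 0$ (via $\inf_x U_{\BS}(x)$), and forcing $U_{\BS}(u) = 1$; for each pair of distinct sorts $\BS, \BS'$, the disjointness sentence $\sup_x \min(1 \dotle U_{\BS}(x), 1 \dotle U_{\BS'}(x))$; for each predicate $P$ with argument sorts $\BS_1, \ldots, \BS_k$, the sentence $\sup_{\vec x}[\max_i U_{\BS_i}(x_i) \dotle P(\vec x)]$; and for each function $F$ with argument sorts $\BS_1, \ldots, \BS_k$ and value sort $\mathbb{T}$, the sentence $\sup_{\vec x}[U_{\mathbb{T}}(F(\vec x)) \dotle \max_i U_{\BS_i}(x_i)]$. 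The main obstacle is the clause demanding the literal equation $F(\vec a) = u^{\cu M}$ on wrong-sort inputs, which our logic cannot state directly. I would handle this via Leibniz equality by including, for every atomic formula $\varphi(y, \vec z)$, the sentence
\[
\sup_{\vec x}\sup_{\vec z}\bigl[\,|\varphi(F(\vec x), \vec z) - \varphi(u, \vec z)|\; \dotle\; (1 \dotle \max_i U_{\BS_i}(x_i))\,\bigr],
\]
which forces $F(\vec a) \doteq^{\cu M} u^{\cu M}$ under the wrong-sort hypothesis; reducedness then upgrades this Leibniz equality to $F(\vec a) = u^{\cu M}$. With these axioms in hand, both directions of the equivalence are routine.
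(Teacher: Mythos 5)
Your proposal is correct and follows essentially the same route as the paper: the paper disposes of (ii) by asserting that every atomic formula in which $x$ occurs evaluates to $1$ at both $a$ and $u^{\cu M}$ (the term induction you spell out is exactly what justifies that assertion), and for (iii) it likewise points to Remark \ref{r-doteq} to express $F(\vec x)\doteq u$ by a set of formulas and uses reducedness to upgrade this to equality. You have simply filled in details that the paper leaves implicit (the paper's proof of (i) is just ``clear,'' and its (iii) gives only the disjointness sentence as a sample).
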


\begin{proof}  (i) is clear.

(ii): Let $a$ be an element of $M$ that does not belong to $\BS^{\cu M}$ for any sort $\BS$ of $W$.  Then for any tuple $\vec c$ in $M$ and atomic formula
$\varphi(x,\vec c)$ in which the variable $x$ occurs, we have $\varphi^{\cu M}(a,\vec c)=1$ and $\varphi^{\cu M}(u^{\cu M},\vec c)=1$.
Thus $a\doteq^{\cu M} u^{\cu M}$, so (ii) holds.

(iii): Each of the requirements for respecting sorts can be expressed in $\cu M$ be a set of sentences.  For example, the requirement
that two sorts $\BS_1$ and $\BS_2$ are pairwise disjoint is expressed by the sentence
$$\sup_x(1\dotle\max(U_{\BS_1}(x),U_{\BS_2}(x))).$$
By Remark \ref{r-doteq}, $F(x,\vec y)\doteq u$ can be expressed in $\cu M$ by a set  of formulas.
One can use that to show that when $\cu M$ is reduced, the requirement that $F^{\cu M}(x,\vec y)=u^{\cu M}$ when $x$ does not have sort
$\BS$ can be expressed by a set of sentences.
The other requirements are similar.  With a bit more work, one can take each sentence in $\rs(W)$ to be a
finite set of $\sup$ quantifiers followed by a quantifier-free formula.
 \end{proof}

\begin{df}  \label{d-sorted-metric}
Given a sorted vocabulary $W$,
a \emph{sorted metric signature} $L$ over  $W$ consists of a distinguished distance predicate symbol $d_\BS$
for each sort $\BS$ of $W$, and a modulus of uniform continuity for each predicate and function symbol with respect to these distance predicates.
By a \emph{sorted metric (or pre-metric) structure}
 with the sorted signature $L$ we mean a reduced general structure $\cu M$ with vocabulary $W$ that respects sorts,
 such that in $\cu M$, each $d_\BS$ is a complete metric (or metric) on $\BS$, and each symbol of $W$ satisfies the modulus of uniform
 continuity given by $L$ with respect to these metrics.
\end{df}

We will return to sorted metric structures in Subsection \ref{s-manysorted}.

\begin{ex} \label{e-bounded}  (Bounded Continuous Logic)

The version of continuous logic as defined in Section 2 of [BBHU], which we will here call \emph{bounded continuous logic},
 is slightly broader than the $[0,1]$-valued version.  (See also [BY09] and [Fa]).
 A \emph{bounded vocabulary} is a vocabulary $V$ equipped with a bounded real interval $[r^P,s^P]$ where $r^P<s^P$, for each predicate symbol $P\in V$.
The metric signatures, continuous formulas, and metric structures with the bounded vocabulary $V$ are as in the $[0,1]$-valued case, except that  the bounds
of uniform continuity are maps from $(0,\infty)$ into $(0,\infty)$,
the distinguished metric $d$ is a metric with values in $[0,s^d]$, the logical connectives are continuous functions from $\BR^n$ into $\BR$, and the
predicates $P$ have truth values in the bounded interval $[r^P,s^P]$.  The truth values of formulas in a bounded metric structure are defined by induction
on complexity of formulas in the usual way, and the truth values are in $\BR$.
Many-sorted bounded metric structures are treated in a similar way.

One can regard $[0,1]$-valued continuous logic as the special case of bounded continuous logic where
where each predicate symbol $P$, including the distinguished metric $d$, has the interval $[0,1]$.
The model theory of $[0,1]$-valued metric structures carries over to bounded metric structures in a routine way.
As pointed out in [BBHU], there is no real loss of generality in working  with $[0,1]$-valued continuous logic, and for
simplicity, [BBHU] develops things in detail only for the $[0,1]$-valued case.

One can convert any
bounded metric structure $\cu M$ to a $[0,1]$-valued metric structure $\cu M^1$ by replacing each
predicate  $P^\cu M$ by the $[0,1]$-valued predicate $P^{\cu M^1}$ where
$$P^{\cu M^1}(\vec v)=(P^\cu M(\vec v)-r^P)/(s^P-r^P).$$
It then turns out that a predicate $P\colon[0,1]^n\to[0,1]$ is definable in $\cu M$ if and only if it is definable in $\cu M^1$.
In that sense, the $[0,1]$-valued language has the same expressive power as the bounded language.  But in many applications, the formulas
in the bounded language will be easier to understand than the corresponding formulas in the $[0,1]$-valued language.

In keeping with the perspective of this paper, one can also consider bounded general structures with a given bounded vocabulary, by simply dropping the
requirement that there is a distinguished metric and a signature giving moduli of uniform continuity.
However, we retain the bounded vocabulary that equips each predicate symbol $P$ with a bounded real interval.
Formulas and truth values are defined in the same way for bounded general structures as they are for bounded metric structures in [BBHU].

\end{ex}

\begin{ex}  \label{e-unbounded} (Unbounded Continuous Logic)
Many important mathematical structures, such as Banach spaces, valued fields, and $C^*$-algebras, are structures with an unbounded complete metric $d$,
functions that are uniformly continuous with respect to $d$, predicates with truth values in $\BR$ that are uniformly continuous with respect to $d$, and constant symbols.
We will call such structures   \emph{unbounded metric structures}.

The inductive definition of the truth value of a formula in a bounded metric structure does not automatically carry over to unbounded metric structures, because the
$\sup$ of a set of values may
not exist in $\BR$.  In the literature, there are at least three approaches to the study of an unbounded metric structure $\cu N$ by applying the existing model theory
to some metric structure that is associated with $\cu N$.

One approach is to add a constant symbol $0$ for a distinguished element  of $\cu N$, and look at the many-sorted  bounded metric structure $\cu M$ that has a sort $\BS_m$
for the closed $m$-ball around $0$ for each $0<m\in\BN$, and, for each $0<m<k$, a function $i_{mk}$ of sort $\BS_m\to\BS_k$
for the inclusion map.  For simplicity, suppose $\cu N$ has no function symbols.  For every constant symbol $c$ and $n$-ary predicate symbol $P$ of $\cu N$, and every sort
$\BS_m$,  $\cu M$ will have a constant symbol $c_m$ and
an $n$-ary predicate symbol $P_m$ of sort $\BS_m$.  The uniform continuity property of  $\cu N$ guarantees that for each predicate symbol $P$ of $\cu N$,
and each $m>0$, $P^\cu N$ maps the closed $m$-ball around $0$  into a bounded interval $[r^P_m,s^P_m]$.  For each predicate symbol $P$, the bounded vocabulary of
$\cu M$ will be equipped with both the arity of $P$ and the bounded interval $[r^P_m,s^P_m]$.

In [Fa], $C^*$-algebras are treated as many-sorted bounded metric structures as in the preceding paragraph.
In order to make things work properly, axioms are explicitly added to guarantee that
the inclusion map sends the sort $\BS_m$ onto the closed $m$-ball in $\BS_k$.
In [BBHU], Sections 15 and 17, Hilbert spaces and Banach lattices are treated in a similar way
(see also Remark 4.6 in [BU], and [BY09]).  In those cases, in any metric model of the many-sorted theory,
the inclusion map sends the sort $\BS_m$ onto the closed $m$-ball around $0$ in $\BS_k$.

A second, and simpler, approach is to look at the single-sorted $[0,1]$-valued metric structure
formed by restricting the universe of $\cu N$ to the unit ball around $0$, and normalizing each predicate symbol so that it takes truth values in $[0,1]$.
[BBHU] show that for Hilbert and $L^p$ spaces, the many-sorted  metric structure has essentially the same model-theoretic properties as the single-sorted
structure on the unit ball, and are able to successfully simplify things by working exclusively with the unit ball.

However, as pointed out in [BY08] and [BY14],
there are other cases, such as in non-archimedean valued fields, where neither of the above approaches is adequate.
The many-sorted structure $\cu M$ may have definable predicates that would not be considered definable in the original structure $\cu N$.
For example, if $m<k$, the distance $\dist(x,\sa B)$ from a point $x\in\BS_{k}$ to the set $\sa B=\range(i_{mk})$ might not be definable in $\cu N$, but is
always definable in $\cu M$ by the formula $\inf_y d_k(x,i_{mk}(y))$.

To deal with that problem, Ben Yaacov [BY08]
developed a variant of continuous logic with an unbounded metric and modified notions of formula and ultraproduct.
That logic has a Lipschitz \emph{gauge function} $\nu\colon\cu N\to[0,\infty)$
that is thought of as the size of an element, and determines the closed $m$-balls $B_m=\{x\colon\nu(x)\le m\}$.
In most examples, the gauge $\nu(x)$ is $d(x,0)$, the distance from $x$ to a distinguished point $0$.  The universe of the modified pre-ultraproduct is the set of elements $x$
of the usual pre-ultraproduct such that $\langle\nu_i(x_i)\rangle_{i\in I}$ is bounded.  This means that the gauge of $x$ is finite,
which in most examples means that $d(x,0)$ is finite.
Using that logic, [BY08] converted an unbounded metric structure
$\cu N$ into an ordinary  $[0,1]$-valued metric structure $\cu N^\infty$  via his \emph{emboundment} construction.
The intuitive idea is to add a point at infinity to $\cu N$, and to add a new metric to form a metric structure $\cu N^\infty$
such that the theories of $\cu N$ and $\cu N^\infty$ have the same model-theoretic properties.
The reason for doing that is to
study the unbounded metric structure $\cu N$ by applying ordinary continuous model theory to $\cu N^\infty$.

The logic developed in [BY08] is in many ways equivalent to the logic initiated in 1976 by Henson [He], and developed further by Henson and Iovino [HI],
and  Dueñez and Iovino [DI].  That approach uses a similar notion of unbounded metric structure and modified ultraproduct as [BY08],
but with a different notion of formula, and a semantics based on approximate truth.  Another form of unbounded continuous logic, that uses
a similar notion of unbounded metric structure and modified ultraproduct but a different notion of formula, is developed in [Lu].

The bounded continuous logic of [BBHU] is better suited for applications than either of the logics developed in [BY08] or in [HI] and [DI], because  the notions of
formula and truth value  are simpler and more natural, and the formulas are easier to understand, in [BBHU] than in the other approaches.  For that reason,
it desirable to find a way to use bounded continuous logic to study unbounded metric structures when possible.

We will return to unbounded metric structures in Subsection \ref{s-unbounded} below.
\end{ex}

\section{Turning General Structures into Metric Structures} \label{s-turning}

In this section we define the key notion of a pre-metric expansion, and show that for every theory $T$
there exists a pre-metric expansion of $T$.

\subsection{Definitional Expansions}

In this subsection we introduce definitional expansions of a theory, and in the next subsection we will introduce pre-metric expansions as a special case.

\begin{df} \label{d-definitional-expansion}  Let $T$ be a general theory in a vocabulary $V$,  let $D$ be a  predicate symbol
that may or may not belong to $V$, and let $V_D=V\cup\{ D\}$.
 A \emph{definitional expansion} of  $T$ over $V_D$ is a  theory $T_e$ with vocabulary $V_D$
such that for some sequence $\langle d\rangle=\langle d_m(\vec x)\rangle_{m\in\BN}$ of formulas of $V$ that is Cauchy in $T$:
\begin{itemize}
\item[(i)]   For every general model $\cu M$ of $T$, $\cu M_e=(\cu M,[\lim d_m]^{\cu M})$ is the a unique expansion of $\cu M$ to a general model of $T_e$.
\item[(ii)]  Every general model of $T_e$ is equal to $(\cu M,[\lim d_m]^{\cu M})$ where $\cu M$ is a general model of $T$.
\end{itemize}
\end{df}

We say that the sequence $\langle d \rangle$ \emph{approximates} $D$ in $T_e$.  Note that if $\langle d \rangle$ \emph{approximates} $D$ in $T_e$,
then so does every subsequence of $\langle d\rangle$.  Therefore, for every definitional expansion $T_e$ of $T$ over $V_D$,
there is an exponentially Cauchy sequence of formulas of $V$ that approximates $D$ in $T_e$.

\begin{lemma}  \label{l-Te-axioms}  (Axioms for $T_e$)
Suppose $T_e$ is a definitional expansion of $T$, and $\langle d_m(x,y)\rangle_{m\in\BN}$ is exponentially Cauchy in $T$ and approximates $D$ in $T_e$.
Let $T'_e$ be the union of $T$ and the set of sentences
\begin{equation} \label{e-Te}
\{\sup_x\sup_y (|d_m(x,y)-D(x,y)|\dotle 2^{-m})\colon m\in\BN\}.
\end{equation}
Then $T_e$ is equivalent to $T'_e$.
\end{lemma}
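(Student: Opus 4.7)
The plan is to verify the two inclusions $T_e\models T'_e$ and $T'_e\models T_e$ at the level of general models, using only two facts: that $\langle d_m\rangle$ is exponentially Cauchy in $T$, and that by Definition \ref{d-definitional-expansion} the general models of $T_e$ are precisely the structures $(\cu M,[\lim d_m]^{\cu M})$ with $\cu M\models T$.

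For the direction $T_e\models T'_e$, I would take a general model $\cu M_e$ of $T_e$. By clause (ii) of Definition \ref{d-definitional-expansion}, $\cu M_e=(\cu M,[\lim d_m]^{\cu M})$ for some $\cu M\models T$, so the $T$-sentences in $T'_e$ hold in $\cu M_e$ (the $D$ symbol does not occur in them). For the remaining axioms of $T'_e$, fix $m\in\BN$ and $\vec a,\vec b\in M$. The exponential Cauchy hypothesis gives $|d_m^{\cu M}(\vec a,\vec b)-d_k^{\cu M}(\vec a,\vec b)|\le 2^{-m}$ for every $k\ge m$; letting $k\to\infty$ and using $D^{\cu M_e}(\vec a,\vec b)=[\lim d_m]^{\cu M}(\vec a,\vec b)=\lim_k d_k^{\cu M}(\vec a,\vec b)$ yields $|d_m^{\cu M}(\vec a,\vec b)-D^{\cu M_e}(\vec a,\vec b)|\le 2^{-m}$. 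Taking $\sup$ over $\vec a,\vec b$, the sentence $\sup_x\sup_y(|d_m(x,y)-D(x,y)|\dotle 2^{-m})$ has truth value $0$ in $\cu M_e$.

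For the reverse direction $T'_e\models T_e$, I would take any general model $\cu N$ of $T'_e$, and write $\cu N=(\cu M,D^{\cu N})$ where $\cu M$ is the $V$-part. Since $T\subseteq T'_e$, Remark \ref{r-formula-expansion} gives $\cu M\models T$. The axioms (\ref{e-Te}) then force $|d_m^{\cu M}(\vec a,\vec b)-D^{\cu N}(\vec a,\vec b)|\le 2^{-m}$ uniformly in $\vec a,\vec b$ and for every $m$, so $D^{\cu N}(\vec a,\vec b)=\lim_m d_m^{\cu M}(\vec a,\vec b)=[\lim d_m]^{\cu M}(\vec a,\vec b)$. Hence $\cu N$ coincides with the unique expansion $(\cu M,[\lim d_m]^{\cu M})$ singled out by clause (i) of Definition \ref{d-definitional-expansion}, which is a general model of $T_e$.

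I do not anticipate a real obstacle here; the argument is purely a bookkeeping verification. The only point requiring any care is to observe that $[\lim d_m]^{\cu M}$ is well-defined for every $\cu M\models T$ (because $\langle d_m\rangle$ is Cauchy in $T$), and that clauses (i) and (ii) of Definition \ref{d-definitional-expansion} together pin down the models of $T_e$ as exactly the expansions of models of $T$ by the limit predicate---which is precisely what (\ref{e-Te}) also axiomatizes, via the exponential Cauchy rate.
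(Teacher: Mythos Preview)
Your proposal is correct and follows essentially the same approach as the paper. The paper's proof is simply more compressed: it states the biconditional ``$\cu N$ satisfies (\ref{e-Te}) if and only if $D^{\cu N}=[\lim d_m]^{\cu M}$'' without proof and then invokes clauses (i) and (ii) of Definition \ref{d-definitional-expansion} directly, whereas you spell out both directions of that biconditional (in particular, you make explicit where the exponential Cauchy hypothesis is used to get the $2^{-m}$ bound).
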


\begin{proof}  Let $\cu N$ be a general structure with vocabulary $V_D$, and let $\cu M$ be the $V$-part of $\cu N$.
Then $\cu N$ satisfies (\ref{e-Te}) if and only if $D^{\cu N}=[\lim d_m]^{\cu M}$.
By \ref{d-definitional-expansion} (i), every general model of $T'_e$ is a general model of $T_e$.
By \ref{d-definitional-expansion} (ii), every general model of $T_e$  is a general model of $T'_e$.
\end{proof}

\begin{rmk} \label{r-definitional-expansion} Suppose $T_e$ is a definitional expansion of $T$ with vocabulary $V_D$.
\begin{itemize}
\item[(i)] For each sequence $\langle d_m\rangle_{m\in\BN}$ of formulas of $V$ that approximates $D$ in $T_e$,
and each general model $\cu M$ of $T$,  $D^{\cu M_e}=[\lim d_m]^{\cu M}$ is defined by $\langle d\rangle$ in $\cu M$.
\item[(ii)] If $T_e,T_f$ are definitional expansions of $T$ with vocabulary $V_D$, and there is a sequence of formulas of $V$ that approximates $D$
in both $T_e$ and $T_f$, then $T_e$ and $T_f$ are equivalent.
\item[(iii)]  If $T\subseteq U$, then $U_e:=T_e\cup U$ is a definitional expansion of $U$, every sequence that approximates $D$ in $T_e$ approximates
$D$ in $U_e$, and for every general model $\cu M$ of $U$, the definitional expansion $\cu M_e$ of $\cu M$ is the same for $U_e$ as for $T_e$.
\item[(iv)]  For each general model $\cu M$ of $T$, $\Th(\cu M_e)$ is a definitional expansion of $\Th(\cu M)$.
\end{itemize}
\end{rmk}

The next remark says that definitional expansions are preserved under the addition of constant symbols.

\begin{rmk}  \label{r-definitional-expansion-parameters}
Suppose $V'=V\cup C$ where $C$ is a set of constant symbols.
If $T_e$ is a definitional expansion of the theory $T$ with vocabulary $V_D$, then
$T_e$ is still a definitional expansion of $T$ with vocabulary $V'_D$.
\end{rmk}

\begin{lemma}  \label{l-reduced-Me}
Suppose $T_e$ is a definitional expansion of a theory $T$, and $\cu M$ is a general model of $T$.
\begin{itemize}
\item[(i)]  $(\doteq^{\cu M}) = (\doteq^{\cu M_e})$.
\item[(ii)] $\cu{M}_e$ is reduced if and only if $\cu{M}$ is reduced.
\item[(iii)]  If  $\cu M'$ is the reduction of $\cu M$, then $\cu{M}'_e$ is the reduction of $\cu{M}_e$.
\item[(iv)]  If $\cu{M}'$ is a general model of $T$ and $h\colon\cu{M}\cong\cu{M}'$, then $h\colon\cu{M}_e\cong\cu{M}'_e$.
\end{itemize}
\end{lemma}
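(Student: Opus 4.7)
The plan is to fix, as permitted by Definition~\ref{d-definitional-expansion} and the remark following it, an exponentially Cauchy sequence $\langle d_m\rangle$ of $V$-formulas that approximates $D$ in $T_e$, so that $D^{\cu M_e}=[\lim d_m]^{\cu M}$. The argument rests on a preliminary observation, which I would state and prove at the outset (or cite as implicit in the treatment of reductions and isomorphisms in Section~2.1): if $a\doteq^{\cu M} b$, then $\varphi^{\cu M}(a,\vec c)=\varphi^{\cu M}(b,\vec c)$ for every $V$-formula $\varphi(x,\vec z)$ and tuple $\vec c$; and analogously, an embedding $g\colon\cu N\to\cu N'$ whose image is dense modulo $\doteq^{\cu N'}$ preserves the value of every formula. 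Both are routine inductions on formula complexity, with the atomic case handled by definition (respectively by the embedding hypothesis), connectives by continuity, and the quantifier step by the fact that $\sup_z\psi(a,z,\vec c)=\sup_z\psi(b,z,\vec c)$ whenever the two functions of $z$ agree pointwise.

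For part~(i), the inclusion $\doteq^{\cu M_e}\subseteq\doteq^{\cu M}$ is immediate, because every atomic $V$-formula is also an atomic $V_D$-formula. For the reverse, given $a\doteq^{\cu M} b$, I need only verify agreement on atomic $V_D$-formulas of the form $D(t_1,t_2)$ with $V$-terms $t_i$. Writing $D^{\cu M_e}(\cdot,\cdot)=\lim_m d_m^{\cu M}(\cdot,\cdot)$ and applying the preliminary fact to each $V$-formula $d_m(t_1(x,\vec z),t_2(x,\vec z))$ reduces this to an equality of convergent sequences, hence of their limits. Part~(ii) is then a restatement of (i), since $\cu M_e$ and $\cu M$ share the same universe set.

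For~(iii), let $\pi\colon\cu M\to\cu M'$ denote the reduction map. By~(i) the equivalence relation used to form the reduction of $\cu M_e$ coincides with $\doteq^{\cu M}$, so that reduction has underlying set $M'$ and $V$-part $\cu M'$. Since $\pi$ is an embedding of $\cu M$ onto $\cu M'$, the preliminary fact gives $d_m^{\cu M}=d_m^{\cu M'}\circ\pi$ for each~$m$, and passing to limits yields $D^{\cu M_e}=D^{\cu M'_e}\circ\pi$; by Remark~\ref{r-definitional-expansion}(i) applied to the general model $\cu M'\models T$, this identifies the reduction of $\cu M_e$ with $\cu M'_e$. For~(iv), the hypothesis $h\colon\cu M\cong\cu M'$ makes $h$ an embedding whose image is dense modulo $\doteq^{\cu M'}$, so the same fact gives $d_m^{\cu M}(\vec a)=d_m^{\cu M'}(h(\vec a))$ for every $m$ and $\vec a$; taking limits produces a $V_D$-embedding $h\colon\cu M_e\to\cu M'_e$, and the density condition required for $h\colon\cu M_e\cong\cu M'_e$ transfers from $\doteq^{\cu M'}$ to $\doteq^{\cu M'_e}$ via~(i). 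The only real obstacle is the preliminary fact itself: the induction is routine, but the term case demands care, since $\doteq^{\cu M}$ is defined via atomic formulas directly rather than as a congruence, so one must first verify that $u_i\doteq^{\cu M} v_i$ implies $F^{\cu M}(\vec u)\doteq^{\cu M} F^{\cu M}(\vec v)$ before the formula induction proceeds smoothly.
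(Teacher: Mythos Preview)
Your proposal is correct and follows essentially the same approach as the paper: both arguments hinge on the fact that $a\doteq^{\cu M} b$ forces agreement on the (non-atomic) $V$-formulas $d_m$, then pass to the limit to handle $D$. The paper uses this fact tacitly in (i) and (iii), whereas you isolate it explicitly as a preliminary induction on formula complexity; for (iv) the paper takes the minor shortcut of first reducing to the case where $\cu M,\cu M'$ are reduced (via (ii) or (iii)), so that $h$ becomes a bijective embedding, while you work directly with the definition of $h\colon\cu M\cong\cu M'$ and invoke your second preliminary fact about embeddings surjective modulo $\doteq$.
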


\begin{proof}  Let $\langle d_m(\vec x)\rangle_{m\in\BN}$ approximate $D$ in $T_d$.

(i): It is trivial that $x\doteq^{\cu{M}_e} y$ implies $x\doteq^{\cu{M}} y$.  Suppose $x\doteq^{\cu{M}} y$.
Consider an atomic formula $D(\vec{\tau}(x,\vec z))$, where $\vec \tau$ is a tuple of terms, and none of the variables $x, y,\vec{z}$ occur freely
in the any of the approximating formulas $d_m$.
Then for all $\vec z$ in $\cu M_e$ we have
$$ D(\vec{\tau}(x,\vec z))=\lim_{m\to\infty} d_m(\vec{\tau}(x,\vec z))=\lim_{m\to\infty} d_m(\vec{\tau}(y,\vec z))=D(\vec{\tau}(y,\vec z)),$$
so $x\doteq^{\cu{M}_e} y.$  Therefore  $(\doteq^{\cu{M}_e})=(\doteq^{\cu{M}})$.

(ii): Expansions of reduced structures are always reduced.  Suppose $\cu M_e$ is reduced. By the proof of (ii), if $x\doteq^{\cu M}y$ then $x\doteq^{\cu N} y$, and
since $\cu{M}_e$ is reduced, $x=y$.  Hence $\cu{M}$ is reduced.

(iii)
For each $x\in M$ let $x'\in M'$ be the equivalence class of $x$ under $\doteq^{\cu M}$.  Then
$$D^{\cu{M}_e}(\vec x)=\lim_{m\to\infty} d_m^{\cu{M}}(\vec x)=\lim_{m\to\infty} d_m^{\cu{M}'}(\vec x')=D^{\cu{M}'_e}(\vec x'),$$
so $\cu{M}'_e$ is the reduction of $\cu{M}_e$.

(iv):  By (ii), we may assume that $\cu M, \cu M'$ are reduced.  Then any isomorphism $h\colon\cu{M}\cong\cu{M}'$ sends $[\lim d_m]^{\cu M}$ to $[\lim d_m]^{\cu M'}$.
\end{proof}

\begin{prop}  \label{p-expansion-ultraproduct}  (Definitional expansions commute with ultraproducts.)
Suppose $T_e$ is a definitional expansion of $T$, $\cu D$ is an ultrafilter over a set $I$, and $\cu M_i\models T$ for each $i\in I$.
Then $(\prod^{\cu D}\cu M_i)_e=\prod^{\cu D}((\cu M_i)_e)$, and $(\prod_{\cu D}\cu M_i)_e=\prod_{\cu D}((\cu M_i)_e)$.
\end{prop}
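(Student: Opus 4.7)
The plan is to use Lemma \ref{l-Te-axioms} to reduce both equalities to a uniqueness statement, then derive the ultraproduct version from the pre-ultraproduct version via Lemma \ref{l-reduced-Me}~(iii). First I would pass to a subsequence and assume that the sequence $\langle d_m(\vec x)\rangle_{m\in\BN}$ approximating $D$ is exponentially Cauchy in $T$, so by Lemma \ref{l-Te-axioms}, $T_e$ is equivalent to $T$ together with the sentences
$$\sigma_m \; := \; \sup_{\vec x}\sup_{\vec y}\bigl(|d_m(\vec x,\vec y)-D(\vec x,\vec y)|\dotle 2^{-m}\bigr), \qquad m\in\BN.$$

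For the pre-ultraproduct equality, the key observation is that both $(\prod^{\cu D}\cu M_i)_e$ and $\prod^{\cu D}\bigl((\cu M_i)_e\bigr)$ share the same $V$-part, namely the pre-ultraproduct $\cu M' = \prod^{\cu D}\cu M_i$, since the universe (cartesian product) and the interpretations of the constant, function, and predicate symbols of $V$ are identical in both. Thus what must be checked is that they agree on $D$. Since each $(\cu M_i)_e \models T_e$, by Fact \ref{f-Los}---applied in the form that any sentence has the same truth value in the pre-ultraproduct as in its reduction, using Remark \ref{r-reduction}---we get that $\cu M' \models T$ and that $\sigma_m$ holds in $\prod^{\cu D}\bigl((\cu M_i)_e\bigr)$ for every $m$. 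Consequently, the interpretation $D^{\prod^{\cu D}(\cu M_i)_e}$ satisfies $|d_m^{\cu M'}(\vec b)-D^{\prod^{\cu D}(\cu M_i)_e}(\vec b)|\le 2^{-m}$ for every tuple $\vec b$ and every $m$, so letting $m\to\infty$ gives $D^{\prod^{\cu D}(\cu M_i)_e} = [\lim d_m]^{\cu M'} = D^{(\cu M')_e}$. The uniqueness clause in Definition \ref{d-definitional-expansion}~(i) then yields $(\prod^{\cu D}\cu M_i)_e = \prod^{\cu D}\bigl((\cu M_i)_e\bigr)$.

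For the ultraproduct equality, recall that by Definition \ref{d-ultraproduct} the ultraproduct is the reduction of the pre-ultraproduct. Applying Lemma \ref{l-reduced-Me}~(iii) to $\cu M'=\prod^{\cu D}\cu M_i \models T$ tells us that reduction commutes with definitional expansion, so taking reductions on both sides of the pre-ultraproduct equality yields
$$\bigl(\textstyle\prod_{\cu D}\cu M_i\bigr)_e \; = \; \text{reduction of } (\cu M')_e \; = \; \text{reduction of } \textstyle\prod^{\cu D}\bigl((\cu M_i)_e\bigr) \; = \; \textstyle\prod_{\cu D}\bigl((\cu M_i)_e\bigr).$$

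The only delicate point is the first step: deducing from Fact \ref{f-Los} (which is stated for the reduction) that the sentences $\sigma_m$ and the sentences of $T$ hold in the pre-ultraproduct $\prod^{\cu D}\bigl((\cu M_i)_e\bigr)$. This is immediate because the pre-ultraproduct and the ultraproduct are isomorphic in the sense of Remark \ref{r-reduction}, hence elementarily equivalent, so any sentence has the same truth value in both; once that is noted, the entire argument reduces to invoking the uniqueness of the definitional expansion.
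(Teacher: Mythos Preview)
Your proof is correct and follows essentially the same route as the paper: both arguments verify that $\prod^{\cu D}((\cu M_i)_e)$ is a general model of $T_e$ whose $V$-part is $\prod^{\cu D}\cu M_i$, then invoke the uniqueness clause of Definition~\ref{d-definitional-expansion}, and finally pass to reductions via Lemma~\ref{l-reduced-Me}~(iii). The only difference is cosmetic: the paper appeals directly to Definition~\ref{d-definitional-expansion}~(ii) (a model of $T_e$ is automatically $(\cu M)_e$ for its $V$-part $\cu M$), whereas you unpack this via the explicit axiomatization of Lemma~\ref{l-Te-axioms}; your extra care about transferring {\L}o\'s from the ultraproduct to the pre-ultraproduct is a point the paper leaves implicit.
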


\begin{proof}  By Fact \ref{f-Los}, $\prod^{\cu D}\cu M_i$, $\prod_{\cu D}\cu M_i$ are models of $T$, and
$\prod^{\cu D}((\cu M_i)_e)$, $\prod_{\cu D}((\cu M_i)_e)$ are models of $T_e$.
$(\prod^{\cu D}\cu M_i)_e=\prod^{\cu D}((\cu M_i)_e)$ because $\prod^{\cu D}\cu M_i$ is the $V$-part of $\prod^{\cu D}((\cu M_i)_e)$.
Therefore, by  Lemma \ref{l-reduced-Me} (ii), $(\prod_{\cu D}\cu M_i)_e=\prod_{\cu D}((\cu M_i)_e)$.
\end{proof}

\subsection{Pre-metric Expansions}

We assume hereafter that $D$ is a binary  predicate symbol.

\begin{df} \label{d-metric-expansion}
Let $T$ be a theory in a vocabulary $V$.
We say that $T_e$ is a pre-metric expansion of $T$ (with signature $L_e$) if:
\begin{itemize}
\item[(i)]  $(T_e,L_e)$ is a metric theory whose signature $L_e$ has vocabulary $V_D$ and distance predicate $D$.
\item[(ii)] There is a Cauchy sequence $\langle d\rangle$ of formulas in $T$ such that
the general models of $T_e$ are exactly the structures
of the form $\cu M_e=(\cu M,[\lim d_m]^{\cu M})$, where $\cu M$ is a general model of $T$.
\end{itemize}
\end{df}

Since $(T_e,L_e)$ is a metric theory, we have $T_e\models \met(L_e)$, and every general model of $T_e$ is the downgrade of a pre-metric structure with signature $L_e$.
Condition (ii) in the above definition just says that $T_e$ is a definitional expansion of $T$.
So each of the results \ref{l-Te-axioms} -- \ref{p-expansion-ultraproduct} hold for pre-metric expansions because they hold for all definitional expansions.

We call $(\cu M_e,L_e)$ the \emph{pre-metric expansion of} $\cu M$ for $T_e$, and call $\cu M$ the \emph{non-metric part} of $\cu M_e$.
We abuse notation by using $\cu M_e$ to denote both the pre-metric expansion $(\cu M_e,L_e)$ of $\cu M$, and its downgrade  $\cu M_e$.
We call a sequence $\langle d\rangle$ that approximates $D$ in $T_e$ an \emph{approximate distance for} $T_e$, and also for $\cu M_e$.
Every pre-metric expansion of $T$ has approximate distances, but they are not unique.

Here are three rather trivial examples of pre-metric expansions.

\begin{ex} \label{e-no-predicates}
Suppose $V$ has no predicate symbols.  Then up to equivalence, the unique pre-metric expansion of $T$ is the theory $T_e=T\cup\{\sup_x\sup_y D(x,y)\dotle 0\}$
in which the distance between any two elements is $0$.
Every reduced model of $T$ or of $T_e$ is a one-element structure.
\end{ex}

\begin{ex}  \label{e-metric-expansion-trivial}    Suppose the predicate symbol $D$ already belongs to $V$,
$L$ is a metric signature over $V$ with distance predicate $D$, and $T\models\met(L)$.  Let $T_e$ be the metric theory with signature $L$
and the same set of sentences as $T$.  Then $T_e$
is a pre-metric expansion of $T$,  and for each general model $\cu M$ of $T$, the upgrade of $\cu M$ to $L$ is the pre-metric expansion of $\cu M$ for $T_e$.
\end{ex}

\begin{ex} \label{e-metric-expsansion-discrete}
If there is a formula $D(x,y)$ in the vocabulary $V$ such that $D^{\cu M}$ is $\doteq^{\cu M}$ in every general model $\cu M$ of $T$, then $T$ has a pre-metric expansion
with the distinguished distance $D$ and the trivial moduli of uniform continuity.  We call such a pre-metric expansion \emph{discrete}.
A theory $T$ has a discrete pre-metric expansion if and only if
 $T$ has a Cauchy sequence of formulas $\langle d_m(x,y)\rangle_{m\in\BN}$ such that
in every reduced model $\cu N$ of $T$, $[\lim d_m]^{\cu{N}}$ is the discrete metric $=^{\cu N}$.
(Hint: Take  $d_m(x,y)$ to be exponentially Cauchy, so that $d_2$ is always within $1/4$ of the limit, and take $D(x,y)=C(d_2(x,y))$ for an appropriate connective $C$.)
\end{ex}

Note that a classical structure without equality (where every atomic formula has truth value $0$ or $1$) will not necessarily have a discrete pre-metric expansion.
But by the Expansion Theorem below, it will have a pre-metric expansion if $V$ is countable.

\medskip

\emph{From this point on, except when we specify otherwise,
every general structure we consider will be understood to be a general structure whose vocabulary has at most countably many predicate and function symbols.
We let $V$ be a vocabulary that contains at most countably many predicate and function symbols (we make no restriction on the number of constant symbols).
We let $D$ be a binary predicate symbol and let $V_D=V\cup\{D\}$.}
\medskip

Formulas  in the vocabulary $V$ will be called \emph{$V$-formulas},
and general structures for $V$ will be called $V$-structures.  Similarly for terms and sentences.
Unless we say otherwise, $T$ will be a $V$-theory, that is, a set of $V$-sentences.

\begin{df}  \label{d-exact} Let $T$ be a complete $V$-theory.
We call $\langle d\rangle=\langle d_m(x,y)\rangle_{m\in\BN}$ an \emph{exact distance} in $T$ if
\begin{itemize}
\item [(i)] $\langle d\rangle$ is Cauchy in $T$.
\item[(ii)] $[\lim d_m]^{\cu M}$ is a pseudo-metric in every general model of $T$.
\item[(iii)] For each general model $\cu M\models T$ and $V$-formula $\varphi(\vec x)$,
the mapping $\varphi^{\cu M}\colon M^{|\vec x|}\to[0,1]$ is uniformly continuous in the pseudo-metric space $(M,[\lim d_m]^{\cu M})$.
\end{itemize}
\end{df}

\begin{prop}  \label{p-expansion-d-metric}
Suppose $T_e$ is a pre-metric expansion of a complete $V$-theory $T$ with approximate distance $\langle d\rangle=\langle d_m(x,y)\rangle_{m\in\BN}$.  Then
$\langle d\rangle$ is an exact distance in $T$.
\end{prop}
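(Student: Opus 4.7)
The plan is to unwind the definitions and observe that each of the three clauses of Definition \ref{d-exact} either is given outright or follows immediately from the fact that the pre-metric expansion of $\cu M$ is, by construction, a pre-metric structure for the signature $L_e$. Since $T_e$ is a metric theory with signature $L_e$ whose distance predicate is $D$, and the general models of $T_e$ are exactly the expansions $\cu M_e=(\cu M,[\lim d_m]^{\cu M})$ for $\cu M\models T$, every such $\cu M_e$ is the downgrade of a pre-metric structure for $L_e$. Clause (i) is part of the definition of an approximate distance.

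For clause (ii), I would use Remark \ref{r-definitional-expansion}(i) to identify $D^{\cu M_e}$ with the mapping $[\lim d_m]^{\cu M}$ defined by the approximate distance. Because $L_e$ is a metric signature with $D$ as its distinguished distance symbol and $\cu M_e$ is a pre-metric structure for $L_e$, the interpretation $D^{\cu M_e}$ is a pseudo-metric on $M$; thus $[\lim d_m]^{\cu M}$ is a pseudo-metric.

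For clause (iii), I would apply Fact \ref{f-t3.5} to the $V$-formula $\varphi(\vec x)$, regarding it as a formula in the larger vocabulary $V_D$ of $L_e$. That gives a function $\triangle_\varphi\colon(0,1]\to(0,1]$ which is a modulus of uniform continuity for $\varphi^{\cu M_e}$ with respect to $D^{\cu M_e}$ in every pre-metric structure for $L_e$. By Remark \ref{r-formula-expansion}, $\varphi^{\cu M_e}(\vec b)=\varphi^{\cu M}(\vec b)$ for every $\vec b\in M^{|\vec x|}$, since $\varphi$ mentions no symbol outside $V$. Combining this with the identity $D^{\cu M_e}=[\lim d_m]^{\cu M}$ from the previous paragraph, the same $\triangle_\varphi$ witnesses uniform continuity of $\varphi^{\cu M}$ on the pseudo-metric space $(M,[\lim d_m]^{\cu M})$.

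There is essentially no obstacle here; the proposition is a bookkeeping result whose entire content is that the pre-metric structure packaged inside a pre-metric expansion really does satisfy the metric and continuity conditions. The only subtlety worth noting is that the completeness hypothesis on $T$ is not used: the argument works verbatim for any $V$-theory that admits a pre-metric expansion, once one has Definition \ref{d-metric-expansion} and Fact \ref{f-t3.5} at hand.
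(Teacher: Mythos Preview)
Your proof is correct and follows essentially the same approach as the paper's: verify clauses (i) and (ii) directly from the definition of a pre-metric expansion, and obtain clause (iii) from Fact~\ref{f-t3.5}. Your write-up is simply more explicit, spelling out the identifications $D^{\cu M_e}=[\lim d_m]^{\cu M}$ and $\varphi^{\cu M_e}=\varphi^{\cu M}$ that the paper leaves implicit, and your observation that completeness of $T$ is not actually used is correct.
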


\begin{proof}  By the definition of a pre-metric expansion, Definition \ref{d-exact} (i) and (ii) hold, and
for every $\cu M\models T$, $\cu M_e=(\cu M,[\lim d_m]^{\cu M})$ is a pre-metric structure with signature $L_e$.  Then by Fact \ref{f-t3.5}, (iii) holds.
\end{proof}

We now state two results, Theorems \ref{t-Be05} and \ref{f-BU-Th4.23}, that follow from the proofs of results from [I94] in the context of Henson's Banach space
model theory [He], and from [BU] in the context of
open Hausdorff cats.   Those results together  imply that every complete theory has a pre-metric expansion.
In order to convert the proofs from [I94] and [BU] to proofs of Theorems \ref{t-Be05} and \ref{f-BU-Th4.23} in our present setting,
we would need a long detour through positive bounded formulas and open Hausdorff cats.  Instead,
 in the next subsection we will give a different and self-contained proof of a stronger result,
that every (not necessarily complete) theory has a pre-metric expansion
with an approximate distance $\langle d_m\rangle_{m\in\BN}$ such that each $d_m$ defines a pseudo-metric in every general model of $T$.

\begin{thm} \label{t-Be05}  (By the proof of Proposition 53 of [I94], Theorem 5.1 of [I99], or Theorem 2.20 of [BY05].)
Every complete $V$-theory $T$ has an exact distance.
\end{thm}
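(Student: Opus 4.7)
The plan is to construct the sequence directly, exploiting that $V$ has countably many predicate and function symbols, hence countably many atomic $V$-formulas. First enumerate as $\{\psi_n(x,\vec z_n)\}_{n\in\BN}$ all pairs consisting of an atomic $V$-formula $\varphi(\vec w)$ together with a choice of free variable $w_i$, renaming $w_i$ to $x$ and the other free variables to $\vec z_n$. For each $n$ set
$$\rho_n(x,y) := \sup_{\vec z_n}|\psi_n(x,\vec z_n)-\psi_n(y,\vec z_n)|,$$
which is a $V$-formula that is symmetric, vanishes on the diagonal, and satisfies the triangle inequality in every general $V$-structure. Finally let
$$d_m(x,y) := \sum_{n=0}^{m} 2^{-n-1}\,\rho_n(x,y),$$
itself a $V$-formula. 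Since $|d_k-d_m|\le 2^{-m-1}$ pointwise whenever $k\ge m$, the sequence $\langle d_m\rangle$ is exponentially Cauchy in $T$, so $D^{\cu M}:=[\lim d_m]^{\cu M}$ is well-defined for each $\cu M\models T$; as a pointwise limit of weighted sums of pseudo-metrics, it is itself a pseudo-metric. This gives clauses (i) and (ii) of Definition \ref{d-exact}.

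For clause (iii), I would prove by induction on formula complexity that every $V$-formula $\varphi(\vec x)$ is uniformly continuous with respect to $D^{\cu M}$, with a modulus depending only on $\varphi$. In the base case, fix atomic $\varphi(\vec w)$ and a position $i$; the formula obtained by singling out $w_i$ is some $\psi_n$ in the enumeration, and by construction $\rho_n\le 2^{n+1}D$ pointwise, so $\varphi$ is uniformly continuous in $w_i$ (uniformly in the remaining variables) with modulus $\varepsilon\mapsto\varepsilon/2^{n+1}$. Varying one argument at a time yields joint uniform continuity of $\varphi$ in all its free variables. In the inductive step, continuous connectives preserve uniform continuity because they are uniformly continuous on the compact cube $[0,1]^k$, and $\sup_y$ and $\inf_y$ preserve it via the standard inequality $|\sup_y\alpha(\vec x,y)-\sup_y\alpha(\vec x',y)|\le\sup_y|\alpha(\vec x,y)-\alpha(\vec x',y)|$.

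The main obstacle I anticipate is the treatment of function symbols. One might worry that carrying the induction through requires uniform continuity of terms themselves as maps into $(M,D^{\cu M})$, and a direct weighted-sum attempt does not obviously yield such a bound because composing an atomic formula $\psi_n(u,\vec z)$ with a function symbol yields a formula $\psi_n(F(u,\ldots),\vec z)$ appearing at some index $n'$ possibly much larger than $n$, so the natural estimate blows up on summing over $n$. This difficulty is bypassed by the observation that any atomic $V$-formula $P(t_1(\vec x),\dots,t_k(\vec x))$ with arbitrary compound $V$-terms $t_i$ is \emph{itself} an atomic $V$-formula, hence handled directly by the base case; the inductive step then only has to propagate uniform continuity through connectives and quantifiers, which is routine. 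I also note that this construction does not use completeness of $T$ anywhere, so it in fact yields an exact distance for every $V$-theory in a countable vocabulary; the completeness hypothesis in the statement reflects the scope of the proofs cited from [I94], [I99], and [BY05] rather than a limitation of the construction itself.
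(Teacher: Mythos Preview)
Your construction is correct and, for vocabularies without function symbols, essentially identical to the paper's Lemma \ref{l-metric-expansion-relational}: the paper uses $d_m=\max(d_{m-1},2^{-m}\beta_m)$ where you use a weighted sum, but the two choices yield uniformly equivalent pseudo-metrics and the verification of clauses (i)--(iii) is the same.

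Where your argument genuinely differs from the paper is the treatment of function symbols. The paper does not prove Theorem \ref{t-Be05} directly but instead establishes the stronger Expansion Theorem \ref{t-metric-expansion-exists}, and for that it passes through an atomic Morleyization: it adds a predicate symbol $P_\alpha$ for every atomic formula, applies the relational case, and then recovers moduli of uniform continuity for the function symbols themselves via Lemma \ref{l-def-predicate-uniform}. That detour is necessary for the Expansion Theorem because a pre-metric expansion requires each function symbol $F$ to have a modulus with respect to $D$. But for exact distance alone, clause (iii) only demands that \emph{formulas} be uniformly continuous, and your observation---that any atomic formula $P(t_1(\vec x),\ldots,t_k(\vec x))$ with compound terms is still an atomic formula and therefore already sits in the enumeration---sidesteps the issue entirely. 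This is a legitimate simplification relative to the paper's route, and your closing remark that completeness of $T$ plays no role is also correct (the paper's Expansion Theorem likewise does not assume completeness).

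One small point to tighten: since $V$ may contain uncountably many constant symbols, you should enumerate only atomic formulas \emph{without constants} (built from variables alone). This is harmless for clause (iii): for an atomic formula $\alpha(x,\vec c)$ with constants $\vec c$, replace the constants by fresh variables $\vec z$ to obtain some $\psi_n(x,\vec z)$ in the list, and then $|\alpha(x,\vec c)-\alpha(y,\vec c)|\le\sup_{\vec z}|\psi_n(x,\vec z)-\psi_n(y,\vec z)|=\rho_n(x,y)\le 2^{n+1}D(x,y)$.
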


\begin{thm}  \label{f-BU-Th4.23}  (By Theorem 4.23 of [BU].)
Let $T$ be a complete $V$-theory.  If $T$ has an exact distance,
then $T$ has a pre-metric expansion.
\end{thm}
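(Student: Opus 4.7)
My plan is to start from the given exact distance $\langle d_m(x,y)\rangle_{m\in\BN}$, pass to an exponentially Cauchy subsequence (still denoted $\langle d_m\rangle$), and take
$$T_e:=T\cup\{\sup_x\sup_y|d_m(x,y)-D(x,y)|\dotle 2^{-m}\colon m\in\BN\}.$$
As in Lemma \ref{l-Te-axioms}, these axioms force $D^{\cu M_e}=[\lim d_m]^{\cu M}$ in every general model of $T_e$, so $T_e$ is a definitional expansion of $T$ over $V_D$; in particular, condition (ii) of Definition \ref{d-metric-expansion} is already satisfied. It remains to produce a metric signature $L_e$ over $V_D$ with distance predicate $D$ such that $T_e\models\met(L_e)$.

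The pseudo-metric axioms on $D$ are cheap: by Definition \ref{d-exact}(ii) the predicate $[\lim d_m]^{\cu M}$ is a pseudo-metric in every general model $\cu M$ of $T$, so reflexivity, symmetry, and the triangle inequality for $D$ hold in every general model of $T_e$. The real work is to assign, for each $n$-ary symbol $S\in V_D$ and each $\varepsilon\in(0,1]$, a single value $\triangle_S(\varepsilon)>0$ that is a modulus of uniform continuity for $S$ in every general model of $T_e$. For $D$ itself the triangle inequality supplies $\triangle_D(\varepsilon)=\varepsilon/2$. For a $V$-symbol $S$ I propose the following ultraproduct argument. Suppose no uniform $\delta$ works for $S$ and $\varepsilon$: then for each $n\in\BN$ one picks a general model $\cu M_n\models T$ and tuples $\vec a_n,\vec b_n$ with $\max_k [\lim d_m]^{\cu M_n}(a_{n,k},b_{n,k})<1/n$ but $|S^{\cu M_n}(\vec a_n)-S^{\cu M_n}(\vec b_n)|\ge\varepsilon$ (and the analogous discrepancy measured in $D$-values when $S$ is a function symbol). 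Take a nonprincipal ultrafilter $\cu D$ on $\BN$ and let $\cu M^*=\prod_{\cu D}\cu M_n$. By Proposition \ref{p-expansion-ultraproduct}, $(\cu M^*)_e=\prod_{\cu D}(\cu M_n)_e$, so Fact \ref{f-Los} applied to the predicate $D$ yields
$$D^{(\cu M^*)_e}(a_{k,\cu D},b_{k,\cu D})=\lim_{\cu D}[\lim d_m]^{\cu M_n}(a_{n,k},b_{n,k})=0$$
for each $k$, while $|S^{\cu M^*}(\vec a_{\cu D})-S^{\cu M^*}(\vec b_{\cu D})|\ge\varepsilon$. Since $\cu M^*\models T$, Definition \ref{d-exact}(iii) supplies a modulus for $S^{\cu M^*}$ with respect to $D^{(\cu M^*)_e}$, forcing $|S^{\cu M^*}(\vec a_{\cu D})-S^{\cu M^*}(\vec b_{\cu D})|\le\varepsilon'$ for every $\varepsilon'>0$, a contradiction.

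With these uniform moduli in hand I define $L_e$ to be the metric signature over $V_D$ with distance predicate $D$ and moduli $\triangle_S$, and the verifications that $T_e\models\met(L_e)$ and that the general models of $T_e$ are precisely the definitional expansions $\cu M_e$ for $\cu M\models T$ are then immediate. The main obstacle is exactly the ultraproduct step, which upgrades the \emph{pointwise} uniform continuity guaranteed by Definition \ref{d-exact}(iii) to a modulus that is \emph{uniform in the model}. This step relies essentially on Proposition \ref{p-expansion-ultraproduct}---which in turn uses the exponentially Cauchy rate---to swap the two limits hidden in $D^{\cu M^*}=[\lim d_m]^{\cu M^*}$ when passing to the ultraproduct.
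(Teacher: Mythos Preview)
Your argument is correct. The paper itself does not prove this theorem directly; it cites Theorem 4.23 of [BU] and remarks that transplanting that proof would require ``a long detour through positive bounded formulas and open Hausdorff cats.'' Instead, the paper bypasses Theorem \ref{f-BU-Th4.23} entirely by proving the stronger Expansion Theorem \ref{t-metric-expansion-exists} via an explicit construction: enumerate the atomic formulas, build $d_m$ as a weighted maximum of difference formulas $\sup_{\vec z}|\alpha(x,\vec z)-\alpha(y,\vec z)|$, and read off Lipschitz moduli by hand. Your route is genuinely different and more conceptual: you keep the given exact distance and use an ultraproduct to upgrade the model-by-model uniform continuity of Definition \ref{d-exact}(iii) to moduli uniform across all models of $T$. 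The trade-off is that the paper's construction yields a pseudo-metric approximate distance and works for arbitrary (incomplete) $T$, while your argument only promises \emph{some} signature $L_e$ and needs $\cu M^*\models T$ so that (iii) applies in the ultraproduct---which is exactly where completeness (or at least the hypothesis that (iii) holds in all models) enters. One small point worth making explicit for the reader: in the function-symbol case you are implicitly using that each $d_m(F(\vec x),F(\vec y))$ is a $V$-formula, so (iii) makes each of these uniformly continuous in $\cu M^*$, and the exponentially Cauchy rate then passes continuity to $D(F(\vec x),F(\vec y))$; together with $D(F(\vec a),F(\vec a))=0$ this forces the contradiction.
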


Note that by Proposition \ref{p-expansion-d-metric}, every complete $V$-theory has a pre-metric expansion if and only if
both Theorem \ref{t-Be05} and Theorem \ref{f-BU-Th4.23} hold.

\subsection{The Expansion Theorem}

We will show that every $V$-theory $T$ has a pre-metric expansion with an approximate distance $\langle d_m(x,y)\rangle_{m\in\BN}$
such that each $d_m$ defines a pseudo-metric in every general model of $T$.
In Lemma \ref{l-metric-expansion-relational} below  we  prove this in  the special case that $V$ has no function symbols.
We will use that to prove the general result in Theorem \ref{t-metric-expansion-exists}.
By Remark \ref{r-definitional-expansion} (iii), it is enough to show that the empty set of sentences with vocabulary $V$ has such a pre-metric expansion.

\begin{df}  A formula $\varphi(\vec x,\vec y)$ is \emph{pseudo-metric in} $T$ if $|\vec x|=|\vec y|$
and $\varphi^{\cu M}$ is a pseudo-metric on $M^{|\vec x|}$ for every general model $\cu M\models T$.
By a \emph{pseudo-metric approximate distance for} a pre-metric expansion $T_e$ of $T$, we mean an approximate distance $\langle d_m(x,y)\rangle_{m\in\BN}$
for $T_e$ such that each $d_m$ is a pseudo-metric formula in $T$.
\end{df}

\begin{rmk} \label{r-expansion-subtheory}
(i) If $\langle\varphi_m(\vec x,\vec y)\rangle_{m\in\BN}$ is Cauchy in $T$ and each formula $\varphi_m$ is pseudo-metric in $T$,
then $[\lim \varphi_m]^{\cu M}$ is a pseudo-metric on $M^{|\vec x|}$ for every $\cu M\models T$.

(ii)  If $T\subseteq U$ and $T_e$ is a pre-metric expansion of $T$, then $U_e=T_e\cup U$ is a pre-metric expansion of $U$ with the same
metric signature and approximate distance as $T_e$.
\end{rmk}

\begin{proof}  (i) is clear.  (ii) follows from Remark \ref{r-definitional-expansion} (iii) and part (i) above.
\end{proof}

\begin{lemma} \label{l-metric-expansion-relational}   If the vocabulary $V$ has countably many predicate symbols and no function symbols, then
every $V$-theory $T$ has a pre-metric expansion with a pseudo-metric approximate distance.
\end{lemma}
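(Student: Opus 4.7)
The plan is to build the pseudo-metric $D$ directly as a weighted series of single-variable pseudo-metrics attached to each argument position of each predicate; with no function symbols in $V$, uniform continuity reduces to the predicate-argument level and the construction goes through without iteration.

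\emph{Building blocks.} Enumerate the predicate symbols of $V$ as $P_0,P_1,\ldots$ of arities $n_0,n_1,\ldots$. For each $k$ and each position $j\in\{1,\ldots,n_k\}$, define the $V$-formula
$$\delta_{k,j}(x,y) \;=\; \sup_{\vec z}\bigl|P_k(z_1,\ldots,z_{j-1},x,z_{j+1},\ldots,z_{n_k}) - P_k(z_1,\ldots,z_{j-1},y,z_{j+1},\ldots,z_{n_k})\bigr|.$$
Each $\delta_{k,j}$ is pseudo-metric in every $V$-structure: symmetry, reflexivity, and triangle inequality follow pointwise from the corresponding properties of $|\cdot|$ on $[0,1]$ and are preserved by $\sup_{\vec z}$.

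\emph{Weighted partial sums.} Choose positive reals $c_{k,j}$ with $\sum_{k,j} c_{k,j}\le 1$ and $\sum_{k>m}\sum_{j\le n_k} c_{k,j}\le 2^{-m-1}$ for every $m$ (for instance $c_{k,j}=2^{-(k+j+2)}/n_k$), and put
$$d_m(x,y) \;=\; \sum_{k\le m}\sum_{j\le n_k} c_{k,j}\,\delta_{k,j}(x,y).$$
Each $d_m$ is a $V$-formula with values in $[0,1]$, and, as a nonnegative linear combination of pseudo-metric formulas, is itself pseudo-metric in every $V$-structure. The tail estimate on the weights makes $\langle d_m\rangle$ exponentially Cauchy in the empty theory, and therefore in $T$.

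\emph{Signature, axioms, verification.} Let $D$ be a new binary predicate and $V_D = V\cup\{D\}$. Define $L_e$ to be the metric signature over $V_D$ with distance $D$, with $\triangle_D(\varepsilon)=\varepsilon/2$ (forced by the triangle inequality), and with $\triangle_{P_k}(\varepsilon)=\min_{j\le n_k}(c_{k,j}\varepsilon/n_k)$ for each $k$; the modulus for $P_k$ is justified by the pointwise estimate
$$|P_k(\vec x)-P_k(\vec y)|\;\le\;\sum_{j=1}^{n_k}\delta_{k,j}(x_j,y_j)\;\le\;\sum_{j=1}^{n_k}c_{k,j}^{-1}D(x_j,y_j)$$
that holds in any structure where $D=\lim d_m$. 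Set
$$T_e \;=\; T \,\cup\, \met(L_e) \,\cup\, \bigl\{\sup_x\sup_y|d_m(x,y)-D(x,y)|\dotle 2^{-m}:m\in\BN\bigr\}.$$
For every $\cu M\models T$, the unique expansion $\cu M_e=(\cu M,[\lim d_m]^{\cu M})$ satisfies the approximation axioms, is pseudo-metric by Remark \ref{r-expansion-subtheory}(i), and meets the moduli in $L_e$ by the estimate above, so $\cu M_e\models T_e$; conversely, the approximation axioms force $D^{\cu N}=[\lim d_m]^{\cu N{\restriction}V}$ on any general model $\cu N$ of $T_e$. Thus $T_e$ is a pre-metric expansion of $T$ and $\langle d_m\rangle$ is a pseudo-metric approximate distance.

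\emph{Anticipated difficulty.} The only subtle point is choosing coordinates in which the computation is clean: the $\delta_{k,j}$ are already pseudo-metrics at the level of each single argument slot, so a single weighted sum produces a pseudo-metric that dominates each $\delta_{k,j}$ and hence controls each $P_k$. This would fail if $V$ contained function symbols, because Leibniz-indistinguishability then depends on composite terms; removing that obstruction by iteration is precisely what the proof of Theorem \ref{t-metric-expansion-exists} will add on top of this lemma.
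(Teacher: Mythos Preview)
Your proof is correct and follows essentially the same approach as the paper: both build the approximate distance from the single-argument-slot pseudo-metrics $\sup_{\vec z}|P(\ldots,x,\ldots)-P(\ldots,y,\ldots)|$, combine them with summable weights to obtain an exponentially Cauchy sequence of pseudo-metric formulas, and read off Lipschitz moduli for each predicate from those weights. The only cosmetic difference is that the paper combines the building blocks via a weighted $\max$ rather than a weighted sum, which makes no structural difference to the argument.
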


\begin{proof}   Let $x, y, u,z_1,z_2,\ldots$ be distinct variables.  Arrange all atomic formulas with no constant
symbols whose variables are among $u,z_1,z_2,\ldots$ in a countable list
$\alpha_0,\alpha_1,\ldots$.  For each $m$, let $\alpha_m(x,\vec z)$ and $\alpha_m(y,\vec z)$ be the formulas formed from $\alpha_m(u,\vec z)$
by replacing $u$ by $x$ and $y$ respectively.
 For each $m$, let $\beta_m(x,y)$ be the $V$-formula
$$ \beta_m(x,y) = \sup_{\vec {z}}|\alpha_m(x,\vec{z})-\alpha_m(y,\vec{z})|.$$
Let $d_0(x,y)=\beta_0(x,y)$, and for each $m>0$ let $d_m(x,y)$ be the $V$-formula
$$d_m(x,y)=\max(d_{m-1}(x,y),2^{-m}\beta_m(x,y)).$$
Then for every $V$-structure $\cu{M}$ and every $m$, $\beta_m^{\cu{M}}$ and $d_m^{\cu{M}}$ are pseudo-metrics on $M$.  Moreover,
$$\models d_m(x,y)\dotle d_{m+1}(x,y)\dotle d_m(x,y)\dotplus 2^{-(m+1)},$$
so $\langle d\rangle:=\langle d_m(x,y)\rangle_{m\in\BN}$ is exponentially Cauchy in $T$.
For every general model $\cu M$ of $T$, let $\cu M_e=(\cu M,D^{\cu M_e})$ where $D^{\cu M_e}=[\lim d_m]^{\cu M}$.
Since each $d_m^{\cu M}$ is a  pseudo-metric on $M$, the limit $D^{\cu M_e}$ is a pseudo-metric on $M$.

For every $\cu M\models T$, $\cu M_e$ is a general model of the $V_D$-theory
$$T_{\langle d\rangle}=\{\sup_x\sup_y[d_m(x,y)\dotle D(x,y)\dotle d_m(x,y)\dotplus 2^{-m}]\colon m\in\BN\}.$$
We will find a metric signature $L_e$ such that $T_e=T\cup T_{\langle d\rangle}$ is a pre-metric expansion of $T$ with approximate distance ${\langle d\rangle}$.
It suffices to specify a modulus of uniform continuity $\triangle_P$ for each $k$-ary predicate symbol $P\in V$ that is satisfied with
respect to $D$ in every general model of $T_{\langle d\rangle}$.
 For each $1\le i\le k$, let $P(u,\vec z)_i$ be the atomic formula obtained from $P(z_1,\ldots,z_k)$
by replacing $z_i$ by $u$.  Then  $P(u,\vec z)_i$ is $\alpha_{m_i}$ for some $m_i\in\BN$.
Let $m=\max(m_1,\ldots,m_k)$.  For each $1\le i\le k$,  we have
$$ T_{\langle d \rangle}\models\sup_{\vec z}|P(x,\vec z)_i-P(y,\vec z)_i|=\beta_{m_i}(x,y)\dotle 2^m d_m(x,y)\dotle 2^m D(x,y).$$
Therefore whenever $\vec x, \vec y$ differ only in the $i$-th argument we have
$$T_{\langle d \rangle}\models|P(\vec x)-P(\vec y)|\dotle 2^m D(x_i,y_i).$$
Since one can change any $k$-tuple $\vec x$ to $\vec y$ in $k$ steps by changing one variable at a time,
for every pair of $k$-tuples $\vec x,\vec y$ we have
$$T_{\langle d \rangle}\models|P(\vec x)-P(\vec y)|\dotle 2^m k \max( D(x_1,y_1),\ldots,D(x_k,y_k)).$$
It follows that $P$ has modulus of uniform continuity $\triangle_P(\varepsilon)=2^{-m}k^{-1}\varepsilon$ with respect to $D$ in each general model of $T_{\langle d \rangle}$.
Therefore $T_e=T\cup T_{\langle d\rangle}$ is a pre-metric expansion of $T$.
\end{proof}

In the general case that $V$ contains function symbols, we must also specify a modulus of uniform continuity for each function symbol with respect to $D$.
In a pre-metric structure with distance $d$, one can eliminate function symbols by using the formula $d(F(\vec x),y)$
for the graph of $F(\vec x)$.  This will not work in a general structure, because
the formula $D(F(\vec x),y)$ for the graph of  $F(\vec x)$ is not a $V$-formula.
We will circumvent that difficulty by an ``atomic Morleyization'', adding a new predicate symbol for each atomic $V$-formula.

\begin{thm}  \label{t-metric-expansion-exists} (Expansion Theorem) For every vocabulary $V$ with countably many predicate symbols and countably many
function symbols, every $V$-theory $T$ has a pre-metric expansion with a pseudo-metric  approximate distance.
\end{thm}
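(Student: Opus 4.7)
The plan is to reduce to Lemma \ref{l-metric-expansion-relational} via an \emph{atomic Morleyization}. I will let $V^\flat$ be the relational vocabulary obtained by taking the constants of $V$ together with a fresh predicate symbol $R_\alpha$ of arity $|\vec w|$ for each atomic $V$-formula $\alpha(\vec w)$ without constant symbols; since $V$ has countably many predicate and function symbols, $V^\flat$ has countably many predicate symbols and no function symbols. Each $V$-structure $\cu M$ has a canonical $V^\flat$-reduct $\cu M^\flat$ on the same universe with $R_\alpha^{\cu M^\flat}(\vec a)=\alpha^{\cu M}(\vec a)$. A translation $\varphi\mapsto\varphi^\tau$ from $V^\flat$-formulas to $V$-formulas is obtained by inductively replacing every subformula $R_\alpha(\vec w)$ with $\alpha(\vec w)$, and it satisfies $\varphi^{\cu M^\flat}=(\varphi^\tau)^{\cu M}$ for all $\cu M$. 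By Remark \ref{r-expansion-subtheory}(ii) it suffices to build a pre-metric expansion of the empty $V$-theory.

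Applying Lemma \ref{l-metric-expansion-relational} to the empty $V^\flat$-theory produces a pseudo-metric approximate distance $\langle d^\flat_m(x,y)\rangle_{m\in\BN}$ in $V^\flat$ together with uniform continuity moduli $\triangle_{R_\alpha}$. Set $d^\tau_m=(d^\flat_m)^\tau$; then $\langle d^\tau_m\rangle$ is an exponentially Cauchy sequence of $V$-formulas, and each $d^\tau_m$ is a pseudo-metric formula in every $V$-structure, since its interpretation in $\cu M$ coincides with $d^\flat_m$ in $\cu M^\flat$. I will axiomatize $T_e$ over $V_D$ as in Lemma \ref{l-Te-axioms} using $\langle d^\tau_m\rangle$, and put $D^\tau=\lim d^\tau_m$. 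Each predicate symbol $P\in V$ inherits the modulus $\triangle_{R_P}$ directly, since $P^{\cu M}=R_P^{\cu M^\flat}$. The main obstacle is supplying a uniform modulus for each function symbol $F\in V$ of arity $n$, since $F$ is not a symbol of $V^\flat$.

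The key observation is that for every atomic $V$-formula $\alpha(u,\vec z)$ without constants, the substitution $\alpha(F(u_1,\ldots,u_n),\vec z)$ is again atomic and constant-free in $V$, and hence corresponds to some $R_{\alpha'}$ in $V^\flat$. Writing $\beta^\flat_m(x,y)=\sup_{\vec z}|\alpha^\flat_m(x,\vec z)-\alpha^\flat_m(y,\vec z)|$ as in the lemma and applying the triangle inequality one coordinate at a time yields
\[
 \beta^\flat_m(F(\vec x),F(\vec y))\le\sum_{i=1}^{n}\beta^\flat_{k'_i(m,F)}(x_i,y_i),
\]
where the indices $k'_i(m,F)$ are determined by the enumeration and by $F$, and each $\beta^\flat_{k'}(x_i,y_i)\le 2^{k'}D^\tau(x_i,y_i)$ by the very construction of $d^\flat_m$. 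A single multiplicative constant need not suffice, because $k'_i(m,F)-m$ can be unbounded in $m$; the fix exploits the exponentially decaying weights $2^{-m}$ inside $d^\flat_m$. Given $\varepsilon>0$, the contributions $2^{-m}\beta^\flat_m(F(\vec x),F(\vec y))$ with $m>\log_2(1/\varepsilon)$ are automatically below $\varepsilon$, while the finitely many remaining $m$ contribute at most $n\cdot 2^{\max_i k'_i(m,F)-m}\cdot\max_i D^\tau(x_i,y_i)$, a finite constant times $\max_i D^\tau(x_i,y_i)$. Taking $\triangle_F(\varepsilon)$ as the minimum of the resulting finitely many thresholds yields the desired modulus. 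The remaining verifications — pseudo-metric axioms for $D$, the axiomatization of Lemma \ref{l-Te-axioms}, and the conditions of Fact \ref{f-metric-theory} — transfer from the relational case without change.
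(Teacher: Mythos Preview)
Your proof is correct and follows the same high-level strategy as the paper: reduce to the empty theory via Remark~\ref{r-expansion-subtheory}(ii), perform an atomic Morleyization, apply Lemma~\ref{l-metric-expansion-relational} to the resulting relational vocabulary, and translate the approximate distance back to $V$. Where you diverge from the paper is in how you obtain the modulus of uniform continuity for each function symbol $F$. The paper observes that $d_m(F(\vec x),y)$ is $T'$-equivalent to a formula $\theta_m(\vec x,y)$ in the function-free vocabulary, so that $D(F(\vec x),y)=[\lim\theta_m]$ is a definable predicate in the pre-metric structure $\cu M''_e$; it then invokes Lemma~\ref{l-def-predicate-uniform} for a uniform modulus and Proposition~9.23 of [BBHU] to transfer this to $F$ itself. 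You instead exploit the explicit shape $D=\sup_j 2^{-j}\beta_j$ from the proof of Lemma~\ref{l-metric-expansion-relational}: since $\alpha_m(F(\vec u),\vec z)$ is again atomic, a telescoping argument bounds $\beta_m(F(\vec x),F(\vec y))$ by a sum of other $\beta_{k'}(x_i,y_i)\le 2^{k'}D(x_i,y_i)$, and you handle the tail $m>\log_2(1/\varepsilon)$ trivially via $\beta_m\le 1$. Your route is more elementary and self-contained (no appeal to [BBHU]), at the cost of being tied to the specific construction of $\langle d_m\rangle$ in Lemma~\ref{l-metric-expansion-relational}; the paper's route is more modular and would work with any pre-metric expansion of $T''$. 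One small point worth making explicit in a write-up: the expression $\beta^\flat_m(F(\vec x),F(\vec y))$ is not literally a $V^\flat$-formula since $F\notin V^\flat$; what you mean is $(\beta^\flat_m)^\tau$ evaluated at $(F(\vec x),F(\vec y))$, or equivalently $\beta^\flat_m$ evaluated in $\cu M^\flat$ at the elements $F^{\cu M}(\vec a),F^{\cu M}(\vec b)$.
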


\begin{proof}  By Remark \ref{r-expansion-subtheory} (ii), we may assume that $T$ is the empty set of sentences.   Let $V'$ be the union of $V$ and a
$k$-ary  predicate symbol $P_\alpha$
for each atomic $V$-formula $\alpha(\vec x)$ with $k$ variables and no constant symbols.  Let $T'$ be the set of $V'$-sentences
$\sup_{\vec x} |P_{\alpha}(\vec x)-\alpha(\vec x)|$
for each $\alpha(\vec x)$.  Then $V'\setminus V$ is a countable set of new predicate symbols, and
every $V$-structure $\cu M$ has a unique expansion to a  general model $\cu{M}'$ of $T'$.
It can be shown by induction on complexity that every $V'$-formula $\varphi(\vec x)$ is $T'$-equivalent to a $V$-formula $\varphi^0(\vec x)$, and is also
$T'$-equivalent to a $V'$-formula $\varphi''(\vec x)$ that has no function symbols.

Let $V''$ be the set of all predicate and constant symbols  in $V'$, and for each  $V$-structure $\cu{M}$ let $\cu{M}''$ be the $V''$-part of $\cu{M}'$.
Let $T''$ be the set of all $V''$-sentences that hold in all general models of $T'$, which is the same as the set of all $V''$-sentences that hold in
$\cu{M}''$ for every  $V$-structure $\cu{M}$.  Note that $V''\subseteq V'$, $T''\subseteq T'$, and for each $V'$-formula $\varphi(\vec x)$,
the formula $\varphi''(\vec x)$ defined in the previous paragraph is a $V''$-formula.
By Lemma \ref{l-metric-expansion-relational}, $T''$ has a pre-metric expansion $T''_e$ with a pseudo-metric approximate distance $\langle d\rangle=\langle d_m(x,y)\rangle_{m\in\BN}$.
 Then for each $V$-structure $\cu{M}$,  $\cu{M}''_{e}=(\cu M'',[\lim d_m]^{\cu M''})$.

We next find a metric signature $L'_e$ over $V'_D$ such that $T'_e=T''_e\cup T'$ with signature $L'_e$
is a pre-metric expansion of $T'$ with the same approximate distance.  Since each $\cu M'$ is an expansion of $\cu M''$,
$d_m^{\cu M'}=d_m^{\cu M''}$, so $[\lim d_m]^{\cu M'}=[\lim d_m]^{\cu M''}$.
We must find a modulus of uniform continuity for each $k$-ary function symbol $F$ in $V$.
For each $m$, let $\theta_m(\vec x,y)$ be a $V''$-formula that is $T'$-equivalent to $d_m(F(\vec x),y)$.  Then
 $\langle \theta_m(\vec x,y)\rangle_{m\in\BN}$ is  Cauchy in $T''$,
and for each $\cu M\models T$ and all $(\vec b,c)\in M^{k+1}$,
$$D^{\cu M''_e}(F^{\cu M}(\vec b),c)=[\lim d_m]^{\cu M''}(F^{\cu M}(\vec b),c)=[\lim \theta_m]^{\cu M''}(\vec b,c)=[\lim \theta_m]^{\cu M''_e}(\vec b,c).$$
 By Lemma \ref{l-def-predicate-uniform},
there is a function $\triangle_F$ that is a modulus of uniform continuity for $[\lim \theta_m]^{\cu M''_e}$ in the pre-metric structure $\cu M''_e$
for every general model $\cu M$ of $T$.
We note that Proposition 9.23 of [BBHU] on definable functions holds for pre-metric structures as well as metric structures.
Therefore in $\cu M''_e$, the function $F^{\cu M}$ is definable and has the same modulus of uniform continuity $\triangle_F$.
 This shows that $\cu M'_e=(\cu M',[\lim d_m]^{\cu M'})$ is
a pre-metric structure with the metric signature $L'_e$ that agrees with $L''_e$ on $V''$ and gives  $F$ the modulus of uniform continuity $\triangle_F$.

Finally, we show that there is a pre-metric expansion $T_e$  of the empty theory $T$ with a pseudo-metric approximate distance.
We take $L_e$ to be the restriction of $L'_e$ to $V_D$ (so the symbols of $V_D$ have the same moduli of uniform continuity in $L_e$ as in $L'_e$).
Then $L_e$ is a metric signature with distance predicate $D$ over $V_D$.
For each $m$ let $\hat{d}_m(x,y)$ be a $V$-formula that is $T'$-equivalent to $d_m(x,y)$, and let $\langle \hat{d}\rangle=\langle \hat{d}_m(x,y)\rangle_{m\in\BN}.$
Since $\langle d\rangle$ is  Cauchy in $T'$ and each $d_m$ is pseudo-metric in every general model of $T'$,
and every $\cu M\models T$ has a unique expansion to a general model $\cu M'\models T'$,
$\langle \hat{d}\rangle$ is  Cauchy in $T'$ and each $\hat{d}_m$ is pseudo-metric in each general model of $T$.
We let $T_e$ be the union of $T$ and a set of sentences saying that $D(x,y)=\lim_{m\to\infty} \hat {d}_m(x,y)$ with signature $L_e$.
Then $T_e$ is a pre-metric expansion of $T$ with  pseudo-metric approximate distance $\langle \hat{d}\rangle$.
\end{proof}

Theorem \ref{t-metric-expansion-exists} shows that a pre-metric expansion with a pseudo-metric approximate distance always exists, but the distance predicate built in the proof
depends on an arbitrary enumeration of the atomic formulas, and  may not be natural.

\begin{cor}
Every complete $V$-theory $T$ has an exact distance $\langle d_m\rangle_{m\in\BN}$ in which each $d_m^{\cu M}$ is a pseudo-metric for each $\cu M\models T$.
\end{cor}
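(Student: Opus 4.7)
The plan is to obtain the desired exact distance by combining the Expansion Theorem with Proposition \ref{p-expansion-d-metric}. Since the Expansion Theorem already produces a pre-metric expansion whose approximate distance is pseudo-metric, almost all of the work has been done, and the only remaining step is to observe that each condition in the definition of an exact distance (Definition \ref{d-exact}) has been guaranteed elsewhere.

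First I would apply Theorem \ref{t-metric-expansion-exists} to the complete $V$-theory $T$. This yields a pre-metric expansion $T_e$ of $T$, with some metric signature $L_e$ and some approximate distance $\langle d_m(x,y)\rangle_{m\in\BN}$, such that $\langle d_m\rangle$ is a pseudo-metric approximate distance; that is, each formula $d_m(x,y)$ is pseudo-metric in $T$, meaning $d_m^{\cu M}$ is a pseudo-metric on $M$ for every general model $\cu M\models T$. This immediately supplies the extra property required in the corollary's conclusion.

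Next, I would invoke Proposition \ref{p-expansion-d-metric}, which says precisely that whenever $T$ is a complete $V$-theory and $T_e$ is a pre-metric expansion of $T$ with approximate distance $\langle d_m\rangle$, the sequence $\langle d_m\rangle$ is an exact distance in $T$. This gives at once conditions (i), (ii), and (iii) of Definition \ref{d-exact}: the Cauchy condition holds by definition of approximate distance; the limit $[\lim d_m]^{\cu M}=D^{\cu M_e}$ is the distance predicate of the pre-metric structure $\cu M_e$, hence a pseudo-metric; and uniform continuity of each $\varphi^{\cu M}$ with respect to $[\lim d_m]^{\cu M}$ follows from Fact \ref{f-t3.5} applied in $\cu M_e$.

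Combining these two steps, $\langle d_m\rangle_{m\in\BN}$ is an exact distance in $T$ in which each $d_m^{\cu M}$ is a pseudo-metric for each $\cu M\models T$, as required. There is no genuine obstacle: the corollary is essentially just the observation that the approximate distance produced by Theorem \ref{t-metric-expansion-exists} already enjoys both the pseudo-metric property of each finite stage (by construction) and the exactness of the whole sequence (by Proposition \ref{p-expansion-d-metric}). The only subtlety is that Proposition \ref{p-expansion-d-metric} requires $T$ to be complete, and that hypothesis is exactly what the corollary assumes.
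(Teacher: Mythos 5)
Your proposal is correct and follows exactly the paper's own argument: the paper proves this corollary by citing Theorem \ref{t-metric-expansion-exists} and Proposition \ref{p-expansion-d-metric}, precisely the two results you combine. Your elaboration of why those two results suffice is accurate, including the observation that completeness of $T$ is needed only for Proposition \ref{p-expansion-d-metric}.
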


\begin{proof}
By Theorem \ref{t-metric-expansion-exists} and Proposition \ref{p-expansion-d-metric}.
\end{proof}


\begin{question}  \label{q-pseudo}
Does every pre-metric expansion of $T$ have a pseudo-metric approximate distance?
\end{question}

Added in October, 2020: With the permission of James E. Hanson, we report that he has shown that the answer to Question \ref{q-pseudo} is ``Yes''.
If $\langle d_k\rangle$ is an approximate distance for $T_e$, then $\langle e_k\rangle$ is a pseudo-metric approximate distance for $T_e$,
where $e_n(x,y)=\sup_x|d_k(x,z)-d_k(y,z)|$.

We now show that the pre-metric expansion of a $V$-structure $\cu{M}$ is unique up to a uniformly continuous homeomorphism.

\begin{prop}  \label{p-homeomorphic}  Let $T_e, T_f$ be pre-metric expansions of $T$.  There is a function $\triangle(\cdot)\colon (0,1]\to(0,1]$ such that for
every general model $\cu{M}$ of $T$, the identity function is a uniformly continuous homeomorphism from $(M,D^{\cu{M}_e})$ onto $(M,D^{\cu{M}_f})$
with modulus $\triangle(\cdot)$.
\end{prop}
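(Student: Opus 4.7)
The plan is to pick approximate distances $\langle d_m(x,y)\rangle_{m\in\BN}$ for $T_e$ and $\langle d'_m(x,y)\rangle_{m\in\BN}$ for $T_f$, and extract a single modulus from Lemma \ref{l-def-predicate-uniform}(ii) by viewing each approximate distance inside the other pre-metric expansion. The key preliminary observation is that by Remark \ref{r-formula-expansion}, every $V$-formula has the same truth value in a general model $\cu M$ of $T$ as in its pre-metric expansion. Consequently $\langle d'_m\rangle$, which is Cauchy in $T$, is also Cauchy in the metric theory $T_e$, and the definable predicate $[\lim d'_m]^{\cu M_e}$ agrees pointwise with $D^{\cu M_f}$ on every $\cu M\models T$.

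First I would apply Lemma \ref{l-def-predicate-uniform}(ii) to the Cauchy sequence $\langle d'_m\rangle$ inside the metric theory $T_e$. This produces a function $\triangle_1\colon(0,1]\to(0,1]$ that is a modulus of uniform continuity for the definable binary predicate $D^{\cu M_f}$ with respect to the metric $D^{\cu M_e}$ in \emph{every} pre-metric model $\cu M_e$ of $T_e$. The essential point is that this modulus is uniform across all such $\cu M$, which is precisely what the proposition demands.

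Next I would convert uniform continuity of the predicate $D^{\cu M_f}$ into uniform continuity of the identity map. Since $D^{\cu M_f}$ is a pseudo-metric on $M$, we have $D^{\cu M_f}(x,x)=0$, so whenever $D^{\cu M_e}(x,y)<\triangle_1(\varepsilon)$, taking one argument fixed at $x$ and the other moving from $x$ to $y$ gives
$$D^{\cu M_f}(x,y)=|D^{\cu M_f}(x,y)-D^{\cu M_f}(x,x)|\le\varepsilon.$$
Thus the identity map $(M,D^{\cu M_e})\to(M,D^{\cu M_f})$ is uniformly continuous with modulus $\triangle_1$, uniformly in $\cu M$. Swapping the roles of $T_e$ and $T_f$ and repeating the argument yields a modulus $\triangle_2$ for the identity in the reverse direction, and then $\triangle(\varepsilon):=\min(\triangle_1(\varepsilon),\triangle_2(\varepsilon))$ is a single modulus that makes the identity a uniform homeomorphism in both directions.

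There is no genuine obstacle here; the only point that requires care is making sure Lemma \ref{l-def-predicate-uniform}(ii) is invoked correctly, namely with $\langle d'_m\rangle$ treated as a Cauchy sequence of formulas in the vocabulary of the metric theory $T_e$ (rather than just in $T$), so that the modulus of uniform continuity it delivers is with respect to the distance predicate $D$ of $T_e$ and is independent of the pre-metric model. Once that is set up, the pseudo-metric inequality does the rest.
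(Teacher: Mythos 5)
Your proposal is correct and follows essentially the same route as the paper: choose an approximate distance for $T_f$, observe it is Cauchy in $T$ and hence in the metric theory $T_e$, and invoke Lemma \ref{l-def-predicate-uniform}(ii) to obtain a single modulus of uniform continuity for the definable predicate $D^{\cu M_f}$ with respect to $D^{\cu M_e}$ across all general models of $T$. The paper leaves the conversion from uniform continuity of the predicate to uniform continuity of the identity map (via $D^{\cu M_f}(x,x)=0$) and the symmetrization implicit, whereas you spell them out; no substantive difference.
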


\begin{proof}  We must find a function $\triangle(\cdot)$ that is a modulus of uniform continuity for $D^{\cu M_f}$
in the pre-metric structure $\cu M_e$ for each $\cu M\models T$.
Let $ \langle d_m\rangle_{m\in\BN}$ be an approximate distance for $f$.
For each $\cu M\models T$, $\cu M_e$ is a pre-metric structure with signature $L_e$.
$ \langle d_m\rangle_{m\in\BN}$ is Cauchy in the $V$-theory $T$.  $T$ is also a $V_D$-theory, and $D^{\cu M_f}=[\lim d_m]^{\cu M}=[\lim d_m]^{\cu M_e}$.
 By Lemma \ref{l-def-predicate-uniform} (ii),
there is a function $\triangle(\cdot)$ with the required property.
\end{proof}

Let us look at the pre-metric expansion process in the reverse direction.  The general structure $\cu M$ can by obtained from the pre-metric expansion
$\cu M_e$ by simplifying in two steps: first downgrade  $\cu M_e$ to a general structure $\cu N$ by forgetting the metric signature, and then take the $V$-part
of $\cu N$ by forgetting the distance to get the original general structure $\cu M$.   $\cu M$ is the non-metric part of $\cu M_e$.
We thus have two equivalence relations on the class of pre-metric structures with vocabulary $V_D$, the fine relation of having the same downgrade,
and the coarse relation of having the same non-metric part.

Formally, the class of general structures is disjoint from the class  of pre-metric structures because only the latter includes a metric signature.
But intuitively, we regard $\cu M_e$ and the downgrade of $\cu M_e$ as the same structure presented in two different ways.

\subsection{Absoluteness}

In this rather philosophical subsection we develop a framework that allows one to apply known model-theoretic results about metric structures to general structures.

\emph{For the remainder of this paper, we let $T$ be a  $V$-theory, $T_e$ be an arbitrary pre-metric expansion of $T$ with signature $L_e$,
$\cu M$ be an arbitrary general model of $T$, and $\cu M_e$ be the pre-metric expansion of $\cu M$ for $T_e$.}

By a \emph{general property}  we mean a class of general  structures that is preserved under automorphism (i.e., bijective embeddings), and by a
\emph{pre-metric property} we mean a class of pre-metric structures, again preserved under automorphisms.
When $\cu P$ is a  property, we interchangeably use the phrases ``$\cu N$ belongs to $\cu P$'', ``$\cu N$ has property $\cu P$'',  ``$\cu N$ satisfies $\cu P$'',
and ``$\cu P$ holds in $\cu N$''.

\begin{df}  \label{d-absolute-version}
An \emph{absolute version} of a pre-metric property $\cu{Q}$ is a general property $\cu P$ such that whenever $\cu M_e$
is a pre-metric expansion of  $\cu M$, $\cu M$ has property $\cu P$ if and only if $\cu M_e$ has property $\cu Q$.
\end{df}

The following corollary was stated in the Introduction.

\begin{cor}  \label{c-absolute-unique}
Every pre-metric property  $\cu{Q}$  has at most one absolute version.
\end{cor}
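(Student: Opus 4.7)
The plan is to reduce the uniqueness claim to a single application of the Expansion Theorem. Suppose $\cu P_1$ and $\cu P_2$ are both absolute versions of the pre-metric property $\cu Q$. I want to show that for every general structure $\cu M$ (under the standing countability convention on $V$), we have $\cu M \in \cu P_1 \iff \cu M \in \cu P_2$.

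The key observation is that under the convention in force, every general structure $\cu M$ admits at least one pre-metric expansion. Indeed, $\cu M$ is a general model of its complete theory $\Th(\cu M)$, and by the Expansion Theorem \ref{t-metric-expansion-exists} the theory $\Th(\cu M)$ has a pre-metric expansion $T_e$ with some signature $L_e$. Let $\cu M_e$ be the pre-metric expansion of $\cu M$ for $T_e$, which exists by Definition \ref{d-metric-expansion}. So the universal clause in Definition \ref{d-absolute-version} is never vacuous: each $\cu M$ has at least one witness pre-metric expansion available.

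Now apply the definition of absolute version twice with this single witness $\cu M_e$. Because $\cu P_1$ is an absolute version of $\cu Q$, we have $\cu M \in \cu P_1$ iff $\cu M_e \in \cu Q$. Because $\cu P_2$ is an absolute version of $\cu Q$, we have $\cu M \in \cu P_2$ iff $\cu M_e \in \cu Q$. Chaining these biconditionals gives $\cu M \in \cu P_1 \iff \cu M \in \cu P_2$, and since $\cu M$ was arbitrary, $\cu P_1 = \cu P_2$ as classes.

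There is essentially no obstacle here; the content of the corollary is entirely absorbed by the existence half of the Expansion Theorem, which guarantees that no general structure is left without a pre-metric expansion to serve as the comparison witness. (Proposition \ref{p-homeomorphic} about uniqueness up to uniform equivalence is not needed: absoluteness requires agreement for \emph{every} pre-metric expansion, so any single one suffices.)
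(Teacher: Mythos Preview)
Your proof is correct and follows essentially the same argument as the paper: both take two absolute versions $\cu P_1,\cu P_2$, invoke the Expansion Theorem to produce a pre-metric expansion $\cu M_e$ of an arbitrary $\cu M$, and chain the two biconditionals through $\cu M_e\in\cu Q$. The only difference is that you spell out the intermediate step of applying Theorem~\ref{t-metric-expansion-exists} to $\Th(\cu M)$, whereas the paper simply asserts that $\cu M$ has a pre-metric expansion.
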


\begin{proof}  Suppose $\cu P$ and $\cu{P}'$ are absolute versions of $\cu Q$, and consider a general structure $\cu M$.
By Theorem \ref{t-metric-expansion-exists}, $\cu{M}$ has a pre-metric expansion $\cu{M}_e$.  Then the following are equivalent: $\cu M$ has property $\cu{P}$,
$\cu M_e$ has property $\cu Q$, $\cu M$ has property $\cu P'$.
\end{proof}

\begin{df} \label{d-absolute} We say that a general property  $\cu P$  is \emph{absolute} if whenever $\cu M_e$ is a pre-metric expansion of  $\cu M$,
$\cu{M}$ has property $\cu P$ if and only if the downgrade of $\cu{M}_e$ has property $\cu P$.
\end{df}

\begin{cor}  \label{c-absolute-version}
Suppose $\cu P$ is an absolute version of $\cu Q$.  Then $\cu P$ is absolute.
\end{cor}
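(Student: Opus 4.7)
The plan is to apply the defining property of ``absolute version'' twice: once to the pair $(\cu M, \cu M_e)$ and once to the pair $(\cu N, \cu M_e)$, where $\cu N$ denotes the downgrade of $\cu M_e$. Fix a general model $\cu M$ of a theory $T$ and a pre-metric expansion $\cu M_e$ of $\cu M$ for some pre-metric expansion $T_e$ of $T$ with signature $L_e$; the goal is to show that $\cu M$ has $\cu P$ if and only if $\cu N$ has $\cu P$. From the hypothesis, applied to the pair $(\cu M, \cu M_e)$, we immediately get that $\cu M \in \cu P$ iff $\cu M_e \in \cu Q$.

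The key step is to exhibit the very same pre-metric structure $\cu M_e$ also as a pre-metric expansion of $\cu N$. For this I would invoke the trivial construction of Example~\ref{e-metric-expansion-trivial}: let $T' = \Th(\cu N)$, regarded as a theory in the vocabulary $V_D$, and take the constant sequence $d_m(x,y) = D(x,y)$ as approximate distance, paired with the signature $L_e$. Since $\cu N$ is the downgrade of a pre-metric structure for $L_e$, we have $\cu N \models \met(L_e)$, hence $T' \models \met(L_e)$, and $(T', L_e)$ is a metric theory. The general models of $T'$, viewed as its own pre-metric expansion, are precisely the structures $(\cu K, [\lim D]^{\cu K}) = \cu K$ with $\cu K \models T'$; in particular, applying this expansion to $\cu N$ literally reproduces $(\cu N, L_e) = \cu M_e$. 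Thus $(\cu N, \cu M_e)$ is a legitimate instance of Definition~\ref{d-absolute-version}, and the hypothesis yields $\cu N \in \cu P$ iff $\cu M_e \in \cu Q$. Combining the two biconditionals gives $\cu M \in \cu P$ iff $\cu N \in \cu P$, which is exactly the definition of absoluteness of $\cu P$.

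The only subtle point to check is that we are allowed to let the ``new'' distance predicate in a pre-metric expansion coincide with a predicate already present in the base vocabulary. This is explicitly permitted by Definition~\ref{d-definitional-expansion} (``$D$ may or may not belong to $V$'') and is exactly what Example~\ref{e-metric-expansion-trivial} formalizes, so this is not really an obstacle but rather the single place where one must be careful about the statement of the definitions.
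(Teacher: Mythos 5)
Your proposal is correct and follows essentially the same route as the paper: the paper's proof likewise invokes Example \ref{e-metric-expansion-trivial} to see that every pre-metric structure is a pre-metric expansion of its own downgrade, and then chains the two instances of the defining biconditional for an absolute version. Your write-up simply makes explicit the verification (constant approximate distance $d_m = D$, the observation that $D$ may already lie in the vocabulary) that the paper leaves implicit.
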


\begin{proof}
By Example \ref{e-metric-expansion-trivial}, each pre-metric structure $\cu N$ is a pre-metric expansion of the downgrade of $\cu N$.
Therefore for each $\cu M$ and  $\cu M_e$, the following are equivalent:
$\cu M$ has property $\cu P$, $\cu M_e$ has property $\cu Q$, the downgrade of $\cu M_e$ has property $\cu P$.
\end{proof}

By Lemma \ref{l-reduced-Me} (i) we have:

\begin{prop}  \label{p-reduced-absolute}
The  property  \emph{[$\cu M$ is reduced]}  in Definition \ref{d-reduced} is absolute.
\end{prop}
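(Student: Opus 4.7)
The plan is a one-line application of the cited lemma, so the proof proposal is essentially a bookkeeping argument.

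First, I would unpack Definition \ref{d-absolute}: I need to show that for every $\cu M\models T$ and every pre-metric expansion $\cu M_e$ of $\cu M$ (for some pre-metric expansion $T_e$ of a theory $T$ with $\cu M\models T$), the general structure $\cu M$ is reduced if and only if the downgrade of $\cu M_e$ is reduced. Recall that the downgrade of $(\cu M_e,L_e)$ is the general $V_D$-structure $\cu M_e$ obtained by forgetting the metric signature; so the question really is whether reducedness of $\cu M$ as a $V$-structure is equivalent to reducedness of $\cu M_e$ as a $V_D$-structure.

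Next, since every pre-metric expansion $T_e$ of $T$ is a definitional expansion of $T$ in the sense of Definition \ref{d-definitional-expansion}, Lemma \ref{l-reduced-Me}(i) applies and gives $({\doteq^{\cu M}}) = ({\doteq^{\cu M_e}})$ as binary relations on the common universe $M$. From this equality of Leibniz equivalence relations it is immediate that $\doteq^{\cu M}$ is the identity on $M$ if and only if $\doteq^{\cu M_e}$ is the identity on $M$, which by Definition \ref{d-reduced} is exactly the statement that $\cu M$ is reduced iff the downgrade of $\cu M_e$ is reduced.

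There is no real obstacle here: the content is entirely packaged in Lemma \ref{l-reduced-Me}(i), whose nontrivial direction was already handled in its proof by using that for any atomic $V_D$-formula $D(\vec\tau(x,\vec z))$, the approximating $V$-formulas $d_m$ allow one to transfer $x\doteq^{\cu M} y$ to $x\doteq^{\cu M_e} y$. The only thing worth being careful about is distinguishing the pre-metric structure $(\cu M_e,L_e)$ from its downgrade, and noting that the definitions of $\doteq$ and of ``reduced'' depend only on the underlying general structure, not on the metric signature, so ``$\cu M_e$ is reduced'' and ``the downgrade of $\cu M_e$ is reduced'' mean the same thing.
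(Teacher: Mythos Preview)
Your proposal is correct and takes essentially the same approach as the paper, which simply cites Lemma~\ref{l-reduced-Me}(i) as the entire justification. Your additional unpacking of Definition~\ref{d-absolute} and the remark about the downgrade versus the pre-metric structure are accurate and helpful, but not strictly needed beyond what the paper does; note also that Lemma~\ref{l-reduced-Me}(ii) already states the equivalence of reducedness directly, so one could cite that instead.
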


In view of Remark \ref{r-definitional-expansion-parameters}, we often consider properties of a  general structure $\cu M$ with additional parameters from $M$.

\begin{prop}  \label{p-realize-absolute}
Let $p(\vec x,A)$ be a set of $V$-formulas with parameters from $M$.
  The property  \emph{[$p(\vec x,A)$ is realized in $\cu M_A$]} is absolute.
\end{prop}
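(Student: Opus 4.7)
The plan is to reduce the claim to Remark \ref{r-formula-expansion}, which says that $V$-formulas take identical truth values in $\cu M$ and in any expansion of $\cu M$ to a larger vocabulary. The crucial observation is that $\cu M_e$, and hence its downgrade, is built on the same underlying universe $M$ as $\cu M$, and that $\cu M$ is exactly the $V$-part of $\cu M_e$. Since $p(\vec x,A)$ consists of $V$-formulas (with parameters from $A\subseteq M$), truth of each member of $p$ at a tuple $\vec b\in M^{|\vec x|}$ does not depend on how (or whether) the distance predicate $D$ is interpreted.

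More precisely, first I would write $\cu M_A$ as the expansion of $\cu M$ to the vocabulary $V\cup\{c_a:a\in A\}$, and similarly interpret $(\cu M_e)_A$ as the expansion of $\cu M_e$ to $V_D\cup\{c_a:a\in A\}$. Then $\cu M_A$ is the $(V\cup\{c_a:a\in A\})$-part of the downgrade of $(\cu M_e)_A$; this is what allows Remark \ref{r-formula-expansion} to apply. Concretely, for every $V$-formula $\varphi(\vec x,\vec y)$, every parameter tuple $\vec a$ from $A$, and every $\vec b\in M^{|\vec x|}$, one has $\varphi^{\cu M_A}(\vec b,\vec a)=\varphi^{\text{dngrd}(\cu M_e)_A}(\vec b,\vec a)$.

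Since $p(\vec x,A)$ is by definition a set of $V$-formulas with parameters in $A$, a tuple $\vec b\in M^{|\vec x|}$ realizes $p(\vec x,A)$ in $\cu M_A$ if and only if it realizes $p(\vec x,A)$ in the downgrade of $(\cu M_e)_A$. Because $M$ is the common universe of the two structures, the existence of a realizer in one is equivalent to the existence of a realizer in the other, which is exactly the condition for absoluteness in Definition \ref{d-absolute}.

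There is essentially no obstacle here: the entire argument is a bookkeeping application of Remark \ref{r-formula-expansion}. The only point worth flagging is the minor notational issue that ``$\cu M$'' has vocabulary $V$ while the downgrade of $\cu M_e$ has vocabulary $V_D$, so one must be careful to state the property ``$p(\vec x,A)$ is realized in $(\cdot)_A$'' uniformly across the two contexts, using the $V$-formula content of $p$. Once this is set up, the equivalence is immediate.
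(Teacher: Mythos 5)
Your proof is correct and is essentially identical to the paper's: both arguments observe that $\cu M_e$ is an expansion of $\cu M$ over the same universe and invoke Remark \ref{r-formula-expansion} to conclude that $V$-formulas (with parameters from $A$) have the same truth values in $\cu M_A$ as in $(\cu M_e)_A$, so the same tuples realize $p(\vec x,A)$ in both. The bookkeeping about added constant symbols that you flag is handled in the paper by Remark \ref{r-definitional-expansion-parameters}, but it changes nothing of substance.
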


\begin{proof}  Every pre-metric expansion of $\cu M$ is an expansion of $\cu M$.   It follows from Remark \ref{r-formula-expansion},
that if  $\cu M'$ is an expansion of $\cu M$, $A\subseteq M$, and $\vec b\subseteq M$, then $\vec b$ realizes $p(\vec x,A)$
in $\cu M'_A$ if and only if $\vec b$ realizes $p(\vec x,A)$ in $\cu M_A$.
\end{proof}

For the same reason, Proposition \ref{p-realize-absolute} also holds when $p(\vec x,A)$ is an infinitary $L_{\omega_1\omega}$-formula in the sense of [Ea15]

In view of Corollaries \ref{c-absolute-unique} and \ref{c-absolute-version}, we consider the absolute version of a pre-metric property $\cu Q$, if there is one, to be the
``right'' way to extend $\cu Q$ to all general structures.  When we have a name for structures with a pre-metric property $\cu Q$,
it will be convenient to use the same name for general structures that satisfy an absolute version of $\cu Q$.  So we adopt the following convention.

\begin{df} \label{d-general-Q}
A pre-metric property $\cu Q$ is said to be \emph{absolute} if $\cu Q$ has an absolute version.
If $\cu Q$ is an absolute pre-metric property, a general structure that satisfies the absolute version of $\cu Q$ will be called
a \emph{general structure with property} $\cu Q$.
\end{df}

\begin{prop}  \label{p-robust}
For any pre-metric property $\cu Q$, the following are equivalent:
\begin{itemize}
\item[(i)]  $\cu Q$ is absolute.
\item[(ii)]  The class of general structures with property $\cu Q$ is absolute.
\item[(iii)] For every general structure $\cu{M}$ and all pre-metric expansions $\cu{M}_e, \cu{M}_f$ of $\cu{M}$, $\cu{M}_e$ has property $\cu Q$
if and only if $\cu{M}_f$ has property $\cu Q$.
\end{itemize}
\end{prop}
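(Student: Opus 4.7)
The plan is to establish the cycle (i) $\Rightarrow$ (iii) $\Rightarrow$ (i) as the main content, and then to treat (i) $\Leftrightarrow$ (ii) as bookkeeping that falls out of Definition \ref{d-general-Q} combined with Corollary \ref{c-absolute-version}. The direction (i) $\Rightarrow$ (iii) is a one-line unfolding: if $\cu P$ is an absolute version of $\cu Q$ and $\cu M_e, \cu M_f$ are two pre-metric expansions of the same general structure $\cu M$, then the defining biconditional of an absolute version forces $\cu M_e \in \cu Q \Leftrightarrow \cu M \in \cu P \Leftrightarrow \cu M_f \in \cu Q$.

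The substantive step is (iii) $\Rightarrow$ (i), where the idea is to build the absolute version by hand. I define $\cu P$ to be the class of general structures $\cu M$ such that some pre-metric expansion of $\cu M$ belongs to $\cu Q$. The Expansion Theorem \ref{t-metric-expansion-exists}, applied to $\Th(\cu M)$, guarantees that at least one such expansion always exists, so the definition is nonvacuous; hypothesis (iii) then lets me replace ``some'' by ``every'', so membership in $\cu P$ is unambiguous. Two things remain to be checked. First, $\cu P$ must be closed under isomorphism: if $\cu M \cong \cu M'$, then they share a common theory $T$, and I fix a single pre-metric expansion $T_e$ of $T$; Lemma \ref{l-reduced-Me}(iv) promotes an isomorphism $h\colon \cu M \cong \cu M'$ to $h\colon \cu M_e \cong \cu M'_e$, and the automorphism invariance built into the definition of a pre-metric property transfers $\cu Q$-membership. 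Second, $\cu P$ must really be an absolute version of $\cu Q$: for any pre-metric expansion $\cu M_e$ of $\cu M$, the ``if'' direction of $\cu M \in \cu P \Leftrightarrow \cu M_e \in \cu Q$ is immediate from the definition of $\cu P$, while the ``only if'' direction uses (iii) to upgrade the existential witness supplied by the definition to the specific expansion $\cu M_e$.

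For (i) $\Rightarrow$ (ii), once $\cu Q$ is absolute with absolute version $\cu P$, Definition \ref{d-general-Q} identifies the class named in (ii) with $\cu P$, and Corollary \ref{c-absolute-version} tells us $\cu P$ is absolute. Conversely, the phrase ``general structure with property $\cu Q$'' appearing in (ii) is, per Definition \ref{d-general-Q}, only defined when $\cu Q$ is absolute, so (ii) tacitly carries (i). The closest thing to an obstacle is the isomorphism-invariance verification in (iii) $\Rightarrow$ (i): a priori it is not obvious that two isomorphic general structures should receive $\cu Q$-equivalent pre-metric expansions, and this is precisely where Lemma \ref{l-reduced-Me}(iv) together with hypothesis (iii) do the work.
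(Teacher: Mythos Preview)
Your proof is correct and follows essentially the same approach as the paper: define $\cu P$ as the class of general structures $\cu M$ admitting some pre-metric expansion in $\cu Q$, then invoke (iii) to upgrade ``some'' to ``every'' and read off the defining biconditional of an absolute version. Your explicit verification that $\cu P$ is preserved under isomorphism is a detail the paper leaves tacit; note that for this step you only need closure under bijective embeddings (the paper's definition of ``general property''), which lifts to $\cu M_e$ directly without needing the full strength of Lemma~\ref{l-reduced-Me}(iv).
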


\begin{proof}  (i) and (ii) are equivalent by Corollary \ref{c-absolute-version}.  Assume (i), so $\cu Q$ has an absolute version
$\cu{P}$.   Let $\cu{M}$ be a  general structure and let $\cu{M}_e$ and $\cu{M}_f$ be two pre-metric expansions of
$\cu{M}$.  Then the following are equivalent: $\cu M_e$ has $\cu{Q}$, the downgrade of $\cu{M}_e$ has $\cu P$, $\cu M$ has $\cu P$, the downgrade of $\cu M_f$ has $\cu P$,
$\cu M_f$ has $\cu Q$.  Therefore (iii) holds.

Assume (iii). Let $\cu{P}$ be the property such that for each general structure $\cu M$, $\cu P$ holds for  $\cu{M}$ if and only if $\cu{Q}$
holds for some pre-metric expansion of $\cu{M}$. By (iii), for each $\cu M$ and  $\cu M_e$,
$\cu P$ holds for $\cu M$ if and only if $\cu Q$ holds for $\cu M_e$.
By Example \ref{e-metric-expansion-trivial},  each pre-metric structure $\cu N$ is a pre-metric expansion of the downgrade of $\cu N$,
so $\cu Q$ holds for $\cu N$ if and only if $\cu P$ holds for the downgrade of $\cu N$.
In particular, for each $\cu M$ and $\cu M_e$,  $\cu Q$ holds for $\cu M_e$ if and only if $\cu P$ holds for the downgrade of $\cu M_e$.
 It follows that $\cu{P}$ is an absolute version of $\cu{Q}$, so (i) holds.
\end{proof}

Here are some examples of  pre-metric properties that are \emph{not} absolute.

\begin{ex}  \label{e-Lipschitz}
\noindent\begin{itemize}
\item  Let $0<r\le 1$.  The property of being a pre-metric structure whose distinguished distance has diameter $r$ is not absolute.
Similarly for diameter $\le r$, and for diameter $\ge r$.
\item  Let $d(x,y)$ be a $V$-formula that defines a pseudo-metric in every $V$-structure.
The property of being a pre-metric structure $\cu M_e$ with distinguished distance $D$
such that  $\cu M_e$ satisfies
$\sup_x\sup_y (d(x,y)\dotle D(x,y))$, is not absolute.  (Hint:
If $\cu M_e$ has an approximate distance $\langle d_m\rangle_{m\in\BN}$, then $(\cu M,\max(\sa D,d^{\cu M}))$ is
a pre-metric expansion of $\cu M$ with approximate distance $\langle \max(d_m,d)\rangle_{m\in\BN}$ and the same signature $L_e$.)
\item
Say that a pre-metric structure  is \emph{Lipschitz} if every predicate and function symbol $S$
has a modulus of uniform continuity $\triangle_S$ that is linear, that is, for each $S$ there is a  positive real $\ell$
such that  $\triangle_S(x)=\ell x$ for all $x\in(0,1]$.
The property of being a Lipschitz pre-metric structure is not absolute.  (Hint: Consider pre-metric expansions of
the unit ball in an $n$-dimensional vector space over the reals.)
\end{itemize}
\end{ex}

In the next section, we will show that many pre-metric properties $\cu Q$ in the literature have absolute versions, and also give
necessary and sufficient conditions for $\cu M$ to be a general structure with property $\cu Q$.
Note that the characterization of the absolute version of $\cu Q$ given by the proof of Proposition \ref{p-robust}
mentions pre-metric expansions.  In the results that follow, we will always give necessary and sufficient conditions
for a general structure $\cu M$ to have property $\cu Q$ that are about $\cu M$ itself, rather than
conditions that mention pre-metric expansions of $\cu M$.
This is desirable for potential applications, because a general structure $\cu M$
may have a very simple description even though all its pre-metric expansions are complicated.

\section{Properties of General Structures} \label{s-robust}

In this section we use absoluteness to extend known results about metric structures to general structures.

\subsection{Types in Pre-metric Expansions}  \label{s-types}

\begin{prop}  \label{p-expansion-equiv}
If $\cu M\models T$ and $\cu N\equiv \cu M$, then $\cu N_e\equiv\cu M_e$.
\end{prop}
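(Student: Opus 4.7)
The plan is to show that every $V_D$-sentence can be uniformly approximated by $V$-sentences using an approximate distance for $T_e$, and then to exploit elementary equivalence at the level of $V$-sentences. First I would fix an exponentially Cauchy approximate distance $\langle d_m(x,y)\rangle_{m\in\BN}$ for $T_e$, which exists by Definition \ref{d-metric-expansion} together with the observation following it, and invoke Lemma \ref{l-Te-axioms} to get that $T_e$ is equivalent to $T$ together with the sentences $\sup_x\sup_y|d_m(x,y)-D(x,y)|\dotle 2^{-m}$. Since $T\subseteq\Th(\cu M)=\Th(\cu N)$, $\cu N$ is a general model of $T$, so $\cu N_e$ is defined.

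Next, for each $V_D$-formula $\varphi(\vec x)$, let $\varphi_m(\vec x)$ be the $V$-formula obtained by replacing every atomic subformula of the form $D(\sigma,\tau)$ by $d_m(\sigma,\tau)$ (this is legal because terms use only symbols of $V$). I would then prove by induction on the complexity of $\varphi$ that in every pre-metric model $\cu N'$ of $T_e$, the sequence $\varphi_m^{\cu N'}$ converges uniformly to $\varphi^{\cu N'}$, and in fact is exponentially Cauchy. The atomic case is either trivial (if $D$ does not occur) or given directly by the approximating axioms. The connective step uses uniform continuity of connectives on $[0,1]^k$, and the quantifier step uses the elementary inequality $|\sup_y\alpha(\vec x,y)-\sup_y\beta(\vec x,y)|\le\sup_y|\alpha(\vec x,y)-\beta(\vec x,y)|$ (and likewise for $\inf$).

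Applied to a $V_D$-sentence $\varphi$, this gives $\varphi^{\cu M_e}=\lim_{m\to\infty}\varphi_m^{\cu M}$ and $\varphi^{\cu N_e}=\lim_{m\to\infty}\varphi_m^{\cu N}$. But each $\varphi_m$ is a $V$-sentence, so $\cu M\equiv\cu N$ gives $\varphi_m^{\cu M}=\varphi_m^{\cu N}$ for every $m$, and hence $\varphi^{\cu M_e}=\varphi^{\cu N_e}$. Since $\varphi$ was arbitrary, $\cu M_e\equiv\cu N_e$.

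I do not anticipate a significant obstacle: the whole argument rests on the single fact that the distance predicate $D$ is a uniform limit of $V$-formulas, and that continuous connectives and the $\sup/\inf$ quantifiers preserve uniform convergence. The only place that requires minor care is making sure that the approximation $\varphi_m$ is actually a well-formed $V$-formula, but this is immediate once one notes that $D$ occurs only in atomic subformulas and terms are built from symbols of $V$ alone.
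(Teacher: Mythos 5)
Your proof is correct, but it takes a genuinely different route from the paper's. The paper proves Proposition \ref{p-expansion-equiv} by a model-theoretic argument: it takes $\kappa$-special structures $\cu M'\equiv\cu M_e$ and $\cu N'\equiv\cu N_e$, observes that their $V$-parts are elementarily equivalent $\kappa$-special models of the same complete theory and hence isomorphic by the Uniqueness Theorem for Special Models, and then transfers that isomorphism to the expansions via Lemma \ref{l-reduced-Me}. Your argument is instead the syntactic one: approximate each $V_D$-sentence by $V$-sentences obtained by substituting $d_m$ for $D$, prove uniform convergence by induction on complexity, and read off equality of truth values from $\cu M\equiv\cu N$. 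This is precisely the content of the paper's Lemma \ref{l-expansion-converge}, which is stated and proved later (in the subsection on definable predicates) and from which the proposition does follow in one line; so your route is legitimate and arguably more elementary, avoiding the special-model machinery entirely, whereas the paper's method doubles as a template that is reused for the subsequent corollaries on types, elementary embeddings, and saturation. Two small points of care: when substituting terms $\sigma,\tau$ into $d_m(x,y)$ you must rename bound variables of $d_m$ to avoid capture (the paper's proof of Lemma \ref{l-expansion-converge} explicitly arranges this), and your parenthetical claim that the approximating sequence remains \emph{exponentially} Cauchy through the connective step is an overstatement --- an arbitrary continuous connective need not preserve the rate --- but this is harmless since only uniform convergence is used.
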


\begin{proof}
Let $\kappa$ be a special cardinal such that $|V|+\aleph_0<\kappa$.
By  the Existence Theorem for Special Models,
there are $\kappa$-special $V_D$-structures $\cu{M}'\equiv \cu{M}_e$ and $\cu{N}'\equiv\cu{N}_e$.  Let $\cu{M}'', \cu{N}''$ be the $V$-parts of $\cu M',\cu N'$ respectively.
Then $\cu{M}'', \cu{N}''\models T$,
$\cu{M}'=\cu{M}''_e$ and $\cu{N}'=\cu{N}''_e$.
By Remark \ref{r-special-expansion}, the reductions of $\cu{M}''$ and $\cu{N}''$ are $\kappa$-special.
 By  the Uniqueness Theorem for Special Models, we have $\cu{M}''\cong\cu{N}''$.
By Lemma \ref{l-reduced-Me} (iii) we have $\cu{M}''_e\cong\cu{N}''_e$, or in other words, $\cu{M}'\cong\cu{N}'$.
Therefore $\cu{N}_e\equiv\cu{N}'\equiv\cu{M}'\equiv\cu{M}_e$.
\end{proof}

\begin{cor}  \label{c-expansion-over-A}  Suppose $A\subseteq M$ and $h\colon A\to N$.
Then $(\cu{M},a)_{a\in A}\equiv(\cu{N},ha)_{a\in A}$ if and only if
$(\cu{M}_e,a)_{a\in A}\equiv(\cu{N}_e,ha)_{a\in A}$
\end{cor}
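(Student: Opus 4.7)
The plan is to reduce the corollary to Proposition \ref{p-expansion-equiv} applied in the expanded vocabulary $V\cup A$, where each element of $A$ (resp.\ its image under $h$) is treated as a constant symbol. First I would invoke Remark \ref{r-definitional-expansion-parameters}, which guarantees that $T_e$ remains a pre-metric expansion of $T$ after adjoining these new constant symbols. Consequently, $(\cu M_e,a)_{a\in A}$ is the pre-metric expansion of $(\cu M,a)_{a\in A}$ for $T_e$ in the enlarged vocabulary, and likewise $(\cu N_e,ha)_{a\in A}$ is the pre-metric expansion of $(\cu N,ha)_{a\in A}$.

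For the forward direction, assume $(\cu M,a)_{a\in A}\equiv (\cu N,ha)_{a\in A}$. Since $\cu M\models T$ and $T$ is in the vocabulary $V$ (hence unaffected by the new constants), $\cu N$ is also a general model of $T$, and both structures are general models of $T$ in the expanded vocabulary. Proposition \ref{p-expansion-equiv}, applied in that vocabulary, then gives $(\cu M_e,a)_{a\in A}\equiv (\cu N_e,ha)_{a\in A}$.

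For the reverse direction, suppose $(\cu M_e,a)_{a\in A}\equiv (\cu N_e,ha)_{a\in A}$. Any sentence in the vocabulary $V\cup\{a\colon a\in A\}$ is in particular a sentence in the vocabulary $V_D\cup\{a\colon a\in A\}$, and by Remark \ref{r-formula-expansion} its truth value is the same in $(\cu M,a)_{a\in A}$ as in $(\cu M_e,a)_{a\in A}$ (and likewise on the $\cu N$ side). Restricting the elementary equivalence of the pre-metric expansions to this sub-vocabulary therefore yields $(\cu M,a)_{a\in A}\equiv (\cu N,ha)_{a\in A}$.

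There is no real obstacle here; the only care needed is bookkeeping about vocabularies, namely that adjoining constants does not disturb the pre-metric-expansion relationship (handled by Remark \ref{r-definitional-expansion-parameters}) and that passing between the $V_D$-vocabulary and the $V$-vocabulary preserves truth values of $V$-sentences (handled by Remark \ref{r-formula-expansion}).
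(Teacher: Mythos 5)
Your proposal is correct and follows essentially the same route as the paper, which proves the corollary by citing exactly Remark \ref{r-definitional-expansion-parameters} and Proposition \ref{p-expansion-equiv}; you have simply spelled out the vocabulary bookkeeping (and the easy reverse direction via restriction to $V$-sentences) that the paper leaves implicit.
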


\begin{proof} By  Remark \ref{r-definitional-expansion-parameters} and Proposition \ref{p-expansion-equiv}.
\end{proof}

\begin{cor}  \label{c-expansion-elem-ext}
$h\colon\cu{M}\prec\cu{N}$ if and only if $h\colon\cu{M}_e\prec\cu{N}_e$.
\end{cor}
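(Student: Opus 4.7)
The plan is to reduce the statement to Corollary \ref{c-expansion-over-A} by taking $A = M$, after taking care of the fact that an elementary embedding involves both the preservation of formulas and the embedding conditions on constants, function symbols, and predicate symbols.

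The easy direction is ($\Leftarrow$): if $h\colon \cu{M}_e \prec \cu{N}_e$, then $h$ is a $V_D$-embedding and hence a $V$-embedding, and since $V$-formulas are $V_D$-formulas, $h$ preserves the truth values of all $V$-formulas, giving $h\colon \cu{M} \prec \cu{N}$.

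For ($\Rightarrow$), assume $h\colon \cu{M}\prec\cu{N}$. The first step is to upgrade $h$ from a $V$-embedding to a $V_D$-embedding; since the only new symbol is $D$, this reduces to verifying
\[D^{\cu{M}_e}(a,b) = D^{\cu{N}_e}(h(a),h(b)) \quad \text{for all } a,b \in M.\]
This follows from the fact that if $\langle d_m \rangle$ is an approximate distance for $T_e$, then each $d_m$ is a $V$-formula, and $h$ preserves $V$-formulas, so $d_m^{\cu{M}}(a,b) = d_m^{\cu{N}}(h(a),h(b))$; taking limits gives the claim using $D^{\cu{M}_e} = [\lim d_m]^{\cu{M}}$ and $D^{\cu{N}_e} = [\lim d_m]^{\cu{N}}$.

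The second step is to promote preservation of $V$-formulas to preservation of all $V_D$-formulas. Since $h\colon \cu{M}\prec\cu{N}$, we have $(\cu{M},a)_{a\in M}\equiv(\cu{N},ha)_{a\in M}$ as $V$-structures with constants added for every element of $M$. Applying Corollary \ref{c-expansion-over-A} with $A = M$ yields $(\cu{M}_e,a)_{a\in M}\equiv(\cu{N}_e,ha)_{a\in M}$, which means that for every $V_D$-formula $\varphi(\vec x)$ and every tuple $\vec b$ in $M$, $\varphi^{\cu{M}_e}(\vec b) = \varphi^{\cu{N}_e}(h(\vec b))$. Combined with the first step, this gives $h\colon \cu{M}_e \prec \cu{N}_e$. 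There is no real obstacle here, since the bulk of the work has already been done in Proposition \ref{p-expansion-equiv} and Corollary \ref{c-expansion-over-A}; the only content added by this corollary is the promotion of the embedding condition from $V$ to $V_D$, which is handled by the forced-limit identity for $D$.
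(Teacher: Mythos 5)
Your proof is correct and takes essentially the same route as the paper, whose entire proof is ``By Corollary \ref{c-expansion-over-A} with $A=M$.'' The extra care you take with the embedding condition for the new symbol $D$ (via the approximate distance $\langle d_m\rangle$) is a reasonable elaboration of a point the paper leaves implicit, though it is in fact already subsumed by preservation of the atomic $V_D$-formula $D(x,y)$ once Corollary \ref{c-expansion-over-A} is applied.
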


\begin{proof}  By Corollary \ref{c-expansion-over-A} with $A=M$.
\end{proof}

\begin{cor}  \label{c-type-over-A}  The property of two tuples realizing the same type over $A$ is
absolute---$\tp_{\cu{M}}(\vec b/A)=\tp_{\cu{M}}(\vec c/A)$
if and only if $\tp_{\cu{M}_e}(\vec b/A)=\tp_{\cu{M}_e}(\vec c/A).$
Also, indiscernibility over $A$ is absolute.
\end{cor}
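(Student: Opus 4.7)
The plan is to reduce both claims to Corollary \ref{c-expansion-over-A}. The main step is to reformulate equality of types as an elementary equivalence of expansions by constants, so that the known absoluteness of $\equiv$ with parameters does all the work.

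First I would observe that, for any $V$-structure $\cu M$, set $A\subseteq M$, and tuples $\vec b,\vec c\in M^n$, the equality $\tp_{\cu M}(\vec b/A)=\tp_{\cu M}(\vec c/A)$ is equivalent to the statement
$(\cu M,a,\vec b)_{a\in A}\equiv(\cu M,a,\vec c)_{a\in A}$,
where the $n$ components of $\vec b$ and $\vec c$ are added as constants in the second sequence. This is just the usual reformulation of type equality, and it works verbatim in the general (i.e., $V_D$) structure $\cu M_e$ as well, giving an equivalence for $\tp_{\cu M_e}(\vec b/A)=\tp_{\cu M_e}(\vec c/A)$.

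Next, apply Corollary \ref{c-expansion-over-A} with $\cu N=\cu M$, with the parameter set $A':=A\cup\{b_1,\dots,b_n\}$, and with $h\colon A'\to M$ defined by $h{\restriction}A=\mathrm{id}_A$ and $h(b_i)=c_i$. The corollary then gives
\[
(\cu M,a)_{a\in A'}\equiv(\cu M,h(a))_{a\in A'}\iff (\cu M_e,a)_{a\in A'}\equiv(\cu M_e,h(a))_{a\in A'}.
\]
Combining this with the reformulation from the previous paragraph yields $\tp_{\cu M}(\vec b/A)=\tp_{\cu M}(\vec c/A)$ iff $\tp_{\cu M_e}(\vec b/A)=\tp_{\cu M_e}(\vec c/A)$, which is the first assertion.

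For indiscernibility: recall that a sequence $\langle \vec b_i\rangle_{i\in I}$ (with $I$ linearly ordered) is indiscernible over $A$ in $\cu M$ if, for every $n$ and all increasing tuples $i_1<\dots<i_n$ and $j_1<\dots<j_n$ in $I$, we have $\tp_{\cu M}(\vec b_{i_1},\dots,\vec b_{i_n}/A)=\tp_{\cu M}(\vec b_{j_1},\dots,\vec b_{j_n}/A)$, and the same condition in $\cu M_e$ defines indiscernibility over $A$ in $\cu M_e$. By the first part applied to each such pair of tuples (viewed as tuples of appropriate length), the two conditions are equivalent, so indiscernibility over $A$ is absolute. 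The only substantive issue is the reformulation of type equality as elementary equivalence of constant-expansions; everything else is a direct appeal to Corollary \ref{c-expansion-over-A}.
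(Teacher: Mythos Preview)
Your proposal is correct and follows essentially the same approach as the paper: both apply Corollary~\ref{c-expansion-over-A} with $\cu N=\cu M$ and with $h$ fixing $A$ pointwise and sending $\vec b$ to $\vec c$, and then derive the indiscernibility claim immediately from the first part. Your write-up simply spells out the standard reformulation of type equality as elementary equivalence of constant-expansions, which the paper leaves implicit.
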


\begin{proof}  Apply Corollary \ref{c-expansion-over-A} with $\cu{N}=\cu{M}$ and where $h$  is the identity on $A$ and maps $\vec b$ to $\vec c$.
\end{proof}

The next corollary shows that for each complete theory $T$, $T_e$ has essentially the same complete types as $T$.

\begin{cor}  \label{c-homeo-types}
If $T=\Th(\cu M)$ and $T_e=\Th(\cu M_e)$, there is a  homeomorphism $h$ from $S_n(T)$ onto $S_n(T_e)$
such that for each  $\vec c\in M^n$, $h(\tp_{\cu M}(\vec c))=\tp_{\cu M_e}(\vec c)$.
\end{cor}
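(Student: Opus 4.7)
The plan is to define $h$ directly by realization, check it is a well-defined bijection using the absoluteness results from Subsection 4.1, and then get the homeomorphism property from continuity together with compactness.

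First I would verify that $T_e=\Th(\cu M_e)$ is itself a pre-metric expansion of $T=\Th(\cu M)$: by Remark \ref{r-definitional-expansion}(iv) it is a definitional expansion of $T$, and since $\cu M_e$ is a pre-metric model with signature $L_e$ we have $\met(L_e)\subseteq \Th(\cu M_e)$, so $(T_e,L_e)$ is a metric theory. Fix an approximate distance $\langle d_m\rangle$ for $T_e$. Now define $h\colon S_n(T)\to S_n(T_e)$ as follows: given $p\in S_n(T)$, pick by compactness a general model $\cu N\models T$ and a tuple $\vec c$ realizing $p$, and set $h(p)=\tp_{\cu N_e}(\vec c)$. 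Corollary \ref{c-expansion-over-A} (applied with parameters $\vec c$ and $\vec c'$) shows both well-definedness and injectivity, since $(\cu N,\vec c)\equiv(\cu N',\vec c')$ if and only if $(\cu N_e,\vec c)\equiv(\cu N'_e,\vec c')$. Surjectivity holds because every general model of $T_e$ is of the form $\cu N_e$ for some $\cu N\models T$ by condition (ii) of Definition \ref{d-metric-expansion}: if $q\in S_n(T_e)$ is realized by $\vec c$ in $\cu N_e$, then $h(\tp_{\cu N}(\vec c))=q$. Taking $\cu N=\cu M$ yields $h(\tp_{\cu M}(\vec c))=\tp_{\cu M_e}(\vec c)$ for all $\vec c\in M^n$, as required.

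The main work is continuity. The key claim is that for every $V_D$-formula $\varphi(\vec x)$ there is a Cauchy sequence of $V$-formulas $\langle \varphi_m(\vec x)\rangle$ in $T$ such that $\varphi^{\cu N_e}(\vec c)=\lim_m \varphi_m^{\cu N}(\vec c)$ for every $\cu N\models T$ and every $\vec c\in N^{|\vec x|}$. This is proved by induction on $\varphi$: atomic $V$-formulas need no change, $D(s,t)$ is replaced by $d_m(s,t)$, continuous connectives are handled via their uniform continuity on the compact cube $[0,1]^k$ (so they commute with uniform limits), and the quantifiers $\sup$ and $\inf$ commute with uniform limits. Granting the claim, the function $p\mapsto \varphi^{h(p)}$ on $S_n(T)$ is the uniform limit of the functions $p\mapsto \varphi_m^p$, each of which is continuous in the logic topology on $S_n(T)$ (directly, or via Proposition \ref{p-definable-continuous}), hence is itself continuous. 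Therefore the preimage under $h$ of every basic closed set $\{q\colon \varphi^q=0\}$ in $S_n(T_e)$ is closed in $S_n(T)$, proving $h$ is continuous.

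By Fact \ref{f-logic-topology-compact} both $S_n(T)$ and $S_n(T_e)$ are compact, and the logic topology on each is Hausdorff (distinct complete types differ on the value of some formula). A continuous bijection between compact Hausdorff spaces is automatically a homeomorphism, so $h$ is the desired homeomorphism. The only substantive step is the inductive claim producing $V$-formula approximations to an arbitrary $V_D$-formula; the main point there is that uniform convergence is preserved under continuous connectives and under $\sup_y,\inf_y$, which is routine but is the one place the specific structure of the syntax is used.
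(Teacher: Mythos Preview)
Your argument is correct, and your outline of the bijection is exactly how the paper proceeds: well-definedness and injectivity come from Corollary~\ref{c-expansion-over-A} (which is what the paper's one-line citation of Corollary~\ref{c-type-over-A} is really pointing to), and surjectivity from the fact that every general model of $T_e$ is $\cu N_e$ for some $\cu N\models T$.

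Where you diverge from the paper is in the continuity step. You prove continuity of $h$ directly by establishing the approximation claim---for every $V_D$-formula $\varphi$ there is a Cauchy sequence of $V$-formulas with the same limit in every model. That claim is correct; in fact it is exactly Lemma~\ref{l-expansion-converge}, which the paper proves later in Section~4.2. The paper's intended shortcut here is to observe instead that the \emph{inverse} map $h^{-1}\colon S_n(T_e)\to S_n(T)$ is trivially continuous: since $V\subseteq V_D$, every $V$-formula $\psi$ is already a $V_D$-formula and satisfies $\psi^{h(p)}=\psi^p$, so the $h^{-1}$-preimage of any logic-closed set in $S_n(T)$ is logic-closed in $S_n(T_e)$. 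Then the same compact--Hausdorff argument you give (applied to $h^{-1}$ rather than $h$) finishes. Your route works just as well and has the merit of being self-contained; the paper's route avoids re-proving Lemma~\ref{l-expansion-converge} at this point by exploiting the asymmetry $V\subseteq V_D$.
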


\begin{proof}  This follows from Corollary \ref{c-type-over-A}.
\end{proof}

\begin{prop}  \label{p-saturated-metric-expansion}  Let $\kappa$ be an infinite cardinal. The $\kappa$-saturation property is absolute---for
each general model $\cu{M}$ of $T$, $\cu{M}$ is $\kappa$-saturated if
and only if $\cu{M}_e$ is  $\kappa$-saturated.
\end{prop}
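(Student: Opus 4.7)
My plan is to prove the two implications separately, with the nontrivial direction relying on an explicit translation of $V_D$-types into $V$-types with the same realizers.

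For the easy direction, suppose $\cu{M}_e$ is $\kappa$-saturated, let $A\subseteq M$ with $|A|<\kappa$, and let $p(x)$ be a set of $V$-formulas with parameters from $A$ that is finitely satisfiable in $\cu{M}_A$. Viewing $p$ as a set of $V_D$-formulas, Remark \ref{r-formula-expansion} gives $\varphi^{\cu M}(b,\vec a)=\varphi^{\cu{M}_e}(b,\vec a)$ for every $\varphi\in p$ and all relevant $b,\vec a$, so finite satisfiability transfers to $(\cu{M}_e)_A$, a realizer exists there by hypothesis, and the same element realizes $p$ in $\cu{M}_A$.

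For the nontrivial direction, suppose $\cu{M}$ is $\kappa$-saturated. The strategy is to attach to each $V_D$-type over $A$ a $V$-type over the same $A$ with the same realizers. I would fix an approximate distance $\langle d_m(x,y)\rangle_{m\in\BN}$ for $T_e$ and, after passing to a subsequence, assume it is exponentially Cauchy in $T$; this gives $d_m^{\cu M}\to D^{\cu{M}_e}$ uniformly on $M\times M$, since $d_m^{\cu M}=d_m^{\cu{M}_e}$ by Remark \ref{r-formula-expansion}. For each $V_D$-formula $\varphi(\vec x)$ and each $m$, let $\varphi_m$ be the $V$-formula obtained by replacing every atomic subformula $D(t_1,t_2)$ by $d_m(t_1,t_2)$. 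The key lemma to prove by induction on $\varphi$ is the uniform approximation
\[
\lim_{m\to\infty}\sup_{\vec b\in M^{|\vec x|}}|\varphi_m^{\cu{M}_e}(\vec b)-\varphi^{\cu{M}_e}(\vec b)|=0,
\]
using uniform continuity of connectives on $[0,1]^n$ and the bound $|\sup_y f-\sup_y g|\le\sup_y|f-g|$ (and similarly for $\inf_y$).

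Given a $V_D$-type $p(x)$ over $A$ with $|A|<\kappa$ that is finitely satisfiable in $(\cu{M}_e)_A$, I would build the $V$-type $p'(x)$ over $A$ containing, for each $\varphi(x,\vec a)\in p$ and each $k\in\BN$, the $V$-formula $\varphi_{m(\varphi,k)}(x,\vec a)\dotle 2^{-k}$, where $m(\varphi,k)$ is chosen so large that $\sup_{\vec b}|\varphi_{m(\varphi,k)}^{\cu{M}_e}(\vec b)-\varphi^{\cu{M}_e}(\vec b)|\le 2^{-(k+1)}$. Then $p'$ is finitely satisfiable in $\cu{M}_A$: if $b$ exactly satisfies the corresponding finite fragment of $p$ in $(\cu{M}_e)_A$, then $\varphi_{m(\varphi,k)}^{\cu M}(b,\vec a)\le 0+2^{-(k+1)}\le 2^{-k}$ for each required instance. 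By $\kappa$-saturation of $\cu M$, pick a realizer $b\in M$ of $p'$. For each $\varphi(x,\vec a)\in p$ and every $k\in\BN$, this $b$ yields $\varphi^{\cu{M}_e}(b,\vec a)\le 2^{-k}+2^{-(k+1)}$; letting $k\to\infty$ gives $\varphi^{\cu{M}_e}(b,\vec a)=0$, so $b$ realizes $p$ in $(\cu{M}_e)_A$.

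The main obstacle will be the uniform-approximation lemma. The atomic base case $\varphi=D(t_1,t_2)$ is immediate from the defining property of the approximate distance; the connective step reduces to uniform continuity of a continuous function on $[0,1]^n$; and the quantifier step uses the standard $\sup/\inf$ stability of uniform convergence between bounded $[0,1]$-valued functions. Once this is established, the whole proof reduces to the bookkeeping above, and the only other subtlety is making sure $p'$ uses parameters only from $A$, which it does by construction, so the cardinality hypothesis $|A|<\kappa$ transfers without loss.
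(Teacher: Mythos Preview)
Your proof is correct and takes a genuinely different route from the paper's. For the nontrivial direction the paper passes to a reduced $\kappa$-saturated elementary extension $\cu N'\succ\cu M_e$ (via the Existence Theorem for Special Models), realizes $\Gamma(x)$ by some $c\in N$, uses $\kappa$-saturation of $\cu M$ to find $b\in M$ with $\tp_{\cu M}(b/A)=\tp_{\cu N}(c/A)$, and then invokes Corollary~\ref{c-type-over-A} (absoluteness of type equality, itself resting on special models via Proposition~\ref{p-expansion-equiv}) to conclude $\tp_{\cu N_e}(b/A)=\tp_{\cu N_e}(c/A)$, whence $b$ realizes $\Gamma$ in $\cu M_e$.

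Your approach instead proves directly that every $V_D$-formula is a uniform limit of $V$-formulas over $\cu M$---this is precisely Lemma~\ref{l-expansion-converge}, which the paper establishes \emph{after} the present proposition---and then performs an explicit syntactic translation of the type. This is more elementary: it avoids special models entirely and works inside the given structure. The paper's route, by contrast, showcases the type-absoluteness machinery just developed (Corollaries~\ref{c-expansion-over-A}--\ref{c-type-over-A}) and sets a template that recurs throughout Section~\ref{s-robust}: move to a saturated extension, apply absoluteness of types, move back. Your argument would permit reordering the exposition so that Lemma~\ref{l-expansion-converge} comes first and this proposition becomes a corollary. One minor point worth making explicit in a final write-up: when forming $\varphi_m$ by substituting $d_m$ for $D$, ensure the bound variables of $d_m$ are disjoint from those of $\varphi$ (as the paper does in its proof of Lemma~\ref{l-expansion-converge}).
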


\begin{proof}  We prove the non-trivial direction.
Suppose $\cu{M}$ is $\kappa$-saturated.
By Remark \ref{r-reduction-sat} and Lemma \ref{l-reduced-Me} (i), we may assume without loss of generality that $\cu{M}$ is reduced.  Then $\cu{M}_e$ is reduced.
Let $A\subseteq M$ with $|A|<\kappa$, and let $\Gamma(x)$ be a set of $V_D$-formulas that is finitely satisfiable in $(\cu{M}_e)_A$.   By
the Existence Theorem for Special Models,
there is a reduced $\kappa$-saturated elementary extension
$\cu{N}'\succ\cu{M}_e$. Then $\cu{N}'$
is equal to $\cu{N}_e$ where $\cu{N}$ is the $V$-part of $\cu N'$, and $\cu N\succ\cu M$.  Since $\cu{N}_e$ is $\kappa$-saturated, some $c\in N$ satisfies $\Gamma(x)$ in $(\cu{N}_e)_A$.
Since $\cu{M}$ is $\kappa$-saturated, there exists $b\in M$ such that
$\tp_{\cu{M}}(b/A)=\tp_{\cu{N}}(c/A),$ so $\tp_{\cu{N}}(b/A)=\tp_{\cu{N}}(c/A).$
By Corollary \ref{c-type-over-A}, $\tp_{\cu{N}_e}(b/A)=\tp_{\cu{N}_e}(c/A).$  Therefore $b$ satisfies $\Gamma(x)$ in $\cu{N}_e$, and hence also satisfies $\Gamma(x)$ in $\cu{M}_e$.
\end{proof}

\begin{cor}  \label{c-special-metric-expansion}   $\cu{M}$ is $\kappa$-special if and only if $\cu{M}_e$ is $\kappa$-special.
\end{cor}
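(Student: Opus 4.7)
The plan is to transfer a witnessing elementary chain across the pre-metric expansion operation in both directions, using three tools already established: (a) $\kappa$-saturation is absolute (Proposition \ref{p-saturated-metric-expansion}); (b) pre-metric expansion preserves and reflects elementary substructure (Corollary \ref{c-expansion-elem-ext}), so it turns elementary chains into elementary chains with the same union at the level of universes; and (c) pre-metric expansion commutes with reduction (Lemma \ref{l-reduced-Me}(iii)). By definition, $\cu{M}$ is $\kappa$-special precisely when it is the reduction of a special $V$-structure of cardinality $\kappa$, and likewise for $\cu{M}_e$ with a special $V_D$-structure. So in each direction I only have to lift or drop the witnessing special structure across the expansion and then pass to reductions.

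For the forward direction, suppose $\cu{M}$ is the reduction of a special $V$-structure $\cu{M}'=\bigcup_{\lambda<\kappa}\cu{M}'_\lambda$ with each $\cu{M}'_\lambda$ being $\lambda^+$-saturated. Since $\cu{M}'\equiv \cu{M}\models T$, each $\cu{M}'_\lambda\models T$, so every $(\cu{M}'_\lambda)_e$ is defined. By (b), $(\cu{M}'_\lambda)_e\prec (\cu{M}')_e$ and these form an elementary chain whose union (on universes and hence as a $V_D$-structure) is $(\cu{M}')_e$; by (a), each $(\cu{M}'_\lambda)_e$ is $\lambda^+$-saturated, so $(\cu{M}')_e$ is special of cardinality $\kappa$. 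By (c), its reduction is $\cu{M}_e$, so $\cu{M}_e$ is $\kappa$-special.

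For the converse, assume $\cu{M}_e$ is the reduction of a special $V_D$-structure $\cu{N}$ of cardinality $\kappa$, witnessed by the chain $\langle \cu{N}^\lambda:\lambda<\kappa\rangle$. The only point that needs a short verification is that $\cu{N}$ and each $\cu{N}^\lambda$ are themselves pre-metric expansions of their respective $V$-parts. This holds because $\cu{N}\equiv\cu{M}_e\models T_e$ forces $\cu{N}\models T_e$, and then Lemma \ref{l-Te-axioms} forces $D^{\cu{N}}=[\lim d_m]^{\cu{N}_0}$, so $\cu{N}=(\cu{N}_0)_e$; the same applies to each $\cu{N}^\lambda\prec\cu{N}$, giving $\cu{N}^\lambda=(\cu{N}^\lambda_0)_e$. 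With this in hand, (b) applied in reverse yields $\cu{N}^\lambda_0\prec\cu{N}_0$, so the $V$-parts form an elementary chain with union $\cu{N}_0$; (a) makes each $\cu{N}^\lambda_0$ $\lambda^+$-saturated, so $\cu{N}_0$ is special of cardinality $\kappa$; and (c), combined with the known reduction of $\cu{N}=(\cu{N}_0)_e$ being $\cu{M}_e=\cu{M}_e$, identifies the reduction of $\cu{N}_0$ as $\cu{M}$. Hence $\cu{M}$ is $\kappa$-special. The ``hard part'' is really only the brief verification via Lemma \ref{l-Te-axioms} that elementary substructures of a pre-metric expansion are themselves pre-metric expansions of their $V$-parts; once that is noted, the rest of the argument is a mechanical combination of (a), (b), (c).
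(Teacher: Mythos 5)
Your proof is correct and follows essentially the same route as the paper, whose own proof is just the citation ``By Corollary \ref{c-expansion-elem-ext} and Proposition \ref{p-saturated-metric-expansion}'': you transfer the witnessing elementary chain using absoluteness of $\kappa$-saturation and preservation of $\prec$, with Lemma \ref{l-reduced-Me}(iii) handling the reductions. The extra verification you flag in the converse direction (that elementary substructures of a general model of $T_e$ are themselves pre-metric expansions of their $V$-parts) is exactly the detail the paper leaves implicit, and you justify it correctly.
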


\begin{proof}  By Corollary \ref{c-expansion-elem-ext} and  Proposition \ref{p-saturated-metric-expansion}.
\end{proof}

\subsection{Definable Predicates}

The notion of a definable predicate in $\cu M$ was introduced in Definition \ref{d-definable}.
In Proposition \ref{p-definable-predicate} below we will show that the general property  [$\sa P$ is a definable predicate] is absolute.
The next lemma  will be used several times in this paper.

\begin{lemma}  \label{l-expansion-converge}  Let $T_e$ be a pre-metric expansion of $T$.
For every $V_D$-formula $\varphi(\vec x)$, there is a Cauchy sequence of $V$-formulas $\langle\varphi_m(\vec x)\rangle_{m\in\BN}$
in $T$ such that for every general model $\cu M$ of $T$,
$\varphi^{\cu{M}_e}=[\lim \varphi_m]^{\cu{M}}$.  Hence $\varphi^{\cu{M}_e}$ is definable in $\cu{M}$.
\end{lemma}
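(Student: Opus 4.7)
The plan is to prove the lemma by induction on the complexity of the $V_D$-formula $\varphi(\vec x)$. Fix an approximate distance $\langle d_m(x,y)\rangle_{m\in\BN}$ for $T_e$; by Lemma \ref{l-Te-axioms} this sequence is Cauchy in $T$ and satisfies $D^{\cu M_e}=[\lim d_m]^{\cu M}$ for every $\cu M\models T$. Note that terms of $V_D$ are the same as terms of $V$, since $D$ is a predicate symbol.

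For the base case, if $\varphi$ is an atomic $V$-formula (including the $0$-ary connectives $r\in[0,1]$), the constant sequence $\varphi_m=\varphi$ works. The only other atomic case is $\varphi=D(\tau_1,\tau_2)$ with $\tau_1,\tau_2$ terms of $V$; here I take $\varphi_m=d_m(\tau_1,\tau_2)$, and the desired properties follow directly from the choice of $\langle d_m\rangle$, since for any $\cu M\models T$ and any tuple $\vec a$, $\varphi^{\cu M_e}(\vec a)=D^{\cu M_e}(\tau_1^{\cu M}(\vec a),\tau_2^{\cu M}(\vec a))=\lim_m d_m^{\cu M}(\tau_1^{\cu M}(\vec a),\tau_2^{\cu M}(\vec a))$.

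For the connective step, if $\varphi=C(\psi_1,\ldots,\psi_k)$ where $C\colon[0,1]^k\to[0,1]$ is continuous and each $\psi_i$ has a Cauchy approximating sequence $\langle\psi_{i,m}\rangle$ of $V$-formulas by the induction hypothesis, I set $\varphi_m=C(\psi_{1,m},\ldots,\psi_{k,m})$. Since $C$ is continuous on the compact set $[0,1]^k$, it is uniformly continuous there, so Cauchy inputs give a Cauchy output sequence in $T$, and pointwise $[\lim\varphi_m]^{\cu M}=C([\lim\psi_{1,m}]^{\cu M},\ldots,[\lim\psi_{k,m}]^{\cu M})=\varphi^{\cu M_e}$. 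For the quantifier step $\varphi=\sup_y\psi$, I set $\varphi_m=\sup_y\psi_m$. The inequality $|\sup_y\psi_m-\sup_y\psi_k|\le\sup_y|\psi_m-\psi_k|$, holding pointwise in every general model of $T$, shows that the Cauchy property is preserved, and exchanging $\sup_y$ with the uniform limit (which is legitimate because convergence is uniform in $y$ by the Cauchy-in-$T$ condition) yields $[\lim\varphi_m]^{\cu M}=\sup_y[\lim\psi_m]^{\cu M}=\sup_y\psi^{\cu M_e}=\varphi^{\cu M_e}$; the case $\inf_y$ is symmetric.

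I do not expect a real obstacle here: the only subtle point is justifying that uniform Cauchy behaviour passes through the $\sup$ quantifier, which is immediate from the sup inequality above, and that it passes through connectives, which reduces to compactness of $[0,1]^k$. The final sentence of the lemma, that $\varphi^{\cu M_e}$ is definable in $\cu M$, is just the statement that $\varphi^{\cu M_e}=[\lim\varphi_m]^{\cu M}$ with $\langle\varphi_m\rangle$ Cauchy in $\cu M$ (since it is Cauchy in $T$), which is the content of Definition \ref{d-definable}.
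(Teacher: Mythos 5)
Your proposal is correct and follows essentially the same route as the paper: the paper likewise forms $\varphi_m$ by replacing each subformula $D(\sigma,\tau)$ with $d_m(\sigma,\tau)$ and verifies by induction on complexity that the result is Cauchy in $T$ with limit $\varphi^{\cu M_e}$, merely leaving the connective and quantifier steps (which you spell out) to the reader. The only detail the paper makes explicit that you omit is choosing the approximate distance so that no bound variable of $d_m(u,v)$ occurs in $\varphi(\vec x)$, to avoid variable capture under the substitution; this is a routine renaming matter, not a gap.
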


\begin{proof}
Let $\langle d\rangle=\langle d_m(u,v)\rangle_{m\in\BN}$ be an approximate distance for $T_e$,
such that  no bound variable in $d_m(u,v)$ occurs in $\varphi(\vec x)$.
Let $\Psi$ be the set of all subformulas of $\varphi(\vec x)$.
 For every $\psi\in\Psi$, let $\psi_m$ be the $V$-formula
obtained by replacing every subformula of $\psi$ of the form $D(\sigma,\tau)$ by $d_m(\sigma,\tau)$, where $\sigma,\tau$ are $V$-terms.  It then follows by induction
on complexity that for every $\psi\in\Psi$ we have:
$$\langle\psi_m^{\cu M}\rangle_{m\in\BN} \mbox{ is Cauchy  in } T \mbox{ and for each } \cu M\models T, \psi^{\cu {M}_e}=[\lim\psi_m]^{\cu M}.$$
In particular, this holds when $\psi=\varphi(\vec x)$, as required.
\end{proof}

\begin{prop}  \label{p-definable-predicate}
Let $\sa P\colon M^k\to[0,1]$.  The general property \emph{[$\sa P$ is a definable predicate]} is absolute.
\end{prop}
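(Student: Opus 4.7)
The plan is to prove absoluteness by verifying the two directions separately, with the forward direction being essentially trivial and the reverse direction resting on Lemma \ref{l-expansion-converge} combined with a diagonal selection.

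First I would handle the easy direction: suppose $\sa P$ is definable in $\cu M$, witnessed by a Cauchy sequence $\langle\varphi_m(\vec x)\rangle_{m\in\BN}$ of $V$-formulas with $\sa P=[\lim\varphi_m]^{\cu M}$. Every $V$-formula is also a $V_D$-formula, and by Remark \ref{r-formula-expansion} each $\varphi_m$ has the same truth value function in $\cu M$ and in $\cu M_e$. Hence $\langle\varphi_m\rangle$ is Cauchy in $\cu M_e$ (with the same uniform bounds) and still has pointwise limit $\sa P$, so $\sa P$ is definable in $\cu M_e$.

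For the nontrivial reverse direction, suppose $\sa P$ is definable in $\cu M_e$, witnessed by an exponentially Cauchy sequence $\langle\psi_n(\vec x)\rangle_{n\in\BN}$ of $V_D$-formulas with $\sa P=[\lim\psi_n]^{\cu M_e}$. Apply Lemma \ref{l-expansion-converge} to each $\psi_n$ to obtain a Cauchy sequence $\langle\psi_{n,m}(\vec x)\rangle_{m\in\BN}$ of $V$-formulas in $T$ (hence in $\cu M$) with $\psi_n^{\cu M_e}=[\lim_m\psi_{n,m}]^{\cu M}$. The approximation $\psi_{n,m}^{\cu M}\to\psi_n^{\cu M_e}$ is uniform over $M^{|\vec x|}$ by the definition of a definable mapping, so for each $n$ we may choose $m(n)$ large enough that
$$\sup_{\vec a\in M^{|\vec x|}}|\psi_{n,m(n)}^{\cu M}(\vec a)-\psi_n^{\cu M_e}(\vec a)|\le 2^{-n}.$$
Set $\varphi_n:=\psi_{n,m(n)}$. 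A triangle inequality estimate
$$\sup_{\vec a}|\varphi_n^{\cu M}(\vec a)-\varphi_k^{\cu M}(\vec a)|\le 2^{-n}+\sup_{\vec a}|\psi_n^{\cu M_e}(\vec a)-\psi_k^{\cu M_e}(\vec a)|+2^{-k}$$
together with the Cauchyness of $\langle\psi_n\rangle$ in $\cu M_e$ shows that $\langle\varphi_n\rangle$ is Cauchy in $\cu M$, and a similar estimate shows that $[\lim\varphi_n]^{\cu M}(\vec a)=\sa P(\vec a)$ pointwise. Thus $\sa P$ is definable in $\cu M$.

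The main obstacle, such as it is, is the diagonal choice of $m(n)$: one must make sure the selection is uniform enough in $\vec a$ to preserve the Cauchy condition in $\cu M$ rather than merely pointwise convergence. This is guaranteed because the notion of a definable mapping (Definition \ref{d-definable}) is already framed in terms of uniform Cauchyness, so the uniform bound used to pick $m(n)$ is exactly what Lemma \ref{l-expansion-converge} provides.
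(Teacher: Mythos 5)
Your proposal is correct and follows essentially the same route as the paper: the forward direction is immediate, and the reverse direction applies Lemma \ref{l-expansion-converge} to each $V_D$-formula in the defining sequence and then makes a diagonal choice of $V$-formula approximants within $2^{-n}$, exactly as the paper does. Your explicit triangle-inequality verification of Cauchyness and of the limit is just a spelled-out version of the step the paper leaves implicit.
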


\begin{proof}
It is clear that if $\sa P$ is definable in $\cu M$, then  $\sa P$ is definable in $\cu M_e$.
Suppose that $\sa P$ is definable in $\cu M_e$.
Then $\sa P=[\lim \varphi_m]^{\cu M_e}$ for some sequence $\langle\varphi_m\rangle_{m\in\BN}$ of $V_D$-formulas
that is Cauchy in $\Th(\cu M_e)$.  By Lemma \ref{l-expansion-converge}, each $\varphi_m^{\cu M_e}$ is definable in $\cu M$,
so for each $m$ there is a $V$-formula $\psi_m(\vec x)$ such that
$$(\forall \vec b\in M^k)|\psi_m^{\cu M}(\vec b)-\varphi_m^{\cu M_e}(\vec b)|\le 2^{-m}.$$
Then $\sa P=[\lim\varphi_m]^{\cu M_e}=[\lim\psi_m]^{\cu M}$, so $\sa P$ is definable in $\cu M$.
\end{proof}

The following corollary shows that when we add countably many predicates that are definable in $\cu{M}$
to a pre-metric expansion of $\cu{M}$, we still have a pre-metric expansion.

\begin{cor} \label{c-expand-definable} Suppose $V'=V\cup W$ where $W$ is a countable set of new predicate symbols,  $\cu{M}$ is a $V$-structure,
$\cu{M}_e$ is a pre-metric expansion of $\cu{M}$, and $\cu{M}'=(\cu{M}, P^{\cu{M}'})_{P\in W}$ is a $V'$-structure such that
$P^{\cu{M}'}$ is a definable predicate in $\cu{M}$ for each $P\in W$.
Then the $V'_D$-structure $(\cu{M}_e,P^{\cu{M}'})_{P\in W}$ is a pre-metric expansion of $\cu{M}'$.
\end{cor}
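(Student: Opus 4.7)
The plan is to exhibit a $V'$-theory $T'$ satisfied by $\cu M'$, together with a pre-metric expansion $T'_e$ of $T'$ whose associated pre-metric expansion of $\cu M'$ is the given structure $(\cu M_e,P^{\cu M'})_{P\in W}$. Let $L_e$ and $\langle d_m(x,y)\rangle_{m\in\BN}$ be the signature and an approximate distance for the given pre-metric expansion $\cu M_e$; by passing to a subsequence we may take $\langle d_m\rangle$ to be exponentially Cauchy. For each $P\in W$, since $P^{\cu M'}$ is definable in $\cu M$, use the forced-convergence trick from the proof of Lemma \ref{l-definable-iff} to fix an exponentially Cauchy sequence $\langle\varphi^P_m(\vec x)\rangle_{m\in\BN}$ of $V$-formulas in $\Th(\cu M)$ with $[\lim\varphi^P_m]^{\cu M}=P^{\cu M'}$.

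Set
$$T'=\Th(\cu M)\cup\bigcup_{P\in W}\{\sup_{\vec x}|\varphi^P_m(\vec x)-P(\vec x)|\dotle 2^{-m}:m\in\BN\},$$
so $\cu M'\models T'$ and, by the same argument as Lemma \ref{l-Te-axioms}, the general models of $T'$ are exactly the structures $(\cu N,P^{\cu N'})_{P\in W}$ with $\cu N\models\Th(\cu M)$ and $P^{\cu N'}=[\lim\varphi^P_m]^{\cu N}$. Regarded as a metric theory with signature $L_e$, the complete theory $\Th(\cu M_e)$ is itself a pre-metric expansion of $\Th(\cu M)$ with approximate distance $\langle d_m\rangle$ (by Remark \ref{r-definitional-expansion} (iv) together with Proposition \ref{p-expansion-equiv}). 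Applying Lemma \ref{l-def-predicate-uniform} (ii) to the metric theory $(\Th(\cu M_e),L_e)$ and the Cauchy sequence $\langle\varphi^P_m\rangle_{m\in\BN}$ then yields, for each $P\in W$, a function $\triangle_P\colon(0,1]\to(0,1]$ that serves as a modulus of uniform continuity for $[\lim\varphi^P_m]$ with respect to $D$ in every pre-metric model of $\Th(\cu M_e)$. Let $L'_e$ be the metric signature over $V'_D$ agreeing with $L_e$ on $V_D$ and assigning $\triangle_P$ to each $P\in W$, and set $T'_e=\Th(\cu M_e)\cup T'$.

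It remains to verify that $T'_e$ is a pre-metric expansion of $T'$ with signature $L'_e$ and approximate distance $\langle d_m\rangle$. Any general model of $T'_e$ has the form $(\cu N',D^{\cu N_e})$ with $\cu N'=(\cu N,P^{\cu N'})_{P\in W}\models T'$ and $\cu N\models\Th(\cu M)$; its $V_D$-reduct is a pre-metric structure for $L_e$ because $\cu N_e$ is, and each $P\in W$ respects $\triangle_P$ with respect to $D^{\cu N_e}$ by the choice of $\triangle_P$. Hence $T'_e\models\met(L'_e)$, conditions (i) and (ii) of Definition \ref{d-metric-expansion} are satisfied, and the pre-metric expansion of $\cu M'$ for $T'_e$ is $(\cu M',[\lim d_m]^{\cu M'})=(\cu M_e,P^{\cu M'})_{P\in W}$. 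The one nontrivial step is the production of the moduli $\triangle_P$ that work simultaneously across all pre-metric models of the ambient theory; this is precisely the content of Lemma \ref{l-def-predicate-uniform} (ii), and everything else reduces to definitional unpacking.
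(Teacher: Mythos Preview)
Your proof is correct and follows essentially the same approach as the paper's: both take the approximate distance $\langle d_m\rangle$ for $\cu M_e$, invoke Lemma~\ref{l-def-predicate-uniform} to obtain moduli of uniform continuity $\triangle_P$ for the new predicates, and extend $L_e$ accordingly. Your write-up is more explicit than the paper's in that you spell out the theories $T'$ and $T'_e$ and verify conditions (i) and (ii) of Definition~\ref{d-metric-expansion} directly, whereas the paper works at the level of the single structure $\cu M'$ and leaves the ambient theory implicit; but the underlying argument is the same.
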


\begin{proof}  Let $\langle d\rangle=\langle d_m\rangle_{m\in\BN}$ be an approximate distance for $T_e$.
By Lemma \ref{l-def-predicate-uniform},
for each predicate symbol $P\in W$, $P^{\cu {M}'}$ is uniformly continuous with respect to $D^{\cu{M}_e}$
with some modulus of uniform continuity $\triangle_P$.  Let $L_f$ agree with $L_e$
on $V$ and give each new predicate symbol $P\in W$ the modulus of uniform continuity $\triangle_P$.
 Then
$$ \cu{M}'_f := (\cu M',[\lim d_m]^{\cu M'})=(\cu M',D^{\cu M_e})=(\cu{M}_e,P^{\cu{M}'})_{P\in W}$$
is a pre-metric  expansion of $\cu M'$ with signature $L_f$.
\end{proof}

Given a set of parameters $A\subseteq M$, we say that a mapping $\sa P\colon M^k\to[0,1]$ is a \emph{definable predicate over}  $A$ in $\cu{M}$ if $\sa P$ is
a definable predicate in
$\cu{M}_A$.  In view of Remark \ref{r-definitional-expansion-parameters}, all of the results in this section hold for definable predicates over $A$ in $\cu{M}$.

\begin{df}  We say that a general structure $\cu N$ whose vocabulary $W$ contains $V$ \emph{admits quantifier elimination over} $V$ if for every
$W$-formula $\varphi(\vec x)$, $\varphi^{\cu N}$ is defined by a sequence of quantifier-free $V$-formulas in $\cu N$.
\end{df}

Note that admitting  quantifier elimination over $V$ is a stronger property than admitting  quantifier elimination over $V_D$.

\begin{cor}
\noindent\begin{itemize}
\item[(i)] $\cu M$ admits quantifier elimination over $V$ if and only if $\cu M_e$ admits quantifier elimination over $V$.
\item[(ii)] If $D^{\cu M_e}(x,y)$ is defined in $\cu M_e$ by a sequence of quantifier-free $V$-formulas, and $\cu M_e$ admits elimination of quantifiers over $V_D$,
then $\cu M$ admits elimination of quantifiers over $V$.
\end{itemize}
\end{cor}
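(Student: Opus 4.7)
For part (i), both directions should follow quickly from Lemma \ref{l-expansion-converge} and the fact (Remark \ref{r-formula-expansion}) that every $V$-formula has the same truth values in $\cu M$ and in any expansion of $\cu M$. The direction $(\Leftarrow)$ is essentially free: if $\cu M_e$ admits quantifier elimination over $V$, then in particular each $V$-formula $\varphi(\vec x)$ is defined in $\cu M_e$ by some Cauchy sequence $\langle \psi_m\rangle$ of quantifier-free $V$-formulas, and since $V$-formula values agree on $\cu M$ and $\cu M_e$, the same sequence is Cauchy in $\cu M$ and defines $\varphi^{\cu M}$ there. For $(\Rightarrow)$, given a $V_D$-formula $\varphi(\vec x)$, I would apply Lemma \ref{l-expansion-converge} to produce a Cauchy sequence $\langle \varphi_m\rangle$ of $V$-formulas in $T$ with $\varphi^{\cu M_e}=[\lim\varphi_m]^{\cu M}$; then use the QE hypothesis on $\cu M$ to approximate each $\varphi_m^{\cu M}$ to within $2^{-m}$ by a quantifier-free $V$-formula $\psi_m$. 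A triangle inequality shows that $\langle\psi_m\rangle$ is Cauchy in $\cu M$ with $[\lim\psi_m]^{\cu M}=\varphi^{\cu M_e}$, and because $V$-formula values agree on both structures, the same sequence defines $\varphi^{\cu M_e}$ in $\cu M_e$.

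For part (ii), the plan is to combine the assumed QE of $\cu M_e$ over $V_D$ with the substitution technique from the proof of Lemma \ref{l-expansion-converge}. Let $\varphi(\vec x)$ be an arbitrary $V$-formula, regarded as a $V_D$-formula; by hypothesis, $\varphi^{\cu M_e}$ is defined in $\cu M_e$ by a Cauchy sequence $\langle \chi_m\rangle$ of quantifier-free $V_D$-formulas. For each $m,k$ I would form $\chi_{m,k}$ by replacing every atomic subformula of $\chi_m$ of the form $D(\sigma,\tau)$ by $d_k(\sigma,\tau)$. Since by hypothesis each $d_k$ is quantifier-free, the resulting $\chi_{m,k}$ is a quantifier-free $V$-formula (and no bound-variable renaming is needed). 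Uniform convergence of $d_k^{\cu M}$ to $D^{\cu M_e}$, together with continuity of the connectives assembling $\chi_m$, yields that $\chi_{m,k}^{\cu M}\to\chi_m^{\cu M_e}$ uniformly in $\vec x$ as $k\to\infty$. A diagonal choice $k=k(m)$ with $\sup_{\vec x}|\chi_{m,k(m)}^{\cu M}-\chi_m^{\cu M_e}|\le 2^{-m}$ then gives a Cauchy sequence $\langle \chi_{m,k(m)}\rangle$ of quantifier-free $V$-formulas in $\cu M$ with limit $\varphi^{\cu M_e}=\varphi^{\cu M}$, which is what QE over $V$ for $\cu M$ requires.

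The only real piece of work is in (ii), and it is the double-limit bookkeeping: keeping straight that the outer Cauchy sequence $\langle\chi_m\rangle$ converges uniformly to $\varphi^{\cu M_e}$ on $\cu M_e$, while the inner convergence $\chi_{m,k}^{\cu M}\to\chi_m^{\cu M_e}$ is uniform in $\vec x$ but only for fixed $m$. Choosing $k(m)$ so that the $m$th inner error is at most $2^{-m}$ gives a single Cauchy diagonal by the usual triangle inequality, but one must verify that the convergence $\chi_{m,k}^{\cu M}\to\chi_m^{\cu M_e}$ really is uniform, which follows because $\langle d_k\rangle$ is Cauchy in $T$ and any formula built from these by finitely many applications of continuous connectives inherits the corresponding uniform convergence.
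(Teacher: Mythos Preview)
Your proposal is correct and is essentially the same argument the paper has in mind: the paper's proof is the single line ``This follows easily from Lemma~\ref{l-expansion-converge},'' and what you have written is precisely a careful unpacking of that sentence, using the substitution $D(\sigma,\tau)\mapsto d_k(\sigma,\tau)$ from the proof of that lemma together with a diagonal/$2^{-m}$-approximation step. Your handling of the double limit in (ii) and your observation that the quantifier-free hypothesis on the $d_k$ keeps each $\chi_{m,k}$ quantifier-free are exactly the points that make the corollary work.
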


\begin{proof} This follows easily from Lemma \ref{l-expansion-converge}.
\end{proof}

\subsection{Topological and Uniform Properties}   \label{s-top-unif}

The following definition was given in [BBHU] for metric structures, but makes sense for all general structures.

\begin{df}  A set $C\subseteq M^k$ is said to be \emph{type-defined} by $\Phi(\vec x)$ in a general structure $\cu M$,
and that $C$ is \emph{type-definable in} $\cu M$,  if $\Phi(\vec x)$ is a set of formulas
in the vocabulary of $\cu M$ with parameters in $M$, and
$$C=\{\vec c\in M^k\colon \cu M\models \Phi(\vec c)\}.$$
\end{df}

Note that type definability in $\cu M$  is preserved under finite unions and arbitrary intersections.

\begin{df}
Let $\cu N$ be a pre-metric structure with distinguished distance predicate $d$.  A set $C\subseteq N^k$ is \emph{closed} in $\cu N$ if
it is closed with respect to the pseudo-metric $d(\vec x,\vec y)=\max_{i\le k} d(x_i,y_i)$ on $N^k$.
\end{df}

\begin{lemma}  \label{l-closed-type-definable}
Let $\cu N$ be a pre-metric structure.  A set $C\subseteq N^k$ is closed in $\cu N$ if and only if
$C$ is type-definable in $\cu N$.
\end{lemma}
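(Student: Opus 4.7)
My plan is to treat the two implications separately. The direction that type-definable sets are closed is the easy one and follows from Fact \ref{f-t3.5}. The reverse direction requires constructing an explicit type-defining family, which I would do using only the distance predicate and parameters from $N$.

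For the ``only if'' direction, suppose $C$ is type-defined by $\Phi(\vec x)$ in $\cu N$. By Fact \ref{f-t3.5}, every formula $\varphi(\vec x)$ in the vocabulary of $\cu N$ (with parameters from $N$) has a modulus of uniform continuity with respect to $d^{\cu N}$, so $\varphi^{\cu N}\colon N^k\to[0,1]$ is continuous in the pseudo-metric topology on $N^k$. Its zero set $\{\vec c\in N^k\colon \varphi^{\cu N}(\vec c)=0\}$ is therefore closed, and $C$ is the intersection of these zero sets as $\varphi$ ranges over $\Phi$, hence $C$ is closed.

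For the ``if'' direction, I would exhibit $\Phi$ explicitly. For each $\vec b\in N^k$, set
$$r_{\vec b} = \inf\{d^{\cu N}(\vec b,\vec c)\colon\vec c\in C\},$$
with the convention $r_{\vec b}=1$ in case $C=\emptyset$, and let
$$\Phi(\vec x)=\{r_{\vec b}\dotminus d(\vec x,\vec b)\colon \vec b\in N^k\},$$
where $d(\vec x,\vec b)$ abbreviates $\max_{i\le k} d(x_i,b_i)$ and each $r_{\vec b}\in[0,1]$ is treated as a $0$-ary connective. If $\vec c\in C$, then $d^{\cu N}(\vec c,\vec b)\ge r_{\vec b}$ for every $\vec b\in N^k$ by definition of infimum, so every formula in $\Phi$ vanishes at $\vec c$. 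Conversely, if $\vec b\in N^k\setminus C$, then because $C$ is closed in the pseudo-metric on $N^k$ we have $r_{\vec b}>0$, and evaluating the formula with parameter $\vec b$ at $\vec x=\vec b$ yields $r_{\vec b}\dotminus 0 = r_{\vec b}>0$, so $\vec b$ fails to satisfy $\Phi$. Hence $C$ is precisely the set type-defined by $\Phi$.

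There is essentially no obstacle; the one thing to verify is that each entry of $\Phi$ is a well-formed formula in the vocabulary of $\cu N$ with parameters from $N$. This is immediate from the conventions already adopted in the excerpt: constants $r\in[0,1]$ are $0$-ary connectives, $\dotminus$ is the continuous binary connective $\max(\cdot-\cdot,0)$, and $\max$ is itself a continuous connective, so each $r_{\vec b}\dotminus\max_{i\le k} d(x_i,b_i)$ is a legitimate formula with parameters $b_1,\ldots,b_k\in N$.
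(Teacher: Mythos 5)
Your proof is correct and follows essentially the same route as the paper: the type-definable-implies-closed direction via Fact \ref{f-t3.5}, and the converse via the family of formulas $r_{\vec b}\dotminus\max_{i\le k}d(x_i,b_i)$, which is exactly the paper's family $\varepsilon_{\vec b}\dotle\max_{i\le k}d(b_i,x_i)$ (the paper indexes only over $\vec b\notin C$, but your extra formulas for $\vec b\in C$ are identically zero and harmless).
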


\begin{proof}
Assume $C\subseteq N^k$ is closed in $\cu N$.  For each $\vec b\in N^k\setminus C$ let $\varepsilon_{\vec b}=\inf_{\vec c\in C} \max_{i\le k} d(b_i,c_i),$
so $\varepsilon_{\vec b}>0$.  Then $C$ is type-defined in $\cu N$ by the set of formulas
$$\Phi(\vec x)=\{\varepsilon_{\vec b}\dotle \max_{i\le k}d(b_i,x_i)\colon \vec b\in N^k\setminus C\}.$$

Now suppose $C$ is type-defined in $\cu N$ by some set of formulas $\Psi(\vec x)$.
By Fact \ref{f-t3.5}, for each $\psi(\vec x)\in\Psi(\vec x)$,
the set $\{\vec c\in N^k\colon \cu N\models \psi(\vec c)\}$ is closed in $\cu N$.  Therefore $C$ is closed in $\cu N$.
\end{proof}

\begin{prop}  \label{p-closure}
The property of a set $C\subseteq M^k$ being closed in $\cu M$ is absolute.
$C$ is closed in $\cu M$ if and only if $C$ is type-definable in $\cu M$.
\end{prop}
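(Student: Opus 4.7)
The plan is to prove, for every pre-metric expansion $\cu M_e$ of $\cu M$, the single equivalence
\[
C \text{ is closed in } \cu M_e \iff C \text{ is type-definable in } \cu M.
\]
This simultaneously yields both assertions of the proposition: since the right-hand side is independent of the choice of $\cu M_e$, the pre-metric property of being closed is absolute; and the right-hand side exhibits type-definability in $\cu M$ as its absolute version. By Lemma \ref{l-closed-type-definable} we already know that $C$ is closed in $\cu M_e$ if and only if $C$ is type-definable in $\cu M_e$, so the whole task reduces to showing that $C$ is type-definable in $\cu M$ if and only if $C$ is type-definable in $\cu M_e$.

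For the forward implication, suppose $\Phi(\vec x)$ is a set of $V$-formulas with parameters from $M$ that type-defines $C$ in $\cu M$. Since $V\subseteq V_D$, the same $\Phi(\vec x)$ is a set of $V_D$-formulas, and by Remark \ref{r-formula-expansion} (with the extension by constants for elements of $M$ justified by Remark \ref{r-definitional-expansion-parameters}), the truth value of each $V$-formula at each tuple is the same in $\cu M$ and in $\cu M_e$. Hence $\Phi(\vec x)$ type-defines $C$ in $\cu M_e$ as well.

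For the reverse implication, suppose $\Psi(\vec x)$ is a set of $V_D$-formulas with parameters from $M$ that type-defines $C$ in $\cu M_e$. For each $\psi(\vec x)\in\Psi$, Lemma \ref{l-expansion-converge} (applied after adjoining constants for the relevant parameters) furnishes a Cauchy sequence $\langle\psi_m(\vec x)\rangle_{m\in\BN}$ of $V$-formulas (with parameters from $M$) with $\psi^{\cu M_e}=[\lim\psi_m]^{\cu M}$. Passing to a subsequence, we may assume the sequence is exponentially Cauchy, so that $|\psi_m^{\cu M}(\vec c)-\psi^{\cu M_e}(\vec c)|\le 2^{-m}$ for every $m\in\BN$ and every $\vec c\in M^k$. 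Consequently $\psi^{\cu M_e}(\vec c)=0$ if and only if $\psi_m^{\cu M}(\vec c)\le 2^{-m}$ for all $m$. Taking the union of the $V$-formula sets $\{\psi_m(\vec x)\dotle 2^{-m}:m\in\BN\}$ as $\psi$ ranges over $\Psi$ gives a set of $V$-formulas with parameters in $M$ that type-defines $C$ in $\cu M$.

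The one subtle step is the reverse implication, where a single $V_D$-formula must be replaced by a countable family of $V$-formulas. The exponentially Cauchy normalization is the key device: it converts the abstract statement $\psi^{\cu M_e}(\vec c)=0$ into a quantitatively verifiable countable conjunction of $V$-formula conditions, keeping everything inside the vocabulary $V$.
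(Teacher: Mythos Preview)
Your proof is correct, but the reverse implication follows a genuinely different route from the paper's. The paper does not convert the type-defining set $\Psi$ formula-by-formula. Instead it takes $\Psi'(\vec x)$ to be the set of \emph{all} $V$-formulas with parameters in $M$ whose zero set contains $C$, and argues that $\Psi'$ already type-defines $C$: if some $\vec b$ satisfies $\Psi'$ but $\vec b\notin C$, then since $C$ is closed in $\cu M_e$ one has $\varepsilon\le\max_i D^{\cu M_e}(b_i,c_i)$ for all $\vec c\in C$; using the approximate distance $\langle d_m\rangle$ with $|d_m-D|\le 2^{-m}$, the $V$-formula $\varepsilon\dotle(\max_i d_m(b_i,x_i)\dotplus 2^{-m})$ lies in $\Psi'$ but is violated by $\vec b$ for large $m$, a contradiction. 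Your approach instead invokes Lemma~\ref{l-expansion-converge} to approximate each $V_D$-formula by $V$-formulas, and then packages the zero-set condition as the countable conjunction $\psi_m\dotle 2^{-m}$. Your argument is more modular (it reuses the general approximation lemma and never touches the distance predicate directly), whereas the paper's is more geometric and self-contained (it only needs an exponentially close approximate distance, not the full strength of Lemma~\ref{l-expansion-converge}). Both are short and either is perfectly acceptable.
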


\begin{proof}  By taking a subsequence if necessary, we can find an approximate distance $\langle d_m\rangle_{m\in\BN}$ for $T_e$ such that
$$T_e\models \sup_x\sup_y |d_m(x,y)-D(x,y)|\dotle 2^{-m}$$
for each $m$.  Let $C\subseteq M^k$.
By Corollary \ref{c-absolute-unique} and Lemma \ref{l-closed-type-definable}, it suffices to prove that   $C$ is type-definable in $\cu M_e$
if and only if $C$ is type-definable in $\cu M$.  It is clear that if $C$ is type-definable in $\cu M$ then $C$ is type-definable in $\cu M_e$

Suppose $C$ is type-defined  in $\cu M_e$ by  $\Phi(\vec x)$.
Let $\Psi(\vec x)$ be the set of all $V$-formulas $\psi(\vec x)$ with parameters in $M$ such that
$C\subseteq\psi^{\cu M_e}$, and let $B=\{\vec b\colon\cu M_e\models \Psi(\vec b)\}$.
Then $B=\{\vec b\colon\cu M\models \Psi(\vec b)\}$, so $B$ is type-definable in $\cu M$.
To prove that $C$ is type-definable in $\cu M$  we show that $B=C$.  Clearly $C\subseteq B$.
Let $\vec b\in B\setminus C$.  By Lemma \ref{l-closed-type-definable}, $C$ is closed in $\cu M_e$, so there is an $\varepsilon>0$
such that $\varepsilon\le\max_{i\le k} D^{\cu M_e}(b_i,c_i)$
for all $\vec c\in C$.  Hence for each $m\in\BN$, we have $\varepsilon\le(\max_{i\le k} d_m^{\cu M}(b_i,c_i)+2^{-m})$ for all $\vec c\in C$.  Therefore the
$V$-formula
$$\varepsilon\dotle(\max_{i\le k} d_m(b_i,x_i))\dotplus 2^{-m})$$
 belongs to $\Psi(\vec x)$.  But then
$$\cu M_e\models(\varepsilon\dotle\max_{i\le k} d_m(b_i,b_i))\dotplus 2^{-m}$$
 for each $m\in\BN$, which contradicts the fact that $D(x,y)=\lim_{m\to\infty} d_m(x,y)$ in $\cu M_e$.
\end{proof}

It follows that every pre-metric expansion $\cu M_e$  has the same topology as $\cu M$.
Hence all topological properties of subsets $C\subseteq M^k$ or sequences in $M^k$ are absolute.  For example, the properties that $C$ is dense, that $C$ is compact,
and that $\lim_{n\to\infty} c_n\doteq c$, are absolute.  We let $\cl_{\cu M}(C)$
denote the closure of $C$ in $\cu M$.  Then $\cl_{\cu M}(C)=\cl_{\cu M_e}(C)$.

We now obtain absolute versions of properties related to uniform convergence.

\begin{prop}  \label{d-cauchy-sequence}  The property \emph{[$\langle c_n\rangle_{n\in\BN}$ is Cauchy in $\cu M$]} is absolute.
A sequence $\langle c_n\rangle_{n\in\BN}$ is Cauchy in a general structure $\cu M$ if and only if
$\langle c_n\rangle_{n\in\BN}$
converges to some point in some elementary extension $\cu M'\succ\cu M$.
\end{prop}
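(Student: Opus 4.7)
Proof plan:

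First I would set up the definition and prove absoluteness simultaneously. A sequence $\langle c_n\rangle_{n\in\BN}$ in $M$ is declared Cauchy in $\cu M$ if it is Cauchy with respect to the pseudo-metric $D^{\cu M_e}$ of some pre-metric expansion $\cu M_e$ of $\cu M$. To see this is well-defined, take two pre-metric expansions $\cu M_e,\cu M_f$ of $\cu M$; by Proposition~\ref{p-homeomorphic} the identity is a uniformly continuous homeomorphism from $(M,D^{\cu M_e})$ onto $(M,D^{\cu M_f})$, and since uniform continuity preserves Cauchyness, $\langle c_n\rangle$ is Cauchy in $(M,D^{\cu M_e})$ iff in $(M,D^{\cu M_f})$. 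This immediately yields the absoluteness claim as well.

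Next I would handle the easy direction ($\Leftarrow$). Suppose $\cu M\prec \cu M'$ and $c_n\to c$ in $\cu M'$ for some $c\in M'$. Pick any pre-metric expansion $\cu M_e$ of $\cu M$; by Corollary~\ref{c-expansion-elem-ext} it extends elementarily to the corresponding pre-metric expansion $\cu M'_e$ of $\cu M'$. Convergence is absolute (as noted after Proposition~\ref{p-closure}), so $c_n\to c$ with respect to $D^{\cu M'_e}$, and the triangle inequality gives $D^{\cu M'_e}(c_n,c_m)\to 0$. Because the values $D^{\cu M'_e}(c_n,c_m)$ agree with $D^{\cu M_e}(c_n,c_m)$ (the $c_n$ lie in $M$ and $\cu M_e\prec \cu M'_e$), the sequence is Cauchy in $\cu M_e$, hence in $\cu M$.

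For the harder direction ($\Rightarrow$), suppose $\langle c_n\rangle$ is Cauchy in $\cu M$, and fix a pre-metric expansion $\cu M_e$. For each rational $\varepsilon>0$ choose $N_\varepsilon$ so that $D^{\cu M_e}(c_n,c_m)<\varepsilon$ whenever $n,m\ge N_\varepsilon$, and consider the one-type
\[ \Phi(x)=\{\,D(x,c_n)\dotle\varepsilon : \varepsilon\in\BQ_{>0},\ n\ge N_\varepsilon\,\}\]
over $A=\{c_n:n\in\BN\}$ in $\cu M_e$. Any finite subset of $\Phi(x)$ involves only finitely many thresholds $\varepsilon_1,\ldots,\varepsilon_k$, and taking $n^*\ge\max_i N_{\varepsilon_i}$, the element $c_{n^*}$ satisfies it. Thus $\Phi(x)$ is finitely satisfiable in $(\cu M_e)_A$, so by compactness and Downward L\"owenheim--Skolem there is an elementary extension $\cu N_e\succ\cu M_e$ realizing $\Phi(x)$ by some $c\in N$; that is, $D^{\cu N_e}(c,c_n)\to 0$, i.e.\ $c_n\to c$ in $\cu N_e$. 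Let $\cu M'$ be the $V$-part of $\cu N_e$; Corollary~\ref{c-expansion-elem-ext} gives $\cu M'\succ\cu M$, and absoluteness of convergence (Proposition~\ref{p-closure}) transports $c_n\to c$ down to $\cu M'$, completing the proof.

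The main obstacle, such as it is, is organizational rather than technical: one must be careful that the natural definition of Cauchyness for a general structure is actually well-posed before the main equivalence can even be formulated, and Proposition~\ref{p-homeomorphic} is exactly what makes it well-posed. Once absoluteness is in hand, the two implications reduce to a standard triangle-inequality argument and the standard compactness/saturation trick for realizing a limit in an elementary extension, both transferred through the pre-metric expansion via Corollary~\ref{c-expansion-elem-ext}.
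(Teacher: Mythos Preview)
Your proof is correct and follows essentially the same approach as the paper: both use the correspondence $\cu M\prec\cu M'\Leftrightarrow\cu M_e\prec\cu M'_e$ (Corollary~\ref{c-expansion-elem-ext}) together with the absoluteness of convergence to transfer the problem to the pre-metric setting, where Cauchyness is equivalent to convergence in an elementary extension. The only differences are organizational: the paper packages the argument as the equivalence of three conditions (a)~Cauchy in $\cu M_e$, (b)~converges in some $\cu N_e\succ\cu M_e$, (c)~converges in some $\cu M'\succ\cu M$, declaring (a)$\Leftrightarrow$(b) ``clear'' and obtaining absoluteness from the fact that (c) mentions only $\cu M$; you instead establish absoluteness first via Proposition~\ref{p-homeomorphic} and then spell out the (a)$\Rightarrow$(b) step with an explicit compactness argument. (The mention of Downward L\"owenheim--Skolem is superfluous---compactness alone gives the needed elementary extension.)
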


\begin{proof}
 It suffices to show that the following are equivalent:
\begin{itemize}
\item[(a)]  $\langle c_n\rangle_{n\in\BN}$ is Cauchy in $\cu M_e$.
\item[(b)]  $\langle c_n\rangle_{n\in\BN}$ converges  in some  elementary extension of $\cu M_e$.
\item[(c)]  $\langle c_n\rangle_{n\in\BN}$ converges  in some  elementary extension of $\cu M$.
\end{itemize}

It is clear that (a) $\Leftrightarrow$ (b).

(b) $\Rightarrow$ (c):  Assume (b).
Then there is a general structure $\cu N$ and a point $c\in N$ such that
$\cu N_e\succ\cu M_e$ and $\lim_{n\to\infty} c_n\doteq c$ in $\cu N_e$.
By Corollary \ref{c-expansion-elem-ext}, $\cu N\succ\cu M$.
By absoluteness of convergence, $\lim_{n\to\infty} c_n\doteq c$ in $\cu N$, so (c) holds..

(c) $\Rightarrow$ (b): Suppose $\cu N\succ\cu M$, $c\in N$, and $\lim_{n\to\infty} c_n\doteq c$ in $\cu N$.
Since convergence of a sequence is absolute, $\lim_{n\to\infty} c_n\doteq c$ in $\cu N_e$.
By Corollary \ref{c-expansion-elem-ext}, $\cu N_e\succ\cu M_e$, so (b) holds.
\end{proof}

We say that a general structure $\cu M$ is \emph{complete} if every pre-metric expansion $\cu M_e$ of $\cu M$ is a metric structure.

\begin{cor}  \label{c-absolute-complete}
The property of being complete is absolute.  A  general structure $\cu M$ is complete
if and only if $\cu M$ is reduced and every Cauchy sequence in $\cu M$ converges to some point in $M$.
\end{cor}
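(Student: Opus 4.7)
The plan is to isolate the pre-metric property $\cu Q = $ ``being a metric structure'' and show that its absolute version (in the sense of Definition \ref{d-absolute-version}) is exactly the general property $\cu P = $ ``$\cu M$ is reduced and every Cauchy sequence in $\cu M$ converges to some point in $M$''. Once this equivalence is established for a single pre-metric expansion, the characterization statement follows from the definition of complete (every pre-metric expansion is a metric structure), and absoluteness of completeness follows from Corollary \ref{c-absolute-version} together with Proposition \ref{p-robust}. Absoluteness of $\cu P$ itself is a by-product: reducedness is absolute by Proposition \ref{p-reduced-absolute}, being Cauchy is absolute by Proposition \ref{d-cauchy-sequence}, and convergence of a sequence is absolute by the topological absoluteness discussion following Proposition \ref{p-closure}.

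First I would fix a pre-metric expansion $\cu M_e$ of $\cu M$ and verify the biconditional $\cu M_e \text{ is a metric structure} \Leftrightarrow \cu M \text{ has property } \cu P$. For the forward direction, if $\cu M_e$ is a metric structure then $(M,D^{\cu M_e})$ is a complete metric space, so in particular $D^{\cu M_e}$ separates points; by Fact \ref{f-reduction-distance} this means $\cu M_e$ is reduced, and then Lemma \ref{l-reduced-Me} (ii) gives that $\cu M$ is reduced. Given a Cauchy sequence $\langle c_n\rangle$ in $\cu M$, Proposition \ref{d-cauchy-sequence} says it is also Cauchy in $\cu M_e$, hence $D^{\cu M_e}$-Cauchy, hence convergent to some $c\in M$ in $\cu M_e$ by completeness of the metric; absoluteness of convergence transfers this to $\cu M$.

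For the backward direction, assume $\cu M$ is reduced and every Cauchy sequence in $\cu M$ converges. By Lemma \ref{l-reduced-Me} (ii) the expansion $\cu M_e$ is reduced, so Fact \ref{f-reduction-distance} upgrades the pseudo-metric $D^{\cu M_e}$ to a genuine metric. If $\langle c_n\rangle$ is $D^{\cu M_e}$-Cauchy, then it is Cauchy in $\cu M_e$, hence in $\cu M$ by Proposition \ref{d-cauchy-sequence}, hence convergent in $\cu M$ to some $c\in M$, and finally convergent to $c$ in $\cu M_e$ by topological absoluteness. Thus $(M,D^{\cu M_e})$ is a complete metric space, so $\cu M_e$ is a metric structure.

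With the biconditional in hand, the characterization in the corollary says precisely that $\cu M$ is complete iff $\cu M$ has property $\cu P$ for some (equivalently, every) pre-metric expansion; since the right-hand side does not depend on the choice of expansion, $\cu P$ is an absolute version of the pre-metric property ``metric structure'', and absoluteness of completeness follows from Corollary \ref{c-absolute-version}. The only mild subtlety — really the only point needing care — is the translation between ``Cauchy in $\cu M$'' (defined intrinsically via convergence in an elementary extension, Proposition \ref{d-cauchy-sequence}) and ``$D^{\cu M_e}$-Cauchy'', but this is handled directly by that proposition together with the topological absoluteness of convergence.
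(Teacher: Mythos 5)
Your proposal is correct and follows exactly the route the paper intends for this corollary (which it leaves as an immediate consequence of the surrounding results): reducedness via Lemma \ref{l-reduced-Me}~(ii) and Fact \ref{f-reduction-distance}, the Cauchy transfer via Proposition \ref{d-cauchy-sequence}, convergence transfer via the topological absoluteness following Proposition \ref{p-closure}, and the passage from the per-expansion biconditional to absoluteness via Corollary \ref{c-absolute-version}. Nothing is missing.
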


\begin{cor}  \label{c-saturated-complete}
Every $\aleph_1$-saturated reduced structure is complete.
\end{cor}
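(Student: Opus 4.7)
The plan is to reduce this directly to \textbf{Remark \ref{r-saturated-metric}}, which already asserts that every reduced, $\aleph_1$-saturated pre-metric structure is a metric structure. So the real work is just to check that these two hypotheses transfer from $\cu M$ to an arbitrary pre-metric expansion $\cu M_e$.

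First I would unfold the definition: $\cu M$ is complete iff every pre-metric expansion $\cu M_e$ of $\cu M$ is a metric structure (not just a pre-metric one). So let $\cu M_e$ be an arbitrary pre-metric expansion of $\cu M$ with signature $L_e$. I need to verify two things about $\cu M_e$: it is reduced, and it is $\aleph_1$-saturated. The first is immediate from \textbf{Remark \ref{r-reduced-part}}, since $\cu M_e$ is an expansion of the reduced structure $\cu M$. The second is exactly \textbf{Proposition \ref{p-saturated-metric-expansion}}, which tells us that $\aleph_1$-saturation is absolute.

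With $\cu M_e$ now known to be reduced and $\aleph_1$-saturated, \textbf{Remark \ref{r-saturated-metric}} applies and yields that $\cu M_e$ is a metric structure for $L_e$. Since $\cu M_e$ was an arbitrary pre-metric expansion, $\cu M$ is complete.

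There is no real obstacle here; the statement is essentially a packaging corollary that combines three earlier facts. If any step required more care, it would be making sure that Proposition \ref{p-saturated-metric-expansion} is genuinely available (it is, being the preceding result on saturation and pre-metric expansions) and that one is entitled to invoke Remark \ref{r-saturated-metric} in the ``pre-metric'' rather than strictly ``metric'' setting -- but that remark is stated for pre-metric structures, so no issue arises.
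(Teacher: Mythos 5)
Your proposal is correct and matches the core of the paper's argument: the paper likewise deduces from Lemma \ref{l-reduced-Me}(ii) and Proposition \ref{p-saturated-metric-expansion} that $\cu M_e$ is a reduced $\aleph_1$-saturated pre-metric structure and hence a metric structure. The paper then adds a (strictly speaking redundant) verification that every Cauchy sequence in $\cu M$ converges, so as to land on the characterization in Corollary \ref{c-absolute-complete}, whereas you stop at the definition of completeness; both routes are fine.
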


\begin{proof}  Suppose $\cu M$ is reduced and $\aleph_1$-saturated.
By Lemma \ref{l-reduced-Me} (ii) and Proposition \ref{p-saturated-metric-expansion}, $\cu M_e$ is a reduced $\aleph_1$-saturated pre-metric structure,
and hence is complete.
Let $\langle c_n\rangle_{n\in\BN}$ be Cauchy in $\cu M$. By Proposition \ref{d-cauchy-sequence},
$\langle c_n\rangle_{n\in\BN}$ is Cauchy in $\cu M_e$.
Therefore $\langle c_n\rangle_{n\in\BN}$ converges to some point $c$ in $\cu M_e$.
By the absoluteness of convergence, $\langle c_n\rangle_{n\in\BN}$ converges to some point $c$ in $\cu M$.
\end{proof}

Recall that if $\cu M$ is a pre-metric structure, then a completion of $\cu M$ is a metric structure $\cu N$ such that the reduction of $\cu M$
is a dense elementary substructure of $\cu N$.

\begin{df}  We say that a general structure $\cu N$ is a \emph{completion} of a general structure $\cu M$ if $\cu N$ is complete, and the reduction of
$\cu M$ is a dense elementary substructure of $\cu N$.
\end{df}

The elementary substructure requirement in the above definition  ensures that the approximate distances are preserved when passing from $\cu M$ to $\cu N$.

\begin{cor}  \label{c-general-completion}
Let $\cu M, \cu N$ be general models of $T$ and let $T_e$ be a pre-metric expansion of $T$.  Then $\cu N$ is a completion of $\cu M$
if and only if $\cu N_e$ is a completion of $\cu M_e$.
\end{cor}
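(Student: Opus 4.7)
The plan is to unpack the definition of \emph{completion} into three ingredients---completeness of the larger structure, denseness of the smaller structure's image, and elementarity of the inclusion---and then observe that each ingredient, together with the reduction operation itself, has already been shown to be absolute or to commute with pre-metric expansion.

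First I would record the identification of reductions under expansion: by Lemma~\ref{l-reduced-Me}(iii), if $\cu M'$ denotes the reduction of $\cu M$, then $\cu M'_e$ is the reduction of $\cu M_e$. Thus the two instances of ``reduction of $\cu M$'' appearing in the two sides of the claim line up correctly under the functor $(\cdot)_e$, and we may harmlessly assume (replacing $\cu M$ by its reduction if needed) that the universe of $\cu M'$ is a subset of $N$ in both pictures.

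Next I would handle completeness via Corollary~\ref{c-absolute-complete}: $\cu N$ is complete if and only if $\cu N_e$ is complete. Then I would handle elementarity via Corollary~\ref{c-expansion-elem-ext}: the inclusion $\cu M'\prec\cu N$ holds if and only if $\cu M'_e\prec\cu N_e$, i.e., the reduction of $\cu M_e$ is an elementary substructure of $\cu N_e$. Finally, for density, I would use the absoluteness of the topology established after Proposition~\ref{p-closure}: since $\cl_{\cu N}(C)=\cl_{\cu N_e}(C)$ for every $C\subseteq N$, the subset (the universe of $\cu M'$) is dense in $\cu N$ precisely when it is dense in $\cu N_e$.

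Combining these three equivalences (with the reduction identification to ensure the structures on the ``small side'' match) gives the biconditional of the corollary. There is no real obstacle here, since every ingredient has been proved in the preceding subsections; the only thing to be careful about is making sure that the notion of density is interpreted with respect to the same topology on both sides, which is exactly what Proposition~\ref{p-closure} guarantees by showing the closed sets coincide in $\cu M$ and $\cu M_e$ (and analogously in $\cu N$ and $\cu N_e$).
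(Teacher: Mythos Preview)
Your proposal is correct and follows essentially the same approach as the paper: decompose ``completion'' into completeness, density, and elementary inclusion, then invoke Lemma~\ref{l-reduced-Me}, Corollary~\ref{c-absolute-complete}, Proposition~\ref{p-closure}, and Corollary~\ref{c-expansion-elem-ext} in turn. In fact, your citation of Lemma~\ref{l-reduced-Me}(iii) for the reduction identification is the accurate one.
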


\begin{proof}  Let $\cu M'$ be the reduction of $\cu M$.  By Lemma \ref{l-reduced-Me} (ii), $\cu M'_e$ is the reduction of $\cu M_e$.
By Corollary \ref{c-absolute-complete},  $\cu N_e$ is reduced and complete if and only if $\cu N$ is reduced and complete.
By Proposition \ref{p-closure}, $\cu M'_e$ is dense in $\cu N_e$ if and only if $\cu M'$ is dense in $\cu N$.
By Corollary \ref{c-expansion-elem-ext}, $\cu M'_e\prec\cu N_e$ if and only if $\cu M'\prec\cu N$.
\end{proof}

\begin{prop}  \label{p-completion-unique}
Every general structure $\cu M$ has a completion, which is unique up to an isomorphism that is the identity on the reduction of $\cu M$.
\end{prop}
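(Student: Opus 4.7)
The plan is to bootstrap the existence and uniqueness of completions for pre-metric structures (recalled just before Remark \ref{r-saturated-metric}) to general structures, using the Expansion Theorem \ref{t-metric-expansion-exists} and the absoluteness result Corollary \ref{c-general-completion}.

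For existence, I would first apply Theorem \ref{t-metric-expansion-exists} to produce a pre-metric expansion $T_e$ of $T := \Th(\cu M)$, so that $\cu M_e$ is a well-defined pre-metric structure with signature $L_e$. The standard completion construction for pre-metric structures then yields a metric structure $\cu K$ for $L_e$ in which the reduction of $\cu M_e$ is a dense elementary substructure. Since $\cu K \equiv \cu M_e \models T_e$, $\cu K$ is a general model of $T_e$; letting $\cu N$ denote its $V$-part we therefore have $\cu K = \cu N_e$, and this is a completion of $\cu M_e$ in the pre-metric sense. Corollary \ref{c-general-completion} then translates this back to the statement that $\cu N$ is a completion of $\cu M$.

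For uniqueness, suppose $\cu N_1, \cu N_2$ are both completions of $\cu M$, and write $\cu M'$ for the reduction of $\cu M$. By definition of completion, $\cu M' \prec \cu N_i$, so in particular $\cu N_1, \cu N_2 \models T$, and I may fix a single pre-metric expansion $T_e$ of $T$ to apply to all three structures. Corollary \ref{c-general-completion} gives that each $(\cu N_i)_e$ is a completion of $\cu M_e$, and the uniqueness of completions of pre-metric structures produces an isomorphism $h \colon (\cu N_1)_e \cong (\cu N_2)_e$ that is the identity on the reduction of $\cu M_e$, which is $\cu M'_e$ by Lemma \ref{l-reduced-Me} (iii).

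The main (mildly delicate) step is then to verify that $h$, viewed purely in the vocabulary $V$, already gives an isomorphism $\cu N_1 \cong \cu N_2$ of general structures that fixes $\cu M'$. Preservation of atomic $V$-formulas is automatic from $V \subseteq V_D$, so $h$ is at least an embedding $\cu N_1 \to \cu N_2$; fixing $\cu M'$ is likewise preserved under restriction. The only nontrivial point is the surjectivity condition in the definition of $\cong$: for every $b \in N_2$ one needs $a \in N_1$ with $h(a) \doteq^{\cu N_2} b$. This follows at once from Lemma \ref{l-reduced-Me} (i), which equates $\doteq^{\cu N_2}$ with $\doteq^{(\cu N_2)_e}$, so the surjectivity already known in $V_D$ transfers to the $V$-reduct. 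This completes the uniqueness.
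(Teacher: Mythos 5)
Your proposal is correct and follows essentially the same route as the paper: take a pre-metric expansion of $\Th(\cu M)$ via the Expansion Theorem, complete $\cu M_e$ as a pre-metric structure, identify the completion as $\cu N_e$ for a $V$-structure $\cu N$, and transfer both existence and uniqueness back through Corollary \ref{c-general-completion}. Your extra care in checking that the isomorphism of the expansions restricts to an isomorphism of the $V$-parts (via Lemma \ref{l-reduced-Me}) is a detail the paper leaves implicit, but it is the right justification.
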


\begin{proof}
By Theorem  \ref{t-metric-expansion-exists}, there is a pre-metric expansion $T_e$ of $\Th(\cu M)$.   Let $\cu M'$ be the reduction of $\cu M$.
Then $\cu M_e$ is a pre-metric structure,  $\cu M'_e$ is the reduction of $\cu M_e$, and there exists a completion $\cu N_1$ of $\cu M_e$
that is unique up to an isomorphism that is the identity on $M'$.  Moreover, $\cu N_1\succ\cu M'_e$.  Then
$\cu N_1=\cu N_e$ where $\cu N$ is the $V$-part of $\cu N_1$.  By Corollary \ref{c-general-completion}, $\cu N$ is a completion of $\cu M$.

If $\cu N'$ is another completion of $\cu M$, then by Corollary \ref{c-general-completion}, $\cu N'_e$ is a completion of $\cu M_e$,
so there is an isomorphism $h\colon \cu N_e\cong \cu N'_e$ that is the identity on $M'$.  Then $h\colon \cu N\cong \cu N'$, as required.
\end{proof}

Let $\kappa$ be an infinite cardinal.  A complete metric theory $T$ is $\kappa$-categorical if every two complete models of $T$ of density character $\kappa$
are isomorphic.

\begin{cor}  \label{c-categorical}  The property of having a $\kappa$-categorical complete theory is absolute.  A complete general theory $T$ is
$\kappa$-categorical if and only if every two complete general models of $T$ of density character $\kappa$ are isomorphic.
\end{cor}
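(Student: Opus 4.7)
The plan is to reduce the corollary directly to the metric case by combining three absoluteness results already established. First I would verify that the density character of a general structure $\cu M$---defined as the smallest cardinality of a subset of $M$ whose closure equals $M$---is a purely topological invariant, and hence by Proposition \ref{p-closure} agrees with the density character of any pre-metric expansion $\cu M_e$, since closure in $\cu M$ and in $\cu M_e$ coincide as subset operators on $M$.

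Next I would set up a bijection between complete general models of $T$ and complete metric models of $T_e$ via the correspondence $\cu M \leftrightarrow \cu M_e$. In one direction, if $\cu M$ is a complete general model of $T$, then by Corollary \ref{c-absolute-complete} $\cu M$ is reduced and $\cu M_e$ is complete, so $\cu M_e$ is a metric model of $T_e$. In the other direction, any complete metric model of $T_e$ is of the form $\cu N_e$ where $\cu N$ is its $V$-part, and $\cu N$ is again a complete general model of $T$ by the same corollary. Combining with the first paragraph, this restricts to a bijection between complete general models of $T$ of density character $\kappa$ and complete metric models of $T_e$ of density character $\kappa$.

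Finally, I would invoke Lemma \ref{l-reduced-Me}(iv), together with the trivial observation that any isomorphism of the $V_D$-structures $\cu M_e, \cu N_e$ restricts to an isomorphism of their $V$-reducts, to conclude that $\cu M \cong \cu N$ if and only if $\cu M_e \cong \cu N_e$. Thus the condition ``every two complete general models of $T$ of density character $\kappa$ are isomorphic'' coincides, under the bijection above, with $\kappa$-categoricity of $T_e$ in the sense of [BBHU]. Taking the former condition as the definition of $\kappa$-categoricity for a complete general theory then yields both the characterization and the absoluteness assertion in one stroke. I do not anticipate any substantive obstacle; the only mildly nontrivial input is that density character is topological, and this is immediate from Proposition \ref{p-closure}.
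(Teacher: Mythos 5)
Your proposal is correct and follows essentially the same route as the paper, whose proof is a one-line citation of exactly the three ingredients you use: Proposition \ref{p-closure} (so density character is absolute), Corollary \ref{c-absolute-complete} (completeness is absolute), and Lemma \ref{l-reduced-Me} (isomorphism transfers between $\cu M,\cu N$ and $\cu M_e,\cu N_e$). Your write-up merely makes explicit the bijection between complete models of $T$ and complete metric models of $T_e$ that the paper leaves implicit.
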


\begin{proof}  By Proposition \ref{p-closure} (closed is absolute), Corollary \ref{c-absolute-complete} (being complete is absolute),
and Lemma \ref{l-reduced-Me} (iii) (being isomorphic is absolute).
\end{proof}

\subsection{Infinitary Continuous Logic}  \label{s-infinitary}

We return to the infinitary  continuous formulas that were introduced in [Ea15] and discussed in Example \ref{e-Eagle}.
We assume in this subsection that $|V|\le\aleph_0$.
$\cu L_{\omega_1\omega}(V)$ denotes the set of all continuous $\cu L_{\omega_1\omega}$-formulas over the vocabulary $V$.

\begin{lemma}  \label{l-infinitary-approx}
Let $T_e$ be a pre-metric expansion of a $V$-theory $T$.  For every formula  $\psi(\vec x)\in\cu L_{\omega_1\omega}(V_D)$,
there is a formula $\varphi(\vec x)\in\cu L_{\omega_1\omega}(V)$ such that $\psi^{\cu M_e}=\varphi^{\cu M}$
for every general $V$-model $\cu M$ of $T$.
\end{lemma}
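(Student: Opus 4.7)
The plan is to prove this by induction on the complexity of $\psi$. Throughout, I fix an exponentially Cauchy approximate distance $\langle d_m(u,v)\rangle_{m\in\BN}$ for $T_e$, chosen so that $T_e\models\sup_u\sup_v|d_m(u,v)-D(u,v)|\dotle 2^{-m}$ for each $m$. Note also that the terms of $V_D$ coincide with the terms of $V$ (since $D$ is a predicate symbol), so every atomic $V_D$-formula is either a $V$-atomic formula or of the form $D(\sigma,\tau)$ for $V$-terms $\sigma,\tau$. The induction will assign to each $\psi\in\cu L_{\omega_1\omega}(V_D)$ a formula $\varphi\in\cu L_{\omega_1\omega}(V)$ with the same free variables and satisfying $\psi^{\cu M_e}=\varphi^{\cu M}$ for every $\cu M\models T$; preservation of free variables is essential so that the $\sup_m,\inf_m$ steps remain legitimate $\cu L_{\omega_1\omega}$-formulas.

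For the base case, a $V$-atomic $\psi$ is already in $\cu L_{\omega_1\omega}(V)$. For $\psi=D(\sigma,\tau)$, in every $\cu M\models T$ we have $D^{\cu M_e}(\sigma,\tau)=\lim_{m\to\infty} d_m^{\cu M}(\sigma,\tau)$, and because the sequence is exponentially Cauchy this limit is expressible as the $\cu L_{\omega_1\omega}(V)$-formula $\sup_{m}(d_m(\sigma,\tau)\dotminus 2^{-m})$ (equivalently $\inf_m(d_m(\sigma,\tau)\dotplus 2^{-m})$). The free variables of this formula are exactly those appearing in $\sigma,\tau$, a finite set, so the infinitary connective is legal. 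This step plays the role Lemma \ref{l-expansion-converge} played for finitary formulas and is really the only content of the proof; everything else is formal.

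The inductive steps are routine. If $\psi=c(\psi_1,\dots,\psi_k)$ for a continuous connective $c$, apply the IH to each $\psi_i$ to obtain $\varphi_i$ and set $\varphi=c(\varphi_1,\dots,\varphi_k)$. If $\psi=Q_y\psi_0$ where $Q\in\{\sup,\inf\}$, let $\varphi=Q_y\varphi_0$ where $\varphi_0$ is obtained from $\psi_0$ by IH; the semantic equality holds because $M$ is the same universe in $\cu M$ and $\cu M_e$ and $\cu M_e$ is an expansion of $\cu M$ (Remark \ref{r-formula-expansion}). For the infinitary quantifiers $\psi=Q_m\psi_m$ with $Q\in\{\sup,\inf\}$, apply the IH to get $\varphi_m\in\cu L_{\omega_1\omega}(V)$ for each $m$; since IH preserves free variables, the sequence $\langle\varphi_m\rangle$ has the same finite set of free variables as $\langle\psi_m\rangle$, so $\varphi:=Q_m\varphi_m$ is a legitimate formula of $\cu L_{\omega_1\omega}(V)$, and the required equality is immediate from the pointwise definition of $Q_m$.

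The main (and only) obstacle is the base case $\psi=D(\sigma,\tau)$: one must translate the external limit defining $D^{\cu M_e}$ into a formal infinitary connective. Exponential Cauchyness of $\langle d_m\rangle$ converts the limit into the explicit $\sup_m$/$\inf_m$ expression above; without this, one would need a more awkward $\sup_n\inf_{k\ge n}d_k$ representation, which is also available in $\cu L_{\omega_1\omega}(V)$ but less tidy. No compactness or semantic argument is needed beyond these syntactic translations.
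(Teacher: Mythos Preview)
Your proof is correct and follows essentially the same strategy as the paper's: induction on the complexity of $\psi$, with the only substantive step being the base case $\psi=D(\sigma,\tau)$, handled by expressing the limit $\lim_m d_m$ as an $\cu L_{\omega_1\omega}(V)$-formula. The paper uses the $\liminf$/$\limsup$ form $\inf_m\sup_k d_{m+k}(\sigma,\tau)$ (what you call the ``less tidy'' option), whereas you exploit the explicit error bound to write $\sup_m(d_m(\sigma,\tau)\dotminus 2^{-m})$; both are valid, and your attention to preservation of free variables (needed for the infinitary connective step) is a point the paper leaves implicit. One minor omission: like the paper, you should note that the bound variables of the $d_m$ may be chosen fresh so that the substitution $d_m(\sigma,\tau)$ incurs no variable capture.
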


\begin{proof}
The pre-metric expansion $T_e$ has an exponentially Cauchy approximate distance $\langle d_m(u,v)\rangle_{m\in\BN}$
such that no bound variable in $d_m(u,v)$ occurs in $\psi(\vec x)$ or in $\vec z$.  Then for each $m$
and pair of $V$-terms $\sigma(\vec z), \tau(\vec z)$, $d_m(\sigma(\vec z),\tau(\vec z))$
is a $V$-formula, and for every general model $\cu M$ of $T$ and tuple $\vec z$ in $M$ we have
$$D^{\cu M_e}(\sigma(\vec z),\tau(\vec z))-2^{-m}\le d_m^{\cu M}(\sigma(\vec z),\tau(\vec z))\le D^{\cu M_e}(\sigma(\vec z),\tau(\vec z))+2^{-m}.$$
It follows that
$$D^{\cu M_e}(\sigma(\vec z),\tau(\vec z))=\inf_m\sup_k d_{m+k}^{\cu M}(\sigma(\vec z),\tau(\vec z))=\sup_m\inf_k d_{m+k}^{\cu M}(\sigma(\vec z),\tau(\vec z)).$$
Let $\psi\in\cu{L}_{\omega_1\omega}(V_D)$, and let $\varphi$ be the $\cu{L}_{\omega_1\omega}$-formula in the vocabulary $V$
obtained by replacing each atomic subformula of $\psi$ of the form $D(\sigma(\vec z),\tau(\vec z))$ by $\inf_m\sup_k d_{m+k}(\sigma(\vec z),\tau(\vec z))$.
It follows by induction on the complexity of $\psi$ that $\psi^{\cu M_e}=\varphi^{\cu M}$
for every general $V$-model $\cu M$ of $T$.
\end{proof}

We say that a mapping $\sa P\colon M^k\to[0,1]$ is \emph{$\cu{L}_{\omega_1\omega}$-definable in} a general structure $\cu M$
if $\sa P=\varphi^{\cu M}$ for some  $\cu{L}_{\omega_1\omega}$-formula $\varphi(\vec x)$ with $|\vec x|=k$ in the vocabulary of $\cu M$.
It is easily seen that if $\sa P$ is definable in $\cu M$ then $\sa P$ is $\cu{L}_{\omega_1\omega}$-definable in $\cu M$.

\begin{prop}  \label{p-infinitary} The property \emph{[$\sa P$ is $\cu{L}_{\omega_1\omega}$-definable in $\cu M$]} is absolute.
\end{prop}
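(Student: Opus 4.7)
The plan is to derive the proposition almost immediately from Lemma \ref{l-infinitary-approx}, which is the key technical result. By Definition \ref{d-absolute}, I need to show that for any general model $\cu M$ of $T$ and any pre-metric expansion $\cu M_e$, the mapping $\sa P\colon M^k\to[0,1]$ is $\cu L_{\omega_1\omega}$-definable in $\cu M$ if and only if it is $\cu L_{\omega_1\omega}$-definable in $\cu M_e$ (viewed as a general $V_D$-structure). So I just need to prove each direction separately.

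For the forward direction, suppose $\sa P = \varphi^{\cu M}$ for some $\varphi(\vec x)\in\cu L_{\omega_1\omega}(V)$. Since $\cu M_e$ is a $V_D$-expansion of $\cu M$, an easy induction on the complexity of $\cu L_{\omega_1\omega}(V)$-formulas (extending Remark \ref{r-formula-expansion} to the infinitary connectives $\sup_m$ and $\inf_m$, which are computed the same way in both structures) gives $\varphi^{\cu M_e} = \varphi^{\cu M} = \sa P$. Hence $\sa P$ is $\cu L_{\omega_1\omega}$-definable in $\cu M_e$ by the very same formula $\varphi$.

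For the backward direction, suppose $\sa P = \psi^{\cu M_e}$ for some $\psi(\vec x)\in\cu L_{\omega_1\omega}(V_D)$. Let $T=\Th(\cu M)$, so by Remark \ref{r-definitional-expansion}(iv), $\Th(\cu M_e)$ is a pre-metric expansion of $T$. Apply Lemma \ref{l-infinitary-approx} to obtain a formula $\varphi(\vec x)\in\cu L_{\omega_1\omega}(V)$ such that $\psi^{\cu N_e} = \varphi^{\cu N}$ for every $\cu N\models T$. Specializing to $\cu N = \cu M$ yields $\sa P = \psi^{\cu M_e} = \varphi^{\cu M}$, so $\sa P$ is $\cu L_{\omega_1\omega}$-definable in $\cu M$ by $\varphi$.

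There is no significant obstacle once Lemma \ref{l-infinitary-approx} is available: all the real work---turning atomic subformulas $D(\sigma,\tau)$ into $\inf_m\sup_k d_{m+k}(\sigma,\tau)$ and checking compatibility by induction---was done there. The proposition could thus simply be stated as an immediate corollary of that lemma.
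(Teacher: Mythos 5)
Your proof is correct and follows essentially the same route as the paper: the forward direction is the easy induction showing $\varphi^{\cu M}=\varphi^{\cu M_e}$ for $\varphi\in\cu L_{\omega_1\omega}(V)$, and the backward direction is a direct application of Lemma \ref{l-infinitary-approx}. Your added detail of setting $T=\Th(\cu M)$ to invoke the lemma is a harmless explicit step that the paper leaves implicit.
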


\begin{proof}  Let $\cu M_e$ be a pre-metric expansion of $\cu M$.  It is easily seen by induction on complexity that for
each $\cu{L}_{\omega_1\omega}$-formula $\varphi(\vec x)$ in the vocabulary $V$ of $\cu M$ we have $\varphi^{\cu M}=\varphi^{\cu M_e}$,
so $\cu{L}_{\omega_1\omega}$-definability in $\cu M$ implies $\cu{L}_{\omega_1\omega}$-definability in $\cu M_e$.
Conversely, by Lemma \ref{l-infinitary-approx}, $\cu{L}_{\omega_1\omega}$-definability in $\cu M_e$ implies $\cu{L}_{\omega_1\omega}$-definability in $\cu M$.
\end{proof}

We now generalize several results in [Ea15] from metric theories to general theories.

\begin{prop}  \label{p-scott}  Let $\cu M$ be a separable complete $V$-structure.   There is an $\cu L_{\omega_1\omega}$-sentence $\varphi$
in the vocabulary $V$, called a \emph{Scott sentence} of $\cu M$, such that for every separable complete $V$-structure $\cu N$, $\varphi^{\cu N}=0$ if
$\cu M\cong\cu N$ and $\varphi^{\cu N}=1$ otherwise.
\end{prop}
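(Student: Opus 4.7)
The plan is to reduce to the Scott-sentence theorem of Eagle [Ea15] for separable complete metric structures, and then pull the resulting $V_D$-sentence back to a $V$-sentence by means of Lemma \ref{l-infinitary-approx}. Set $T=\Th(\cu{M})$ and, invoking the Expansion Theorem \ref{t-metric-expansion-exists}, fix a pre-metric expansion $T_e$ of $T$ with approximate distance $\langle d_m\rangle_{m\in\BN}$. Since $\cu{M}$ is complete, Corollary \ref{c-absolute-complete} gives that $\cu{M}$ is reduced and that $\cu{M}_e$ is a metric structure (not merely pre-metric). Separability is a purely topological notion and the topology of $\cu{M}$ coincides with that of $\cu{M}_e$ by Proposition \ref{p-closure}; hence $\cu{M}_e$ is a separable complete metric $V_D$-structure.

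Next, I would invoke Eagle's continuous Scott-sentence theorem from [Ea15] applied to $\cu{M}_e$: there is a sentence $\psi\in\cu{L}_{\omega_1\omega}(V_D)$ such that for every separable complete metric $V_D$-structure $\cu{N}'$ we have $\psi^{\cu{N}'}=0$ if $\cu{N}'\cong\cu{M}_e$ and $\psi^{\cu{N}'}=1$ otherwise. By Lemma \ref{l-infinitary-approx} there is $\varphi\in\cu{L}_{\omega_1\omega}(V)$ with $\varphi^{\cu{N}}=\psi^{\cu{N}_e}$ for every general model $\cu{N}$ of $T$. This $\varphi$ is the proposed Scott sentence for $\cu{M}$.

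To verify the desired property, let $\cu{N}$ be a separable complete $V$-structure. If $\cu{N}\models T$, then $\cu{N}_e$ is a separable complete metric $V_D$-structure (using Proposition \ref{p-closure} and Corollary \ref{c-absolute-complete} again), so $\varphi^{\cu{N}}=\psi^{\cu{N}_e}\in\{0,1\}$, and $\varphi^{\cu{N}}=0$ iff $\cu{N}_e\cong\cu{M}_e$. The key identification is that $\cu{N}_e\cong\cu{M}_e$ iff $\cu{N}\cong\cu{M}$: the forward direction follows because a $V_D$-isomorphism restricts to a $V$-isomorphism between the reduced non-metric parts, and the reverse follows from Lemma \ref{l-reduced-Me}(iv), since any $V$-isomorphism preserves each $d_m$ and hence the pointwise limit $D^{\cu{M}_e}=[\lim d_m]^{\cu{M}}$.

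The remaining case, and the main obstacle, is $\cu{N}\not\models T$: there $\cu{N}_e$ is not defined and we need $\varphi^{\cu{N}}=1$. The fix is to replace $\varphi$ by a small modification that forces the value $1$ off the general models of $T$. Since $|V|\le\aleph_0$, Fact \ref{f-Skolem} and countability of the syntax let us choose a countable subtheory $\{\sigma_n\colon n\in\BN\}\subseteq T$ equivalent to $T$ for general models; then the $\cu{L}_{\omega_1\omega}(V)$-formula $\chi=\sup_n \sigma_n$ satisfies $\chi^{\cu{N}}=0$ iff $\cu{N}\models T$. Composing $\chi$ with a continuous cutoff that equals $0$ at $0$ and $1$ on $[\varepsilon,1]$ for an appropriate $\varepsilon>0$ (obtained by exploiting the crispness built into Eagle's Scott sentence, which only takes values in $\{0,1\}$ on separable complete $V_D$-structures), and taking $\varphi':=\max(\varphi,\chi')$, produces a sentence with $\varphi'^{\cu{N}}=0$ when $\cu{N}\cong\cu{M}$ and $\varphi'^{\cu{N}}=1$ otherwise. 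The delicate point is the crispness of the $\{0,1\}$-valuation off the isomorphism class; this is inherited directly from Eagle's theorem, whose construction is the real engine of the argument.
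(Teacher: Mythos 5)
Your overall strategy---apply Eagle's Scott-sentence theorem to a pre-metric expansion of $\cu M$ and pull the resulting $V_D$-sentence back to a $V$-sentence via Lemma \ref{l-infinitary-approx}, using the absoluteness of completeness, the topology, and isomorphism---is exactly the paper's, and your first two paragraphs are correct. The gap is in the final paragraph, where you handle separable complete $\cu N$ with $\cu N\not\models\Th(\cu M)$. You need $\chi'^{\cu N}=1$ for every such $\cu N$, but all you know is $\chi^{\cu N}>0$, and there is no uniform $\varepsilon>0$ with $\chi^{\cu N}\ge\varepsilon$ over all non-models of $\Th(\cu M)$: the values $\sigma_n^{\cu N}$ can be positive yet arbitrarily small as $\cu N$ varies. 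Hence a single continuous cutoff vanishing at $0$ and equal to $1$ on $[\varepsilon,1]$ cannot force the value $1$, and the appeal to the ``crispness'' of Eagle's sentence is a non sequitur: $\psi$ takes values in $\{0,1\}$ on separable complete metric $V_D$-structures, which says nothing about the size of $\chi^{\cu N}$ on $V$-structures that fail $\Th(\cu M)$ (and on such $\cu N$ the value $\varphi^{\cu N}$ from Lemma \ref{l-infinitary-approx} is uncontrolled as well). The defect is repairable inside $\cu L_{\omega_1\omega}$, e.g.\ by $\chi':=\sup_k\min(1,k\cdot\chi)$, but that is not the construction you gave.

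The cleaner point, and what the paper actually does, is that this entire case split is unnecessary: apply the Expansion Theorem \ref{t-metric-expansion-exists} to the \emph{empty} $V$-theory rather than to $\Th(\cu M)$. Then every $V$-structure $\cu N$ is a general model of $T$, so $\cu N_e$ is always defined; when $\cu N$ is separable and complete, $\cu N_e$ is a separable complete metric structure with signature $L_e$, Eagle's theorem applies to it directly, and $\varphi^{\cu N}=\psi^{\cu N_e}\in\{0,1\}$ with value $0$ exactly when $\cu N_e\cong\cu M_e$, equivalently (by Lemma \ref{l-reduced-Me}) when $\cu N\cong\cu M$. No auxiliary sentence $\chi$ is needed.
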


\begin{proof} By Theorem 3.2.1 of [Ea15] (which follows from [BDNT]), the result holds for metric structures.  By the Expansion Theorem \ref{t-metric-expansion-exists},
the empty $V$-theory $T$ has a pre-metric expansion $T_e$ with signature $L_e$.  By the preceding section, $\cu M_e$ is a separable complete
pre-metric structure, and hence is a metric structure with signature $L_e$.
Therefore there is an $\cu L_{\omega_1\omega}$-sentence $\psi$ in the vocabulary $V_D$
such that for every  separable metric structure $\cu N'$ with signature $L_e$, $\psi^{\cu N'}=0$ if $\cu M_e\cong\cu N'$,
and $\psi^{\cu N'}=1$ otherwise.
By Lemma \ref{l-infinitary-approx}, there is an $\cu L_{\omega_1\omega}$-sentence $\varphi$ in the vocabulary $V$
such that $\psi^{\cu N_e}=\varphi^{\cu N}$ for every $V$-structure $\cu N$.
Moreover, every metric structure $\cu N'$ with signature $L_e$ is a general model of $T_e$, and hence is equal to $\cu N_e$ where $\cu N$ is the $V$-part of $\cu N'$,
and $\cu N$ is a separable complete $V$-structure.
By Lemma \ref{l-reduced-Me} (iii),  $\cu M\cong\cu N$ if and only if $\cu M_e\cong\cu N_e$.
Therefore $\varphi$ has the required property that for every separable complete $V$-structure $\cu N$,
$\varphi^{\cu N}=0$ if $\cu M\cong\cu N$ and $\varphi^{\cu N}=1$ otherwise.
\end{proof}

\begin{prop}  \label{p-scott-isom}  Let $\cu M$ be a separable complete general structure and let $\sa P$ be a mapping from $M^{n}$ into $[0,1]$.  The following are equivalent:
\begin{itemize}
\item[(i)]  $\sa P$ is $\cu L_{\omega_1\omega}$-definable in $\cu M$.
\item[(ii)]  $\sa P$ is fixed by all automorphisms of $\cu M$.
\end{itemize}
\end{prop}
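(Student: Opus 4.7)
The plan is to reduce to the corresponding result for separable complete metric structures (established in [Ea15] as the analogue of the classical Scott theorem on automorphism-fixed relations), by passing to a pre-metric expansion, and then to use the absoluteness of $\cu L_{\omega_1\omega}$-definability and the fact that the automorphism groups coincide.

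First I would observe that (i) $\Rightarrow$ (ii) is essentially formal: if $\sa P = \varphi^{\cu M}$ for some $\cu L_{\omega_1\omega}$-formula $\varphi(\vec x)$ in the vocabulary $V$, then since automorphisms of $\cu M$ preserve the truth value of every $\cu L_{\omega_1\omega}$-formula (by induction on complexity, exactly as for ordinary formulas), $\sa P$ is fixed by every automorphism of $\cu M$.

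For (ii) $\Rightarrow$ (i), I would invoke the Expansion Theorem \ref{t-metric-expansion-exists} to fix a pre-metric expansion $T_e$ of $\Th(\cu M)$ with signature $L_e$. Since $\cu M$ is separable and complete, Corollary \ref{c-absolute-complete} (and absoluteness of density from Proposition \ref{p-closure}) imply that $\cu M_e$ is a separable complete pre-metric structure, hence a separable metric structure for $L_e$. The key observation is that the automorphism groups of $\cu M$ and $\cu M_e$ coincide: any automorphism $h$ of $\cu M$ preserves every $V$-formula, in particular each $d_m^{\cu M}$ in an approximate distance $\langle d_m\rangle$ for $T_e$, hence preserves $D^{\cu M_e} = [\lim d_m]^{\cu M}$, so $h$ is an automorphism of $\cu M_e$; conversely, any automorphism of $\cu M_e$ restricts to an automorphism of $\cu M$ by forgetting $D$. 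Therefore $\sa P$ is fixed by every automorphism of $\cu M_e$.

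Now I would apply the metric-structure analogue of the proposition (the ``Scott-style'' characterization of $\cu L_{\omega_1\omega}$-definable predicates in separable complete metric structures from [Ea15], proved there in the same way Proposition \ref{p-scott} was used above) to conclude that $\sa P$ is $\cu L_{\omega_1\omega}$-definable in $\cu M_e$. Finally, by Proposition \ref{p-infinitary}, $\cu L_{\omega_1\omega}$-definability is an absolute property, so $\sa P$ is $\cu L_{\omega_1\omega}$-definable in $\cu M$, completing the proof. The only delicate point is the identification of the two automorphism groups, but this is immediate from the fact that $D$ is a uniform limit of $V$-formulas and hence is preserved automatically by any map that preserves all $V$-formulas.
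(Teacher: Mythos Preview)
Your proposal is correct and follows essentially the same strategy as the paper: reduce to the metric case (Theorem 3.2.3 of [Ea15]) via a pre-metric expansion, using that both $\cu L_{\omega_1\omega}$-definability (Proposition \ref{p-infinitary}) and invariance under automorphisms are absolute. The paper compresses your automorphism-group argument into a citation of Lemma \ref{l-reduced-Me}, but the content is identical; your version simply unpacks the details.
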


\begin{proof}  By Theorem 3.2.3 of [Ea15], the  result holds for  metric structures.  By the preceding section, Lemma \ref{l-reduced-Me} (iii),
and Proposition \ref{p-infinitary}, both (i) and (ii) are absolute for complete separable general structures.
\end{proof}

Proposition \ref{p-ott} below extends the Omitting Types Theorem 3.3.4 of [Ea15] to general theories $T$.
The result in [Ea15] is about reduced pre-metric structures, which are pre-metric structures whose distinguished distance is a metric, but is not necessarily complete.

We first need the notion of a strong countable fragment of $\cu L_{\omega_1\omega}(V)$.  By [BBHU], Section 6, we may fix a countable set $\cu F$ of connectives such that $\cu F$
is closed under composition and projections, and for each $n\in\BN$ the set of $n$-ary connectives $C\in\cu F$ is uniformly dense in the
set of all $n$-ary connectives.  We also assume that $\cu F$ contains the connective $\dotle$ and each rational constant in $[0,1]$.  Following [Ea15],
by a \emph{countable fragment} of $\cu L_{\omega_1\omega}(V)$  we mean a countable set $\cu L\subseteq\cu L_{\omega_1\omega}(V)$ that
contains the set of atomic formulas over $V$, and is closed under the connectives in $\cu F$, $\sup_x$, $\inf_x$, subformulas, and substituting terms for free variables.
The smallest countable fragment of $\cu L_{\omega_1\omega}(V)$ is the set of all (finitary) $V$-formulas with connectives in $\cu F$.

\begin{df} \label{d-strongfragment}
We say that $\cu L$ is a \emph{strong countable fragment of} $\cu L_{\omega_1\omega}(V)$ if there is a countable fragment
$\cu L_D$ of $\cu L_{\omega_1\omega}(V_D)$ such that $\cu L=\cu L_D\cap\cu L_{\omega_1\omega}(V)$, and for each finitary $V$-formula
$\theta(x,y)$ with connectives in $\cu F$ and pair of $V$-terms $\sigma(\vec u),\tau(\vec v)$, $\cu L_D$ is closed under the operation
of replacing each subformula of the form $D(\sigma(\vec u),\tau(\vec v))$ by $\theta(\sigma(\vec u),\tau(\vec v))$.
\end{df}

Clearly, every strong countable fragment of $\cu L_{\omega_1\omega}(V)$ is a countable fragment of $\cu L_{\omega_1\omega}(V)$.
Also, the smallest countable fragment of $\cu L_{\omega_1\omega}(V)$ is a strong countable fragment of $\cu L_{\omega_1\omega}(V)$.
It is easily seen that every countable subset of $\cu L_{\omega_1\omega}(V)$
is contained in a strong countable fragment of $\cu L_{\omega_1\omega}(V)$.

\begin{df}
Let  $\cu L$ be a  countable fragment of $\cu L_{\omega_1\omega}(V)$, and let $T$ be a set of sentences in $\cu L$.
We say that a set $\Sigma(\vec x)\subseteq\cu L$  is  \emph{principal} over  $(T,\cu L)$ if
there is a formula $\varphi(\vec x)\in\cu L$, an $|\vec x|$-tuple of $V$-terms $\vec\tau(\vec y)$, and a rational $r\in(0,1)$, such that:
\begin{itemize}
\item $T\cup\{\varphi(\vec y)\}$ is satisfiable,
\item $T\cup\{\varphi(\vec y)\dotle r\}\models\Sigma(\vec\tau(\vec y))$.
\end{itemize}
\end{df}

\begin{prop}  \label{p-ott}
Let $\cu L$ be a strong countable fragment of $\cu L_{\omega_1\omega}(V)$. Let $T$ be a set of sentences in $\cu L$, and for each $n\in\BN$,
suppose $\Sigma_n(\vec x_n)\subseteq\cu L$ but $\Sigma_n(\vec x_n)$ is not principal over $(T,\cu L)$.
Then there is a reduced separable model of $T$ in which none of the sets $\Sigma_n(\vec x_n)$ is realized.
\end{prop}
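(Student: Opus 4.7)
My plan is to transport the problem into the metric setting via the Expansion Theorem, invoke Eagle's Omitting Types Theorem 3.3.4 from [Ea15] for reduced pre-metric structures, and transfer the resulting model back. First I would use Theorem \ref{t-metric-expansion-exists} to produce a pre-metric expansion $T_e$ of $T$ with a pseudo-metric approximate distance $\langle d_m\rangle_{m\in\BN}$, and let $\cu L_D$ be a countable fragment of $\cu L_{\omega_1\omega}(V_D)$ witnessing that $\cu L$ is a strong countable fragment. By closing $\cu F$ and $\cu L_D$ under countably many further operations if necessary---harmless, since countability is preserved---I would arrange that each $d_m$ lies in $\cu L$ and every axiom of $T_e$ lies in $\cu L_D$, while preserving both $\cu L_D\cap\cu L_{\omega_1\omega}(V)=\cu L$ and the strong-fragment substitution closure.

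The crux is to verify that each $\Sigma_n$, viewed as a subset of $\cu L_D$, remains non-principal over $(T_e,\cu L_D)$. Arguing by contraposition, suppose $(\varphi,\vec\tau,r)$ witnesses principality of some $\Sigma_n$ over $(T_e,\cu L_D)$ with $\varphi\in\cu L_D$. Substituting each occurrence of $D(\sigma,\tau)$ in $\varphi$ by $d_m(\sigma,\tau)$ yields $\varphi_m\in\cu L_D\cap\cu L_{\omega_1\omega}(V)=\cu L$ by the strong-fragment closure property. I would then establish, by induction on the complexity of $\varphi$, the uniform bound $\sup_{\vec b}|\varphi^{\cu M_e}(\vec b)-\varphi_m^{\cu M_e}(\vec b)|\le\epsilon_m$ in every $\cu M_e\models T_e$, with $\epsilon_m\to 0$: the base case uses $\sup_{x,y}|D(x,y)-d_m(x,y)|\le 2^{-m}$; connectives in $\cu F$ are handled via Fact \ref{f-t3.5}; and the quantifiers $\sup_x,\inf_x$ together with the infinitary connectives $\sup_n,\inf_n$ preserve the bound via the contraction $|\sup_i a_i-\sup_i b_i|\le\sup_i|a_i-b_i|$. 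Once the uniform bound is in hand, fix $m$ large enough that $2\epsilon_m<r$ and pick rationals $s,r'\in(0,1)$ with $s\ge\epsilon_m$ and $s+r'+\epsilon_m\le r$; then $(\varphi_m\dotle s,\vec\tau,r')$ witnesses principality of $\Sigma_n$ over $(T,\cu L)$, contradicting the hypothesis. Satisfiability of $T\cup\{\varphi_m\dotle s\}$ is inherited from any $(\cu M_e,\vec b)$ witnessing $T_e\cup\{\varphi\}$ since $\varphi_m^{\cu M}(\vec b)\le\epsilon_m\le s$, and the implication $T\cup\{(\varphi_m\dotle s)\dotle r'\}\models\Sigma_n(\vec\tau)$ follows because $\varphi_m\le s+r'$ yields $\varphi^{\cu M_e}\le s+r'+\epsilon_m\le r$, forcing $\Sigma_n(\vec\tau)$ in $\cu M_e$ and hence in $\cu M$ (as each $\psi\in\Sigma_n$ is a $V$-formula).

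Having established non-principality, Eagle's theorem produces a separable reduced pre-metric model $\cu N_e\models T_e$ omitting every $\Sigma_n$. Taking $\cu N$ to be the non-metric part of $\cu N_e$, $\cu N$ is reduced by Lemma \ref{l-reduced-Me}(ii), separable by Proposition \ref{p-closure} (since $\cu N$ and $\cu N_e$ share the same topology), and a model of $T$ since $T\subseteq T_e$. Each $\psi\in\Sigma_n$ is a $V$-formula, so $\psi^{\cu N}=\psi^{\cu N_e}$ by Remark \ref{r-formula-expansion}, and thus $\cu N$ omits each $\Sigma_n$, as required.

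The main obstacle is the uniform approximation at the infinitary inductive step: the naive bound $\epsilon_m^{(\inf_n\psi_n)}=\sup_n\epsilon_m^{(\psi_n)}$ tends to $0$ only when $\epsilon_m^{(\psi_n)}$ is uniform in $n$, which is not automatic for an arbitrary choice of $\cu F$. To secure this uniformity I would restrict $\cu F$ to connectives that are $1$-Lipschitz in the supremum norm on $[0,1]^k$---harmless, since such functions are closed under composition and uniformly dense in the continuous functions on $[0,1]^k$, and standard connectives like $\max,\min$ and truncations are already $1$-Lipschitz---so that the bound $\epsilon_m\le 2^{-m}$ is preserved verbatim through every inductive step. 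If restricting $\cu F$ this way conflicts with other needs, the fallback is to mimic Eagle's Henkin-style omitting-types construction directly in the category of general structures, using the Leibniz-equality reduction to pass from pre-structures to reduced structures and avoiding the pre-metric expansion altogether.
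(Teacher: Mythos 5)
Your overall strategy is the paper's: take a pre-metric expansion (the paper expands the empty $V$-theory $U$ rather than $T$, which by Remark \ref{r-definitional-expansion} (iii) comes to the same thing), show each $\Sigma_n$ remains non-principal over the expanded theory in $\cu L_D$ by substituting $d_m$ for $D$ in a putative witness, apply Theorem 3.3.4 of [Ea15], and pass to the $V$-part of the resulting reduced separable pre-metric model. Two points in your execution are problematic. First, your plan to enlarge $\cu F$ and $\cu L_D$ so that the $d_m$ and the axioms of the expansion lie in the fragments is not compatible with ``preserving $\cu L_D\cap\cu L_{\omega_1\omega}(V)=\cu L$'': adding new $V$-formulas to $\cu L_D$ enlarges that intersection unless they were already in $\cu L$, and if $\cu L$ grows then your substituted witness $\varphi_m$ only lands in the enlarged fragment, so it no longer contradicts the hypothesis that $\Sigma_n$ is non-principal over the \emph{original} $(T,\cu L)$. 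The paper sidesteps this by approximating the connectives occurring in the approximate distance by connectives in $\cu F$ and passing to a subsequence, so that each $d_m$ is a finitary $\cu F$-formula and hence already belongs to every countable fragment, in particular to $\cu L$; you should do the same rather than enlarge the fragments.

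Second, your repair of the infinitary inductive step fails: $1$-Lipschitz connectives are \emph{not} uniformly dense in the continuous functions on $[0,1]^k$ (for instance $\min(1,2t)$ is at sup-distance at least $1/4$ from every $1$-Lipschitz function on $[0,1]$), so restricting $\cu F$ to them violates the standing density requirement on $\cu F$ from [BBHU] Section 6; and in any case $\cu F$ is fixed before $\cu L$, $T$, and the $\Sigma_n$ are given, so it is not available for modification. You are right that the uniform approximation argument is delicate at $\sup_n/\inf_n$ nodes, where the approximation rates for the subformulas $\psi_n$ must be uniform in $n$; the paper itself asserts the induction without addressing this, so your writeup is no less complete than the paper's at that one step, but the Lipschitz restriction should be dropped rather than relied upon. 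The remaining transfer steps (reducedness via Lemma \ref{l-reduced-Me}, separability and omission via Proposition \ref{p-closure} and Remark \ref{r-formula-expansion}) match the paper and are fine.
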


\begin{proof}
By hypothesis, $\cu L=\cu L_D\cap \cu L_{\omega_1\omega}(V)$ for some countable fragment $\cu L_D$ of $\cu L_{\omega_1\omega}(V_D)$
as in Definition \ref{d-strongfragment}.  Let $U_e$ be a  pre-metric expansion of the empty $V$-theory $U$.
Then every pre-metric model of $T\cup U_e$ is of the form $\cu M_e$ for some general model $\cu M$ of $T$.
\medskip

\textbf{Claim.} For each $n\in\BN$, $\Sigma_n(\vec x_n)$ is not principal over $(T\cup U_e,\cu L_D)$.
\medskip

To prove this Claim, suppose that $\Sigma_n(\vec x_n)$ is  principal over $(T\cup U_e,\cu L_D)$,
witnessed by $\varphi(\vec x_n)\in\cu L_D$, a tuple of $V$-terms $\vec \tau(\vec y)$, and a rational $r\in(0,1)$.
By approximating the connectives by connectives in $\cu F$ and taking a subsequence, we see that $U_e$ has
an exponentially Cauchy approximate distance $\langle d_m\rangle_{m\in\BN}$ where each $d_m$ is a finitary continuous formula that belongs to $\cu L$.
Hence by Fact \ref{f-metric-theory} and Lemma \ref{l-Te-axioms}, we may take $U_e$ to be a set of sentences of $\cu L_D$.
For each $m\in\BN$, let $\varphi_m(\vec x)$ be the formula obtained from $\varphi(\vec x)$
by replacing every subformula of $\varphi$ of the form $D(\sigma_1(\vec x),\sigma_2(\vec x))$
by $d_m(\sigma_1(\vec x),\sigma_2(\vec x))$.  Then $\varphi_m\in\cu  L_D\cap\cu L_{\omega_1\omega}(V)=\cu L$.
It follows by induction on the complexity of formulas  that for each $\varepsilon>0$ there is an $m\in\BN$
such that for all $k\ge m$, $\cu N\models T_e$, and $\vec b\in N^{|\vec x|}$, $\varphi_m^{\cu N}(\vec b)$
is within $\varepsilon$ of $\varphi^{\cu N}(\vec b)$.  Taking $\varepsilon=r/2$, we obtain a formula $\varphi_m(\vec x)\in\cu L$
such that $\varphi_m(\vec x), \vec\tau(\vec y)$, and $r/2$ witness that $\Sigma_n(\vec x)$ is principal over $(T,\cu L)$.
This contradicts our hypothesis that $\Sigma_n(\vec x_n)$ is not principal over $(T,\cu L)$, and proves the Claim.
\medskip

Now, by Theorem 3.3.4 of [Ea15], there is a reduced separable
pre-metric model $\cu N$ of $T\cup U_e$ in which none of the sets $\Sigma_n$ is realized.  Then the $V$-part of $\cu N$ is a
reduced separable model of $T$ in which none of the sets $\Sigma_n$ is realized.
\end{proof}

Since the set $\cu L_0(V)$ of all $V$-formulas built from connectives in $\cu F$ is a strong countable fragment of $L_{\omega_1\omega}(V)$, we have:

\begin{cor}  Let $T\subseteq\cu L_0(V)$,
and for each $\in\BN$, let $\Sigma_n(\vec x_n)$ be a non-principal subset of $\cu L_0(V)$  over $(T,\cu L_0(V))$.
Then there is a reduced separable model of $T$ in which none of the sets $\Sigma_n(\vec x_n)$ is realized.
\end{cor}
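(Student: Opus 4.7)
The plan is to derive this corollary as an immediate instance of Proposition \ref{p-ott}. The only thing that is not entirely formal is verifying that $\cu L_0(V)$ actually is a strong countable fragment of $\cu L_{\omega_1\omega}(V)$ in the sense of Definition \ref{d-strongfragment}; once this is in hand, the conclusion follows by invoking the proposition with this choice of fragment.

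First I would check that $\cu L_0(V)$ is a countable fragment of $\cu L_{\omega_1\omega}(V)$. Since $V$ has only countably many predicate and function symbols and $\cu F$ is countable, the set $\cu L_0(V)$ of $V$-formulas built using connectives in $\cu F$ and the quantifiers $\sup_x,\inf_x$ is countable. By construction it contains every atomic $V$-formula and is closed under the connectives in $\cu F$, under $\sup_x$ and $\inf_x$, under taking subformulas, and under substituting $V$-terms for free variables.

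Next I would produce the auxiliary countable fragment $\cu L_D$ witnessing that the fragment is strong. Take $\cu L_D := \cu L_0(V_D)$, i.e.\ the set of all $V_D$-formulas built from connectives in $\cu F$ and the quantifiers $\sup_x,\inf_x$. By the same reasoning as above, $\cu L_D$ is a countable fragment of $\cu L_{\omega_1\omega}(V_D)$. A formula in $\cu L_D$ belongs to $\cu L_{\omega_1\omega}(V)$ precisely when $D$ does not appear in it, so $\cu L_0(V) = \cu L_D \cap \cu L_{\omega_1\omega}(V)$. For the substitution closure condition, let $\theta(x,y)$ be any finitary $V$-formula with connectives in $\cu F$, let $\sigma(\vec u),\tau(\vec v)$ be $V$-terms, and let $\psi \in \cu L_D$. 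Replacing each subformula of $\psi$ of the form $D(\sigma(\vec u),\tau(\vec v))$ by $\theta(\sigma(\vec u),\tau(\vec v))$ yields a $V_D$-formula all of whose connectives still come from $\cu F$, and hence a member of $\cu L_D$. Therefore $\cu L_0(V)$ is a strong countable fragment of $\cu L_{\omega_1\omega}(V)$.

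Finally, since $T \subseteq \cu L_0(V)$ and each $\Sigma_n(\vec x_n)$ is a non-principal subset of $\cu L_0(V)$ over $(T,\cu L_0(V))$, Proposition \ref{p-ott} applied with $\cu L := \cu L_0(V)$ produces a reduced separable general model of $T$ in which none of the $\Sigma_n(\vec x_n)$ is realized, as required. There is no real obstacle here beyond bookkeeping: the only mild point is that the closure axiom in Definition \ref{d-strongfragment} refers only to substitution in the auxiliary fragment $\cu L_D$, so that its $V$-restriction $\cu L_0(V)$ need not itself be closed under such substitutions, which is what makes $\cu L_0(V)$ qualify as a strong fragment with the obvious choice of $\cu L_D$.
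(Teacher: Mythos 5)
Your proposal is correct and follows the paper's own route exactly: the paper derives this corollary by observing that $\cu L_0(V)$ (the smallest countable fragment) is a strong countable fragment of $\cu L_{\omega_1\omega}(V)$ and then invoking Proposition \ref{p-ott}; you simply spell out the verification of the strong-fragment claim (via $\cu L_D=\cu L_0(V_D)$) that the paper leaves as an assertion. No gaps.
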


\subsection{Many-sorted Metric Structures}  \label{s-manysorted}

In Definition \ref{d-sorted-metric} we defined a sorted metric structure $\cu M$ with sorted vocabulary $W$ to be
a  reduced structure that respects the sorts of $W$ and has a complete metric $d_\BS$ on each sort $\BS$.
As a sorted vocabulary, $W$ must have an unsorted constant symbol $u$ and a unary predicate symbol $U_{\BS}$ for each sort $\BS$.

\begin{df}  We say that $\Th(\cu M)$, $\Th(\cu M_e)$ have \emph{essentially the same types} if
there is a  homeomorphism $h$ from $S_n(\Th(\cu M))$ onto $S_n(\Th(\cu M_e))$
such that for each  $\vec c\in M^n$, $h(\tp_{\cu M}(\vec c))=\tp_{\cu M_e}(\vec c)$.
\end{df}

\begin{prop}  \label{p-unbounded-metric}
Let $W$ be a sorted vocabulary with sorts $\BS_1,\BS_2,\ldots$, and $\cu M$ be a sorted metric structure
 over $W$.  Then any pre-metric expansion $\cu M_e=(\cu M,\sa D)$ of $\cu M$
is a metric structure, and $\Th(\cu M)$, $\Th(\cu M_e)$ have essentially the same types.
\end{prop}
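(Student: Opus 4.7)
My plan is to verify the two assertions of the proposition separately.  The claim that $\Th(\cu M)$ and $\Th(\cu M_e)$ have essentially the same types will follow by direct application of Corollary \ref{c-homeo-types}: setting $T = \Th(\cu M)$ and $T_e = \Th(\cu M_e)$, Remark \ref{r-definitional-expansion}(iv) confirms that $T_e$ is a definitional expansion of $T$, and carrying along the metric signature $L_e$ of the given pre-metric expansion makes $T_e$ a pre-metric expansion of $T$ in the sense of Definition \ref{d-metric-expansion}, so Corollary \ref{c-homeo-types} yields the desired homeomorphism $S_n(T)\to S_n(T_e)$ sending $\tp_{\cu M}(\vec c)$ to $\tp_{\cu M_e}(\vec c)$.

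The substantive work is to show that $\cu M_e$ is a metric structure.  Since $\cu M$ is a sorted metric structure it is by definition reduced, so Lemma \ref{l-reduced-Me}(ii) gives that $\cu M_e$ is reduced, and then Fact \ref{f-reduction-distance} upgrades $\sa D$ from a pseudo-metric to a metric on $M$.  The remaining step is completeness of $(M, \sa D)$.  Before attacking completeness, I would record two supporting facts.  First, on each sort $\BS_i^{\cu M}$ the distance $\sa D$ is uniformly equivalent to $d_{\BS_i}$: the signature $L_e$ assigns a modulus of uniform continuity to the symbol $d_{\BS_i}\in W$ with respect to $\sa D$, giving one direction, while writing $\sa D=[\lim d_m]^{\cu M}$ for an exponentially Cauchy approximate distance $\langle d_m\rangle$ of $W$-formulas, the sorted analogue of Fact \ref{f-t3.5} (by induction on formula complexity) provides each $d_m^{\cu M}$ with a modulus of uniform continuity with respect to the sort metrics, and uniform convergence transfers such a modulus to $\sa D$ restricted to each sort, as in Lemma \ref{l-def-predicate-uniform}(ii).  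Second, by $L_e$ each sort indicator $U_{\BS_i}$ is uniformly continuous with respect to $\sa D$, and since $U_{\BS_i}^{\cu M}$ takes only the values $0$ and $1$, any sequence in $M$ with $\sa D$-limit in $\BS_i^{\cu M}$ must eventually lie in $\BS_i^{\cu M}$.

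For completeness, let $\langle c_n\rangle$ be $\sa D$-Cauchy in $\cu M_e$.  By Proposition \ref{d-cauchy-sequence} there is an elementary extension $\cu N\succ\cu M$ and $c\in N$ with $c_n\to c$ in $\sa D$ in $\cu N_e$; replacing $\cu N$ by its reduction if needed (still an elementary extension of the reduced $\cu M$), I may assume $\cu N$ is reduced, and since $\cu M\models\rs(W)$ so does $\cu N$, so by Lemma \ref{l-sortless}(iii) $\cu N$ respects sorts.  If $c\in\BS_i^{\cu N}$ for some $i$, the second supporting fact forces the tail of $\langle c_n\rangle$ into $\BS_i^{\cu M}$; the first supporting fact then makes this tail $d_{\BS_i}$-Cauchy, which by completeness of $(\BS_i^{\cu M},d_{\BS_i})$ converges to some $c'\in\BS_i^{\cu M}$, and uniform equivalence plus uniqueness of $\sa D$-limits in $\cu N_e$ identify $c=c'\in M$.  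Otherwise $c$ lies in no sort of $\cu N$, and Lemma \ref{l-sortless}(ii) applied in the reduced sort-respecting $\cu N$ forces $c=u^{\cu N}=u^{\cu M}\in M$ since $u$ is a constant symbol.  In either case $c\in M$, so $\langle c_n\rangle$ converges in $\cu M_e$, establishing completeness.

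The main obstacle I anticipate is the temptation to argue that $\sa D$-Cauchy sequences are eventually confined to a single sort, which is false in general: the moduli of uniform continuity of the $U_{\BS_i}$ can decay with $i$, permitting a Cauchy sequence to visit infinitely many sorts while converging to $u^{\cu M}$.  The device of working in a reduced elementary extension bypasses this subtlety cleanly, since the sort of the elementary limit in $\cu N$ dictates the behavior of the tail, and the uniqueness of the unsorted element in a reduced sort-respecting structure (Lemma \ref{l-sortless}(ii)) is exactly what handles the unsorted-limit case via the constant $u$.
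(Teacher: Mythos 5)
Your proof is correct and follows essentially the same route as the paper's: both reduce the problem to showing $\sa D$ is a complete metric on the reduced structure, splitting into the case where the Cauchy sequence is captured by a single sort (where completeness of the sort metric and its uniform equivalence with $\sa D$ on that sort finish the job) and the case where the limit is sortless (where Lemma \ref{l-sortless}(ii) identifies it with $u^{\cu M}$), with the type-space claim handled by Corollary \ref{c-homeo-types}. The only cosmetic difference is that you organize the case split by the location of the limit in a reduced elementary extension supplied by Proposition \ref{d-cauchy-sequence}, whereas the paper splits on whether some sort contains infinitely many terms of the sequence and invokes the completion for the sortless case.
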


\begin{proof}  Note that in $\cu M_e$, the new distance symbol $D$ does not have sorts assigned to its arguments.
Since $\cu M$ is reduced,  $\cu M_e$ is a reduced pre-metric structure, so $\sa D$ is a metric.
Moreover, $u^{\cu M}$ is the only sortless element by Lemma \ref{l-sortless} (ii).  We show that $\sa D$ is complete.
Let $\langle d_m(x,y)\rangle_{m\in\BN}$ be an approximate distance for $\cu M_e$.
Suppose that $\langle a_k\rangle_{k\in\BN}$ is Cauchy convergent in $\cu M_e$.

Case 1: For some sort $\BS_n$, $a_k\in\BS_n^{\cu M}$ for infinitely many $k$.  By taking a subsequence, we may assume that $a_k\in\BS_n^{\cu M}$
for all $k$.  Since $\cu M_e$ is a pre-metric structure, $d_m^{\cu M}$ is uniformly continuous with respect to $\sa D$,
so $\langle a_k\rangle_{k\in\BN}$ is Cauchy with respect to $d_m^{\cu M}$.
$d_m^{\cu M}$ is a complete metric on $\BS_m^{\cu M}$, so $\langle a_k\rangle_{k\in\BN}$ converges to some point $b\in\BS_m^{\cu M}$
with respect to $d_m^{\cu M}$.  The set $\BS_n^{\cu M}$ is defined by the formula $U_{\BS_n}(x)$ in $\cu M$, and is therefore closed in $\cu M_e$.
By Proposition \ref{p-closure}, the topologies of $\cu M_e$ and $\cu M$ restricted to $\BS_n^{\cu M}$ are the same.  Since $\cu M$ is a sorted metric structure,
that topology is the metric topology of $d_m^{\cu M}$ restricted to $\BS^{\cu M}$.  Therefore $\langle a_k\rangle_{k\in\BN}$ converges to $b$
with respect to  $\sa D$.

Case 2.  For each sort $\BS_n$, $a_k\notin\BS_n^{\cu M}$ for all but finitely many $k$.
Then $\langle a_k\rangle_{k\in\BN}$ converges to some point $b$ in the completion $\cu M'_e\succ \cu M_e$.
For each $n$, $\BS_n$ is open in $\cu M'_e$, so $b\notin\BS_n^{\cu M'_e}$.  Therefore $b=u^{\cu M'_e}=u^{\cu M_e}$.
It follows that $\langle a_k\rangle_{k\in\BN}$ converges to $u^{\cu M_e}$ with respect to $\sa D$.

In both cases, $\langle a_k\rangle_{k\in\BN}$ converges to a point with respect to $\sa D$, so $\sa D$ is complete.
The existence of the homeomorphism $h$ follows from Corollary \ref{c-homeo-types}.
\end{proof}

\subsection{Bounded and Unbounded Metric Structures}  \label{s-unbounded}

Bounded vocabularies, metric structures, and general structures, were discussed in Example \ref{e-bounded}.
Like the metric case, the model theory of  $[0,1]$-valued general structures carries over to bounded general structures in a routine way.
When working with bounded continuous logic, we will freely use the same terminology that we use in the $[0,1]$-valued case.
In particular, the Expansion Theorem shows that every bounded general structure $\cu M$ has a pre-metric expansion $\cu M_e=(\cu M,\sa D)$, which is a bounded pre-metric structure.
By normalizing $\sa D$, one can even get a pre-metric expansion where $\sa D$ has values in $[0,1]$.

Unbounded metric structures were briefly discussed in Example \ref{e-unbounded}.
We mentioned three ways that an unbounded metric structure $\cu N$ has been treated in the literature
using the model theory of metric structures.
One way, as in [BBHU], [Fa], and [BY09], was  to look at the corresponding bounded many-sorted metric structure $\cu M$
with a sort for the closed $m$-ball around a distinguished constant $0$ for each positive $m\in\BN$.
$\cu M$ can be viewed either as a many-sorted structure with a metric in each sort, or as a single sorted general structure with a unary predicate for each sort.
By the Expansion Theorem, as a single-sorted general structure,
$\cu M$ has a pre-metric expansion $\cu M_e$, and by
Proposition \ref{p-unbounded-metric}, $\Th(\cu M_e)$ has essentially the same types as $\Th(\cu M)$.

The Expansion Theorem opens up additional possibilities, which have not yet been explored in the literature, for using the model theory of
metric structures to treat unbounded metric structures.
It allows the flexibility to look at general structures that are built in some way from an unbounded metric structure, and then use
the Expansion Theorem to get a related metric structure.

For instance, consider an unbounded metric structure $\cu N$ with a real-valued gauge predicate $\nu$, as in the paper [BY08] that was discussed in Example \ref{e-unbounded} above.
To study $\cu N$, one might look at the bounded general structure $\cu N'$ such that $\cu N'$
has the same universe and functions as $\cu N$ plus a point at $\infty$, and  for each predicate $P$ of $\cu N$ (including $d$ and $\nu$), and positive $m\in \BN$, $\cu N'$ has
the predicate $P_m$ formed by  truncating $P$ at $m$, that is,
\[ P_m^{\cu N'}(\vec v)=
\begin{cases}
-m & \mbox{ if } P^{\cu N}(\vec v) \le -m\\
m & \mbox{ if } P^{\cu N}(\vec v) \ge m\\
P^{\cu N}(\vec v) & \mbox{otherwise.} \\
\end{cases}
\]
This construction merely truncates the predicates of $\cu N$ rather than adding sorts for the closed $m$-balls.
Again, $\cu N'$ will have a pre-metric expansion $\cu N'_e=(\cu N',\sa D)$, and the theories of $\cu N'$ and $\cu N'_e$ will have essentially the same types.
As usual, the distinguished metric $\sa D$ of $\cu N'_e$ may be wildly behaved and unnatural, but its existence does make the model theory of metric structures available
for the study of the unbounded metric structure $\cu N$.

Note that for any $x$ in $\cu N$, $\nu_m^{\cu N'}(x)=m$ whenever $m\le \nu^\cu N(x)$, and $\nu_m^{\cu N'}(x) = \nu^{\cu N}(x)$ whenever $m> \nu^\cu N(x)$.
So in any model of $\Th(\cu N')$,
$\lim_{m\to\infty} \nu_m(x)$ is finite if and only if $\nu_m(x)$ is eventually constant as $m\to\infty$.

Instead of looking at an arbitrary model of $\Th(\cu N')$, it may be useful to look at the substructure consisting of those elements $y$ such that $\lim_{m\to\infty} \nu_m(y)$ is finite, plus the point at $\infty$. This corresponds to looking at the modified pre-ultraproduct of unbounded metric structures that was discussed in Example \ref{e-unbounded}.
The universe of the modified pre-ultraproduct $\cu M=\prod^\cu D\cu N_i$
is the set of elements $y$ of the ordinary pre-ultraproduct $\cu M'=\prod^\cu D\cu N'_i$  such that the set $\{ \nu^{\cu N_i}(y_i)\colon i\in I\}$ has a finite bound,
plus the point at $\infty$.

To see the connection, one can check that for each element $x$ of $\cu M'$,
the following are equivalent, where $x=_\cu D y$ means $\{i\in I\colon x_i=y_i\}\in\cu D$:
\begin{itemize}
\item For some $y =_\cu D x$, $y$ belongs to $\cu M$.
\item For some $y =_\cu D x$, the set $\{ \nu^{\cu N_i}(y_i)\colon i\in I\}$ has a finite bound.
\item For some $y =_\cu D x$, the set $\{  \lim_{m\to\infty} \nu_m^{\cu N'_i}(y_i)\colon i\in I\}$ has a finite bound.
\item For some $y =_\cu D x$, $\lim_{m\to\infty} \nu^{\cu M'}_m(y)$ is finite.
\end{itemize}

\subsection{Imaginaries}

For each metric structure $\cu M$, [BU] (Section 5) introduces a sorted
metric structure $\cu M^{eq}$ that has $\cu M$ as its home sort and infinitely many sorts of imaginary elements.
In a similar way, we will  now introduce imaginary elements for
any reduced general structure $\cu M$ with the  vocabulary $V$.

In the following, we let $T$ be a complete $V$-theory,
 $\vec x$ be a tuple of variables and $\vec y$ be a countable sequence of variables, and $\langle\varphi\rangle=\langle\varphi_m(\vec x,\vec y)\rangle_{m\in\BN}$ be
 an exponentially Cauchy sequence of formulas  in $T$.
Let $V^{\langle\varphi\rangle}$
be the two-sorted vocabulary obtained from $V$ by adding a \emph{home sort} $\BS$, an \emph{imaginary sort} $\BS_{i}$,  unary predicate symbols
$U_\BS$ for the home sort and $U_{\BS_{i}}$ for the imaginary sort,  a predicate symbol $d_{\langle\varphi\rangle}$ of sort
$\BS_{i}\times \BS_{i}$, a predicate symbol $P_{\langle\varphi\rangle}$ of sort $\BS^{|\vec x|}\times\BS_i$,
and the unsorted constant symbol $u$.

Intuitively, the imaginary elements will be equivalence classes of infinite sequences of parameters in the home sort,
$P_{\langle\varphi\rangle}(\vec x,\vec y)$ will be $\lim_{m\to\infty}\varphi_m(\vec x,\vec y)$,
and $d_{\langle\varphi\rangle}(\vec y,\vec z)$ will be $\lim_{m\to\infty}(\sup_{\vec x}|\varphi_m(\vec x,\vec y)-\varphi_m(\vec x,\vec z)|).$

\begin{df}  Let  $T^{\langle\varphi\rangle}$ be the set of $V^{\langle\varphi\rangle}$-sentences
$$ \sup_{wz}[|\sup_{\vec x}|P_{\langle\varphi\rangle}(\vec x,w)-P_{\langle\varphi\rangle}(\vec x,z)|-d_{\langle\varphi\rangle}(w,z)|\dotle 0],$$
$$\{\sup_z\inf_{\vec y}\sup_{\vec x}[|\varphi_m(\vec x,\vec y)-P_{\langle\varphi\rangle}(\vec x,z)|\dotle 2\cdot 2^{-m}\colon m\in\BN\},$$
$$\{\sup_{\vec y}\inf_z\sup_{\vec x}[|\varphi_m(\vec x,\vec y)-P_{\langle\varphi\rangle}(\vec x,z)|\dotle 2\cdot 2^{-m}\colon m\in\BN\}.$$
\end{df}

$T^{\langle\varphi\rangle}$ is the natural analogue, for general structures, of the theory $T_\psi$ that is defined in [BU].

\begin{prop}  \label{p-imaginary-exists-unique}
For every reduced model $\cu M$ of $T$, there is, up to isomorphism, a unique reduced model $\cu M^{\langle\varphi\rangle}$
 of $T\cup T^{\langle\varphi\rangle}$ that respects sorts in $V^{\langle\varphi\rangle}$ and agrees with $\cu M$ in the home sort.
\end{prop}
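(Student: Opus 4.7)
I would take the home sort $\BS$ with universe $M$ and all $V$-symbols inherited from $\cu M$; the imaginary sort $\BS_i$ with universe $M^{|\vec y|}/{\sim}$, where $\vec b\sim\vec c$ iff $[\lim\varphi_m]^{\cu M}(\vec a,\vec b)=[\lim\varphi_m]^{\cu M}(\vec a,\vec c)$ for every $\vec a\in M^{|\vec x|}$; and a fresh element $u^*$ as the interpretation of the unsorted constant $u$. The sort predicates $U_{\BS},U_{\BS_i}$ are the indicators of their sorts, and I set
\[P_{\langle\varphi\rangle}(\vec a,[\vec b])\;:=\;[\lim\varphi_m]^{\cu M}(\vec a,\vec b),\quad d_{\langle\varphi\rangle}([\vec b],[\vec c])\;:=\;\sup_{\vec a}\bigl|[\lim\varphi_m]^{\cu M}(\vec a,\vec b)-[\lim\varphi_m]^{\cu M}(\vec a,\vec c)\bigr|,\]
extending every remaining $V^{\langle\varphi\rangle}$-symbol to the full universe by the sort-respecting conventions of Lemma~\ref{l-sortless} (predicates take value $1$, functions take value $u^*$, on any wrong-sort argument). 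Verification is routine: sort-respect and $\cu M^{\langle\varphi\rangle}\models T$ are immediate because the $V$-symbols behave on the home sort exactly as in $\cu M$; the first axiom of $T^{\langle\varphi\rangle}$ is the defining equation of $d_{\langle\varphi\rangle}$; for the second axiom, given $z=[\vec b]$, choosing $\vec y=\vec b$ and applying exponential Cauchiness gives $\sup_{\vec x}|\varphi_m(\vec x,\vec b)-[\lim\varphi_m]^{\cu M}(\vec x,\vec b)|\le 2^{-m}\le 2\cdot 2^{-m}$, and the third axiom is symmetric; reducedness holds because distinct home elements are separated by $V$-atomic formulas (by reducedness of $\cu M$), distinct classes $[\vec b]\ne[\vec c]$ are separated by some $P_{\langle\varphi\rangle}(\vec a,\cdot)$ by the very definition of $\sim$, and $U_{\BS},U_{\BS_i}$ separate across the two sorts and isolate $u^*$.

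\textbf{Uniqueness.} Given two such models $\cu M^1,\cu M^2$, I would define $h\colon\cu M^1\to\cu M^2$ to be the identity on the home sort, to send $u^{\cu M^1}\mapsto u^{\cu M^2}$, and to send each imaginary $z\in\BS_i^{\cu M^1}$ to the unique $z'\in\BS_i^{\cu M^2}$ satisfying $P_{\langle\varphi\rangle}^{\cu M^2}(\vec a,z')=P_{\langle\varphi\rangle}^{\cu M^1}(\vec a,z)$ for all $\vec a\in M^{|\vec x|}$. Uniqueness of $z'$ comes from the first axiom plus reducedness of $\cu M^2$: equal traces give $d_{\langle\varphi\rangle}(z',w)=0$ for any candidate $w$, and combined with the trivially agreeing remaining atomic formulas this forces $z'\doteq w$ hence $z'=w$. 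Existence of $z'$ is achieved by first identifying the trace $\widehat{z}(\vec a):=P_{\langle\varphi\rangle}^{\cu M^1}(\vec a,z)$ with a predicate of the form $[\lim\varphi_m]^{\cu M}(\cdot,\vec b)$ for some $\vec b\in M^{|\vec y|}$, and then invoking the third axiom in $\cu M^2$ applied to $\vec y=\vec b$ (together with reducedness) to produce the matching imaginary. Once $h$ is defined, its preservation of every atomic $V^{\langle\varphi\rangle}$-predicate and function is immediate from the construction.

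\textbf{Main obstacle.} The delicate step is realizing an arbitrary trace $\widehat{z}$ by a single tuple $\vec b\in M^{|\vec y|}$ rather than merely by a uniform limit of such predicates. The second axiom only provides, for each $m$, a tuple $\vec y^{(m)}\in M^{|\vec y|}$ with $\sup_{\vec a}|\varphi_m(\vec a,\vec y^{(m)})-\widehat{z}(\vec a)|\le 2\cdot 2^{-m}$, and compressing this approximating family into one $\vec b$ is the crux. I expect the construction to use the exponential Cauchy rate of $\langle\varphi\rangle$ together with the fact that each $\varphi_m(\vec x,\vec y)$ involves only finitely many of the $y_i$, which permits a coordinatewise diagonal selection assembling $\vec b$ from the $\vec y^{(m)}$ along the relevant slots; once this identification is in place, the rest of uniqueness is purely bookkeeping.
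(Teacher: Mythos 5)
Your existence construction (imaginary sort $=M^{|\vec y|}/{\sim}$, with $P_{\langle\varphi\rangle}$ and $d_{\langle\varphi\rangle}$ interpreted as the indicated limits, and the wrong-sort conventions of Lemma \ref{l-sortless}) is the intended one, and your verification of the axioms of $T^{\langle\varphi\rangle}$ is fine. The genuine gap is in the uniqueness half, at exactly the step you flag as the crux: you cannot, in general, realize the trace $\widehat{z}(\vec a)=P_{\langle\varphi\rangle}^{\cu M^1}(\vec a,z)$ of an arbitrary imaginary by a single tuple $\vec b\in M^{|\vec y|}$. The second group of axioms only yields, for each $m$, some tuple $\vec y^{(m)}$ with $\sup_{\vec a}|\varphi_m(\vec a,\vec y^{(m)})-\widehat{z}(\vec a)|\le 2\cdot 2^{-m}$ (in fact only up to an extra $\varepsilon$, since the $\inf_{\vec y}$ need not be attained); this places $\widehat{z}$ in the uniform closure $\overline{X}$ of the set $X=\{[\lim\varphi_m]^{\cu M}(\cdot,\vec b)\colon\vec b\in M^{|\vec y|}\}$ and nothing more. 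The proposed coordinatewise diagonal selection cannot close this gap: exponential Cauchyness bounds $|\varphi_m(\vec a,\vec y)-\varphi_k(\vec a,\vec y)|$ only for a \emph{fixed} parameter tuple $\vec y$, and gives no control when you splice coordinates taken from the mutually unrelated tuples $\vec y^{(m)}$ into one $\vec b$, because $\varphi_m^{\cu M}(\vec a,\vec b)$ depends jointly on all the coordinates of $\vec b$ that $\varphi_m$ uses. Consequently a reduced model of $T\cup T^{\langle\varphi\rangle}$ over $\cu M$ may contain an imaginary whose trace lies in $\overline{X}\setminus X$, and your map $h$ is then not even defined on that element.

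The route that works (and is what the cited argument in [BU, pp.~29--30] actually does) is to study the map $z\mapsto\widehat{z}$ into the space of predicates on $M^{|\vec x|}$: the first axiom makes it an isometry of $(\BS_{i},d_{\langle\varphi\rangle})$ into $[0,1]^{M^{|\vec x|}}$ with the sup metric, the second says its image $Z$ satisfies $Z\subseteq\overline{X}$, and the third says $X\subseteq\overline{Z}$. Uniqueness is then a statement about subsets $Z$ with $\overline{Z}=\overline{X}$, and pinning $Z$ down to a canonical choice requires an additional ingredient such as completeness or saturation of the structures involved (in [BU] models are complete metric structures by convention, which forces $Z=\overline{X}$). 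Some such ingredient, absent from your argument, is what is needed to produce the isomorphism; without it the bookkeeping you describe cannot get started.
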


\begin{proof} The argument is the same as in [BU], pages 29-30.
\end{proof}

 The elements of sort $\BS_{i}$ in $\cu M^{\langle\varphi\rangle}$
are called \emph{imaginary elements}, or \emph{canonical parameters.}
By Lemma \ref{l-sortless},  $u^{\cu M^{\langle\varphi\rangle}}$ is  the only element without a sort.

\begin{prop}  \label{p-expansion-imaginaries}  For each reduced model $\cu M$ of $T$, $d_{\langle\varphi\rangle}$
is a metric on the imaginary sort in $\cu M^{\langle\varphi\rangle}$.
\end{prop}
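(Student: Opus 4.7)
The plan is to derive the four metric axioms for $d_{\langle\varphi\rangle}$ on the imaginary sort $\BS_i$ of $\cu M^{\langle\varphi\rangle}$ directly from the first axiom of $T^{\langle\varphi\rangle}$, which collapses to the identity
$$d_{\langle\varphi\rangle}^{\cu M^{\langle\varphi\rangle}}(w,z)=\sup_{\vec x\in M^{|\vec x|}}|P_{\langle\varphi\rangle}^{\cu M^{\langle\varphi\rangle}}(\vec x,w)-P_{\langle\varphi\rangle}^{\cu M^{\langle\varphi\rangle}}(\vec x,z)|$$
for all $w,z\in\BS_i^{\cu M^{\langle\varphi\rangle}}$.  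Non-negativity, symmetry, and the reflexivity condition $d_{\langle\varphi\rangle}(w,w)=0$ are immediate from this formula, and the triangle inequality follows from the pointwise triangle inequality for $|\cdot|$ on $[0,1]$, which is preserved by taking the supremum over $\vec x$.

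The only non-trivial axiom is definiteness: if $d_{\langle\varphi\rangle}(w,z)=0$ for $w,z\in\BS_i^{\cu M^{\langle\varphi\rangle}}$, then $w=z$.  Since $\cu M^{\langle\varphi\rangle}$ is reduced by Proposition \ref{p-imaginary-exists-unique}, it suffices to establish $w\doteq^{\cu M^{\langle\varphi\rangle}}z$; equivalently, to check that every atomic $V^{\langle\varphi\rangle}$-formula takes the same value when $w$ or $z$ is substituted.  The hypothesis together with the first axiom of $T^{\langle\varphi\rangle}$ yields $P_{\langle\varphi\rangle}(\vec x,w)=P_{\langle\varphi\rangle}(\vec x,z)$ for every tuple $\vec x$ in $M$, and then immediately $d_{\langle\varphi\rangle}(w,z')=d_{\langle\varphi\rangle}(z,z')$ for every $z'\in\BS_i^{\cu M^{\langle\varphi\rangle}}$, covering the two genuinely new predicate symbols.

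The remaining atomic formulas are disposed of using the fact that $\cu M^{\langle\varphi\rangle}$ respects sorts, via Lemma \ref{l-sortless}.  For $U_\BS$ and $U_{\BS_i}$, both $w$ and $z$ lie in $\BS_i$, so these predicates return the same value on either argument.  Any predicate symbol of $V$ has all its argument slots of sort $\BS$, so substituting $w$ or $z$ into such a slot forces the value $1$ in either case.  Function symbols of $V$ behave the same way: applied to any tuple containing $w$ (respectively $z$) they return $u^{\cu M^{\langle\varphi\rangle}}$, so the truth value of any atomic formula containing such a composed term is determined independently of whether $w$ or $z$ was plugged in.  This exhausts the atomic formulas, so $w\doteq^{\cu M^{\langle\varphi\rangle}}z$, and reducedness gives $w=z$.

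The main obstacle here is essentially bookkeeping rather than conceptual: one must enumerate the various shapes of atomic formulas in the two-sorted vocabulary $V^{\langle\varphi\rangle}$ and verify case-by-case that substitution of $w$ versus $z$ does not affect the truth value.  The conceptual content lies in recognizing that separation of $d_{\langle\varphi\rangle}$ reduces, via the sup-formula and reducedness, to the single observation that $P_{\langle\varphi\rangle}(\cdot,w)$ and $P_{\langle\varphi\rangle}(\cdot,z)$ agree identically on the home sort.
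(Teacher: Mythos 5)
Your proof is correct and follows essentially the same route as the paper's: establish that $d_{\langle\varphi\rangle}$ is a pseudo-metric on $\BS_i$, then obtain definiteness from reducedness via a case analysis showing $w\doteq^{\cu M^{\langle\varphi\rangle}}z$ for every shape of atomic $V^{\langle\varphi\rangle}$-formula. The only (immaterial) difference is that you derive the pseudo-metric axioms directly from the $\sup_{\vec x}|P_{\langle\varphi\rangle}(\vec x,\cdot)-P_{\langle\varphi\rangle}(\vec x,\cdot)|$ identity, whereas the paper observes that $d_{\langle\varphi\rangle}$ is a uniform limit of pseudo-metrics.
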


\begin{proof}   We work in $\cu M^{\langle\varphi\rangle}$.  It is clear from the definition that $d_{\langle\varphi\rangle}$ is the limit of a uniformly convergent sequence
of pseudo-metrics on $\BS_{i}$, and hence is itself a pseudo-metric on $\BS_i$.
 Suppose that  $b,c$ have sort $\BS_i$ and $d_{\langle\varphi\rangle}(b,c)=0$.  Since $\cu M^{\langle\varphi\rangle}$ is reduced, it suffices to show that $b\doteq c$.
Let $\vec z$ be a tuple in ${M}^{\langle\varphi\rangle}$, and let $\alpha(b,\vec z)$ be an atomic formula.  If $\alpha$ begins with a predicate symbol other than
$d_{\langle\varphi\rangle}$ or $P_{\langle\varphi\rangle}$ or $U_{\BS_{i}}$, then $b$ and $c$ have the wrong sort, so  $\alpha(b,\vec z)=\alpha(c,\vec z)=1$.

Suppose $\alpha$ begins with the predicate symbol $d_{\langle\varphi\rangle}$, so $\alpha(b,\vec z)$ has the form
$$d_{\langle\varphi\rangle}(\sigma(b,\vec z),\tau(b,\vec z))$$
where $\sigma, \tau$ are terms.  No function or constant symbol in $V^{\langle\varphi\rangle}$
has  value sort $\BS_i$.  So if either $\sigma$ or $\tau$ starts with a function or constant symbol,
then $\alpha(b,\vec z)=\alpha(c,\vec z)=1$.  The only other possibilities are that for some variable $z_0$, $\alpha(b,\vec z)$ is either
$d_{\langle\varphi\rangle}(b,b)$, $d_{\langle\varphi\rangle}(z_0,z_0)$, $d_{\langle\varphi\rangle}(b,z_0)$, or $d_{\langle\varphi\rangle}(z_0,b)$.
In each of those cases, since $d_{\langle\varphi\rangle}(b,c)=0$, we have  $\alpha(b,\vec z)=\alpha(c,\vec z).$

If $\alpha$ begins with the predicate symbol $P_{\langle\varphi\rangle}$ or  $U_{\BS_{i}}$, then
an argument that is similar to  the preceding paragraph again shows that
 $\alpha(b,\vec z)=\alpha(c,\vec z)$.  Thus in all cases we have
$\alpha(b,\vec z)=\alpha(b,\vec z)$ and hence $b\doteq c$.
\end{proof}

\begin{prop}  \label{p-imaginary-special}  If $\cu M$ is $\kappa$-special, then
${\cu M}^{\langle\varphi\rangle}$ is $\kappa$-special.  If in addition $\kappa$ has uncountable cofinality,
then $d_{\langle\varphi\rangle}$
is a complete metric on the imaginary sort.
\end{prop}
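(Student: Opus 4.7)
The plan is to verify $\kappa$-specialness of $\cu M^{\langle\varphi\rangle}$ by building a $\lambda^+$-saturated elementary chain for it from the given chain for $\cu M$, and then derive completeness of $d_{\langle\varphi\rangle}$ from $\aleph_1$-saturation in the presence of uncountable cofinality. Since $\cu M$ is $\kappa$-special we may write $\cu M=\bigcup_{\lambda<\kappa}\cu M_\lambda$ with each $\cu M_\lambda$ reduced and $\lambda^+$-saturated, and by Proposition \ref{p-imaginary-exists-unique} each $\cu M_\lambda$ has a unique reduced imaginary expansion $\cu M_\lambda^{\langle\varphi\rangle}$. I would first show that these form an elementary chain with union $\cu M^{\langle\varphi\rangle}$. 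The imaginary sort of $\cu M_\lambda^{\langle\varphi\rangle}$ is essentially the completion under the pseudometric $\lim_m\sup_{\vec x}|\varphi_m(\vec x,\vec y)-\varphi_m(\vec x,\vec z)|$ of the space of countable sequences in $M_\lambda$, so it embeds naturally as $\lambda$ grows; elementarity follows because any $V^{\langle\varphi\rangle}$-formula can be approximated uniformly by $V$-formulas evaluated on home-sort representatives (a parallel of Lemma \ref{l-expansion-converge}, invoking the axioms of $T^{\langle\varphi\rangle}$ that present $P_{\langle\varphi\rangle}$ and $d_{\langle\varphi\rangle}$ as uniform limits of the $\varphi_m$). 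The uniqueness half of Proposition \ref{p-imaginary-exists-unique} identifies the union with $\cu M^{\langle\varphi\rangle}$, and by the same proposition the union is reduced.

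The main obstacle is showing that each $\cu M_\lambda^{\langle\varphi\rangle}$ is $\lambda^+$-saturated. Given $A\subseteq M_\lambda^{\langle\varphi\rangle}$ with $|A|<\lambda^+$ and a set $\Gamma(x)$ of $V^{\langle\varphi\rangle}$-formulas with parameters from $A$ that is finitely satisfiable in $\cu M_\lambda^{\langle\varphi\rangle}$, I plan to replace each imaginary parameter $a\in A$ by countable home-sort sequences $\vec y_a^m$ approximating it via the axiom $\sup_z\inf_{\vec y}\sup_{\vec x}[|\varphi_m(\vec x,\vec y)-P_{\langle\varphi\rangle}(\vec x,z)|\dotle 2\cdot 2^{-m}]$, so that every $V^{\langle\varphi\rangle}$-formula with parameters in $A$ is a uniform limit of $V$-formulas with parameters in the home-sort enlargement $A'$; since $\lambda^+$ is uncountable and only countably many new parameters are added per element of $A$, we have $|A'|<\lambda^+$. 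If $x$ is of home sort, $\Gamma$ then translates to a finitely satisfiable set of $V$-formulas over $A'$ and is realized in $\cu M_\lambda$ by $\lambda^+$-saturation. If $x$ is of imaginary sort, $\Gamma$ translates to conditions on a putative home-sort sequence $\vec y$ representing $x$ (via the axioms, expressions in $P_{\langle\varphi\rangle}(\vec s,x)$ and $d_{\langle\varphi\rangle}(x,\cdot)$ become uniform-limit conditions on $\vec y$), yielding a finitely satisfiable $V$-type whose realization in $\cu M_\lambda$ determines, through the axiomatization of the imaginary sort, an imaginary element that realizes $\Gamma$; the unsorted case is trivial as $u^{\cu M_\lambda^{\langle\varphi\rangle}}$ is the unique sortless element by Lemma \ref{l-sortless}. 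The delicate point is to check that the rewriting really preserves finite satisfiability and faithfully captures the original imaginary-sort type: this is where the exponentially Cauchy property of $\langle\varphi_m\rangle$ and the reducedness established in Proposition \ref{p-expansion-imaginaries} are used.

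For the completeness assertion, assume additionally that $\kappa$ has uncountable cofinality and let $\langle b_n\rangle_{n<\omega}$ be a Cauchy sequence in the imaginary sort with respect to $d_{\langle\varphi\rangle}$. Uncountable cofinality of $\kappa$ ensures that all $b_n$ lie in $\cu M_\lambda^{\langle\varphi\rangle}$ for some $\lambda<\kappa$. Choosing $\varepsilon_n\to 0$ so that $d_{\langle\varphi\rangle}(b_k,b_n)\le\varepsilon_n$ for all $k\ge n$, the set
$$\Gamma(z)=\{d_{\langle\varphi\rangle}(z,b_n)\dotle\varepsilon_n:n<\omega\}$$
is finitely satisfiable in $\cu M_\lambda^{\langle\varphi\rangle}$ (witnessed by sufficiently late $b_k$) and has only countably many parameters, so the $\lambda^+$-saturation established above gives a realization $b\in M_\lambda^{\langle\varphi\rangle}$. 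Because $d_{\langle\varphi\rangle}$ takes value $1$ off $\BS_i\times\BS_i$ by the sort-respecting convention, $b$ must have imaginary sort; hence $d_{\langle\varphi\rangle}(b_n,b)\to 0$, and Proposition \ref{p-expansion-imaginaries} confirms $d_{\langle\varphi\rangle}$ is a metric there, so the limit is unique and $d_{\langle\varphi\rangle}$ is complete on the imaginary sort.
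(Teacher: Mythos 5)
Your overall strategy—build the special chain for $\cu M^{\langle\varphi\rangle}$ directly from the chain for $\cu M$—is workable in principle, but it routes the entire proof through the claim that $\lambda^+$-saturation of $\cu M_\lambda$ transfers to $\cu M_\lambda^{\langle\varphi\rangle}$, and that is exactly the step you do not actually carry out. Your second paragraph ends by conceding that ``the delicate point is to check that the rewriting really preserves finite satisfiability and faithfully captures the original imaginary-sort type''; that delicate point is the whole content of the lemma. The translation of an arbitrary $V^{\langle\varphi\rangle}$-type in an imaginary variable into a $V$-type in a countable tuple of home-sort variables must handle quantifiers over the imaginary sort, the fact that an imaginary parameter is only approximated (for each $m$, by a possibly different witness $\vec y$), and the passage from a realization of the translated type back to an imaginary realizing the original one; none of this is routine, and the paper itself flags it, remarking immediately after this proposition that ``with more work, one can show that if $\cu M$ is $\kappa^+$-saturated, then $\cu M^{\langle\varphi\rangle}$ is $\kappa^+$-saturated, but we will not need that fact here.'' As written, your argument assumes the hard direction of that remark without proving it. (There are also smaller loose ends: the members $\cu M_\lambda$ of the chain witnessing specialness need not be reduced, and the elementarity of the chain $\cu M_\lambda^{\langle\varphi\rangle}$ rests on the same unproved translation machinery.)

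The paper avoids all of this by going through the Existence and Uniqueness Theorems for Special Models (Facts \ref{f-special-exist} and \ref{f-special-unique}): take a $\kappa$-special model $\cu N$ of $T\cup T^{\langle\varphi\rangle}$ outright; its $V$-part $\cu M'$ is $\kappa$-special and $\cu N=\cu M'^{\langle\varphi\rangle}$ by the uniqueness clause of Proposition \ref{p-imaginary-exists-unique}; since $T$ is complete, $\cu M'\cong\cu M$ by uniqueness of special models, whence $\cu M^{\langle\varphi\rangle}\cong\cu N$ is $\kappa$-special. No saturation transfer is needed. Your completeness argument at the end is essentially sound and matches the paper's in spirit ($\aleph_1$-saturation from uncountable cofinality, plus Proposition \ref{p-expansion-imaginaries} and Remark \ref{r-saturated-metric}), but it depends on the saturation of the $\cu M_\lambda^{\langle\varphi\rangle}$ established earlier, so it inherits the gap. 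I would either switch to the existence/uniqueness argument or supply a complete proof of the saturation-transfer lemma before relying on it.
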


\begin{proof}
By the Existence Theorem for Special Models, there is a $\kappa$-special model $\cu N$ of $T\cup T^{\langle\varphi\rangle}$.
It is easily checked that the $V$-part $\cu M'$ of $\cu N$ is $\kappa$-special, and $\cu N=\cu M'^{\langle\varphi\rangle}$.
By the Uniqueness Theorem for Special Models, $\cu M'\cong\cu M$, so $\cu M'^{\langle\varphi\rangle}\cong \cu M^{\langle\varphi\rangle}$.
If $\kappa$ has uncountable cofinality, then $\cu M^{\langle\varphi\rangle}$ is $\aleph_1$-saturated, and as in Remark \ref{r-saturated-metric},
it follows that $d_{\langle\varphi\rangle}$
is a complete metric on the imaginary sort.
\end{proof}

With more work, one can show that if $\cu M$ is $\kappa^+$-saturated, then ${\cu M}^{\langle\varphi\rangle}$ is $\kappa^+$-saturated,
but we will not need that fact here.

\begin{df}
We say that two exponentially Cauchy sequences $\langle\varphi\rangle$ and $\langle\psi\rangle$
are \emph{equivalent} in $T$ if $[\lim\varphi_m]^{\cu M}=[\lim\psi_m]^{\cu M}$ for all general models $\cu M$ of $T$.
\end{df}

\begin{rmk}  \label{r-exponential-equivalent}
If $\cu M$ is a reduced model of $T$ and $\langle\varphi\rangle$ and $\langle\psi\rangle$ are equivalent exponentially Cauchy sequences in $T$,
then $\cu M^{\langle\varphi\rangle}=\cu M^{\langle\psi\rangle}$,
 in the sense that they have
the same universes and the identity function is an isomorphism between them.
\end{rmk}

We now define the sorted vocabulary $V^{eq}$ and structure $\cu M^{eq}$.  Let $\Phi$ be a set of
exponentially Cauchy sequences $\langle\varphi\rangle$ for $\cu M$ that contains exactly one member of each equivalence class.
For each $\langle\varphi\rangle\in\Phi$, take a copy of ${\cu M}^{\langle\varphi\rangle}$ in such a way that
the home sorts are all the same, and the imaginary
sorts of ${\cu M}^{\langle\varphi\rangle}$ and ${\cu M}^{\langle\psi\rangle}$ are disjoint when $\langle\varphi\rangle\ne\langle\psi\rangle$.
Denote the imaginary sort of ${\cu M}^{\langle\varphi\rangle}$ by $\BS_{i}^{\langle\varphi\rangle}$.

Let $V^{eq}$ be the union of the sorted vocabularies
$\{V^{\langle\varphi\rangle}\colon  \langle\varphi\rangle\in\Phi\}$.
Finally, let $\cu M^{eq}$ be a $V^{eq}$-structure that respects sorts and is a common expansion of the structures
$\{{\cu M}^{\langle\varphi\rangle}\colon \langle\varphi\rangle\in\Phi\}$.

\begin{rmk}  \label{r-Meq-unique} It follows from Remark \ref{r-exponential-equivalent} that
for each reduced  $V$-structure $\cu M$, $\cu M^{eq}$ is unique up to isomorphism.
\end{rmk}

Note that $V^{eq}$ may have uncountably many sorts and hence uncountably many predicate symbols.  To avoid that difficulty,
one can restrict things to countable sets of sorts.  For any  set $\Theta\subseteq \Phi$, let $V^\Theta$ and
$\cu M^\Theta$ be the parts of $V^{eq}$ and
$\cu M^{eq}$ with only the main sort and the imaginary sorts $\BS_{i}^{\langle\varphi\rangle}$ where $\langle\varphi\rangle \in \Theta$ .
Whenever $\Theta$ is countable, the vocabulary $V^{\Theta}$   has countably many predicate and function symbols.

The next result concerns pre-metric expansions of $\cu M$.

\begin{prop}  \label{p-Meeq}  Suppose $\cu M$ is a reduced  $V$-structure, and $\cu M_e=(\cu M,\sa D)$ is a pre-metric expansion of $\cu M$ with
vocabulary $V_D$.  Then $(V_D)^{eq}$ has the same sorts
as $V^{eq}$, and $(\cu M_e)^{eq}$ is a sorted pre-metric structure whose signature  has:
\begin{itemize}
\item the distinguished distance $D$ in the home sort,
\item the distinguished distance $d_{\langle\varphi\rangle}$ in each imaginary sort $\BS_{i}^{\langle\varphi\rangle}$,
\item the same modulus of uniform continuity as $\cu M_e$ has for each symbol of $V$,
\item the modulus of uniform continuity for $P_{\langle\varphi\rangle}$ that $\cu M_e$ has for $[\lim \varphi_m]$,
\item the trivial modulus of uniform continuity for  $U_{\BS}$ and each $U_{\BS_{i}^{\langle\varphi\rangle}}$.
\end{itemize}
The same holds with a set $\Theta\subseteq \Phi$ in place of $eq$.
\end{prop}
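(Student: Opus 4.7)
The plan is to verify each clause separately, exploiting the fact that a pre-metric expansion does not enrich definable structure beyond what approximating $V$-formulas already supply.

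First, I would show that the sort index set for $(V_D)^{eq}$ coincides with that for $V^{eq}$. The sorts of $\cu N^{eq}$ are indexed by equivalence classes (in $\Th(\cu N)$) of exponentially Cauchy sequences of formulas in the vocabulary of $\cu N$. By Lemma \ref{l-expansion-converge}, every $V_D$-formula $\psi(\vec x,\vec y)$ is a uniform limit of some Cauchy sequence of $V$-formulas across all general models of $T$; passing to an exponentially Cauchy subsequence yields, for any exponentially Cauchy sequence of $V_D$-formulas, an equivalent exponentially Cauchy sequence of $V$-formulas. The reverse inclusion is trivial since $V$-formulas are $V_D$-formulas. By Remark \ref{r-exponential-equivalent}, equivalent exponentially Cauchy sequences give the same sort, so the sort index sets agree.

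Second, identifying the distinguished distances is essentially by construction. The home-sort distance is $D^{\cu M_e}=\sa D$ by Definition \ref{d-metric-expansion}, and for each imaginary sort $\BS_{i}^{\langle\varphi\rangle}$ the predicate $d_{\langle\varphi\rangle}$ is a metric by Proposition \ref{p-expansion-imaginaries} applied with $\cu M_e$ (itself reduced, since $\cu M$ is reduced and Lemma \ref{l-reduced-Me}(ii) applies) in place of $\cu M$.

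The moduli of uniform continuity then require a short case analysis. For each symbol of $V$, the modulus from $L_e$ suffices, since the home sort of $(\cu M_e)^{eq}$ is just $\cu M_e$. For $U_{\BS}$ and $U_{\BS_{i}^{\langle\varphi\rangle}}$, the values are constantly $0$ or $1$ on each sort, so the trivial modulus works. For $P_{\langle\varphi\rangle}(\vec x,z)$ the first axiom of $T^{\langle\varphi\rangle}$ gives $\sup_{\vec x}|P_{\langle\varphi\rangle}(\vec x,w)-P_{\langle\varphi\rangle}(\vec x,z)|=d_{\langle\varphi\rangle}(w,z)$, so $P_{\langle\varphi\rangle}$ is $1$-Lipschitz in its imaginary argument with respect to $d_{\langle\varphi\rangle}$; for continuity in the home-sort arguments $\vec x$, the two approximation axioms let me replace $P_{\langle\varphi\rangle}(\vec x,z)$ by $\varphi_m(\vec x,\vec y)$ up to error $2\cdot 2^{-m}$ for a suitable $\vec y$, and then Lemma \ref{l-def-predicate-uniform}(ii) gives a uniform modulus for $[\lim\varphi_m]^{\cu M_e}$ in $\cu M_e$; a standard $\varepsilon/3$ combination produces a single modulus that is independent of $z$.

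The main obstacle I expect is packaging the ``mixed-sort'' modulus for $P_{\langle\varphi\rangle}$: the Lipschitz behavior in the imaginary argument and the uniform behavior in the home-sort arguments arise from quite different axioms, and one must fuse them into one modulus function valid on the product pseudo-metric. The stated choice, namely the modulus $\cu M_e$ has for $[\lim\varphi_m]$, works precisely because $d_{\langle\varphi\rangle}(w,z)$ measures exactly the uniform $\vec x$-deviation of $P_{\langle\varphi\rangle}(\vec x,\cdot)$, so no additional loss is incurred when replacing a $\vec y$-change by a $z$-change. Finally, the last sentence of the proposition is immediate: restricting to $\Theta\subseteq\Phi$ simply discards the imaginary sorts outside $\Theta$, and the restriction of a (sorted) pre-metric signature to a subset of its sorts is again a pre-metric signature.
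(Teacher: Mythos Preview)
Your proposal is correct and follows essentially the same approach as the paper: both use Lemma~\ref{l-expansion-converge} (and Remark~\ref{r-exponential-equivalent}) to establish the bijection of sorts, and both then verify the listed signature. The paper's proof is terser---it notes that $(\cu M_e)^{\langle\varphi\rangle}=(\cu M^{\langle\varphi\rangle},\sa D)$ and simply asserts that the described signature makes $(\cu M_e)^{eq}$ a pre-metric expansion of $\cu M^{eq}$---whereas you spell out the modulus verification for $P_{\langle\varphi\rangle}$ explicitly via the $T^{\langle\varphi\rangle}$ axioms and Lemma~\ref{l-def-predicate-uniform}(ii), which is a welcome elaboration of a step the paper leaves implicit.
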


\begin{proof}   Since every exponentially Cauchy sequence of formulas in $T$ is also
exponentially Cauchy in $T_e$,  every sort of $\cu M^{eq}$ is also a sort of $(\cu M_e)^{eq}$.
Also, for each $\langle\varphi\rangle\in\Phi$, $({\cu M}_e)^{\langle\varphi\rangle}=({\cu M}^{\langle\varphi\rangle},\sa D)$ is
the structure obtained by adding the predicate $\sa D$ in the home sort.
If $\langle\varphi\rangle$ and $\langle\psi\rangle$
are equivalent in $T$ then they are equivalent in $T_e$.  The proof of Lemma \ref{l-expansion-converge} shows that
every exponentially Cauchy sequence $\langle\varphi\rangle$ in $T_e$ is equivalent in $T_e$ to an exponentially Cauchy sequence $\langle\psi\rangle$ in $T$,
so by Remark \ref{r-exponential-equivalent} we have $({\cu M}_e)^{\langle\varphi\rangle}=({\cu M}_e)^{\langle\psi\rangle}$.
It follows that $(V_e)^{eq}$ has the same sorts as $V^{eq}$, and $(\cu M_e)^{eq}$ with the signature described above is a pre-metric expansion of $\cu M^{eq}$.
\end{proof}

Assume the hypotheses of Proposition \ref{p-Meeq}, and let $\Theta\subseteq \Phi$.
One can combine all the distinguished distances in $\cu M_e^{\Theta}$ into a single metric $\sa D^{\Theta}$ in the following way.
If $x,y$ are in the home sort, $\sa{D}^{\Theta}(x,y)=\sa{D}(x,y)$.  If $x,y$ have the same imaginary sort $\BS_{i}^{\langle\varphi\rangle}$,
then $\sa{D}^{\Theta}(x,y)=d_{\langle\varphi\rangle}(x,y)$.  If neither $x$ nor $y$ has a sort, then $\sa{D}^{\Theta}(x,y)=0$.
Otherwise, $\sa{D}^{\Theta}(x,y)=1$. According to our definition, the structure $(\cu M^{\Theta},\sa{D}^{\Theta})$ does not respect sorts.
However,  $\cu M^{\Theta}$ is a general structure whose vocabulary is $V^\Theta$,
and $(\cu M^{\Theta},\sa D^{\Theta})$ is a $V_{D}^{\Theta}$-structure.

\begin{cor}  \label{c-S-premetric}  Assume the hypotheses of  Proposition \ref{p-Meeq}, and $\Theta\subseteq \Phi$ is finite.
Then $(\cu M^{\Theta},\sa D^{\Theta})$ is a pre-metric expansion of $\cu M^{\Theta}$.
\end{cor}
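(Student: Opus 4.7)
The plan is to exhibit a Cauchy sequence of $V^\Theta$-formulas whose limit is $\sa D^\Theta$ in $\cu M^\Theta$, and to equip $V^\Theta_D$ with a metric signature making $(\cu M^\Theta,\sa D^\Theta)$ a pre-metric structure. The finiteness of $\Theta=\{\langle\varphi^1\rangle,\ldots,\langle\varphi^k\rangle\}$ is what lets us pack the home-sort distance, each imaginary-sort distance $d_{\langle\varphi^j\rangle}$, and the cross-sort default value into a single finite continuous combination. In $V^\Theta$ introduce the indicator formulas $h(x):=1\dotle U_\BS(x)$, $i_j(x):=1\dotle U_{\BS_i^{\langle\varphi^j\rangle}}(x)$, and $s(x)$ defined as the minimum of $U_\BS(x)$ with all $U_{\BS_i^{\langle\varphi^j\rangle}}(x)$; in the sort-respecting reduced $\cu M^\Theta$ these are $\{0,1\}$-valued and exactly one of $h(x),i_1(x),\ldots,i_k(x),s(x)$ equals $1$ at each element. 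Write $\operatorname{match}(x,y):=h(x)h(y)+\sum_j i_j(x)i_j(y)+s(x)s(y)$.

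Pick an exponentially Cauchy approximate distance $\langle d_m(x,y)\rangle_{m\in\BN}$ for $T_e$. To ensure $d_m$ evaluated on home-sort arguments in $\cu M^\Theta$ matches its value in $\cu M$, produce a $V^\Theta$-formula $d_m^\BS$ by inductively replacing every subformula $\sup_z\psi$ by $\sup_z(h(z)\psi)$ and every $\inf_z\psi$ by $\inf_z\bigl(h(z)\psi\dotplus(1\dotle h(z))\bigr)$. Since $V$-function symbols send home-sort tuples into the home sort and $V$-predicates on home-sort tuples evaluate in $\cu M^\Theta$ just as in $\cu M$, induction on complexity gives $(d_m^\BS)^{\cu M^\Theta}(x,y)=d_m^{\cu M}(x,y)$ for all $x,y\in\BS^{\cu M^\Theta}$. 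Define
\[
e_m(x,y):=h(x)h(y)\,d_m^\BS(x,y)+\sum_{j=1}^k i_j(x)i_j(y)\,d_{\langle\varphi^j\rangle}(x,y)+\bigl(1\dotle\operatorname{match}(x,y)\bigr),
\]
a $V^\Theta$-formula by finiteness of $\Theta$. A case analysis on the sorts of $x,y$ in $\cu M^\Theta$ shows that $e_m^{\cu M^\Theta}$ equals $d_m^{\cu M}$ on home-sort pairs, $d_{\langle\varphi^j\rangle}^{\cu M^\Theta}$ on same-imaginary-sort pairs, $0$ on the sortless pair, and $1$ on sort-mismatched pairs, so $[\lim e_m]^{\cu M^\Theta}=\sa D^\Theta$. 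Cauchyness transfers: $|e_m-e_k|^{\cu M^\Theta}(x,y)=h(x)h(y)\,|d_m^\BS-d_k^\BS|^{\cu M^\Theta}(x,y)\le|d_m-d_k|^{\cu M}\le 2^{-\min(m,k)}$ uniformly.

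For the signature, start from the sorted metric signature on $(\cu M_e)^\Theta$ furnished by Proposition \ref{p-Meeq}, with modulus $\triangle_S$ for each $V^\Theta$-symbol $S$ with respect to the sort-specific distance, and replace it by $\triangle_S^{\operatorname{new}}(\varepsilon):=\min(\triangle_S(\varepsilon),1/2)$. If $\sa D^\Theta(a_k,b_k)<\triangle_S^{\operatorname{new}}(\varepsilon)$ at every argument position $k$, distinct-sort pairs are excluded (their $\sa D^\Theta$-distance is $1$), so each $(a_k,b_k)$ sits in a common sort. Either every pair lies in the sorts that $S$ expects---in which case $\sa D^\Theta$ coincides with the appropriate sort-specific distance and the original modulus yields the required bound---or at some coordinate both $a_k$ and $b_k$ have the wrong sort, so $S(\vec a)=S(\vec b)$ equals the default value ($1$ for a predicate, $u^{\cu M^\Theta}$ for a function) and the bound is trivial. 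Thus $(\cu M^\Theta,\sa D^\Theta)$ is a pre-metric structure, and together with $\langle e_m\rangle$ it is a pre-metric expansion of $\cu M^\Theta$. The technical heart of the argument is the home-restriction step $d_m\mapsto d_m^\BS$: the $\sup$ and $\inf$ quantifiers in $d_m$ would otherwise range over imaginary and sortless elements of $\cu M^\Theta$ where $V$-predicates take their default value $1$, potentially breaking both the convergence to $\sa D$ on the home sort and the Cauchy bound; the remainder is bookkeeping about indicator products and threshold-based modulus adjustments.
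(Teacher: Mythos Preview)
Your proof is correct and follows the paper's strategy: exhibit $\sa D^\Theta$ as definable in $\cu M^\Theta$ by a Cauchy sequence of $V^\Theta$-formulas, then choose a compatible metric signature. The paper's proof is a two-line sketch that simply asserts the definability (``it is easily seen that there is a $V^\Theta$-formula $\theta(x,y)$ that defines $\sa D^\Theta$''); you have supplied the details, notably the home-restriction $d_m\mapsto d_m^\BS$ needed to make the transplanted approximate distance evaluate correctly once quantifiers range over the larger universe of $\cu M^\Theta$.
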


\begin{proof}  The vocabulary $V^{\Theta}$ has countably many predicate and function symbols.
It is easily seen that there is a $V^{\Theta}$-formula $\theta(x,y)$ that defines
$\sa{D}^{\Theta}(x,y)$ in $\cu M^{\Theta}$.
\end{proof}

\begin{prop}  \label{p-eq-saturated}  If $\cu M$ is $\kappa$-special, then  $\cu M^{eq}$ is $\kappa^+$-special,
and $\cu M^{\Theta}$ is $\kappa$-special for each $\Theta\subseteq \Phi$.
\end{prop}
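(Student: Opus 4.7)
The plan is to mimic the proof of Proposition \ref{p-imaginary-special} by working with a many-sorted theory that captures all the imaginary sorts in $\Theta$ (respectively $\Phi$) at once, then invoking the Existence and Uniqueness Theorems for Special Models at the appropriate cardinal.

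For the assertion on $\cu M^\Theta$, fix $\Theta \subseteq \Phi$ subject to the cardinality condition $\aleph_0 + |V^\Theta| < \kappa$ (implicit in the statement), and form the $V^\Theta$-theory
$$T^\Theta := T \cup \bigcup_{\langle\varphi\rangle \in \Theta} T^{\langle\varphi\rangle} \cup \rs(V^\Theta),$$
where $T = \Th(\cu M)$.  A direct generalization of Proposition \ref{p-imaginary-exists-unique} to sets of Cauchy sequences — going through verbatim because the axiom sets $T^{\langle\varphi\rangle}$ for distinct $\langle\varphi\rangle$ involve disjoint imaginary sorts and so do not interact — shows that every reduced sorted model of $T^\Theta$ is of the form $\cu N^\Theta$ for a unique reduced $V$-model $\cu N$ of $T$, namely its $V$-part.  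The Existence Theorem for Special Models then yields a $\kappa$-special reduced model $\cu N$ of $T^\Theta$; its $V$-part $\cu M'$ is $\kappa$-special by Remark \ref{r-special-expansion}, the Uniqueness Theorem for Special Models gives $\cu M' \cong \cu M$, and hence $\cu N \cong {\cu M'}^\Theta \cong \cu M^\Theta$, so $\cu M^\Theta$ is $\kappa$-special.

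For $\cu M^{eq}$, the same strategy applies at the level of $\kappa^+$, which is tacitly assumed to be special.  Countability of $V$ forces each exponentially Cauchy sequence of $V$-formulas to be a countable object drawn from a countable pool, so $|\Phi| \le 2^{\aleph_0}$; combined with $2^{\aleph_0} \le \kappa$ (valid for any uncountable special $\kappa$) this gives $\aleph_0 + |V^{eq}| \le \kappa < \kappa^+$, so the Existence Theorem at level $\kappa^+$ yields a $\kappa^+$-special reduced model $\cu N$ of $T^\Phi$.

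The main obstacle — and the only non-routine step — is the final descent from $\cu N$ to $\cu M^{eq}$: the Uniqueness step that worked cleanly for $\cu M^\Theta$ now identifies the $V$-part of $\cu N$ with a $\kappa^+$-special elementary extension $\cu M^+$ of $\cu M$ (produced by the Existence Theorem applied to $\cu M$), not with $\cu M$ itself.  I would handle this by using that $\lambda^+$-saturation is preserved under the imaginary construction (the strengthening of Proposition \ref{p-imaginary-special} indicated in the parenthetical remark after its proof), and decomposing $\cu M^+$ as a chain $\langle \cu M^+_\lambda : \lambda < \kappa^+ \rangle$ of $\lambda^+$-saturated links with $\cu M \prec \cu M^+_\lambda$ for all $\lambda$.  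The induced chain $\langle (\cu M^+_\lambda)^{eq} : \lambda < \kappa^+ \rangle$ is then an elementary chain of $\lambda^+$-saturated structures sitting above $\cu M^{eq}$, whose union witnesses that $\cu M^{eq}$ is the reduction of a special structure of cardinality $\kappa^+$, yielding that $\cu M^{eq}$ is $\kappa^+$-special.
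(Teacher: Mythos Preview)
Your treatment of $\cu M^\Theta$ is exactly what the paper has in mind: its entire proof is the single sentence ``similar to the proof of Proposition \ref{p-imaginary-special}, but with many imaginary sorts,'' and you have spelled out precisely that argument (form $T^\Theta$, invoke Existence at $\kappa$, take the $V$-part, apply Uniqueness).

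For $\cu M^{eq}$ you have correctly noticed a subtlety that the paper's one-line proof does not address --- after producing a $\kappa^+$-special model $\cu N$ of $T^\Phi$, its $V$-part is $\kappa^+$-special rather than $\kappa$-special, so the Uniqueness Theorem does not match it with $\cu M$.  But your proposed fix does not close the gap.  The chain $\langle (\cu M^+_\lambda)^{eq}\rangle_{\lambda<\kappa^+}$ you build has union $(\cu M^+)^{eq}$, which is a special structure of cardinality $\kappa^+$ that \emph{properly contains} $\cu M^{eq}$ as an elementary substructure; it is not a structure whose reduction is $\cu M^{eq}$.  Recall that ``$\cu M^{eq}$ is $\kappa^+$-special'' means that $\cu M^{eq}$ is itself isomorphic to the reduction of a special structure of cardinality $\kappa^+$, and nothing in your argument produces such a structure --- indeed $|\cu M^{eq}|$ need not equal $\kappa^+$ at all.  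You have in effect re-proved that $\cu N\cong(\cu M^+)^{eq}$ is $\kappa^+$-special, which you already knew, while the conclusion about $\cu M^{eq}$ remains unestablished.

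The paper's sketch is too terse to see how it handles this point; the cleanest reading is that one runs the $\cu M^\Theta$ argument with $\Theta=\Phi$ directly, under the hypothesis that the relevant target cardinal is special and exceeds $|V^{eq}|$.  Your instinct to flag the issue is sound, but the chain-descent you propose does not work as written.
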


\begin{proof}  The proof is similar to the proof of Proposition \ref{p-imaginary-special}, but with many imaginary sorts.
\end{proof}

\begin{cor}  Suppose $T_e$ is a pre-metric expansion of $T$,  $\cu M$ is $\kappa$-special, and $\kappa$ has uncountable cofinality.
 Then $(\cu M_e)^{eq}$ is a sorted metric structure, and $(\cu M_e)^{\Theta}$ is a sorted metric structure for each $\Theta\subseteq\Phi$.
\end{cor}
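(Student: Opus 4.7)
My plan is to string together the saturation results of Proposition \ref{p-eq-saturated} with Proposition \ref{p-imaginary-special} and Remark \ref{r-saturated-metric}, after first passing from $\cu M$ to its pre-metric expansion. Specifically, by Corollary \ref{c-special-metric-expansion}, $\cu M_e$ is $\kappa$-special, so I can apply Proposition \ref{p-eq-saturated} with $\cu M_e$ in place of $\cu M$ to conclude that $(\cu M_e)^{eq}$ is $\kappa^+$-special and $(\cu M_e)^{\Theta}$ is $\kappa$-special. Since every special structure is reduced, and since $\kappa$ has uncountable cofinality, both are $\aleph_1$-saturated and reduced.

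Next I would invoke Proposition \ref{p-Meeq} to identify $(\cu M_e)^{eq}$ as a sorted pre-metric structure whose distinguished distances are $D$ on the home sort and $d_{\langle\varphi\rangle}$ on each imaginary sort $\BS_i^{\langle\varphi\rangle}$, with the moduli of uniform continuity specified there. By Definition \ref{d-sorted-metric}, the only remaining task is to upgrade ``pre-metric'' to ``metric'' by verifying that each of these distances is a complete metric on its sort.

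Completeness is handled sort by sort. For the home sort, $\cu M_e$ itself is reduced (by Lemma \ref{l-reduced-Me}(ii)) and $\aleph_1$-saturated (by Proposition \ref{p-saturated-metric-expansion}), so Remark \ref{r-saturated-metric} tells me $\cu M_e$ is a metric structure for $L_e$, i.e., $D$ is a complete metric on $M$. For each imaginary sort $\BS_i^{\langle\varphi\rangle}$, I apply Proposition \ref{p-imaginary-special} to the $\kappa$-special structure $\cu M_e$ (using the assumption that $\kappa$ has uncountable cofinality) to get that $d_{\langle\varphi\rangle}$ is a complete metric on the imaginary sort of $(\cu M_e)^{\langle\varphi\rangle}$. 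Since $(\cu M_e)^{eq}$ is by construction the common expansion of these $(\cu M_e)^{\langle\varphi\rangle}$ sharing the home sort, and since by Proposition \ref{p-Meeq} the sorts of $(V_D)^{eq}$ coincide with those of $V^{eq}$ up to equivalence in $T_e$, this completeness transfers verbatim to the corresponding sort in $(\cu M_e)^{eq}$. The argument for $(\cu M_e)^{\Theta}$ is identical: it is $\kappa$-special by Proposition \ref{p-eq-saturated}, hence $\aleph_1$-saturated and reduced, and we apply the same two completeness arguments sort by sort.

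There is really no substantial obstacle here, since each ingredient is already in place. The only point that requires slight care is that Proposition \ref{p-imaginary-special} is formally stated for imaginaries of a $V$-structure, while here we need the conclusion for imaginaries of the $V_D$-structure $\cu M_e$; this is handled simply by applying that proposition with $\cu M_e$ and $T_e$ in place of $\cu M$ and $T$, which is legitimate because the statement makes no use of the particular vocabulary beyond completeness and saturation of $\cu M$.
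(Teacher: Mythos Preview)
Your proposal is correct and follows essentially the same approach as the paper, which simply cites Propositions~\ref{p-imaginary-special} and~\ref{p-eq-saturated} together with Remark~\ref{r-saturated-metric}; you have merely spelled out how these three ingredients fit together and added the auxiliary references (Corollary~\ref{c-special-metric-expansion}, Proposition~\ref{p-Meeq}, Lemma~\ref{l-reduced-Me}) needed to pass from $\cu M$ to $\cu M_e$ and to identify the sorted pre-metric signature.
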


\begin{proof}  By Propositions \ref{p-imaginary-special}, \ref{p-eq-saturated}, and Remark \ref{r-saturated-metric}.
\end{proof}

\subsection{Definable Sets}

Recall that in [BBHU], in a metric structure $\cu{M}_e$, the \emph{distance} between a $k$-tuple $\vec x\in M^k$ and a closed set $\sa S\subseteq M^k$
is the mapping
$$\dist^{\cu M_e}(\vec x,\sa S)=\inf\{\max_{i\le k} D^{\cu M_e}(x_i,y_i)\colon \vec y\in\sa S\},$$
and
 $\sa S$ is a \emph{definable set} over $A$ in $\cu M_e$ if $\sa S$
is a closed subset of $M^k$ and $\dist^{\cu M_e}(\vec x,\sa S)$ is a definable predicate over $A$ in $\cu M_e$.
Note that the empty set $\emptyset$ is always definable over $\emptyset$, and $\dist^{\cu M_e}(\vec x,\emptyset)=1$.

The following result uses the fact that the Expansion Theorem \ref{t-metric-expansion-exists} gives a pseudo-metric approximate distance.

\begin{prop}  \label{p-definable-set-absolute}
The general property \emph{[$\cu M$ is complete and $\sa S$ is a definable set in $\cu M$ over $A$]} is absolute.
For each complete general structure $\cu M$, set $A\subseteq M$, and set $\sa S\subseteq M^k$, the following are equivalent:
\begin{itemize}
\item[(a)] $\sa S$ is definable over $A$ in $\cu M$.
\item[(b)] $\sa S$ is closed in $\cu M$, and for each  $V$-formula $\varphi(\vec x,\vec y)$,
the mapping
$$\dist^{\cu M}_\varphi(\vec x,\sa S):=\inf\{ \varphi^{\cu M}(\vec x,\vec y)\colon \vec y\in\sa S\}$$
is a definable predicate over $A$ in $\cu M$.
\item[(c)]  $\sa S$ is closed in $\cu M$, and for each $V$-formula $\varphi(\vec x,\vec y)$ that is pseudo-metric in $\Th(\cu M)$,
$\dist^{\cu M}_\varphi(\vec x,\sa S)$ is a definable predicate over $A$ in $\cu M$.
\end{itemize}
 \end{prop}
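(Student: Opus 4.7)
The plan is to prove (a) $\Rightarrow$ (b) $\Rightarrow$ (c) $\Rightarrow$ (a). Since conditions (b) and (c) refer only to $V$-formulas, to closedness in $\cu M$, and to definable predicates in $\cu M$, all of which are absolute by Propositions~\ref{p-closure} and~\ref{p-definable-predicate}, the three-way equivalence will immediately yield absoluteness via Proposition~\ref{p-robust}(iii): any two pre-metric expansions of $\cu M$ must agree on whether $\sa S$ is definable, since this reduces to the intrinsic condition (c).

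Fix an arbitrary pre-metric expansion $\cu M_e$ of $\cu M$; since $\cu M$ is complete, $\cu M_e$ is a metric structure by Corollary~\ref{c-absolute-complete}, so (a) amounts to $\sa S$ being closed in $\cu M_e$ with $\dist^{\cu M_e}(\vec x,\sa S)$ a definable predicate over $A$ in $\cu M_e$. For (a) $\Rightarrow$ (b), I invoke Theorem~9.17 of [BBHU]: in a metric structure, if $\sa S$ is a definable set then $\inf_{\vec y\in\sa S}\psi(\vec x,\vec y)$ is a definable predicate over $A$ for every formula $\psi$. For a $V$-formula $\varphi$, Remark~\ref{r-formula-expansion} gives $\varphi^{\cu M}=\varphi^{\cu M_e}$, so $\dist^{\cu M}_\varphi(\vec x,\sa S)$ is definable in $\cu M_e$ over $A$, hence in $\cu M$ over $A$ by Proposition~\ref{p-definable-predicate}; closedness transfers by Proposition~\ref{p-closure}. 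The implication (b) $\Rightarrow$ (c) is immediate since every pseudo-metric $V$-formula is a $V$-formula.

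For the main step (c) $\Rightarrow$ (a), I apply the Expansion Theorem~\ref{t-metric-expansion-exists} to obtain a \emph{pseudo-metric} approximate distance $\langle d_m(x,y)\rangle_{m\in\BN}$ for $\cu M_e$. Setting $\varphi_m(\vec x,\vec y):=\max_{i\le k}d_m(x_i,y_i)$, each $\varphi_m$ is a pseudo-metric $V$-formula in $\Th(\cu M)$ of the right arity, so hypothesis (c) makes $\sa Q_m(\vec x):=\dist^{\cu M}_{\varphi_m}(\vec x,\sa S)$ a definable predicate over $A$ in $\cu M$. Because $d_m\to D^{\cu M_e}$ uniformly and $|\inf f-\inf g|\le\sup|f-g|$, the $\sa Q_m$ converge uniformly to $\dist^{\cu M_e}(\vec x,\sa S)$; a diagonal argument across the defining Cauchy sequences of the $\sa Q_m$ produces a Cauchy sequence of $V$-formulas with parameters in $A$ that defines $\dist^{\cu M_e}(\vec x,\sa S)$ in $\cu M$. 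By Proposition~\ref{p-definable-predicate} this predicate is also definable over $A$ in $\cu M_e$, and together with closedness of $\sa S$ in $\cu M_e$ (Proposition~\ref{p-closure}) this yields (a). The crux is the availability of a pseudo-metric approximate distance: this is exactly what permits the $V$-formulas $\varphi_m$ to qualify for hypothesis (c) while simultaneously converging uniformly to the distinguished distance of $\cu M_e$, so that the uniform-limit step closes the argument.
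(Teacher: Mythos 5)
Your overall route and the key ingredients --- Theorem 9.17 of [BBHU] for (a) $\Rightarrow$ (b), and a pseudo-metric approximate distance supplied by the Expansion Theorem for the step from (c) --- are exactly those of the paper's proof. But there is a genuine gap in how you handle the quantification over pre-metric expansions. You fix an \emph{arbitrary} expansion $\cu M_e$ and then ``apply the Expansion Theorem to obtain a pseudo-metric approximate distance for $\cu M_e$.'' Theorem \ref{t-metric-expansion-exists} only asserts that \emph{some} pre-metric expansion has a pseudo-metric approximate distance; whether \emph{every} one does is precisely Question \ref{q-pseudo}, which the paper leaves open in its main text. So your (c) $\Rightarrow$ (a) argument is only licensed for a particular expansion $\cu M_f$ that comes with a pseudo-metric approximate distance, and what it delivers is the definability of $\dist^{\cu M_f}(\vec x,\sa S)$, not of $\dist^{\cu M_e}(\vec x,\sa S)$ for the arbitrary $\cu M_e$ you fixed (these are different functions, since $D^{\cu M_e}$ and $D^{\cu M_f}$ need not coincide). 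Consequently your cycle establishes (a) $\Leftrightarrow$ (b) $\Leftrightarrow$ (c) only with (a) read relative to $\cu M_f$, and the absoluteness claim --- that all expansions agree on whether $\sa S$ is definable --- is not yet proved; your opening remark that this ``reduces to the intrinsic condition (c)'' presupposes the very implication that is missing.

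The repair is the extra step the paper includes: show directly that (b) implies $\dist^{\cu M_e}(\vec x,\sa S)$ is definable over $A$ in $\cu M_e$ for an \emph{arbitrary} expansion $\cu M_e$. Take any approximate distance $\langle d_m\rangle_{m\in\BN}$ for $\cu M_e$ (no pseudo-metric requirement), set $\varphi_m=\max_{i\le k}d_m(x_i,y_i)$, and invoke (b) --- which, unlike (c), applies to \emph{all} $V$-formulas --- to make each $\dist^{\cu M}_{\varphi_m}(\vec x,\sa S)$ definable over $A$; these converge uniformly to $\dist^{\cu M_e}(\vec x,\sa S)$ exactly as in your uniform-limit argument. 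With this in hand, (b) is equivalent to the [BBHU] notion of definability in every expansion, absoluteness follows from the uniqueness of absolute versions, and your (c) $\Rightarrow$ (a) step, rerouted through the special expansion $\cu M_f$, closes the circle.
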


\begin{proof}  We first consider an arbitrary pre-metric expansion $\cu M_e$ of $\cu M$.
By Proposition \ref{c-absolute-complete}, $\cu M$ is complete if and only if $\cu M_e$ is complete.
By Proposition \ref{p-closure}, a subset of $M^k$ is closed in $\cu M_e$ if and only if it is closed in $\cu M$.
Let (a')--(c') be the statements (a)--(c) with $\cu M_e$ in place of $\cu M$.
By Theorem 9.17 of [BBHU], (a'), (b'), and (c') are equivalent. We now show that (b) is equivalent to (b').

(b') $\Rightarrow$ (b):  Assume (b').  Let $\varphi(\vec x,\vec y)$ be a $V$-formula.  By (b'), $\dist_\varphi^{\cu M_e}$
is definable in $\cu M_e$ over $A$. $\dist_\varphi^{\cu M_e}$ is definable in $\cu M$ over $A$ by Proposition \ref{p-definable-predicate}.
Since $\psi^{\cu M}=\psi^{\cu M_e}$ for every $V$-formula $\psi$, $\dist_\varphi^{\cu M_e}=\dist_\varphi^{\cu M}$.  This proves (b).

(b) $\Rightarrow$ (b'):  Assume (b).
Let $\varphi_m(\vec x,\vec y)$ be the $V$-formula $\max_{i\le k} d_m(x_i,y_i)$. By (b),
$$\dist^{\cu M}_{\varphi_m}(\vec x,\sa S)=\inf\{\max_{i\le k} d_m^{\cu M}(x_i,y_i)\colon \vec y\in \sa S\}$$
 is a definable predicate over $A$ in $\cu M$.  Moreover, $\langle \varphi_m\rangle_{n\in\BN}$ is Cauchy in $T$,
 and converges uniformly to $\dist^{\cu M_e}(\vec x,\sa S)$.  Therefore $\dist^{\cu M_e}(\vec x,\sa S)$ is a definable predicate over
 $A$ in $\cu M_e$, so (a') holds and hence (b') holds.

Since (b) is equivalent to (b') for all $\cu M_e$, the property [$\cu M$ is complete and (b)] is the absolute version of
the property [$\cu M_e$ is complete and (b')].  The absolute version is unique and (a') is equivalent to (b'), so (a) is equivalent to (b).
It is trivial that (b) implies (c).

(c) $\Rightarrow$ (b): Assume (c).
By Theorem \ref{t-metric-expansion-exists}, there exists a pre-metric expansion $\cu M_f$ of $\cu M$ with a pseudo-metric approximate distance
$\langle d_m(x,y)\rangle_{m\in\BN}$.
 By hypothesis, each formula $d_m(x,y)$ is pseudo-metric in $\Th(\cu M)$.
Let $\varphi_m(\vec x,\vec y)$ be the $V$-formula $\max_{i\le k} d_m(x_i,y_i)$, which is also pseudo-metric in $\Th(\cu M)$. By (c),
$$\dist^{\cu M}_{\varphi_m}(\vec x,\sa S)=\inf\{\max_{i\le k} d_m^{\cu M}(x_i,y_i)\colon \vec y\in \sa S\}$$
 is a definable predicate over $A$ in $\cu M$.  Moreover, $\langle \varphi_m\rangle_{m\in\BN}$ is Cauchy in $\Th(\cu M)$,
 and converges uniformly to $\dist^{\cu M_f}(\vec x,\sa S)$.  Therefore $\dist^{\cu M_f}(\vec x,\sa S)$ is a definable predicate over
 $A$ in $\cu M_f$.  It follows that (a') holds for $\cu M_f$, and therefore (b) holds.
\end{proof}

We now turn to the notions of definable and algebraic closure.
The following definitions agree with the corresponding definitions in [BBHU] in the case that $\cu M$ is a metric structure.

\begin{df}  Let $\cu M$ be a complete general structure.  An element $b$ belongs to the \emph{definable closure of} $A$ in $\cu M$, in symbols $b\in\dcl_{\cu M}(A)$,
if the singleton $\{b\}$ is definable over $A$ in $\cu M$.  $b$ belongs to the \emph{algebraic closure of} $A$ in $\cu M$, $b\in\acl_{\cu M}(A)$,
if there is a compact set $C$ in $\cu M$ such that $b\in C$ and $C$ is definable over $A$ in $\cu M$.  A tuple $\vec b$ belongs to $\dcl_{\cu M}(A)$ or
$\acl_{\cu M}(A)$ if each term $b_i$ does.
\end{df}

One can use Proposition \ref{p-definable-set-absolute} to obtain necessary and sufficient conditions for $b\in\dcl_{\cu M}(A)$ and $b\in\acl_{\cu M}(A)$.

\begin{cor}  \label{c-dcl-acl-absolute}
If $\cu M$ is a complete general structure,
then $\dcl_{\cu M}(A)=\dcl_{\cu M_e}(A)$ and $\acl_{\cu M}(A)=\acl_{\cu M_e}(A)$.
\end{cor}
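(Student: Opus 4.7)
The plan is to derive both equalities as immediate consequences of the absoluteness results already established, with no new technology. The setup is that $\cu M$ is complete, so by Corollary \ref{c-absolute-complete} the pre-metric expansion $\cu M_e$ is complete as well; in particular, the notion of a definable set makes sense on both sides.

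For the definable closure, I would argue as follows. A singleton $\{b\}\subseteq M$ is automatically closed (in both $\cu M$ and $\cu M_e$, which share the same topology by Proposition \ref{p-closure} and the remarks following it). By Proposition \ref{p-definable-set-absolute}, applied with $k=1$ and $\sa S=\{b\}$, the set $\{b\}$ is definable over $A$ in $\cu M$ if and only if it is definable over $A$ in $\cu M_e$. Unraveling the definitions, this says $b\in\dcl_{\cu M}(A)$ iff $b\in\dcl_{\cu M_e}(A)$, giving $\dcl_{\cu M}(A)=\dcl_{\cu M_e}(A)$.

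For the algebraic closure, I would use the same absoluteness result together with the absoluteness of compactness. Suppose $b\in\acl_{\cu M}(A)$, witnessed by a compact set $C\subseteq M$ with $b\in C$ and $C$ definable over $A$ in $\cu M$. By Proposition \ref{p-definable-set-absolute}, $C$ is definable over $A$ in $\cu M_e$, and by the absoluteness of compactness noted in Subsection \ref{s-top-unif}, $C$ is compact in $\cu M_e$; hence $b\in\acl_{\cu M_e}(A)$. The reverse direction is symmetric: a compact set $C$ in $\cu M_e$ that is definable over $A$ in $\cu M_e$ remains compact and definable over $A$ in $\cu M$ by the same two absoluteness results.

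No step here is really an obstacle, since the hard work has been packaged into Proposition \ref{p-definable-set-absolute} (itself built on the pseudo-metric approximate distance from Theorem \ref{t-metric-expansion-exists}) and into the topological absoluteness of Proposition \ref{p-closure}. The only point to be a little careful about is that the definition of $\acl_{\cu M}(A)$ for a general structure implicitly uses the topology on $M$; this is legitimate precisely because Subsection \ref{s-top-unif} shows that the topology inherited from any pre-metric expansion is independent of the choice of expansion, so ``compact set in $\cu M$'' is unambiguous.
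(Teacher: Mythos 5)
Your proof is correct and follows exactly the paper's route: the paper's own proof is simply ``By Propositions \ref{p-closure} and \ref{p-definable-set-absolute},'' and your write-up just makes explicit how those two absoluteness results (definability of closed sets, and the topology---hence compactness---being independent of the expansion) yield the two equalities. Nothing is missing.
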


\begin{proof}  By Propositions \ref{p-closure} and \ref{p-definable-set-absolute}.
\end{proof}

\begin{cor} \label{c-acl-elem-ext}
Suppose $\cu M, \cu N$ are complete general structures and $A\subseteq\cu M\prec\cu N$.
Then $\dcl_{\cu M}(A)=\dcl_{\cu N}(A)$ and $\acl_{\cu M}(A)=\acl_{\cu N}(A)$.
\end{cor}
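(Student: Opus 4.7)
The plan is to reduce to the metric case via pre-metric expansion, invoking the analogous known fact for metric structures from [BBHU], and then transfer back using the absoluteness established in Corollary \ref{c-dcl-acl-absolute}. This follows the standard template used for the other corollaries in this subsection.

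First I would apply the Expansion Theorem \ref{t-metric-expansion-exists} to obtain a pre-metric expansion $T_e$ of $\Th(\cu N)$; since $\cu M\prec\cu N$ we have $\Th(\cu M)=\Th(\cu N)$, so $T_e$ is also a pre-metric expansion of $\Th(\cu M)$. Let $\cu M_e$ and $\cu N_e$ denote the pre-metric expansions of $\cu M$ and $\cu N$ determined by $T_e$. By Corollary \ref{c-expansion-elem-ext}, $\cu M_e\prec\cu N_e$. By the hypothesis that $\cu M$ and $\cu N$ are complete, both $\cu M_e$ and $\cu N_e$ are metric structures with signature $L_e$.

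Next I would appeal to the corresponding fact for metric structures in [BBHU]: if $\cu M_e\prec\cu N_e$ are metric structures and $A\subseteq M$, then $\dcl_{\cu M_e}(A)=\dcl_{\cu N_e}(A)$ and $\acl_{\cu M_e}(A)=\acl_{\cu N_e}(A)$. The inclusions ``$\subseteq$'' are immediate from the fact that a set definable over $A$ in $\cu M_e$ remains definable over $A$ in $\cu N_e$ via the same defining sequence of formulas. For ``$\supseteq$'' in the definable case, one uses that if $\{b\}$ is definable over $A$ in $\cu N_e$ with defining predicate $\sa P$, then by elementarity $\sa P^{\cu M_e}$ has infimum $0$; the $1$-Lipschitz behavior of $\sa P$ together with the fact that its zero set has diameter $0$ (both elementary properties) produce a Cauchy sequence of approximate zeros in $\cu M_e$; completeness of $\cu M_e$ yields a point $b'\in M$ with $\sa P^{\cu M_e}(b')=0$; elementarity forces $b'=b$, so $b\in M$ and $\{b\}$ is definable over $A$ in $\cu M_e$. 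The argument for $\acl$ is the analogous one for definable compact sets.

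Finally I would transfer back to the general structures: by Corollary \ref{c-dcl-acl-absolute}, $\dcl_{\cu M}(A)=\dcl_{\cu M_e}(A)$, $\dcl_{\cu N}(A)=\dcl_{\cu N_e}(A)$, and likewise for $\acl$. Combining with the metric equalities from the previous step yields $\dcl_{\cu M}(A)=\dcl_{\cu N}(A)$ and $\acl_{\cu M}(A)=\acl_{\cu N}(A)$. The only real obstacle is invoking the metric version correctly; all other steps are routine applications of the machinery already developed in Sections 3 and 4.
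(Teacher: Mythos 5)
Your proposal is correct and takes essentially the same route as the paper: pass to pre-metric expansions, use Corollary \ref{c-expansion-elem-ext} to get $\cu M_e\prec\cu N_e$, note both are metric structures by completeness, invoke the metric-structure version of the statement from [BBHU] (Corollary 10.5 there), and transfer back via Corollary \ref{c-dcl-acl-absolute}. The only difference is that you also sketch a proof of the cited [BBHU] fact, which the paper simply quotes.
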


\begin{proof}   By Proposition \ref{c-absolute-complete},
$\cu M_e$ and $\cu N_e$ are metric structures.  By Corollary \ref{c-expansion-elem-ext} we have $\cu M_e\prec\cu N_e$.
By Corollary \ref{c-dcl-acl-absolute} above and Corollary 10.5 of [BBHU],
$\dcl_{\cu M}(A)=\dcl_{\cu M_e}(A)=\dcl_{\cu N_e}(A)=\dcl_{\cu N}(A)$, and similarly for $\acl$.
\end{proof}

\begin{cor}  \label{c-acl-saturated}
Suppose $\cu M$ is a reduced $\aleph_1$-saturated general structure, $A\subseteq M$, and $b\in M$.
\begin{itemize}
\item[(i)] $b\in\dcl_{\cu M}(A)$ if and only if $b$ is the only realization of $\tp(b/A)$ in $\cu M$.
\item[(ii)]  $b\in\acl_{\cu M}(A)$ if and only if  the set of realizations of $\tp(b/A)$ is compact in $\cu M$.
\end{itemize}
\end{cor}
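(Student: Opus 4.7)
The plan is to reduce both statements to the corresponding results for metric structures by passing to an arbitrary pre-metric expansion $\cu M_e$ of $\cu M$. Since $\cu M$ is reduced and $\aleph_1$-saturated, Proposition \ref{p-saturated-metric-expansion} gives that $\cu M_e$ is also $\aleph_1$-saturated, and Lemma \ref{l-reduced-Me}(ii) gives that $\cu M_e$ is reduced. By Remark \ref{r-saturated-metric}, $\cu M_e$ is therefore a metric structure (so in particular $\cu M$ is complete, as needed to apply the definitions of $\dcl$ and $\acl$ via Corollary \ref{c-saturated-complete}).

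Next, I would invoke the analogue of this corollary for saturated metric structures, which is the standard result in [BBHU] (in the vicinity of Proposition 10.6 and Exercise 10.8): in a reduced $\aleph_1$-saturated metric structure $\cu M_e$, an element $b$ lies in $\dcl_{\cu M_e}(A)$ iff $b$ is the unique realization of $\tp_{\cu M_e}(b/A)$, and $b$ lies in $\acl_{\cu M_e}(A)$ iff the realization set of $\tp_{\cu M_e}(b/A)$ is compact in $\cu M_e$.

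Finally, I would transfer this metric characterization back to $\cu M$ via three absoluteness facts already established. By Corollary \ref{c-dcl-acl-absolute}, $\dcl_{\cu M}(A)=\dcl_{\cu M_e}(A)$ and $\acl_{\cu M}(A)=\acl_{\cu M_e}(A)$. By Corollary \ref{c-type-over-A}, $\tp_{\cu M}(b/A)=\tp_{\cu M_e}(b/A)$ (and they have the same realizations in $M$, since both types are interpreted on the same underlying set). By Proposition \ref{p-closure} together with the remark on absoluteness of topological properties in Subsection \ref{s-top-unif}, compactness of a subset of $M$ is absolute. Combining these transfers yields both (i) and (ii).

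The only nontrivial point is checking that the set of realizations of $\tp_{\cu M}(b/A)$ really does agree, as a topological subspace, with the set of realizations computed in $\cu M_e$: this is immediate because the two types cut out the same subset of $M$, and because the topology on $M$ induced by $\cu M_e$ is intrinsic to $\cu M$ (independent of the particular expansion, by Proposition \ref{p-homeomorphic}). Since every step is a direct citation, I expect no real obstacle; the substance of the argument is entirely absorbed in the absoluteness machinery developed in Subsections \ref{s-types} and \ref{s-top-unif}.
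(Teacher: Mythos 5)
Your proposal is correct and follows essentially the same route as the paper: establish that $\cu M_e$ is a reduced $\aleph_1$-saturated (hence metric) structure, invoke the corresponding characterizations of $\dcl$ and $\acl$ in saturated metric structures from [BBHU] (the paper cites Exercises 10.7(4) and 10.8(4)), and transfer back via the absoluteness of $\dcl$/$\acl$ (Corollary \ref{c-dcl-acl-absolute}), of types over $A$ (Corollary \ref{c-type-over-A}), and of compactness (Proposition \ref{p-closure}). The extra remark about the topology being intrinsic via Proposition \ref{p-homeomorphic} is harmless but already subsumed by Proposition \ref{p-closure}.
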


\begin{proof}
By Exercises 10.7 (4) and 10.8 (4) of [BBHU], (i) and (ii) hold with an $\aleph_1$-saturated metric structure $\cu N$ in place of $\cu M$.
By Corollary  \ref{c-saturated-complete}, $\cu M$ is complete.
 By Proposition \ref{p-saturated-metric-expansion}, $\cu M_e$ is an $\aleph_1$-saturated metric structure.  By Corollary \ref{c-dcl-acl-absolute},
 $\dcl_{\cu M}(A)=\dcl_{\cu M_e}(A)$ and $\acl_{\cu M}(A)=\acl_{\cu M_e}(A)$.
By Corollary \ref{c-type-over-A}, an element $c\in M$ realizes $\tp_{\cu M}(b/A)$ if and only if it realizes $\tp_{\cu M_e}(b/A)$.
By Proposition \ref{p-closure}, a subset of $M$ is compact in $\cu M$ if and only if it is compact in $\cu M_e$.
Therefore (i) and (ii) hold for $\cu M$.
\end{proof}

The paper [EG] introduced the notion of a metric structure admitting weak elimination of finitary imaginaries.  We now introduce the analogous notion
for general complete structures.

An exponentially Cauchy sequence $\langle\varphi\rangle$ in $\cu M$ is called \emph{finitary} if there is an $\ell\in\BN$
such that for each $m\in\BN$, $\varphi_m$ has at most the free variables $(\vec x,y_0,\ldots,y_\ell)$.  If $\langle\varphi\rangle$
is finitary, the elements of sort $\BS_{i}^{\langle\varphi\rangle}$ in ${\cu M}^{\langle\varphi\rangle}$ are called \emph{finitary imaginaries}.

\begin{df}  Let  $\cu M$ be a reduced $\aleph_1$-saturated general structure
\begin{itemize}
\item[(i)]
 $\cu{M}$ \emph{admits  elimination of finitary imaginaries} if for every finitary imaginary $b\in {\cu M}^{\langle\varphi\rangle}$,
there is a finite tuple $\vec c$ from $M$ such that in ${\cu M}^{\langle\varphi\rangle}$, $b\in\dcl(\vec c)$ and $\vec c\in\dcl(b)$.
\item[(ii)]   $\cu{M}$ \emph{admits weak elimination of finitary imaginaries} if for every finitary imaginary $b\in {\cu M}^{\langle\varphi\rangle}$,
there is a finite tuple $\vec c$ from $M$ such that in ${\cu M}^{\langle\varphi\rangle}$, $b\in\dcl(\vec c)$ and $\vec c\in\acl(b)$.
\end{itemize}
\end{df}

The next result shows that the property of being reduced and admitting (or weakly admitting) elimination of finitary imaginaries is absolute.

\begin{cor}  For every reduced $\aleph_1$-saturated general structure $\cu{M}$ and pre-metric expansion $\cu{M}_e$ of $\cu M$, $\cu{M}_e$ admits (or weakly admits) elimination of
finitary imaginaries if and only if $\cu{M}$ admits  (or weakly admits) elimination of finitary imaginaries.
\end{cor}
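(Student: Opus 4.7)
The plan is to reduce the claim to the absoluteness of $\dcl$ and $\acl$ (Corollary \ref{c-dcl-acl-absolute}), but applied within the imaginaries structure $\cu M^{\langle\varphi\rangle}$ rather than within $\cu M$ itself. The key structural input is Proposition \ref{p-Meeq}, which shows that $(\cu M_e)^{\langle\varphi\rangle}$ is a pre-metric expansion of the general structure $\cu M^{\langle\varphi\rangle}$.

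First I would verify that $\cu M$ and $\cu M_e$ have the same finitary imaginaries. By the proof of Proposition \ref{p-Meeq}, every exponentially Cauchy sequence $\langle\psi\rangle$ of $V_D$-formulas in $T_e$ is equivalent in $T_e$ to an exponentially Cauchy sequence of $V$-formulas in $T$, the replacement being carried out via Lemma \ref{l-expansion-converge}. Inspecting that construction, only subformulas of the form $D(\sigma,\tau)$ are replaced, so the free parameter tuples are unchanged; hence finitary sequences go to finitary sequences, and by Remark \ref{r-exponential-equivalent} the two yield the same imaginary sort with the same underlying set of imaginary elements.

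Second, I would apply Corollary \ref{c-dcl-acl-absolute} inside $\cu M^{\langle\varphi\rangle}$. The structure $\cu M^{\langle\varphi\rangle}$ is reduced by construction (Proposition \ref{p-imaginary-exists-unique}). Using the parenthetical assertion just before the definition of (weak) elimination of finitary imaginaries---that $\aleph_1$-saturation of $\cu M$ lifts to $\aleph_1$-saturation of $\cu M^{\langle\varphi\rangle}$---Corollary \ref{c-saturated-complete} yields that $\cu M^{\langle\varphi\rangle}$ is complete. Since $(\cu M_e)^{\langle\varphi\rangle}$ is a pre-metric expansion of $\cu M^{\langle\varphi\rangle}$, Corollary \ref{c-dcl-acl-absolute} gives that for every finitary imaginary $b$ and every finite tuple $\vec c$ from $M$, each of the relations $b\in\dcl(\vec c)$, $\vec c\in\dcl(b)$, and $\vec c\in\acl(b)$ holds in $\cu M^{\langle\varphi\rangle}$ if and only if it holds in $(\cu M_e)^{\langle\varphi\rangle}$.

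The defining conditions for admitting (respectively weakly admitting) elimination of finitary imaginaries are conjunctions, over all finitary imaginaries $b$, of existential statements about finite tuples $\vec c$ from $M$ involving precisely these $\dcl$ and $\acl$ relations. Since both the indexing family (finitary imaginaries) and the individual relations transfer between $\cu M$ and $\cu M_e$, so does each of the two properties. The main obstacle is the saturation-lifting claim for $\cu M^{\langle\varphi\rangle}$, which the text cites without proof; if one wishes to avoid relying on it, an alternative is to characterize $\dcl$ and $\acl$ in the imaginary structure via types in the style of Corollary \ref{c-acl-saturated} and transfer them directly through Corollary \ref{c-type-over-A}, applied inside $\cu M^{\langle\varphi\rangle}$ versus $(\cu M_e)^{\langle\varphi\rangle}$.
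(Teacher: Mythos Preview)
Your proposal is correct and follows essentially the same route as the paper: pass to the imaginary structure, recognize that the pre-metric expansion on $\cu M$ induces a pre-metric expansion on $\cu M^{\langle\varphi\rangle}$, and then invoke the absoluteness of $\dcl$ and $\acl$ (Corollary~\ref{c-dcl-acl-absolute}). The paper's one-line proof cites Corollary~\ref{c-S-premetric} rather than Proposition~\ref{p-Meeq}; this is the more precise reference, since Corollary~\ref{c-S-premetric} packages the sorted distances into a single unsorted distance $\sa D^{\Theta}$ and thereby yields a pre-metric expansion in the strict sense of Definition~\ref{d-metric-expansion}, which is what Corollary~\ref{c-dcl-acl-absolute} requires. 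Your additional care about matching finitary imaginaries and about the completeness hypothesis is well placed and fills in details that the paper's terse proof leaves implicit.
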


\begin{proof}  By Corollaries \ref{c-S-premetric}  and  \ref{c-dcl-acl-absolute}.
\end{proof}

\subsection{Stable Theories}

As is often done the literature on stable theories, we will work in a monster structure of inaccessible cardinality.
The axioms of ZFC do not imply the existence of an inaccessible cardinal.  However, one can avoid
inaccessible cardinals by working in a universal domain, at the cost of some minor complications (see [BBHU]).

\emph{We assume hereafter that $T$ is a complete $V$-theory, and that $\upsilon$ is an inaccessible cardinal greater than $|V|+\aleph_0$.
By a \emph{monster structure} we mean a reduced $\upsilon$-saturated structure of cardinality $\upsilon$.
  We let $\BM$ be a monster model of $T$, and let $\BM_e$ be a pre-metric expansion of $\BM$.}

For the rest of this paper we will work exclusively within  $\BM$ and $\BM_e$.
By a \emph{small set} we mean a set of cardinality $<\upsilon$.  $A, B$ will always denote small subsets of $\BM$.

\begin{rmk} By Remark \ref{r-special-inaccessible}, $\BM$ is $\upsilon$-special. By the Uniqueness Theorem for Special Models, $T$ has a unique monster model up to
isomorphism.  By Corollary \ref{c-special-metric-expansion}, $\BM_e$ is also monster structure.
By Corollary \ref{c-saturated-complete}, $\BM$ and $\BM_e$ are complete, so $\BM_e$ is a metric structure.
\end{rmk}

Let us recall the definition of a $\lambda$-stable metric theory in [BBHU], where $\lambda$ is an infinite cardinal.
For each small set $A$, the $D$-metric on the type space $S_1^{\BM_e}(A)$ is defined by
$$D^{\BM_e}(p,q)=\inf\{D^{\BM_e}(b,c)\colon \tp_{\BM_e}(b/A)=p, \tp_{\BM_e}(c/A)=q\}.$$
The complete metric theory $\Th(\BM_e)$ is \emph{$\lambda$-stable} if for each $A$ of cardinality $\le\lambda$
there is a dense subset of $S_1^{\BM_e}(A)$ of cardinality $\le \lambda$ with respect to the $D$-metric.
And $\Th(\BM_e)$ is stable if it is $\lambda$-stable for some small cardinal $\lambda$.
Here, it will be convenient to say that $\BM_e$ is stable instead of saying that $\Th(\BM_e)$ is stable.
So, by a $\lambda$-stable metric structure we mean a monster metric structure whose complete metric theory is $\lambda$-stable.

\begin{prop}  \label{p-stable-absolute}
The property of being $\lambda$-stable is absolute.  $\BM$ is $\lambda$-stable if and only if for each $A$ of cardinality $\le\lambda$
there is a set $B\subseteq S_1^{\BM}(A)$ of cardinality $\le\lambda$ such that the set $\{b\colon \tp_{\BM}(b/A)\in B\}$ is dense in $\BM$.
\end{prop}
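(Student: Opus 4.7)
The plan is to establish the concrete characterization on the right-hand side of the proposition; absoluteness will then follow for free, because that characterization mentions only the type space and the topology of $\BM$, both of which are absolute by Corollary \ref{c-type-over-A} and Proposition \ref{p-closure}. First I would use Corollary \ref{c-type-over-A} to identify $S_1^{\BM}(A)$ with $S_1^{\BM_e}(A)$ via the canonical bijection $\tp_{\BM}(b/A)\mapsto\tp_{\BM_e}(b/A)$. Under this identification the sets $\{b\in M\colon\tp_{\BM}(b/A)\in B\}$ and $\{b\in M\colon\tp_{\BM_e}(b/A)\in B\}$ coincide, and by Proposition \ref{p-closure} density of this set in $\BM$ is equivalent to density in $\BM_e$. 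Both halves of the proposition therefore reduce to the following key equivalence: for any small $A\subseteq M$ and $B\subseteq S_1^{\BM_e}(A)$, the set $B$ is dense in the $D$-metric on $S_1^{\BM_e}(A)$ if and only if $C_B:=\{b\in M\colon\tp_{\BM_e}(b/A)\in B\}$ is dense in the topology of $\BM_e$.

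For the direction $(\Rightarrow)$, given $b\in M$ and $\varepsilon>0$, I would pick $q\in B$ with $D^{\BM_e}(\tp_{\BM_e}(b/A),q)<\varepsilon/2$; by the definition of the $D$-metric on types there are witnesses $b',c'\in M$ realizing $\tp_{\BM_e}(b/A)$ and $q$ respectively with $D^{\BM_e}(b',c')<\varepsilon/2$. Since $\BM$ is $\upsilon$-special, $|A|<\upsilon$, and $\upsilon$ is inaccessible (hence regular), Fact \ref{f-special-isomorphism} provides an automorphism $\sigma$ of $\BM$ fixing $A$ pointwise with $\sigma(b')=b$; by Lemma \ref{l-reduced-Me}(iv), $\sigma$ is also an automorphism of $\BM_e$, so $\sigma(c')\in C_B$ and $D^{\BM_e}(b,\sigma(c'))<\varepsilon/2$. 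For the direction $(\Leftarrow)$, given $p\in S_1^{\BM_e}(A)$ and $\varepsilon>0$, I would realize $p$ by some $b\in M$ using $\upsilon$-saturation and then choose $c\in C_B$ with $D^{\BM_e}(b,c)<\varepsilon$; the type $\tp_{\BM_e}(c/A)\in B$ witnesses $D$-density at $p$.

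Once the key equivalence is in hand, the full proposition follows routinely. Given that $\BM_e$ is $\lambda$-stable, for every $A$ with $|A|\le\lambda$ one fetches a $D$-dense $B\subseteq S_1^{\BM_e}(A)$ of cardinality at most $\lambda$, transfers it across the homeomorphism to $S_1^{\BM}(A)$, and applies the equivalence; conversely, any $B\subseteq S_1^{\BM}(A)$ with $|B|\le\lambda$ whose set of realizations is dense transfers back to a $D$-dense subset of $S_1^{\BM_e}(A)$ of the same cardinality.

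The main obstacle will be the $(\Rightarrow)$ half of the key equivalence: a $D$-dense family of types furnishes, by the very definition of the $D$-metric, witness pairs $(b',c')$ lying anywhere in $\BM$, whereas we need a realization $c\in C_B$ close to a \emph{prescribed} element $b$ realizing $p$. Sliding an arbitrary witness back to $b$ is exactly what the $\upsilon$-special homogeneity of Fact \ref{f-special-isomorphism} delivers, and it is crucial here that automorphisms of $\BM$ lift to automorphisms of $\BM_e$ (Lemma \ref{l-reduced-Me}(iv)). Everything else is bookkeeping with the absoluteness results already established for types, closed sets, and pre-metric expansions.
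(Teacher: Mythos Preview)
Your proof is correct and follows essentially the same route as the paper: reduce both halves to the key equivalence between $D$-density of $B$ in $S_1^{\BM_e}(A)$ and topological density of the set of realizations in $\BM_e$, and then exploit the homogeneity of the monster for the nontrivial direction. The only cosmetic difference is that the paper invokes $\upsilon$-saturation directly to realize each approximating type $p_n$ at a point $b_n$ with $D^{\BM_e}(b_n,c)=D^{\BM_e}(p_n,p)$, whereas you pass through an explicit automorphism via Fact~\ref{f-special-isomorphism} and Lemma~\ref{l-reduced-Me}(iv); the two arguments are interchangeable.
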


\begin{proof}  By  Corollary \ref{c-type-over-A}, for all elements $b,c$ of $\BM$, $\tp_\BM(b/A)=\tp_\BM(c/A)$ if and only if
$\tp_{\BM_e}(b/A)=tp_{\BM_e}(c/A)$.  By Proposition \ref{p-closure}, a subset of $\BM$ is dense in $\BM$ if and only if it is dense in $\BM_e$.
So it suffices to prove that a set $B\subseteq S_1^{\BM_e}(A)$ is dense in $S_1^{\BM_e}(A)$ with respect to the $D$-metric if and only
if the set $B':=\{b\colon \tp_{\BM_e}(b/A)\in B\}$ is dense in $\BM_e$.
We prove the non-trivial direction here.  Let $c\in \BM_e$ and $p=\tp_{\BM_e}(c/A)$.  Since $B$ is dense in $S_1^{\cu M_e}(A)$,
there is a sequence $\langle p_n\rangle_{n\in\BN}$ in $B$ that converges to $p$ in the $D$-metric.
Since $\BM_e$ is $\kappa^+$-saturated, for each $n$ there exists $b_n$ such that $\tp_{\BM_e}(b_n/A)=p_n$ and
$D^{\BM_e}(b_n,c)=D^{\BM_e}(p_n,p)$.  Then $b_n\in B'$ for each $n$ and $\lim_{n\to\infty} b_n=c$ in $\BM_e$, so $B'$ is dense in $\BM_e$.
\end{proof}

\begin{cor}  \label{c-omega-stable}
$\BM$ is $\aleph_0$-stable if and only if $\BM$ is $\lambda$-stable for all infinite $\lambda$.
\end{cor}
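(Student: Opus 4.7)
The plan is to bounce the statement through the pre-metric expansion $\BM_e$ and appeal to the corresponding result for metric structures, which is already in [BBHU]. The direction ($\Leftarrow$) is immediate since $\aleph_0$ is one of the infinite cardinals. For the forward direction, the only real content is the classical continuous analogue of Morley's theorem on $\omega$-stable theories, and the point of the corollary is simply that absoluteness lets us transport it verbatim from $\BM_e$ to $\BM$.

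First I would invoke Proposition \ref{p-stable-absolute}: since $\lambda$-stability is absolute, $\BM$ is $\aleph_0$-stable if and only if the metric monster $\BM_e$ is $\aleph_0$-stable. For metric structures, it is a theorem in [BBHU] (the continuous logic version of the fact that $\omega$-stable implies totally transcendental, hence $\lambda$-stable for every infinite $\lambda$) that an $\aleph_0$-stable complete metric theory is $\lambda$-stable for every infinite cardinal $\lambda$. Applied to $\Th(\BM_e)$, which is a complete metric theory because $\BM_e$ is a (reduced $\upsilon$-saturated) metric structure, this gives $\lambda$-stability of $\BM_e$ for all infinite $\lambda$. Applying the absoluteness direction of Proposition \ref{p-stable-absolute} once more, $\BM$ is $\lambda$-stable for all infinite $\lambda$.

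I do not expect any real obstacle: all the technical work has been done. The only thing to double-check is that the proposition on absoluteness of $\lambda$-stability applies to \emph{every} infinite $\lambda$ (including $\lambda=\aleph_0$ and $\lambda$ larger than the density character of $\BM_e$), and that the metric-structures version of ``$\aleph_0$-stable implies $\lambda$-stable for all infinite $\lambda$'' is what is actually stated in [BBHU]. Both are standard. If one wanted a self-contained write-up, the proof would be a single sentence: ``Apply Proposition \ref{p-stable-absolute} in both directions, with the corresponding theorem from [BBHU] sandwiched in between.''
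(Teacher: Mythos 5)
Your proposal is correct and is essentially identical to the paper's proof, which likewise transfers the statement to $\BM_e$ via the absoluteness of $\lambda$-stability (Proposition \ref{p-stable-absolute}) and cites the corresponding metric-structure result (Remark 14.8 in [BBHU]). No gaps.
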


\begin{proof}  This follows from the corresponding result for metric structures (Remark 14.8 in [BBHU]) and the fact that being $\lambda$-stable is absolute
(Proposition \ref{p-stable-absolute}).
\end{proof}

\begin{exercise}  Suppose every predicate symbol and every function symbol in $V$ is unary.  Then $T$ is $\aleph_0$-stable.
Hint: Show that for all $b.c\in\BM$ and all $A\subseteq\BM$, $\tp_\BM(b)=\tp_\BM(c)$ if and only if $\tp_\BM(b/A)=\tp_\BM(c/A)$.
\end{exercise}

\begin{cor}  \label{c-stable-discrete}
Suppose $\lambda=\lambda^{\aleph_0}$. The following are equivalent:
\begin{itemize}
\item[(i)] $T$ is stable.
\item[(ii)] $T$ is $\lambda$-stable.
\item[(iii)]  For every set $A$ of cardinality $\le\lambda$, $|S_{1}^{\BM}(A)|\le\lambda$.
\end{itemize}
\end{cor}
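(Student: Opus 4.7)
The plan is to prove the circular chain $\text{(iii)}\Rightarrow\text{(ii)}\Rightarrow\text{(i)}\Rightarrow\text{(iii)}$. The implication $\text{(iii)}\Rightarrow\text{(ii)}$ is immediate via Proposition \ref{p-stable-absolute}: one simply takes $B=S_1^{\BM}(A)$, so the corresponding set of realizations is all of $M$ and is trivially dense in $\BM$. The implication $\text{(ii)}\Rightarrow\text{(i)}$ is immediate from the definition of stability. All substantive work therefore lies in $\text{(i)}\Rightarrow\text{(iii)}$.

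For $\text{(i)}\Rightarrow\text{(iii)}$ I would first use Proposition \ref{p-stable-absolute} to transfer stability between $\BM$ and its pre-metric expansion $\BM_e$, and then quote the corresponding metric result from [BBHU] (for a complete metric theory, the assumption $\lambda=\lambda^{\aleph_0}$ together with stability yields $\lambda$-stability). Transferring back via Proposition \ref{p-stable-absolute} gives that $\BM$ is $\lambda$-stable, so for any $A$ with $|A|\le\lambda$ there is a set $B_0\subseteq S_1^{\BM}(A)$ with $|B_0|\le\lambda$ such that $R:=\{b\colon\tp_{\BM}(b/A)\in B_0\}$ is dense in $\BM$.

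Next I would exploit the topological picture supplied by the pre-metric expansion. Every $p\in S_1^{\BM}(A)$ is realized in $\BM$ by $\upsilon$-saturation, say by $c\in M$. Since the topology of $\BM$ coincides with that of $\BM_e$ (Proposition \ref{p-closure}) and is the pseudo-metric topology of $D^{\BM_e}$, density of $R$ supplies a sequence $(b_n)$ in $R$ with $b_n\to c$; the uniform continuity of $V$-formulas with parameters in $A$ (Fact \ref{f-t3.5}) then shows that $\tp_{\BM}(b_n/A)\to\tp_{\BM}(c/A)=p$ in the logic topology on $S_1^{\BM}(A)$. Hence every $p$ is a logic-topology limit of a sequence from $B_0$, and since the logic topology is compact Hausdorff (Fact \ref{f-logic-topology-compact}) such limits are unique. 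Restricting the partial limit map $B_0^{\BN}\to S_1^{\BM}(A)$ to convergent sequences yields a surjection, so $|S_1^{\BM}(A)|\le|B_0|^{\aleph_0}\le\lambda^{\aleph_0}=\lambda$.

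The main obstacle is the counting in this final step: it requires lining up three ingredients — metrizability of the topology of $\BM$ (so that density becomes sequential density), continuity of the type map into the logic topology on $S_1^{\BM}(A)$, and the cardinal arithmetic hypothesis $\lambda^{\aleph_0}=\lambda$. None of these is deep on its own, but the argument fails if any one of them is missing. An alternative route would be to prove the metric analog $\text{(ii)}\Leftrightarrow\text{(iii)}$ directly in $\BM_e$ and then invoke Corollary \ref{c-type-over-A} to identify $S_1^{\BM}(A)$ with $S_1^{\BM_e}(A)$ as sets, but the self-contained topological argument above seems cleaner in the general-structure setting.
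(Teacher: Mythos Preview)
Your proposal is correct and follows essentially the same route as the paper: the equivalence of (i) and (ii) is obtained by transferring the known metric result (Theorem~8.5 of [BU]) via absoluteness of $\lambda$-stability, and the step (ii)$\Rightarrow$(iii) is the same density-plus-counting argument, using that every type is realized in $\BM$, that a dense set of realizations yields sequential approximation of types, and that $\lambda^{\aleph_0}=\lambda$. Your write-up is in fact more explicit than the paper's about why ``$B'$ dense in $\BM$'' gives a sequential limit in the logic topology on $S_1^{\BM}(A)$, but the underlying argument is the same.
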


\begin{proof}  The equivalence of (i) and (ii) follows from the corresponding result for metric structures
(Theorem 8.5 in [BU]) and the fact that $\lambda$-stability is absolute.
It is trivial that (iii) implies (ii).
Assume (ii).  Suppose $A\subseteq\BM$, and $|A|\le\lambda$.  By (ii) and Proposition \ref{p-stable-absolute}, there is a set
$B\subseteq S_1^{\BM}(A)$ of cardinality $\le\lambda$ such that the set $B':=\{b\colon \tp_{\BM}(b/A)\in B\}$ is dense in $\BM$.
By Remark \ref{r-saturated-types}, every complete type in $S_{1}^{\BM}(A)$ is realized in $\BM$.  Therefore,
since $B'$ is dense in $\BM$, $B$ is dense in $S_1^{\BM}(A)$.  So $|S_1^{\BM}(A)|\le \lambda^{\aleph_0}=\lambda$, and (iii) follows.
\end{proof}

In [BBHU], a \emph{stable independence relation} on a monster metric structure $\BM=\BM_e$ is a ternary relation $A\ind_C B$ on small subsets of  $\BM$ that has the following
properties\footnote{In naming these properties, we follow Adler [Ad], rather than [BBHU]}.
\begin{itemize}
\item \emph{Invariance under automorphisms of $\BM$.}
\item \emph{Symmetry:} $A\ind_C B$ if and only if $B\ind_C A$.
\item \emph{Transitivity:} $A\ind_C BD$ if and only if $A\ind_C B$ and $A\ind_{BC} D$.
\item \emph{Finite character:} $A\ind_C $ if and only if $\vec a\ind_C B$ for all finite $\vec a\subseteq A$.
\item \emph{Full existence:}  For all $A,B,C$ there exists $A'$ such that
$(\BM_{A'})_C\equiv(\BM_A)_C$  and $A'\ind_C B$.
\item \emph{Strong local character:} For each finite $\vec a$, there exists $B_0\subseteq B$ of cardinality $\le |V|+\aleph_0$ with $\vec a\ind_{B_0} B$.
\item \emph{Stationarity:} For all small complete $\cu{M}^0\prec\BM$ and all small $A, A', B$, if
$$ (\BM_A)_{M^0}\equiv(\BM_{A'})_{M^0},  \quad A\ind_{M^0} B, \quad A'\ind_{M^0} B,$$
then
$$(\BM_A)_{B\cup M^0}\equiv (\BM_{A'})_{B\cup M^0}$$
\end{itemize}

We define a \emph{stable independence relation} on a monster general structure $\BM$ to be a ternary relation  that satisfies the same seven properties with respect to $\BM$.

\begin{thm}
\noindent\begin{itemize}
\item[(i)]  A relation $\ind$ is a stable independence relation on $\BM$ if and only if it is a stable independence relation on $\BM_e$.
\item[(ii)] $\BM$ is stable if and only if it has a stable independence relation, and also if and only if it  has a unique stable independence relation.
\end{itemize}
\end{thm}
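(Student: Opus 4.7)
The plan is to derive both parts from the corresponding results for metric structures by checking that each ingredient in the definition of a stable independence relation is absolute in the sense of Section 4.1. Note that by the remark preceding the theorem, $\BM_e$ is itself a monster metric structure (reduced, $\upsilon$-saturated, of cardinality $\upsilon$, and complete by Corollary \ref{c-saturated-complete}), so the metric results of [BBHU] apply directly to $\BM_e$ and we only need to transfer statements back and forth between $\BM$ and $\BM_e$.

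For (i), I would verify each of the seven clauses in turn. Automorphisms coincide: if $h\colon\BM\cong\BM$ then $h\colon\BM_e\cong\BM_e$ by Lemma \ref{l-reduced-Me}(iv), and conversely every automorphism of $\BM_e$ is in particular an automorphism of its $V$-reduct $\BM$; hence invariance under automorphisms of $\BM$ is equivalent to invariance under automorphisms of $\BM_e$. Symmetry, transitivity and finite character are purely set-theoretic conditions on the ternary relation $\ind$ and do not refer to the structure at all. Full existence and stationarity are stated entirely in terms of elementary equivalence over parameters of the form $(\BM_A)_C\equiv(\BM_{A'})_C$, which is absolute by Corollary \ref{c-expansion-over-A}; for stationarity the additional requirement that $\cu M^0\prec\BM$ is complete transfers by Corollary \ref{c-expansion-elem-ext} and Corollary \ref{c-absolute-complete} (and the underlying set $\cu M^0\subseteq\BM$ is the same in either formulation). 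Strong local character mentions the cardinal $|V|+\aleph_0$; since $V_D=V\cup\{D\}$ has the same cardinality as $V$, the bound is unchanged, and $\vec a\ind_{B_0}B$ is the same statement in both settings.

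For (ii), I would quote the Basic Stability Theorem for metric structures from [BBHU], which states that a monster metric structure is stable if and only if it has a stable independence relation, and if and only if it has a unique stable independence relation. Applied to $\BM_e$, this yields the equivalence of the three properties for $\BM_e$. By Proposition \ref{p-stable-absolute}, $\BM$ is stable iff $\BM_e$ is stable; by part (i), the collection of stable independence relations on $\BM$ literally equals the collection of stable independence relations on $\BM_e$. Chaining these equivalences gives (ii).

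The main obstacle is not conceptual but bookkeeping: one must confirm that no axiom in the definition of a stable independence relation hides an implicit reference to the distinguished distance $D$ or to a notion of convergence or closure that behaves differently in $\BM$ than in $\BM_e$. The absoluteness results already in hand — elementary equivalence over parameters (Corollary \ref{c-expansion-over-A}), elementary substructure (Corollary \ref{c-expansion-elem-ext}), completeness (Corollary \ref{c-absolute-complete}), identification of automorphisms (Lemma \ref{l-reduced-Me}(iv)), and stability itself (Proposition \ref{p-stable-absolute}) — cover every such ingredient, and a quick clause-by-clause inspection of the seven axioms confirms there is no hidden dependence; once this is checked, (i) is immediate and (ii) follows mechanically.
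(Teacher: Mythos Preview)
Your proposal is correct and follows essentially the same approach as the paper: verify clause-by-clause that each axiom of a stable independence relation is absolute (using the results of Section~4.1 on types, elementary substructure, and automorphisms), then invoke the corresponding characterization of stability from [BBHU] together with Proposition~\ref{p-stable-absolute}. You are in fact slightly more careful than the paper in noting the completeness hypothesis on $\cu{M}^0$ in Stationarity and in checking that the cardinal bound $|V|+\aleph_0$ in Strong Local Character is unchanged when passing to $V_D$.
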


\begin{proof}  (i):  Let $\ind$ be a ternary relation on small subsets of the universe of $\BM$.  We show that each of the properties listed above for
a stable independence relation is absolute, that is, holds for $\BM$ if and only if it holds for $\BM_e$.
It is trivial that Symmetry, Transitivity, Finite Character, and Local Character are absolute.
By Lemma \ref{l-reduced-Me} (iii),  Invariance under automorphisms is absolute.
To show that Full Existence is absolute, use  Corollary \ref{c-type-over-A},
which says that two tuples realize the same complete type over $C$
in $\BM$ if and only if they realize the same complete type over $C$ in $\BM_e$.  To show that Stationarity is absolute, use Corollary \ref{c-type-over-A} again and
Corollary \ref{c-expansion-elem-ext}, which says that $\cu{M}^0\prec\BM$ if and only if $\cu{M}^0_e\prec\BM_e$.

(ii) follows immediately from (i) and Theorem 14.1 of [BBHU].
\end{proof}

We now consider the approach to stability theory via definable types.

\begin{df}  We say that a complete $k$-type $\tp_\BM(\vec a/B)$  is \emph{definable over} $C$ in $\BM$ if for each $V$-formula
$\varphi(\vec x,\vec y)$ with parameters in $C$ there is a mapping $\sa Q\colon M^{|\vec y|}\to[0,1]$ that is definable over $C$ in $\BM$
such that for all $\vec b\in B^{|\vec y|}$ we have $\varphi^{\BM}(\vec a,\vec b)=\sa Q(\vec b).$
\end{df}

In the case that $\BM=\BM_e$ (so $\BM$ is a  metric structure), the above definition is the same as the corresponding definition in [BBHU].
 We now show that the general property [$\tp(\vec a/B)$ is definable over $C$] is absolute.

\begin{prop} \label{p-definable-type-absolute}
Let $\vec a\in M$ and $B,C$ be small subsets of $M$.  $\tp_\BM(\vec a/B)$ is definable over $C$ in $\BM$ if and only if $\tp_{\BM_e}(\vec a/B)$ is definable over $C$ in $\BM_e$.
\end{prop}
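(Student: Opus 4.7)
The plan is to show both directions using Lemma \ref{l-expansion-converge}, Proposition \ref{p-definable-predicate}, and the forced convergence trick from Lemma \ref{l-definable-iff}. The reverse direction is essentially routine, but the forward direction has a subtle issue that needs to be handled carefully.

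For the reverse direction, suppose $\tp_{\BM_e}(\vec a/B)$ is definable over $C$ in $\BM_e$, and let $\varphi(\vec x,\vec y)$ be a $V$-formula with parameters in $C$. Since every $V$-formula is in particular a $V_D$-formula, the hypothesis yields a mapping $\sa Q$ definable over $C$ in $\BM_e$ with $\varphi^{\BM_e}(\vec a,\vec b)=\sa Q(\vec b)$ for all $\vec b\in B^{|\vec y|}$. By Remark \ref{r-formula-expansion}, $\varphi^{\BM_e}=\varphi^{\BM}$, and by Proposition \ref{p-definable-predicate} (absoluteness of definable predicates), $\sa Q$ is also definable over $C$ in $\BM$, as required.

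For the forward direction, assume $\tp_{\BM}(\vec a/B)$ is definable over $C$ in $\BM$, and let $\psi(\vec x,\vec y)$ be a $V_D$-formula with parameters in $C$. By Lemma \ref{l-expansion-converge}, after passing to a subsequence we can find an exponentially Cauchy sequence $\langle \psi_m(\vec x,\vec y)\rangle_{m\in\BN}$ of $V$-formulas (with parameters drawn from $C$) such that $\psi^{\BM_e}=[\lim \psi_m]^{\BM}$. The hypothesis provides, for each $m$, a mapping $\sa Q_m$ definable over $C$ in $\BM$ with $\psi_m^{\BM}(\vec a,\vec b)=\sa Q_m(\vec b)$ for every $\vec b\in B^{|\vec y|}$. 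The main obstacle is that although $\langle \psi_m^{\BM}(\vec a,\cdot)\rangle$ converges uniformly on $B^{|\vec y|}$, the mappings $\sa Q_m$ agree with $\psi_m^{\BM}(\vec a,\cdot)$ only on $B^{|\vec y|}$; off of $B^{|\vec y|}$ the sequence $\langle \sa Q_m\rangle$ need not even be Cauchy, so we cannot directly extract a definable limit.

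To repair this, I would apply the forced convergence construction used in the proof of Lemma \ref{l-definable-iff}: set $\sa Q'_0=\sa Q_0$ and inductively
\[
\sa Q'_{m+1}=\max\!\bigl(\sa Q'_m-2^{-(m+1)},\,\min(\sa Q'_m+2^{-(m+1)},\sa Q_{m+1})\bigr).
\]
Each $\sa Q'_m$ is definable over $C$ in $\BM$ (as max/min of such predicates), and $\sup|\sa Q'_m-\sa Q'_{m+1}|\le 2^{-(m+1)}$ uniformly on $M^{|\vec y|}$, so the sequence is exponentially Cauchy and its uniform limit $\sa Q'$ is definable over $C$ in $\BM$. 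A short induction, using that $|\psi_m^{\BM}(\vec a,\vec b)-\psi_{m+1}^{\BM}(\vec a,\vec b)|\le 2^{-(m+1)}$ on $B^{|\vec y|}$ by exponential Cauchy-ness, shows that the truncation is inactive at such $\vec b$, whence $\sa Q'_m(\vec b)=\psi_m^{\BM}(\vec a,\vec b)$ for all $\vec b\in B^{|\vec y|}$ and all $m$. Passing to the limit, $\sa Q'(\vec b)=\lim_m\psi_m^{\BM}(\vec a,\vec b)=\psi^{\BM_e}(\vec a,\vec b)$ for all $\vec b\in B^{|\vec y|}$. By Proposition \ref{p-definable-predicate}, $\sa Q'$ is also definable over $C$ in $\BM_e$, witnessing that $\tp_{\BM_e}(\vec a/B)$ is definable over $C$ in $\BM_e$.
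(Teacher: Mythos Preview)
Your proof is correct and follows essentially the same route as the paper: both directions invoke Lemma~\ref{l-expansion-converge} and Proposition~\ref{p-definable-predicate}, and the forward direction approximates the $V_D$-formula by a Cauchy sequence of $V$-formulas, pulls each through the definability hypothesis to obtain predicates $\sa Q_m$, and then applies forced convergence. The only cosmetic difference is that the paper first replaces each definable $\sa Q_m$ by a single approximating formula before forcing convergence at the level of formulas, whereas you force convergence directly on the definable predicates; both are valid.

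One small numeric slip: the paper's definition of ``exponentially Cauchy'' gives $|\psi_m-\psi_{m+1}|\le 2^{-m}$, not $2^{-(m+1)}$, so with your truncation threshold $2^{-(m+1)}$ the induction showing $\sa Q'_m=\sa Q_m$ on $B^{|\vec y|}$ need not go through. Replacing $2^{-(m+1)}$ by $2^{-m}$ in your recursion (or passing to a faster subsequence first) fixes this immediately.
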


\begin{proof}
Suppose first that $\tp_{\BM_e}(\vec a/B)$ is definable over $C$ in $\BM_e$.  Let $\varphi(\vec x,\vec y)$ be a $V$-formula with parameters in $C$.
By hypothesis there is a mapping $\sa Q\colon M^{|\vec y|}\to[0,1]$ that is definable over $C$ in $\BM_e$
such that for all $\vec b\in B^{|\vec y|}$ we have $\varphi^{\BM_e}(\vec a,\vec b)=\sa Q(\vec b).$  By Proposition \ref{p-definable-predicate},
$\sa Q$  is definable over $C$ in $\BM$, and by Remark \ref{r-formula-expansion}, $\varphi^{\BM}(\vec a,\vec b)=\varphi^{\BM_e}(\vec a,\vec b)=\sa Q(\vec b)$,
so $\tp_\BM(\vec a/B)$ is definable over $C$ in $\BM$.

Now suppose that  $\tp_{\BM}(\vec a/B)$ is definable over $C$ in $\BM$.  Let $\varphi(\vec x,\vec y)$ be a $V_D$-formula with parameters in $C$.
By Lemma \ref{l-expansion-converge}, $\varphi^{\BM_e}$ is definable in $\BM$ over $C$.
Then there is a sequence of $V$-formulas $\langle\varphi_m(\vec x,\vec y)\rangle_{m\in\BN}$ with parameters in $C$
such that for each $m\in\BN$,
$$(\forall\vec a\in M^{|\vec x|})(\forall\vec b\in M^{|\vec y|})|\varphi_m^{\BM}(\vec a,\vec b)-\varphi^{\BM_e}(\vec a,\vec b)|\le 2^{-m}.$$
By hypothesis, for each $m\in\BN$ there is a mapping $\sa Q_m\colon \BM^{|\vec y|}\to[0,1]$ definable over
$C$ in $\BM$ such that for all $\vec b\in B^{|\vec y|}$ we have $\varphi_m^{\BM}(\vec a,\vec b)=\sa Q_m(\vec b).$
By Proposition \ref{p-definable-predicate}, each $\sa Q_m$ is definable in $\BM$ over $C$, so
there is a  sequence of $V$-formulas $\langle\psi_m(\vec y)\rangle_{m\in\BN}$ with parameters in $C$  such that
$$ (\forall \vec b\in \BM^{|\vec y|})(|\psi_m^\BM(\vec b)-\sa Q_m(\vec b)|\le 2^{-m})$$
and
$$(\forall \vec b\in B^{|\vec y|})(|\psi_m^{\BM}(\vec b)-\sa Q_m(\vec b)|=|\psi_m^{\BM}(\vec b)-\varphi_m^{\BM}(\vec a,\vec b)|\le 2^{-m}).$$
Then
$$(\forall \vec b\in B^{|\vec y|})(|\psi_m^{\BM}(\vec b)-\varphi^{\cu M_e}(\vec a,\vec b)|\le 2\cdot 2^{-m}.$$
We now modify $\langle\psi_m(\vec y)\rangle_{m\in\BN}$ to a sequence of $V$-formulas that is Cauchy in $\Th(\BM)$ using the forced convergence trick of [BU].
Note that for all $\vec b\in B^{|\vec y|}$,
$$|\psi^{\BM}_m(\vec b)-\psi^{\BM}_{m+1}(\vec b)|\le 3\cdot 2^{-m}.$$
We inductively define $\theta_0=\psi_0$, and
$$\theta_{m+1}=\max(\theta_m-3\cdot 2^{-m},\min(\theta_m+3\cdot 2^{-m},\psi_{m+1})).$$
Then $\theta_m^{\BM}(\vec b)=\psi_m^{\BM}(\vec b)$
for all $m$ and all $\vec b\in B^{|\vec y|}$.  Moreover,
$\langle\theta_m(\vec y)\rangle_{m\in\BN}$ is a sequence of $V$-formulas with parameters in $C$ such that
$$ (\forall \vec b)|\theta_m^{\BM}(\vec b)-\theta_{m+1}^{\BM}(\vec b)|\le 3\cdot 2^{-m},$$
so $\langle\theta_m(\vec y)\rangle_{m\in\BN}$ in Cauchy in $\Th(\BM)$.  Therefore by Proposition \ref{p-definable-predicate},
$\sa Q :=[\lim \theta_m]^{\BM}$ is a definable predicate over $C$ in $\BM_e$, and $\varphi^{\BM_e}(\vec a,\vec b)=\sa Q(\vec b)$
for all $\vec b\in B^{|\vec y|}$.  Therefore $\tp_{\BM_e}(\vec a/B)$ is definable over $C$ in $\BM_e$.
\end{proof}

\begin{df}  We say that a complete type $p=\tp_{\BM}(\vec a/B)$ \emph{does not fork over} $C$ in $\BM$ if
$p$ is definable over $\acl_{\BM}(C)$.
\end{df}

In the case that $\BM=\BM_e$, the above definition is the same as the definition in [BBHU].
We now show that the property [$\tp(\vec a/B)$ does not fork over $C$] is absolute.

\begin{cor}  For every $a, B$, and $C$, $\tp_{\BM}(a/B)$ does not fork over $C$ in $\BM$ if and only if $\tp_{\BM_e}(a/B)$ does not fork over $C$ in $\BM_e$.
\end{cor}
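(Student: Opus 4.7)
The plan is to deduce this corollary by stringing together two absoluteness results that have already been established earlier in the paper, so very little new work is needed. The definition of non-forking says that $\tp_{\BM}(a/B)$ does not fork over $C$ in $\BM$ precisely when this type is definable over $\acl_{\BM}(C)$, and similarly for $\BM_e$. Thus the corollary will follow once I know (i) that the algebraic closures computed in $\BM$ and $\BM_e$ coincide, and (ii) that for any fixed small parameter set, definability of a type is absolute between $\BM$ and $\BM_e$.

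Step one is to invoke Corollary \ref{c-dcl-acl-absolute}: since $\BM$ is a monster model, it is reduced and $\upsilon$-saturated, hence complete by Corollary \ref{c-saturated-complete}, so that corollary applies and yields $\acl_{\BM}(C) = \acl_{\BM_e}(C)$. Write $A$ for this common set.

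Step two is to apply Proposition \ref{p-definable-type-absolute} with the parameter set $A$ in the role of $C$: this gives that $\tp_{\BM}(a/B)$ is definable over $A$ in $\BM$ if and only if $\tp_{\BM_e}(a/B)$ is definable over $A$ in $\BM_e$. Chaining the biconditionals,
\[
\tp_{\BM}(a/B)\ \text{does not fork over}\ C\ \text{in}\ \BM \iff \tp_{\BM}(a/B)\ \text{definable over}\ \acl_{\BM}(C)
\]
\[
\iff \tp_{\BM_e}(a/B)\ \text{definable over}\ \acl_{\BM_e}(C) \iff \tp_{\BM_e}(a/B)\ \text{does not fork over}\ C\ \text{in}\ \BM_e.
\]

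There is no substantial obstacle in this argument; it is a direct corollary of the two preceding results, and the only thing worth double-checking is that the hypotheses of \ref{c-dcl-acl-absolute} and \ref{p-definable-type-absolute} are actually satisfied in the monster setting (completeness of $\BM$ for the former, and the fact that the parameter sets $B$, $A = \acl_{\BM}(C)$ are small for the latter, which follows from $\upsilon$-saturation and local character of algebraic closure).
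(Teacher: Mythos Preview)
Your proof is correct and follows essentially the same route as the paper: apply Proposition \ref{p-definable-type-absolute} to transfer definability of the type over $\acl_{\BM}(C)$ between $\BM$ and $\BM_e$, and use Corollary \ref{c-dcl-acl-absolute} to identify $\acl_{\BM}(C)$ with $\acl_{\BM_e}(C)$. The only difference is that you spell out the verification of hypotheses (completeness of $\BM$, smallness of the parameter sets) that the paper leaves implicit.
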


\begin{proof}
By Proposition \ref{p-definable-type-absolute},
$\tp_{\BM}(\vec a/B)$ is definable over $\acl_\BM(C)$ in $\BM$ if and only $\tp_{\BM_e}(\vec a/B)$ is definable over $\acl_{\BM}(C)$ in $\BM_e$.
We have $\acl_{\BM}(C)=\acl_{\BM_e}(C)$ by Corollary \ref{c-dcl-acl-absolute}.
\end{proof}

\begin{cor}
$\BM$ is stable if and only if for every small $\cu{N}\prec\BM$ and every tuple $\vec a$ of elements of $\BM$, $\tp_\BM(\vec a/N)$ is definable over $N$ in $\BM$.
\end{cor}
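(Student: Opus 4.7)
The plan is to transfer the corresponding characterization from the metric case by means of the absoluteness machinery already in place. Every notion appearing in the statement---being stable, being a small elementary substructure, and having a type definable over a parameter set---has been shown above to be absolute, so the argument is almost entirely a matter of threading these equivalences together.

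First I would invoke the metric analogue from [BBHU]: for a monster metric structure, stability is equivalent to the assertion that the complete type of every finite tuple over every small elementary substructure is definable over that substructure. Applied to $\BM_e$, this reads: $\BM_e$ is stable if and only if for every small $\cu{N}'\prec\BM_e$ and every finite tuple $\vec a$ in $\BM_e$, $\tp_{\BM_e}(\vec a/N')$ is definable over $N'$ in $\BM_e$.

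Next I would assemble three absoluteness ingredients. By Proposition \ref{p-stable-absolute}, $\BM$ is stable iff $\BM_e$ is stable. By Corollary \ref{c-expansion-elem-ext} together with the definition of a pre-metric expansion, the assignment $\cu N\mapsto \cu N_e$ gives a bijection between the small elementary substructures of $\BM$ and those of $\BM_e$; crucially, $\cu N$ and $\cu N_e$ share the same universe, so ``small'' has the same meaning on both sides, and every small $\cu N'\prec\BM_e$ is of the form $\cu N_e$ for a unique small $\cu N\prec\BM$. Finally, Proposition \ref{p-definable-type-absolute} gives that $\tp_{\BM}(\vec a/N)$ is definable over $N$ in $\BM$ iff $\tp_{\BM_e}(\vec a/N)$ is definable over $N$ in $\BM_e$. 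Chaining these three equivalences through the metric characterization produces the desired biconditional.

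The main obstacle, to the extent there is one, lies not in the absoluteness transfer but in citing the metric characterization in exactly the form needed (with $\cu N$ an arbitrary small elementary submodel and $\vec a$ a finite tuple, rather than a $1$-type over a small parameter set). Once that form is in hand, no further work is required.
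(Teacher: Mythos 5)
Your proposal is correct and follows exactly the paper's own argument: cite the metric-case characterization (Theorem 14.16 of [BBHU]) for $\BM_e$, then transfer it to $\BM$ via the absoluteness of stability (Proposition \ref{p-stable-absolute}), of elementary substructure (Corollary \ref{c-expansion-elem-ext}), and of type definability (Proposition \ref{p-definable-type-absolute}). No further comment is needed.
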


\begin{proof}  By Theorem 14.16 of [BBHU], the result holds with the monster metric structure $\BM_e$ in place of $\BM$.
Corollary \ref{c-expansion-elem-ext}, $\cu{N}\prec\BM$ if and only if $\cu{N}_e\prec\BM_e$.  By Proposition \ref{p-definable-type-absolute},
$\tp_\BM(\vec a/N)$ is definable over $N$ in $\BM$ if and  only if $\tp_{\BM_e}(\vec a/N)$ is definable over $N$ in $\BM_e$.
By Proposition \ref{p-stable-absolute}, $\BM$ is stable if and only if $\BM_e$ is stable, and the result follows.
\end{proof}

\subsection{Building Stable Theories}

In both first order and continuous logic, stable structures are of particular interest because they are well-behaved and can be analyzed.
The  literature contains a wide variety of examples of stable first order structures, and several examples of stable
metric structures in areas such as Banach spaces and probability algebras.  We now present a way to build many examples of stable general structures
from first order structures that are stable, or even  stable for positive formulas.
As in [HI] and [BY03a], we exploit a connection between $[0,1]$-valued structures and positive formulas in first order structures.
We need some definitions.

A first order formula is \emph{positive} if it is built from atomic formulas using only quantifiers and the connectives $\wedge,\vee$.
A first-order structure is \emph{positively $\kappa$-saturated} if every set of positive formulas with the free variable $x$ and fewer than $\kappa$
parameters that is finitely satisfiable is satisfiable in the structure.  Note that every $\kappa$-saturated first-order structure is positively $\kappa$-saturated.
A \emph{positive monster structure} is a positively $\upsilon$-saturated first order structure of cardinality $\upsilon$.

In a positive monster structure $\BK$, the \emph{complete positive type} of a $k$-tuple $\vec b$ over a set $A$ is the set of all positive formulas $\varphi(\vec x)$
with parameters in $A$ satisfied by $\vec b$.
We say that a first-order structure $\BK$ is \emph{positively $\lambda$-stable} if $\BK$ is a positive monster structure
and, for every set $A$ of cardinality $\lambda$,
the set of complete positive $1$-types over $A$ in $\BK$ has cardinality $\le\lambda$.
In particular, every $\lambda$-stable first order monster structure is a positively $\lambda$-stable.

By a $\bigwedge$-\emph{formula} we mean a finite or countable conjunction of positive formulas, possibly with parameters in $\BK$.
We also allow the empty conjunction, whose truth value is always true.
Let $\BD$ be the set of all dyadic rationals in $[0,1]$.  Hereafter we let $q,r,s$ vary over $\BD$.  Let $\BJ$ be the set of all intervals of the form
$[0,r]$ or $[r,1]$ where $r\in\BD$.

Let $\BK$ be a first-order positive monster structure with universe $M$ whose vocabulary $W$  contains at least all the function and constant symbols of $V$.
  By a \emph{positive interpretation of $V$ in} $\BK$
we mean a function $\mathbf{I}$ that associates, with each $k$-ary predicate symbol $P\in V$ and interval $J\in\BJ$,  a  $\bigwedge$-formula
$\mathbf{I}(P,J)$ in the vocabulary $W$ with $k$ free variables, such that whenever $r<s$ we have:
\begin{itemize}
\item[(a)] $\mathbf{I}(P,[0,r])^{\BK}\subseteq \mathbf{I}(P,[0,s])^{\BK}$ and  $\mathbf{I}(P,[r,1])^{\BK}\supseteq \mathbf{I}(P[s,1])^{\BK}$.
\item[(b)]  $\mathbf{I}(P,[0,r])^{\BK}\cap \mathbf{I}(P,[s,1])^{\BK}=\emptyset$.
\item[(c)]  $\mathbf{I}(P,[r,1])^{\BK}\cup \mathbf{I}(P,[0,s])^{\BK}=M^k$.
\end{itemize}

\begin{thm}  \label{t-interpret-upgrade}
Suppose $\mathbf{I}$ is a positive interpretation of $V$ in a positive monster structure $\BK$.
There is a unique  $V$-structure $\BM=\mathbf{I}(\BK)$ with universe $M$ such that $\BM$ agrees with $\BK$ on all function and constant symbols in $V$,
and for each $k$-ary predicate symbol $P\in V$, $r\in\BD$, and  $\vec b\in M^k$,
$$ P^{\BM}(\vec b)\in [0,r] \mbox{ if and only if } \BK\models\bigwedge_{s>r} \mathbf{I}(P,[0,s])(\vec b)$$
and
$$ P^{\BM}(\vec b)\in [r,1] \mbox{ if and only if } \BK\models\bigwedge_{s<r} \mathbf{I}(P,[s,1])(\vec b)$$
(with the empty conjunction being the true sentence).
Moreover, $\BM$ is a monster structure, and for each $\lambda$, if $\BK$ is positively $\lambda$-stable then $\BM$ is $\lambda$-stable.
\end{thm}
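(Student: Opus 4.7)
The plan is to define $\BM$ directly from the positive interpretation, extend $\mathbf{I}$ inductively to all $V$-formulas so that truth values in $\BM$ correspond to positive $\bigwedge$-conditions in $\BK$, and then transfer $\upsilon$-saturation and $\lambda$-stability from $\BK$ to $\BM$ via this extension.

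For existence and uniqueness, fix a $k$-ary $P \in V$ and $\vec b \in M^k$. By (a), the set $A_P(\vec b) = \{r \in \BD : \BK \models \mathbf{I}(P,[0,r])(\vec b)\}$ is upward closed in $\BD$ and $B_P(\vec b) = \{r \in \BD : \BK \models \mathbf{I}(P,[r,1])(\vec b)\}$ is downward closed; (c) forces their union to cover $\BD$ up to at most one point, and (b) forces $\sup B_P(\vec b) \le \inf A_P(\vec b)$. Hence there is a unique $\alpha \in [0,1]$ with $\alpha = \sup B_P(\vec b) = \inf A_P(\vec b)$, and setting $P^{\BM}(\vec b) = \alpha$ makes the two displayed equivalences hold; these equivalences pin down $\alpha$, giving uniqueness.

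The main technical step is to extend $\mathbf{I}$ by recursion on $V$-formulas: for each $\varphi(\vec x)$ and each $J \in \BJ$, produce a $\bigwedge$-formula $\mathbf{I}(\varphi,J)$ in the vocabulary $W$ of $\BK$ satisfying the analogues of (a)--(c) together with $\varphi^{\BM}(\vec b) \le r$ iff $\BK \models \bigwedge_{s > r} \mathbf{I}(\varphi,[0,s])(\vec b)$ and the symmetric statement for $[r,1]$. For an $n$-ary continuous connective $c$ applied to $\psi_1,\ldots,\psi_n$, the set $c^{-1}([0,r])$ is closed in $[0,1]^n$, so for each $s > r$ it lies inside a countable union of open dyadic boxes; this turns $c(\vec t) \le s$ into a positive disjunction of conjunctions of conditions of the form $\psi_i^{\BM} \in J_i$, each already a $\bigwedge$-formula by induction. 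For $\sup_y$ and $\inf_y$, one prefixes the inductively obtained positive formulas by $\forall y$ and $\exists y$ respectively; both quantifiers are permitted in positive formulas, so the class is preserved. Maintaining (a)--(c) through this induction, especially across arbitrary continuous connectives approximated by dyadic boxes, is the principal obstacle.

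With the extended interpretation, $\upsilon$-saturation follows by translation: given a set $\Gamma(x)$ of $V$-formulas over $A \subseteq M$ with $|A| < \upsilon$, finitely satisfiable in $\BM$, form the positive type $\Gamma'(x)$ in $W$ over $A$ consisting of the individual positive conjuncts of $\mathbf{I}(\theta,[0,s])(x)$ for each $\theta \in \Gamma$ and each $s \in \BD$ with $s > 0$. Any element of $\BM$ realizing a finite subset of $\Gamma$ also realizes the corresponding finite subset of $\Gamma'$ in $\BK$, so by positive $\upsilon$-saturation of $\BK$ some $b \in M$ realizes $\Gamma'$; the extended interpretation then forces $\theta^{\BM}(b) \le s$ for every $s > 0$, so $\theta^{\BM}(b) = 0$ for each $\theta \in \Gamma$. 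Cardinality $\upsilon$ is inherited from $\BK$; reducedness either follows from the interpretation (when $V$-atomic formulas separate elements of $\BK$) or by passing to the reduction, which preserves saturation and the stability conclusion. For $\lambda$-stability, suppose $\BK$ is positively $\lambda$-stable and $|A| \le \lambda$; two elements of $M$ with the same complete positive type over $A$ in $\BK$ satisfy the same $\bigwedge$-formulas over $A$, hence by the extended interpretation the same $V$-formulas over $A$, so $|S_1^{\BM}(A)| \le \lambda$. Taking $B = S_1^{\BM}(A)$ in Proposition \ref{p-stable-absolute} (so that $\{b : \tp_{\BM}(b/A) \in B\} = M$ is trivially dense) yields $\lambda$-stability of $\BM$.
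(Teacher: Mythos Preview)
Your approach is essentially the paper's: define $P^{\BM}(\vec b)$ as the common value $\sup B_P(\vec b)=\inf A_P(\vec b)$, extend the correspondence inductively so that each condition $\varphi^{\BM}(\vec b)\in[r,s]$ is expressed by a $\bigwedge$-formula in $\BK$, and then read off $\upsilon$-saturation and $\lambda$-stability. There are, however, two small slips in the induction step that the paper handles explicitly and that you should patch.

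For connectives, you write that $c^{-1}([0,r])$ lies inside a \emph{countable} union of open dyadic boxes, yielding a countable disjunction. But $\bigwedge$-formulas are countable conjunctions of positive formulas and are not closed under countable disjunction. Since $c^{-1}([0,r])\subseteq[0,1]^n$ is compact and contained in the open set $c^{-1}([0,s))$ for $s>r$, a \emph{finite} subcover by dyadic boxes suffices; the paper records this as ``$C^{-1}([r,s])$ is a countable intersection of finite unions of $\BD$-rectangles'' and then uses closure of $\bigwedge$-formulas under finite disjunction and countable conjunction.

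For quantifiers, you justify closure by ``both quantifiers are permitted in positive formulas''. That is fine for $\forall$, since $\forall y\bigwedge_n\theta_n$ is $\bigwedge_n\forall y\,\theta_n$. It fails as stated for $\exists$: $\exists y\bigwedge_n\theta_n$ is not a $\bigwedge$-formula on its face. The paper uses the positive $\aleph_1$-saturation of $\BK$ (immediate from positive $\upsilon$-saturation) to see that in $\BK$ this is equivalent to the $\bigwedge$-formula $\bigwedge_m\exists y\bigwedge_{n\le m}\theta_n$. With these two corrections your argument coincides with the paper's.
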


Theorem \ref{t-interpret-upgrade} shows that one can build a $\lambda$-stable general structure $\BM$ by starting with a positively $\lambda$-stable
monster structure $\BK$ and taking any positive interpretation $\mathbf{I}$ of $V$ in $\BK$.
In fact, it is not hard to show that if $\lambda=\lambda^{\aleph_0}$, then every $\lambda$-stable general structure $\BM$ can be obtained in this way.
Since $\lambda$-stability is absolute, one can then get a $\lambda$-stable metric structure by taking any pre-metric expansion $\BM_e$.

\begin{proof} [Proof of Theorem \ref{t-interpret-upgrade}]
It is clear that there is at most one such $\BM$.  We first prove that such an $\BM$ exists.  Let
$$\mathbf{I}'(P,[0,r])=\bigwedge_{s>r}\mathbf{I}(P,[0,s]), \qquad\mathbf{I}'(P,[r,1])=\bigwedge_{s<r}\mathbf{I}(P,[s,1]).$$
Using the fact that $\BD$ is dense, one can easily check that $\mathbf{I}'$ is a positive interpretation of $V$ in $\BK$, and that $\mathbf{I}''=\mathbf{I}'$.
For each $k$-ary $P\in V$ and $\vec b\in M^k$, let
$$X=X(P,\vec b)=\{r\colon \BK\models \mathbf{I}'(P,[r,1])(\vec b)\},$$
$$  Y=Y(P,\vec b)=\{s\colon \BK\models \mathbf{I}'(P,[0,s])(\vec b)\}.$$
By (c), $X$ and $Y$ are non-empty.
 Therefore $\sup X$ exists.  We define $\BM$ to be the $V$-structure with universe $M$ that agrees with
$\BK$ on every function and constant symbol in $V$, and such that $P^{\BM}(\vec b)=\sup X$ for each $P$ and $\vec b$.
We first show that:
\begin{equation} \label{e-claim}
\mbox{For each $P$ and $\vec b\in M^k$, $\sup X=P^{\cu M}(\vec b)=\inf Y$.}
\end{equation}
If $r\in X$ and $s\in Y$, then  by (b), $\neg s<r$ and thus $r\le s$.  Hence $\sup X\le\inf Y$.
By (a),  $q\le r\in X\Rightarrow q\in X$,
and $q\ge s\in Y\Rightarrow q\in Y$.  If $\sup X < \inf Y$, then there is a  $q\in\BD$ with $\sup X<q<\inf Y$, which
would contradict (c).  This proves (\ref{e-claim}).

It follows that for all $r<1$ in $\BD$,
\begin{equation}  \label{e-Y1}
P^{\BM}(\vec b)\in[0,r]\Leftrightarrow \inf Y\in[0,r]\Leftrightarrow(\forall s>r)\BK\models\mathbf{I}'(P,[0,s])(\vec b).
\end{equation}
Then using $\mathbf{I}''=\mathbf{I}'$, one can see that whenever $r<1$,
$$P^{\cu M}(\vec b)\in[0,r]\Leftrightarrow P_{[0,r]}^{\cu{N}}(\vec b).$$
By (c), this also holds when $r=1$.  By a similar argument,
$$P^{\BM}(\vec b)\in[r,1]\Leftrightarrow \BK\models\mathbf{I}'(P,[r,1])(\vec b)$$
for all $r\in \BD$.

We next prove that $\BM$ is $\upsilon$-saturated, and thus is a monster structure.
Note that any small set of $\bigwedge$-formulas with parameters in $M$ that is finitely satisfiable in $\BK$ is satisfiable in $\BK$.
Finite disjunctions and countable conjunctions of $\bigwedge$-formulas are logically equivalent to $\bigwedge$-formulas.
Moreover, since $\BK$ is $\aleph_1$-saturated, for every  $\bigwedge$-formula $\Theta(\vec x,y)$, $(\exists y )\Theta$ and $(\forall y)\Theta$
are equivalent to $\bigwedge$-formulas in $\BK$.

By a \emph{$\BD$-interval} we mean  either the empty set or an interval  $[r,s]$ where $0\le r\le s\le 1$ and $r,s\in\BD$.
A \emph{$\BD$-rectangle} is a finite cartesian product of $\BD$-intervals.
Note that for any continuous connective $C\colon[0,1]^k\to[0,1]$ and $\BD$-interval $[r,s]$, $C^{-1}([r,s])$ is a countable intersection of finite unions
of $\BD$-rectangles.

Using the above two paragraphs, one can show by induction on the complexity of formulas that for every $V$-formula $\varphi(\vec x)$ and $\BD$-interval $[r,s]$,
there is a $\bigwedge$-formula $\Theta(\vec x)$ such that for all $\vec b\in M^k$, $\varphi^{\BM}(\vec b)\in[r,s]$ if and only if $\BK\models\Theta(\vec b)$.
For every small set $A$, every finitely satisfiable set of $\bigwedge$-formulas $\Theta(x)$ with parameters in $A$ is satisfiable in $\BK$.
It follows that every finitely satisfiable set of $V$-formulas $\varphi(x)$ with parameters in $A$ is satisfiable in $\BM$, so $\BM$ is a monster structure.

It also follows that if $\vec b, \vec c$ have the same positive type over $A$ in $\BK$, then $\tp_{\BM}(\vec b/A)=\tp_{\BM}(\vec c/A)$.  Therefore if $\BK$
is positively $\lambda$-stable, then $\BM$ is $\lambda$-stable.
\end{proof}

\subsection{Simple and Rosy Theories}

The notion of a simple theory was introduced in the context of cats in [BY03b].  In the literature (see [EG], for example), the definition
of a simple complete metric theory is obtained by translating the definition of a simple cat in [BY03b] into the context of continuous model theory.
Fact \ref{f-simple} below is a necessary and sufficient condition for a  complete metric theory to be simple that is proved in [BY03b].
Using that fact, we will show here that the property of being simple is absolute.  We need the relation $\equiv^{Ls}_C$
(for Lascar equivalence) from [BY03b].

\begin{df}  In a monster general structure $\BM$, we write $A\equiv^{Ls}_C B$ if $A, B$ are small sequences of the same length and there exist finitely many sequences
$A_1,\ldots,A_n$ such that $A=A_1, B=A_n$, and for each $k<n$, $A_k$ and $A_{k+1}$  both occur on some $C$-indiscernible sequence.
\end{df}

\begin{fact} \label{f-simple} (By Theorem 1.51 of [BY03b])
A complete metric theory  is simple if and only if its monster metric model $\BM$
has a ternary relation $A\ind_C B$ on small sets that has the following properties:
\begin{itemize}
\item Invariance under automorphisms of $\BM$.
\item Symmetry.
\item Transitivity.
\item Finite character.
\item For every $A$ and $C$, $A\ind_C C$.
\item Local character: For every $A$ there exists a small cardinal $\lambda$ such that for every $B$
there exists $B_0\subseteq B$ with $B_0\le\lambda$ and $A\ind_{B_0} B$.
\item Extension: If $A\ind_C B$ and $\hat{B}\supseteq B$, there exists $A'\equiv_{BC} A$ such that $A'\ind_C \hat{B}$.
\item Independence theorem: Whenever $A_0\ind_C A_1$, $B_0\ind_C A_0$, $B_1\ind_C A_1$, and $B_0\equiv^{Ls}_C B_1$,
there exists $B$ such that $B\ind_C A_0 A_1$, $B\equiv^{Ls}_{CA_0}B_0$, and $B\equiv^{Ls}_{CA_1} B_1$.
\end{itemize}
\end{fact}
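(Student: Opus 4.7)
My plan is to invoke the Kim--Pillay characterization of simple theories, adapted to the continuous/metric setting. Since the statement is explicitly credited to Theorem 1.51 of [BY03b], the quickest route is to observe that each complete continuous metric theory gives rise to a metric open Hausdorff cat (as already noted in the Introduction via the [BU] bridge), and to apply the cited theorem in that framework. Below I sketch the underlying argument one would give directly.

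For the forward direction, assume $T$ is simple, i.e.\ every complete type does not divide over some small set. Define $A\ind_C B$ to mean that $\tp(A/BC)$ does not divide (equivalently, does not fork) over $C$. Invariance under automorphisms, finite character, transitivity, and the trivial property $A\ind_C C$ are immediate from the definition of dividing. Local character is essentially the definition of simplicity. Symmetry and extension then follow from Kim's lemma: in a simple theory, dividing over $C$ is witnessed by every Morley sequence over $C$, which makes non-dividing symmetric and permits the extension axiom via compactness. The substantive step is the independence theorem, which is the metric analogue of Kim--Pillay independence over Lascar strong types; its proof uses Kim's lemma together with an amalgamation of Morley sequences carried out inside $\BM$, and it is precisely the piece that requires simplicity in an essential way.

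For the converse, assume $\ind$ is a ternary relation on small sets satisfying all eight listed properties. The strategy is to show that $\ind$ must coincide with non-dividing: given $A\ind_C B$ and a formula $\varphi(\vec x,\vec b)$ satisfied by $A$, combine extension with the independence theorem to build a $C$-indiscernible sequence starting with $\vec b$ along which $\{\varphi(\vec x,\vec b_i)\}$ is jointly consistent, so $\tp(A/BC)$ does not divide over $C$. Conversely, the local character axiom for the abstract $\ind$, together with the fact that $\ind$ refines non-dividing on one side, forces every type to have a small non-dividing base, giving simplicity of $T$.

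The main obstacle will be the independence theorem in the forward direction. In the metric setting one must handle $\equiv^{Ls}_C$ and its interaction with the uniform continuity of predicates and with limits of indiscernible sequences; this is where the cat-theoretic machinery of [BY03b] is most useful, and it is also the reason the result is quoted from there rather than reproved here. The remaining seven properties transcribe almost mechanically from the first-order theory of simplicity.
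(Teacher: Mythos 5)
The paper does not prove this statement at all: it is stated as a Fact and imported verbatim from Theorem 1.51 of [BY03b], which is exactly the route you take, and your Kim--Pillay-style sketch is a faithful outline of what that cited theorem's proof actually does in the cat setting. So your proposal matches the paper's approach (deferral to [BY03b]), with the added bonus of a plausible summary of the underlying argument.
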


As in first order model theory, every stable metric theory is simple.

By a \emph{simple metric structure} we mean a monster metric structure whose complete theory is simple.
The following corollary shows that the property of being a simple metric structure has an absolute version.
By Definition \ref{d-general-Q}, this tells us that the right definition of a simple general structure is a monster general structure that satisfies that absolute version.

\begin{cor}  \label{c-simple-absolute}  The property of being simple is absolute.   A general monster structure
$\BM$ is simple if and only if there exists a ternary relation $A\ind_C B$ on small subsets of $\BM$
that satisfies the properties listed in Fact \ref{f-simple}.
\end{cor}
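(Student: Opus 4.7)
The plan is to mirror the immediately preceding stable-theories theorem. Let $\BM$ be a monster general model of $T$ and $\BM_e$ a pre-metric expansion; by the results established earlier in this section, $\BM_e$ is a monster metric structure with the same universe and the same collection of small subsets as $\BM$. I would fix a ternary relation $A \ind_C B$ on small subsets of this common universe and verify, property by property, that each of the eight conditions listed in Fact \ref{f-simple} holds for $\ind$ in $\BM$ if and only if it holds in $\BM_e$. Granting this, the existence of a simplicity-witnessing $\ind$ is the same statement about $\BM$ and about $\BM_e$. Since $\BM_e$ is a monster metric model of its complete theory, Fact \ref{f-simple} says that the existence of such a relation is equivalent to $\BM_e$ being simple; taking ``there exists such a relation on $\BM$'' as the absolute version of simplicity then delivers both halves of the corollary at once.

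The bookkeeping is routine for most of the conditions. Symmetry, transitivity, finite character, local character, and the reflexivity clause $A \ind_C C$ refer only to $\ind$ and to the set-theoretic structure of small subsets, so they transfer without comment. Invariance under automorphisms is absolute because every automorphism of $\BM$ extends uniquely to an automorphism of $\BM_e$ by Lemma \ref{l-reduced-Me}(iv), and conversely every automorphism of $\BM_e$ restricts to one of $\BM$.

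The two substantive conditions are Extension and the Independence Theorem. Extension asserts the existence of $A' \equiv_{BC} A$ with $A' \ind_C \hat B$ once $A \ind_C B$ and $\hat B \supseteq B$; the clause $A' \equiv_{BC} A$ is absolute by Corollary \ref{c-type-over-A}, while the $\ind$-clause is unchanged, so Extension transfers. For the Independence Theorem one has to check that the Lascar-equivalence relation $\equiv^{Ls}_C$ is the same in $\BM$ and in $\BM_e$: by Corollary \ref{c-type-over-A} the property of a sequence being $C$-indiscernible is absolute, so the relation ``$A_1$ and $A_2$ occur together on some $C$-indiscernible sequence'' agrees in the two structures, and therefore so does its transitive closure $\equiv^{Ls}_C$. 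Combined with absoluteness of $\ind$ and of type equality, the Independence Theorem condition becomes absolute as well.

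I expect the only real friction to come at this last point, namely confirming that Lascar equivalence, being built from $C$-indiscernible sequences rather than directly from types, is genuinely insensitive to the passage from $\BM$ to $\BM_e$. Once one unfolds the definition and applies the absoluteness of $C$-indiscernibility already recorded in Corollary \ref{c-type-over-A}, the whole corollary reduces to Fact \ref{f-simple} applied to $\BM_e$, with the listed characterization of simple general monster structures serving simultaneously as the definition (via Definition \ref{d-general-Q}) and as the absoluteness statement.
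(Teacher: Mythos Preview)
Your proposal is correct and follows essentially the same approach as the paper: verify property-by-property that each condition in Fact~\ref{f-simple} is absolute, using Corollary~\ref{c-type-over-A} for type equality and indiscernibility (and hence Lascar equivalence), and Lemma~\ref{l-reduced-Me} for automorphisms. The paper's proof is considerably terser---it simply asserts that the remaining conditions ``follow easily from Corollary~\ref{c-type-over-A}\ldots and hence that indiscernibility over $C$ is absolute''---whereas you spell out the Extension and Independence Theorem cases explicitly, correctly identifying that the only point requiring a moment's thought is the absoluteness of $\equiv^{Ls}_C$.
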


\begin{proof}  It suffices to show that for each ternary relation $\ind$  on the universe of $\BM$, each of the condit6ions listed in
Fact \ref{f-simple} is absolute, that is, it holds for $\BM$ if and only if it holds for $\BM_e$.
We have already seen that Invariance under automorphisms is absolute.  Absoluteness for the other conditions
follow easily from  Corollary  \ref{c-type-over-A}, which says that the property $\tp(\vec b/A)=\tp(c/A)$
is absolute, and hence that indiscernibility over $C$ is absolute.
\end{proof}

The notion of a rosy metric theory is introduced by Ealy and Goldbring  [EG].

\begin{df}  \label{d-rosy} A metric structure $\BM$, or its complete metric theory $\Th(\BM)$, is
\emph{real rosy} if $\BM$ is a monster metric structure and has a ternary relation $\ind$ on small sets with the following properties:
\begin{itemize}
\item \emph{Invariance under automorphisms of $\BM$.}
\item \emph{Monotonicity:} If $A\ind_C B$, $A'\subseteq A$, and $B'\subseteq B$, then $A'\ind_C B'$.
\item \emph{Base monotonicity:} Suppose $C\in[D, B]$.  If $A\ind_D B$, then $A\ind_C B$.
\item \emph{Transitivity.}
\item \emph{Normality:} $A\ind_CB$ implies $AC\ind_C B$.
\item \emph{Extension.}
\item \emph{Countable character:}  $A\ind_C B$ if and only if $\vec a\ind_C B$ for all countable $\vec a\subseteq A$.
\item \emph{Local character.}
\item \emph{Anti-reflexivity:} $a\ind_B a$ implies $a\in \acl_\BM(B)$.
\end{itemize}
\end{df}

A ternary relation with the above properties is called a \emph{strict independence relation}.  Every strict independence relation
also satisfies Symmetry, Full Existence, and $A\ind_C C$.

\begin{cor}  \label{c-rosy-absolute}  The property of being real rosy is absolute.   A general monster structure
$\BM$ is real rosy if and only if it has a strict independence relation.
\end{cor}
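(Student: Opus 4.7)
The plan is to follow the template already used for Corollary \ref{c-simple-absolute} and the analogous stability theorem: show that for an arbitrary ternary relation $\ind$ on small subsets of the universe of $\BM$, each of the nine clauses in Definition \ref{d-rosy} is absolute—that is, it holds with respect to $\BM$ if and only if it holds with respect to $\BM_e$. Once this is done, the existence of a strict independence relation on $\BM$ is equivalent to the existence of one on $\BM_e$, and since [EG] defines a monster metric structure to be real rosy exactly when it admits such a relation, the class of general monster structures that admit a strict independence relation is an absolute version of the pre-metric property of being real rosy. That immediately yields both the absoluteness claim and the stated characterization.

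First I would dispose of the purely combinatorial clauses that make no reference to $\BM$ at all beyond the universe and the relation $\ind$: Monotonicity, Base monotonicity, Transitivity, Normality, Countable character, and Local character. These are trivially absolute. Next I would handle Invariance under automorphisms using Lemma \ref{l-reduced-Me} (iii), which tells us that automorphisms of $\BM$ correspond exactly to automorphisms of $\BM_e$. The Extension clause involves the equivalence $A'\equiv_{BC} A$, i.e.\ $\tp(A/BC)=\tp(A'/BC)$; by Corollary \ref{c-type-over-A} this agrees between $\BM$ and $\BM_e$, so the clause is absolute.

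The one clause that needs slightly more care is Anti-reflexivity, since it invokes the algebraic closure. Here I would first observe that $\BM$ is reduced and $\upsilon$-saturated, hence complete by Corollary \ref{c-saturated-complete}, which puts us in the regime where Corollary \ref{c-dcl-acl-absolute} applies and gives $\acl_{\BM}(B)=\acl_{\BM_e}(B)$. This is the main (and essentially the only non-routine) obstacle, but it is disposed of immediately by the tools already available. With all nine clauses shown to be absolute, the conclusion assembles: a relation $\ind$ is a strict independence relation on $\BM$ precisely when it is one on $\BM_e$, so $\BM$ satisfies the characterization if and only if $\BM_e$ is real rosy, establishing both assertions.
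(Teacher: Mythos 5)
Your proposal is correct and follows essentially the same route as the paper: both arguments reduce the claim to checking that each clause of Definition \ref{d-rosy} is absolute, dispose of the purely combinatorial clauses as trivial, handle invariance via the correspondence of automorphisms, and settle Anti-reflexivity via Corollary \ref{c-dcl-acl-absolute} (the paper also cites Corollary \ref{c-type-over-A} here, which covers the type-equivalence in the Extension clause that you treat separately). Your explicit remark that completeness of $\BM$ (via Corollary \ref{c-saturated-complete}) is needed before invoking Corollary \ref{c-dcl-acl-absolute} is a small but welcome extra precision.
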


\begin{proof}  It is enough to show that for each ternary relation $\ind$ on the universe $M$, each of the conditions in Definition \ref{d-rosy}
is absolute, that is, it holds for $\BM$ if and only if it holds for $\BM_e$.
We have already observed that Invariance under automorphisms is absolute.  The absoluteness of Anti-reflexivity
follows from Corollaries \ref{c-type-over-A} about types, and Corollary \ref{c-dcl-acl-absolute} about algebraic closure.  It is trivial
that the other properties for a strict independence relation are absolute.
\end{proof}

By Proposition \ref{p-eq-saturated}, if $\BM$ is a monster  general structure, then $\BM^{eq}$ is a monster structure, and $\BM^\Theta$
is a monster structure for each $\Theta\subseteq\Phi$.

\begin{df} A metric structure $\BM$, or its theory $\Th(\BM)$, is called \emph{rosy} if $\BM$ is a monster metric structure and  $\BM^{eq}$
is real rosy.
\end{df}

\begin{lemma}  \label{l-rosy-finite}  Let $\BM$ be a monster general structure, and let $\BM_e$ be a pre-metric expansion of $\BM$.
\begin{itemize}
\item[(i)] $(\BM_e)^{eq}$ has the same sorts, and the same universe in each sort, as $\BM^{eq}$,
\item[(ii)] If $\vec a,\vec b$ are tuples and $C$ is a small set in $\BM^{eq}$,
then
$$\tp_{\BM^{eq}}(\vec a/C)=\tp_{\BM^{eq}}(\vec b/C) \Leftrightarrow\tp_{(\BM_e)^{eq}}(\vec a/C)=\tp_{(\BM_e)^{eq}}(\vec b/C).$$
\end{itemize}
\end{lemma}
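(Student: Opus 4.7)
Part (i) is an immediate consequence of Proposition \ref{p-Meeq} together with Remark \ref{r-exponential-equivalent}.  The sorts of $V^{eq}$ and of $(V_D)^{eq}$ are indexed respectively by $T$-equivalence classes of exponentially Cauchy $V$-sequences and by $T_e$-equivalence classes of exponentially Cauchy $V_D$-sequences, and the proof of Proposition \ref{p-Meeq} shows that every $V_D$-sequence is $T_e$-equivalent to a $V$-sequence, so these indexing sets agree.  For each representative $\langle\varphi\rangle$, Proposition \ref{p-imaginary-exists-unique} pins down the imaginary-sort universe uniquely from the axioms together with the home sort, and by the proof of Proposition \ref{p-Meeq} the structure $(\BM_e)^{\langle\varphi\rangle}$ is nothing but $\BM^{\langle\varphi\rangle}$ with the home-sort distance $D$ added.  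Hence the universes of $\BM^{eq}$ and $(\BM_e)^{eq}$ coincide sort by sort.

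For (ii), the direction $\Leftarrow$ is trivial: every $V^{eq}$-formula is a $V_D^{eq}$-formula, and by Remark \ref{r-formula-expansion} it has the same truth value in $\BM^{eq}$ as in its expansion $(\BM_e)^{eq}$, so agreement of $V_D^{eq}$-types over $C$ forces agreement of $V^{eq}$-types over $C$.

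For $\Rightarrow$, my plan is to establish the following key claim: for every $V_D^{eq}$-formula $\psi(\vec x,\vec y)$ and every tuple $\vec c$ in $\BM^{eq}$, the map $\vec x\mapsto \psi^{(\BM_e)^{eq}}(\vec x,\vec c)$ is a definable predicate over $\vec c$ in $\BM^{eq}$.  Granted this, Proposition \ref{p-definable-continuous} identifies this map with a continuous function on the type space of $\BM^{eq}$ over $\vec c$, so $\tp_{\BM^{eq}}(\vec a/C)=\tp_{\BM^{eq}}(\vec b/C)$ (which restricts to equal types over the finite set $\vec c\subseteq C$) forces $\psi^{(\BM_e)^{eq}}(\vec a,\vec c)=\psi^{(\BM_e)^{eq}}(\vec b,\vec c)$; ranging over all $\psi$ and all finite $\vec c\subseteq C$ then yields $\tp_{(\BM_e)^{eq}}(\vec a/C)=\tp_{(\BM_e)^{eq}}(\vec b/C)$.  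I would prove the claim by induction on complexity of $\psi$.  For atomic $V^{eq}$-symbols there is nothing to do.  The new home-sort distance $D$ is, by construction of the pre-metric expansion, defined in $\BM$ by the approximate distance $\langle d_m\rangle$ of $\BM_e$, hence is definable on the home sort of $\BM^{eq}$.  For a predicate $P_{\langle\varphi\rangle}$ coming from a $V_D$-sequence $\langle\varphi\rangle$ not already in $V^{eq}$, pick a $T_e$-equivalent $V$-sequence $\langle\psi\rangle$ (available by the proof of Proposition \ref{p-Meeq}); by Remark \ref{r-exponential-equivalent} one has $P_{\langle\varphi\rangle}^{(\BM_e)^{eq}}=P_{\langle\psi\rangle}^{(\BM_e)^{eq}}$, and the value of $P_{\langle\psi\rangle}$ in turn is approximated uniformly by $V$-formulas in $\BM$ via Lemma \ref{l-expansion-converge}.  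The connective and $\sup_z,\inf_z$-quantifier steps are routine closure properties of definable predicates under uniform Cauchy limits, and the sort-matching for quantifiers is unproblematic because by (i) the sorts of $V^{eq}$ are exactly those of $V_D^{eq}$.

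The step I expect to require the most care is the atomic case for the $P_{\langle\varphi\rangle}$ arising from $V_D$-sequences: one must translate the $T_e$-equivalence with a $V$-sequence into an equality of predicates on the correct product of sorts in $\BM^{eq}$, not merely into approximate equality in some ambient structure.  Once that is secured, the inductive passage through connectives and quantifiers is standard and the conclusion of (ii) drops out.
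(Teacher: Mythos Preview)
Your proof of (i) is essentially the paper's: both rest on Proposition \ref{p-Meeq}.

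For (ii) you take a genuinely different route.  The paper simply observes that $(\BM_e)^{eq}$ is a pre-metric expansion of $\BM^{eq}$ (by Proposition \ref{p-Meeq}) and then says that the special-model argument behind Corollary \ref{c-type-over-A} goes through verbatim, the only novelty being that $V^{eq}$ has uncountably many predicate symbols (which is harmless because $\upsilon$ is inaccessible and $>|V^{eq}|$).  You instead give a direct, syntactic argument: show by induction on complexity that every $(V_D)^{eq}$-formula with parameters is a definable predicate in $\BM^{eq}$, and then invoke Proposition \ref{p-definable-continuous}.  Both approaches are correct; the paper's is shorter and reuses existing machinery, while yours is more self-contained and does not appeal to special models.

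One simplification you should make: your ``most delicate'' atomic case---the predicates $P_{\langle\varphi\rangle}$ attached to genuinely new $V_D$-sequences---never actually arises.  Once you choose the set $\Phi_e$ of representatives for $T_e$-equivalence classes of $V_D$-sequences to coincide with $\Phi$ (which is legitimate, as the proof of Proposition \ref{p-Meeq} shows every $V_D$-sequence is $T_e$-equivalent to a $V$-sequence, and $T$-equivalence on $V$-sequences agrees with $T_e$-equivalence), you get literally $(V_D)^{eq}=V^{eq}\cup\{D\}$ and $(\BM_e)^{eq}=(\BM^{eq},D^{\BM_e})$.  The only new atomic formulas are those involving $D$ on the home sort, and for these your induction collapses to the argument of Lemma \ref{l-expansion-converge}: replace each occurrence of $D$ by $d_m$ and observe uniform convergence.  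So your inductive plan works, but the atomic step is easier than you feared.
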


\begin{proof}   By Proposition \ref{p-Meeq}, $(\BM_e)^{eq}$ is a pre-metric expansion of $\BM^{eq}$.  Therefore (i) holds.
The proof of Corollary \ref{c-type-over-A} also works in this case, even though the vocabulary $V^{eq}$ has uncountably many predicate symbols,
and shows that (ii) holds.
\end{proof}

\begin{cor}  \label{c-rosy-absolute}  The property of being rosy is absolute.  A general monster structure $\BM$ is rosy if and only if
$\BM^{eq}$ has a strict independence relation.
\end{cor}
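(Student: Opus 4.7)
The plan is to reduce the statement to the already-proved absoluteness of real rosyness (the preceding Corollary on real rosy monster structures) applied to $\BM^{eq}$, using the bridge provided by Lemma \ref{l-rosy-finite} and Proposition \ref{p-Meeq}. By the definition just given, $\BM$ is rosy iff $\BM^{eq}$ is real rosy, and real rosyness has been characterized as the existence of a strict independence relation. This directly yields the second assertion of the corollary, once we verify that the real-rosy characterization applies in the present sorted setting where $V^{eq}$ may have uncountably many sorts.

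For the absoluteness claim, I would fix a pre-metric expansion $\BM_e$ of $\BM$ and prove the chain of equivalences: $\BM$ is rosy iff $\BM^{eq}$ is real rosy iff $(\BM_e)^{eq}$ is real rosy iff $\BM_e$ is rosy. By Proposition \ref{p-Meeq}, $(\BM_e)^{eq}$ is a pre-metric expansion of $\BM^{eq}$, and by Lemma \ref{l-rosy-finite}(i) the two structures have the same sorts and the same universe in each sort, so a ternary relation on small subsets of $\BM^{eq}$ is literally the same piece of data as a ternary relation on small subsets of $(\BM_e)^{eq}$. It then suffices to show, as in the proof of the previous Corollary \ref{c-rosy-absolute}, that each clause of Definition \ref{d-rosy} holds for $\BM^{eq}$ iff it holds for $(\BM_e)^{eq}$. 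Monotonicity, Base monotonicity, Transitivity, Normality, Countable character, Local character, and Extension are purely set-theoretic statements about $\ind$ and so are trivially absolute; Invariance under automorphisms is absolute by Lemma \ref{l-reduced-Me}(iii) applied in the sorted $V^{eq}$-setting; and Anti-reflexivity reduces to absoluteness of algebraic closure, which follows from Corollary \ref{c-dcl-acl-absolute} (transferred to the sorted imaginaries). Each of these last two uses only absoluteness of types, which in turn is Lemma \ref{l-rosy-finite}(ii).

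The main obstacle is that the paper's standing convention restricts $V$ to have countably many predicate and function symbols, whereas $V^{eq}$ is typically of size $2^{\aleph_0}$ or larger. This is precisely the obstacle Lemma \ref{l-rosy-finite} was set up to handle: its statement and proof explicitly note that the argument of Corollary \ref{c-type-over-A} (and hence the downstream absoluteness results about $\tp$, $\dcl$, $\acl$, and indiscernibility) still goes through for $\BM^{eq}$ versus $(\BM_e)^{eq}$ despite the uncountable vocabulary. The only care needed is to quote Lemma \ref{l-rosy-finite}(ii) (rather than Corollary \ref{c-type-over-A} directly) at each place where type-absoluteness is invoked, and to observe that Corollary \ref{c-dcl-acl-absolute} extends to the sorted imaginary setting by the same argument. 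Once this is in hand, the proof is essentially a transcription of the proof of the real-rosy absoluteness corollary with $\BM$ replaced by $\BM^{eq}$ throughout.
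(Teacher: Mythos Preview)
Your proposal is correct and follows essentially the same route as the paper: use Lemma~\ref{l-rosy-finite}(i) to identify the ternary relations on $\BM^{eq}$ and $(\BM_e)^{eq}$, and then check clause by clause that each item of Definition~\ref{d-rosy} transfers, invoking Lemma~\ref{l-rosy-finite}(ii) wherever types or algebraic closure enter. The paper's proof is just a terse compression of this.

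One small slip: you list \emph{Extension} among the ``purely set-theoretic'' clauses, but Extension requires producing $A'\equiv_{BC}A$, which is a type-equality condition and hence needs Lemma~\ref{l-rosy-finite}(ii), not merely (i). Since you already have (ii) available and invoke it elsewhere, this does not damage the argument---just move Extension to the list of clauses handled via type-absoluteness.
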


\begin{proof}  By Lemma \ref{l-rosy-finite} (i), $\BM^{eq}$ and $(\BM_e)^{eq}$ have the same sorts and universe sets.
Therefore they have the same ternary relations.  It follows from Lemma \ref{l-rosy-finite} (i) that for each  ternary relation $\ind$,
each of the conditions in Definition \ref{d-rosy} holds for $\BM^{eq}$ if and only if it holds for $(\BM_e)^{eq}$.
\end{proof}

\section{Conclusion}

We have shown that every general structure with truth values in $[0,1]$ can be made into a metric structure
by adding a distance predicate that is a uniform limit of pseudo-metric formulas, and completing the metric.
We used that result to extend many notions and results  from the class of metric structures to the class of general structures.
Thus the model theory of metric structures is considerably more broadly applicable than it initially appears.

\end{document}